\newcommand{\col}[1]{\mid\!\!\cdot\!\!\cdot\!\!\cdot\: {\it #1}}
\newcommand{\cols}[1]{{\it col}(#1)}
\newcommand{\colsp}[1]{{\it col}^+(#1)}
\newcommand{\colsm}[1]{{\it col}^-(#1)}
 \newtheorem{thm}{Theorem}[section]
 \newtheorem{prop}[thm]{Proposition}
 \newtheorem{lem}[thm]{Lemma}
 \newtheorem{cor}[thm]{Corollary}
\theoremstyle{definition}
 \newtheorem{rem}[thm]{Remark}
 \newtheorem{definition}{Definition}
\numberwithin{equation}{section}
\newtheorem{example}{\sc Example}
\newcommand\mycom[2]{\genfrac{}{}{0pt}{}{#1}{#2}}
\newcommand{\HDS}{\vrule width0pt height2.3ex depth1.05ex\displaystyle}
\newcommand{\f}[2]{{\frac{\HDS #1}{\HDS #2}}}
\newcommand{\afrac}[1]{\mycom{\phantom{\HDS #1}}{\HDS #1}}
\def\d#1{{#1\kern-0.4em\char"16\kern-0.1em}}
\def\D#1{{\raise0.2ex\hbox{-}\kern-0.4em#1}}
\def \Dj{\mbox{\raise0.3ex\hbox{-}\kern-0.4em D}}
\definecolor{britishracinggreen}{rgb}{0.0, 0.26, 0.15}
\def\diskon{\begin{picture}(8,8)
\put(4,3){\makebox(0,0){$\vee$}}
\put(4,3){\makebox(0,0){$\wedge$}}
\end{picture}}
\title{Proofs and surfaces}
\author[Barali\' c]{Djordje Barali\' c}
\address{\scriptsize{Mathematical Institute SANU\\ Knez Mihailova 36, p.f.\ 367\\ 11001 Belgrade, Serbia}}
\email{djbaralic@mi.sanu.ac.rs}
\author[Curien]{Pierre-Louis Curien}
\address{\scriptsize{IRIF $\pi r^2$, Univ.\ Paris Diderot, INRIA and CNRS}}
\email{curien@irif.fr}
\author[Mili\' cevi\' c]{Marina Mili\' cevi\' c}
\address{\scriptsize{Production and Management Faculty, Trebinje, Bosnia and Herzegovina}}
\email{marina.milicevic@fpm.ues.rs.ba}
\author[Obradovi\' c]{Jovana Obradovi\' c}
\address{\scriptsize{Institute of Mathematics AS CR, \v Zitn\' a 25
CZ - 115 67 Praha 1, Czech Republic}}
\email{obradovic@math.cas.cz}
\author[Petri\' c]{Zoran Petri\' c}
\address{\scriptsize{Mathematical Institute SANU\\ Knez Mihailova 36, p.f.\ 367\\ 11001 Belgrade, Serbia}}
\email{zpetric@mi.sanu.ac.rs}
\author[Zeki\' c]{Mladen Zeki\' c}
\address{\scriptsize{Mathematical Institute SANU\\ Knez Mihailova 36, p.f.\ 367\\ 11001 Belgrade, Serbia}}
\email{mzekic@mi.sanu.ac.rs}
\author[\v Zivaljevi\' c]{Rade T.\ \v Zivaljevi\' c}
\address{\scriptsize{Mathematical Institute SANU\\ Knez Mihailova 36, p.f.\ 367\\ 11001 Belgrade, Serbia}}
\email{rade@mi.sanu.ac.rs}
\date{}
\begin{document}

\begin{abstract}
A formal sequent system dealing with Menelaus' configurations is introduced in this paper. The axiomatic sequents of the system stem from 2-cycles of $\Delta$-complexes. The Euclidean and projective interpretations of the sequents are defined and a soundness result is proved. This system is decidable and its provable sequents deliver incidence results. A cyclic operad structure tied to this system is presented by generators and relations.

\vspace{.3cm}

\noindent {\small {\it Mathematics Subject Classification} ({\it
        2010}): 03F03, 18D50, 18G30, 51A05, 51A20}

\vspace{.5ex}

\noindent {\small {\it Keywords$\,$}: Ceva-Menelaus proof, connected sum, cyclic operad, decidability, $\Delta$-complex, incidence theorem, sequent system, simplicial homology}
\end{abstract}

\maketitle

\section{Introduction}
As a part of his program of mechanical theorem proving in projective geometry, J\" urgen Richter-Gebert, partly in collaboration with Susanne Jasmin Apel, investigated so-called Ceva-Menelaus proofs of incidence theorems (see \cite{RG06}, \cite{RG11}, \cite{ARG11} and \cite{A13}). In the paper \cite{RG06}, he gave a proof-theoretical analysis of the method, which served as the starting point for our investigation. For the sake of clarity, we restrict ourselves only to the Menelaus proofs in this paper.

Roughly speaking, a Ceva-Menelaus proof transforms a triangulation of a surface into an incidence result in projective geometry. At the first glance, this is something that connects geometry with geometry. However, the triangulation in question could be envisaged purely combinatorially through a type of abstract cell complexes, like we do in this paper. Even one step further is possible: triangulations of surfaces may be considered as a special kind of syntax built out of two symbols (a dot and a dash) written on various writing pads in the way explained below.

It is convenient to think about syntax as something built out of
primitive symbols combined together in words and written on a
piece of paper or a blackboard. These writing pads, for the sake
of uniformity, could be taken as parts of the two-dimensional
sphere. The same writing pads could be used in proof theory for
trees, which are not linear syntactical forms. The syntax
appropriate for Ceva-Menelaus proofs requires some other surfaces,
not just the sphere. For example, one can start with a piece of
paper (or some other material) in the shape of a torus and produce
``words'' consisting of dots and line segments. A word is
considered to be correct if it triangulates the torus. For
example, the following word consisting of three dots and nine line
segments is correct.

\psset{xunit=1.0cm,yunit=1.0cm,algebraic=true,dimen=middle,dotstyle=o,dotsize=3pt 0,linewidth=0.8pt,arrowsize=3pt 2,arrowinset=0.25}
\begin{center}
\psscalebox{.6 .4} % Change this value to rescale the drawing.
{
\begin{pspicture*}(-4.2,-3.2)(4.69,3.2)
\rput{0}(0,0){\psellipse[linewidth=3.2pt](0,0)(4.15,2.86)}
\parametricplot[linewidth=3.2pt]{3.3936126414032994}{6.021215152704372}{1*2.08*cos(t)+0*2.08*sin(t)+0.01|0*2.08*cos(t)+1*2.08*sin(t)+1.08}
\parametricplot[linewidth=3.2pt]{0.545463398049105}{2.5961292555406885}{1*2.08*cos(t)+0*2.08*sin(t)+0.01|0*2.08*cos(t)+1*2.08*sin(t)+-1.08}
\parametricplot[linewidth=2pt]{2.279976774261442}{4.024668457434242}{1*1.22*cos(t)+0*1.22*sin(t)+0.77|0*1.22*cos(t)+1*1.22*sin(t)+-1.92}
\parametricplot[linewidth=2pt,linestyle=dashed,dash=2pt 2pt]{-0.7960284050386095}{0.8174883295547072}{1*1.29*cos(t)+0*1.29*sin(t)+-0.9|0*1.29*cos(t)+1*1.29*sin(t)+-1.94}
\parametricplot[linewidth=2pt]{4.876359128662692}{5.921249764843227}{-0.93*2.85*cos(t)+-0.37*2.37*sin(t)+-0.41|0.37*2.85*cos(t)+-0.93*2.37*sin(t)+-0.42}
\parametricplot[linewidth=2pt]{-0.36193554233635883}{0.28801900984834206}{-0.93*2.85*cos(t)+-0.37*2.37*sin(t)+-0.41|0.37*2.85*cos(t)+-0.93*2.37*sin(t)+-0.42}
\parametricplot[linewidth=2pt]{0.28801900984834206}{0.9867712530003544}{-0.93*2.85*cos(t)+-0.37*2.37*sin(t)+-0.41|0.37*2.85*cos(t)+-0.93*2.37*sin(t)+-0.42}
\parametricplot[linewidth=2pt]{0.9867712530003544}{2.035888472841568}{-0.93*2.85*cos(t)+-0.37*2.37*sin(t)+-0.41|0.37*2.85*cos(t)+-0.93*2.37*sin(t)+-0.42}
\parametricplot[linewidth=2pt]{4.796287866793456}{5.513370746979559}{-0.99*2.96*cos(t)+0.16*1.46*sin(t)+0.5|-0.16*2.96*cos(t)+-0.99*1.46*sin(t)+0.52}
\parametricplot[linewidth=2pt]{-0.769814560200027}{0.09751614892685402}{-0.99*2.96*cos(t)+0.16*1.46*sin(t)+0.5|-0.16*2.96*cos(t)+-0.99*1.46*sin(t)+0.52}
\parametricplot[linewidth=2pt]{0.09751614892685402}{0.7823645134637963}{-0.99*2.96*cos(t)+0.16*1.46*sin(t)+0.5|-0.16*2.96*cos(t)+-0.99*1.46*sin(t)+0.52}
\parametricplot[linewidth=2pt]{0.7823645134637963}{1.3099205773499074}{-0.99*2.96*cos(t)+0.16*1.46*sin(t)+0.5|-0.16*2.96*cos(t)+-0.99*1.46*sin(t)+0.52}
\parametricplot[linewidth=2pt]{5.101666532076473}{5.812100631499277}{-0.73*2.99*cos(t)+-0.68*2.81*sin(t)+-0.93|0.68*2.99*cos(t)+-0.73*2.81*sin(t)+-0.75}
\parametricplot[linewidth=2pt]{-0.4710846756803093}{0.01597084132437098}{-0.73*2.99*cos(t)+-0.68*2.81*sin(t)+-0.93|0.68*2.99*cos(t)+-0.73*2.81*sin(t)+-0.75}
\parametricplot[linewidth=2pt]{0.01597084132437098}{0.4930772245259334}{-0.73*2.99*cos(t)+-0.68*2.81*sin(t)+-0.93|0.68*2.99*cos(t)+-0.73*2.81*sin(t)+-0.75}
\parametricplot[linewidth=2pt]{0.4930772245259334}{0.9378310323419501}{-0.73*2.99*cos(t)+-0.68*2.81*sin(t)+-0.93|0.68*2.99*cos(t)+-0.73*2.81*sin(t)+-0.75}
\parametricplot[linewidth=2pt,linestyle=dashed,dash=2pt 2pt]{0.29980171239023573}{0.8026092272871177}{-0.96*2.4*cos(t)+0.28*1.62*sin(t)+-1.7|-0.28*2.4*cos(t)+-0.96*1.62*sin(t)+-0.1}
\parametricplot[linewidth=2pt,linestyle=dashed,dash=2pt 2pt]{0.8026092272871177}{1.2856576751432676}{-0.96*2.4*cos(t)+0.28*1.62*sin(t)+-1.7|-0.28*2.4*cos(t)+-0.96*1.62*sin(t)+-0.1}
\parametricplot[linewidth=2pt,linestyle=dashed,dash=2pt 2pt]{1.2856576751432676}{1.764156086743084}{-0.96*2.4*cos(t)+0.28*1.62*sin(t)+-1.7|-0.28*2.4*cos(t)+-0.96*1.62*sin(t)+-0.1}
\parametricplot[linewidth=2pt,linestyle=dashed,dash=2pt 2pt]{1.764156086743084}{2.178177947197833}{-0.96*2.4*cos(t)+0.28*1.62*sin(t)+-1.7|-0.28*2.4*cos(t)+-0.96*1.62*sin(t)+-0.1}
\parametricplot[linewidth=2pt]{4.3610629972392605}{5.012668775187195}{-0.98*2.29*cos(t)+-0.17*1.48*sin(t)+0.44|0.17*2.29*cos(t)+-0.98*1.48*sin(t)+0.41}
\parametricplot[linewidth=2pt]{3.6701867790263365}{4.3610629972392605}{-0.98*2.29*cos(t)+-0.17*1.48*sin(t)+0.44|0.17*2.29*cos(t)+-0.98*1.48*sin(t)+0.41}
\parametricplot[linewidth=2pt]{2.6745386261246695}{3.6701867790263365}{-0.98*2.29*cos(t)+-0.17*1.48*sin(t)+0.44|0.17*2.29*cos(t)+-0.98*1.48*sin(t)+0.41}
\parametricplot[linewidth=2pt]{1.4802438664873026}{2.6745386261246695}{-0.98*2.29*cos(t)+-0.17*1.48*sin(t)+0.44|0.17*2.29*cos(t)+-0.98*1.48*sin(t)+0.41}
\parametricplot[linewidth=2pt]{3.952050680584713}{4.532538608784681}{-0.92*3.03*cos(t)+0.39*2.34*sin(t)+0.41|-0.39*3.03*cos(t)+-0.92*2.34*sin(t)+-0.41}
\parametricplot[linewidth=2pt]{3.2064712294409152}{3.952050680584713}{-0.92*3.03*cos(t)+0.39*2.34*sin(t)+0.41|-0.39*3.03*cos(t)+-0.92*2.34*sin(t)+-0.41}
\parametricplot[linewidth=2pt]{2.1186643232226268}{3.2064712294409152}{-0.92*3.03*cos(t)+0.39*2.34*sin(t)+0.41|-0.39*3.03*cos(t)+-0.92*2.34*sin(t)+-0.41}
\parametricplot[linewidth=2pt]{1.1163143309152501}{2.1186643232226268}{-0.92*3.03*cos(t)+0.39*2.34*sin(t)+0.41|-0.39*3.03*cos(t)+-0.92*2.34*sin(t)+-0.41}
\parametricplot[linewidth=2pt]{2.458508275316489}{3.66283004762946}{-0.93*2.38*cos(t)+-0.38*1.07*sin(t)+0.07|0.38*2.38*cos(t)+-0.93*1.07*sin(t)+0.77}
\parametricplot[linewidth=2pt]{3.66283004762946}{4.353411313885404}{-0.93*2.38*cos(t)+-0.38*1.07*sin(t)+0.07|0.38*2.38*cos(t)+-0.93*1.07*sin(t)+0.77}
\parametricplot[linewidth=2pt]{4.353411313885404}{4.91451518639337}{-0.93*2.38*cos(t)+-0.38*1.07*sin(t)+0.07|0.38*2.38*cos(t)+-0.93*1.07*sin(t)+0.77}

\parametricplot[linewidth=2pt,linestyle=dashed,dash=2pt 2pt]{3.1877435668202536}{4.224426726437027}{-0.2*0.55*cos(t)+0.98*0.27*sin(t)+1.3|-0.98*0.55*cos(t)+-0.2*0.27*sin(t)+-1.2}

\parametricplot[linewidth=2pt,linestyle=dashed,dash=2pt 2pt]{4.224426726437027}{4.962155189093001}{-0.2*0.55*cos(t)+0.98*0.27*sin(t)+1.3|-0.98*0.55*cos(t)+-0.2*0.27*sin(t)+-1.2}

\parametricplot[linewidth=2pt,linestyle=dashed,dash=2pt 2pt]{3.365534736074497}{3.4091646125566917}{-0.43415047669753043*28.79714620222122*cos(t)+-0.9008403652053492*5.674955749185209*sin(t)+-12.39|
0.9008403652053492*28.79714620222122*cos(t)+-0.43415047669753043*5.674955749185209*sin(t)+23.13}

\parametricplot[linewidth=2pt,linestyle=dashed,dash=2pt 2pt]{3.317991240386494}{3.365534736074497}{-0.43415047669753043*28.79714620222122*cos(t)+-0.9008403652053492*5.674955749185209*sin(t)+-12.39|
0.9008403652053492*28.79714620222122*cos(t)+-0.43415047669753043*5.674955749185209*sin(t)+23.13}

\parametricplot[linewidth=2pt,linestyle=dashed,dash=2pt 2pt]{3.2675312276066593}{3.317991240386494}{-0.43415047669753043*28.79714620222122*cos(t)+-0.9008403652053492*5.674955749185209*sin(t)+-12.39|
0.9008403652053492*28.79714620222122*cos(t)+-0.43415047669753043*5.674955749185209*sin(t)+23.13}
\parametricplot[linewidth=2pt,linestyle=dashed,dash=2pt 2pt]{3.1193320943659657}{3.2675312276066593}{-0.43415047669753043*28.79714620222122*cos(t)+-0.9008403652053492*5.674955749185209*sin(t)+-12.39|
0.9008403652053492*28.79714620222122*cos(t)+-0.43415047669753043*5.674955749185209*sin(t)+23.13}

\parametricplot[linewidth=2pt]{3.2714122191065176}{4.775270846461568}{-1*2.03*cos(t)+-0.06*1.21*sin(t)+-0.1|0.06*2.03*cos(t)+-1*1.21*sin(t)+0.21}
\parametricplot[linewidth=2pt]{4.775270846461568}{5.736341849300105}{-1*2.03*cos(t)+-0.06*1.21*sin(t)+-0.1|0.06*2.03*cos(t)+-1*1.21*sin(t)+0.21}
\parametricplot[linewidth=2pt]{-0.546843457879481}{0.49921065979275303}{-1*2.03*cos(t)+-0.06*1.21*sin(t)+-0.1|0.06*2.03*cos(t)+-1*1.21*sin(t)+0.21}
\parametricplot[linewidth=2pt]{0.49921065979275303}{1.6476451344447252}{-1*2.03*cos(t)+-0.06*1.21*sin(t)+-0.1|0.06*2.03*cos(t)+-1*1.21*sin(t)+0.21}
\parametricplot[linewidth=2pt,linestyle=dashed,dash=2pt 2pt]{0.47503988185286944}{1.5219882611117619}{-0.39*2.37*cos(t)+0.92*1.72*sin(t)+0.1|-0.92*2.37*cos(t)+-0.39*1.72*sin(t)+-0.62}
\parametricplot[linewidth=2pt,linestyle=dashed,dash=2pt 2pt]{1.5219882611117619}{1.8448740078507204}{-0.39*2.37*cos(t)+0.92*1.72*sin(t)+0.1|-0.92*2.37*cos(t)+-0.39*1.72*sin(t)+-0.62}
\parametricplot[linewidth=2pt,linestyle=dashed,dash=2pt 2pt]{1.8448740078507204}{2.0832626535555896}{-0.39*2.37*cos(t)+0.92*1.72*sin(t)+0.1|-0.92*2.37*cos(t)+-0.39*1.72*sin(t)+-0.62}
\parametricplot[linewidth=2pt,linestyle=dashed,dash=2pt 2pt]{2.0832626535555896}{2.2608210007427445}{-0.39*2.37*cos(t)+0.92*1.72*sin(t)+0.1|-0.92*2.37*cos(t)+-0.39*1.72*sin(t)+-0.62}
\psdots[dotsize=6pt 0,dotstyle=*](0,-2.86)
\psdots[dotsize=6pt 0,dotstyle=*](-0.02,-1)
\psdots[dotsize=6pt 0,dotstyle=*](0.02,1.92)
\end{pspicture*}}
\end{center}

However, it is difficult to ``read'' directly such a syntax (we
will see, in Section~\ref{reading}, that the above word could be
read as a proof of Pappus theorem). That is the main reason for us
to present our results within a sequent system, which is, at least
from the proof-theoretical standpoint, a more convenient syntax.
The axiomatic sequents of this system are obtained by translating,
via simplicial homology, the triangulations of surfaces. The
required interpretation forces, quite naturally, one-sided
sequents (the formulae of a sequent are placed at one side of the
symbol $\vdash$). Besides the structural rule of cut, which is
implicit in \cite{RG06}, we consider two propositional connectives
and their rules of introduction. Also, an action of the octahedral
group on the set of atomic formulae, which stems from the
possibility to organise a set of six points into twenty four
Menelaus' configurations, is included in our system.

It is proved that the system is sound with respect to both Euclidean and projective interpretation. This system is also decidable. An analysis of possible generalizations of the method, which relies on a constructive solution of Steenrod's problem in dimension 2, shows that there is no way of extending the system by means of homological arguments. Also, a few examples of incidence results delivered from provable sequents of our system are given.

As it is natural to relate the single-conclusion sequent systems with multicategories and operads, this one-sided sequent system gives rise to a cyclic operad, which may serve as an initial framework for a general proof-theoretical study of the matters. This cyclic operad is based on the operation of connected sum on the abstract cell complexes that we use to generate the axiomatic sequents. A presentation of this operad by generators and relations is given in the last section of the paper.

\section{The Menelaus theorem}
By the Menelaus theorem we mean here a lemma used for the Sector
theorem of spherical trigonometry, which appears in
Al-Haraw\={\i}'s version of Menelaus' \emph{Spherics}. The Greek
original of the Menelaus text written at the end of the 1st
century A.D.\ is lost and the Arabic version mentioned above is
from the 9th or 10th century A.D. For historical remarks see
\cite{RP14} and \cite{SK14}. The same lemma appears in Ptolemy's
\emph{Almagest} (see \cite[Book I.13]{T84}).

For three mutually distinct collinear points $X$, $Y$ and $Z$ in the Euclidean plane $\mathbf{R}^2$ let
\[
(X,Y;Z)=_{df} \left\{
\begin{array}{cl} \frac{XZ}{YZ}, & \mbox{\rm if $Z$ is between $X$ and $Y$},\\[1ex]
-\frac{XZ}{YZ}, & \mbox{\rm otherwise}.
\end{array} \right .
\]

\begin{thm}[Menelaus]
For a triangle $ABC$ and points $P$, $Q$ and $R$ (different from the vertices) on the lines $BC$, $CA$ and $AB$ respectively, it holds that
\[
P,Q,R\; \mbox{\it are collinear} \quad  \mbox{\it iff}\quad (B,C;P)\cdot(C,A;Q)\cdot(A,B;R)=-1.
\]
\end{thm}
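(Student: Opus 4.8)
The plan is to prove the two implications separately. For the direct implication I would use \emph{signed} perpendicular distances to the transversal line; the converse then follows from the direct one together with a short injectivity argument, once a degenerate parallel case is dispatched.

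\emph{Collinearity $\Rightarrow$ product $=-1$.} Suppose $P$, $Q$, $R$ lie on a common line $\ell$. Since $P$ is assumed to lie on the line $BC$, the line $\ell$ meets $BC$ exactly at $P$ and so is not parallel to $BC$ (and likewise not parallel to $CA$ or $AB$); moreover none of $A$, $B$, $C$ lies on $\ell$, for otherwise $\ell$ would coincide with a side-line and one of $P,Q,R$ would be a vertex. Fix a normal direction for $\ell$ and let $a,b,c\in\mathbf{R}$ be the corresponding signed distances from $A,B,C$ to $\ell$; these are nonzero. Writing $P=B+t(C-B)$ on the line $BC$, membership $P\in\ell$ reads $b+t(c-b)=0$, i.e. $t=b/(b-c)$, and since one checks directly from the definition of $(X,Y;Z)$ that $(B,C;P)=t/(1-t)$, this gives $(B,C;P)=-b/c$. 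The same computation gives $(C,A;Q)=-c/a$ and $(A,B;R)=-a/b$, and multiplying the three identities the distances cancel, leaving $(B,C;P)\cdot(C,A;Q)\cdot(A,B;R)=-1$.

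\emph{Product $=-1$ $\Rightarrow$ collinearity.} Conversely, assume the product equals $-1$. I would first check that $QR$ is not parallel to $BC$: if it were, the signed intercept theorem applied to the lines $CA$ and $AB$ (which meet at $A$) cut by the parallels $QR$ and $BC$ would force $(C,A;Q)\cdot(A,B;R)=1$, whence $(B,C;P)=-1$, which is impossible for a finite point $P$ on $BC$ with $P\neq B,C$. Let $P'$ be the intersection of the line $QR$ with the line $BC$. Applying the direct implication to the collinear triple $P',Q,R$ gives $(B,C;P')\cdot(C,A;Q)\cdot(A,B;R)=-1$, hence $(B,C;P')=(B,C;P)$. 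In the parameter $Z=B+s(C-B)$ one has $(B,C;Z)=s/(1-s)$, an injective function of $s$, so $P'=P$ and the three points are collinear.

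\emph{Where the work lies.} There is no deep obstacle here; the statement is classical. The two places that need care are the sign bookkeeping in the first part — which is precisely why it is worth routing the argument through signed distances to $\ell$, so that the three factors $-b/c$, $-c/a$, $-a/b$ visibly multiply to $-1$ rather than having to argue separately that an odd number of $P,Q,R$ lie outside the corresponding closed sides — and the excluded parallel case in the converse, handled above. A fully computational alternative, avoiding case distinctions altogether, is to place the figure in affine coordinates, express $P,Q,R$ as the affine combinations of the vertices prescribed by the given ratios, and observe that the $3\times3$ determinant testing collinearity of $P,Q,R$ factors so that its vanishing is literally the equation $(B,C;P)\cdot(C,A;Q)\cdot(A,B;R)=-1$.
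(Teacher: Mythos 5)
The paper itself offers no proof of this theorem: it is stated as the classical starting point (with a remark joking about the cardinality of its set of proofs), and only its consequences are developed, so there is no in-paper argument to compare yours against. On its own merits your proof is correct. The identification $(B,C;P)=t/(1-t)$ for $P=B+t(C-B)$ agrees with the paper's sign convention, under which $(X,Y;Z)$ is the signed ratio $\overline{XZ}/\overline{ZY}$; hence $(B,C;P)=-b/c$ and its two cyclic companions indeed multiply to $-1$, and in the converse the exclusion of $QR\parallel BC$ via the intercept theorem (which would force $(C,A;Q)\cdot(A,B;R)=1$ and so $(B,C;P)=-1$, impossible for a finite $P\neq B,C$ on $BC$), together with the injectivity of $s\mapsto s/(1-s)$, correctly yields $P'=P$. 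Two routine checks deserve an explicit line: the line $QR$ exists because $Q=R$ would place this common point on $CA\cap AB=\{A\}$, making it a vertex; and before invoking the direct implication for the triple $P',Q,R$ you should observe that $P'\neq B,C$ (if, say, $P'=B$, then $QR$ would contain $B$ and $R$ and hence be the line $AB$, forcing $Q=A$), since the direct implication was established only for points distinct from the vertices. With those one-liners added the argument is complete; the determinant variant you sketch at the end would serve equally well.
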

\noindent (The cardinality of the set of proofs of this theorem is not known. This is mostly because the question: \emph{What is a proof?}, i.e.\ when two proofs are equal, is still open.)

Note that for $X$, $Y$ and $Z$ not mutually distinct collinear points, we consider $(X,Y;Z)$ \emph{undefined}. A
sextuple $(A,B,C,P,Q,R)$ of points in $\mathbf{R}^2$ \emph{makes a
Menelaus configuration} when $(B,C;P)$, $(C,A;Q)$ and $(A,B;R)$
are defined and their product is -1.

We note that if $(A, B, C, P, Q, R)$ makes a Menelaus
configuration, then $P$, $Q$ and $R$ are collinear. This follows from the Menelaus theorem if $A$, $B$ and $C$ are
not collinear, while if $A$, $B$ and $C$ are collinear, then all $A$, $B$, $C$, $P$, $Q$ and $R$ are collinear.
On the other hand, if $A$, $B$ and $C$ are not collinear and $P$, $Q$ and $R$, different from the points $A$, $B$ and $C$,
are collinear and lie on the lines $BC$, $CA$ and $AB$, respectively,
then by the Menelaus theorem, the sextuple $(A,B,C,P,Q,R)$ makes a
Menelaus configuration.

\section{2-Cycles and Menelaus configurations}\label{configurations}
Our intention is to formalise and extend, within the proof theory,
the idea of Richter-Gebert (see \cite[Section~2.2]{RG06}), which
could be paraphrased as follows.
\begin{quote}
We consider compact, orientable 2-manifolds without boundary and
subdivisions by CW-complexes whose faces are triangles. Consider such a cycle as being interpreted by flat triangles (it does not matter if
these triangles intersect, coincide or are coplanar as long as they represent the combinatorial structure of the cycle). The presence of Menelaus configurations on all but one of the faces will imply automatically the existence of a Menelaus configuration on the final face.
\end{quote}

For example, consider the sphere $S^2$ triangulated in four
triangles arranged as the sides of a tetrahedron. Suppose that the
vertices of this triangulation as well as its six 1-faces are
interpreted as points $A$, $B$, $C$, $D$, $P$, $Q$, $R$, $U$, $V$
and $W$ in the Euclidean plane. Assume that the triangles $BCD$,
$CAD$ and $ABD$ together with the lines $WVP$, $WUQ$ and $VUR$
make Menelaus configurations. Hence, by the Menelaus theorem, the
following holds
\[
\begin{array}{lll}
(C,D;W)\cdot(D,B;V)\cdot(B,C;P)\!\! & = &\!\! -1
\\[1ex]
(D,C;W)\cdot(A,D;U)\cdot(C,A;Q)\!\! & = &\!\! -1
\\[1ex]
(B,D;V)\cdot(D,A;U)\cdot(A,B;R)\!\! & = &\!\! -1,
\end{array}
\]
which, after multiplication and cancellation, delivers
\[
(B,C;P)\cdot(C,A;Q)\cdot(A,B;R)=-1.
\]
Again, by the Menelaus theorem, this means that we have the Menelaus configuration on the final triangle $ABC$.

\begin{center}
\psset{xunit=1.0cm,yunit=1.0cm,algebraic=true,dimen=middle,dotstyle=o,dotsize=3pt 0,linewidth=0.8pt,arrowsize=3pt 2,arrowinset=0.25}
\begin{pspicture*}(-3.3,-2.5)(16.64,3.5)
%\pspolygon[fillcolor=gray,fillstyle=solid,opacity=0.1](0.02,0.32)(2.66,3.06)(2.78,-1.32)
%\pspolygon[fillcolor=lightgray,fillstyle=solid,opacity=0.1](2.66,3.06)(2.78,-1.32)(6.08,0.34)
\psline(0.02,0.32)(6.08,0.34)
\psline(0.02,0.32)(2.78,-1.32)
\psline(6.08,0.34)(2.78,-1.32)
\psline(2.66,3.06)(0.02,0.32)
\psline(2.66,3.06)(6.08,0.34)
\psline(2.66,3.06)(2.78,-1.32)
\psline(2.78,-1.32)(0.02,0.32)
\psline(2.78,-1.32)(6.08,0.34)
\psline(6.08,0.34)(2.66,3.06)
\psline(2.78,-1.32)(4.01,-2.04)
\psline(4.01,-2.04)(1.4,1.76)
\psline(1.4,1.76)(5.58,0.74)
\psline(5.58,0.74)(7.04,0.34)
\psline(4.01,-2.04)(8.94,1.84)
\psline(2.75,-0.18)(8.94,1.84)
\psline(6.08,0.34)(7.04,0.34)
\psline(6.08,0.34)(8.94,1.84)
\rput[tl](-0.25,0.2){$A$}
\rput[tl](2.58,-1.45){$B$}
\rput[tl](5.9,0.2){$C$}
\rput[tl](8.85,2.2){$P$}
\rput[tl](7.02,0.2){$Q$}
\rput[tl](3.86,-2.15){$R$}
\rput[tl](1.1,2.1){$U$}
\rput[tl](2.34,-0.2){$V$}
\rput[tl](5.5,1.15){$W$}
\rput[tl](2.5,3.4){$D$}
\begin{scriptsize}
\psdots[dotstyle=*](0.02,0.32)
\psdots[dotstyle=*](6.08,0.34)
\psdots[dotstyle=*](2.78,-1.32)
\psdots[dotstyle=*](2.66,3.06)
\psdots[dotstyle=*](4.01,-2.04)
\psdots[dotstyle=*](1.4,1.76)
\psdots[dotstyle=*](7.04,0.34)
\psdots[dotstyle=*](2.75,-0.18)
\psdots[dotstyle=*](5.58,0.74)
\psdots[dotstyle=*](8.94,1.84)
\end{scriptsize}
\end{pspicture*}
\end{center}

In the example above, the vertices and edges of the triangulation
are interpreted as points in $\mathbf{R}^2$ and it is assumed that
the three sextuples of points obtained by interpreting the
vertices and the edges of three sides of the tetrahedron make
Menelaus configurations. This suffices to conclude that the
sextuple obtained by interpreting the vertices and the edges of
the final side makes a Menelaus configuration. Our formalisation
of these matters is given in terms of $\Delta$-complexes (cf.\
\cite[Section~2.1]{H00}). (These combinatorial objects were
introduced in \cite{EZ50} under the name \emph{semi-simplicial}
complexes, and are also called \emph{Delta sets}, see \cite{F12}.)

An (abstract) $\Delta$-\emph{complex} $K$ consists of mutually disjoint sets $K_0,K_1,\ldots$ and functions $d^n_i\colon K_n\to K_{n-1}$, $n\geq 1$, $0\leq i\leq n$, which for $l-1\geq j$ satisfy
\[
d^{n-1}_j\circ d^n_l=d^{n-1}_{l-1}\circ d^n_j.
\]
The elements of $K_n$ are called $n$-\emph{cells} of $K$, and the functions $d^n_i$ are called \emph{faces}. Intuitively, $K_n$ could be conceived as a set of (ordered)
$n$-dimensional simplices. (An ordered $n$-dimensional simplex is
represented by the $(n+1)$-tuple of its vertices.) A face $d^n_i$
maps an ordered simplex
$(a_0,\hdots,a_{i-1},a_i,a_{i+1},\hdots,a_n)$ to its face
$(a_0,\hdots,a_{i-1},a_{i+1},\hdots,a_n)$ opposite to its vertex
$a_i$ as illustrated on the picture below.
\begin{center}
\begin{tikzpicture}[scale=0.7][line cap=round,line join=round,x=1.0cm,y=1.0cm][scale=0.7]
\draw [line width=1.4pt](-3.0,-2.0)-- (0.0,-2.0);
\draw [line width=1.4pt](1.0,0.0)-- (0.0,-2.0);
\draw [line width=1.4pt][dash pattern=on 4pt off 4pt] (-3.0,-2.0)-- (1.0,0.0);
\draw [line width=1.4pt](-2.0,2.0)-- (1.0,0.0);
\draw [line width=1.4pt](-2.0,2.0)-- (0.0,-2.0);
\draw [line width=1.4pt](-2.0,2.0)-- (-3.0,-2.0);
\draw [|->] (2.0,0.0) -- (4.0,0.0);
\draw [line width=1.4pt](6.0,1.5)-- (4.5,-1.5);
\draw [line width=1.4pt](6.0,1.5)-- (7.5,-1.5);
\draw [line width=1.4pt](4.5,-1.5)-- (7.5,-1.5);
\draw (2.6,0.8) node[anchor=north west] {$d_2^3$};
\draw (-3.3,-2) node[anchor=north west] {$a_0$};
\draw (-0.2,-2) node[anchor=north west] {$a_1$};
\draw (0.9,0.1) node[anchor=north west] {$a_2$};
\draw (-2.3,2.55) node[anchor=north west] {$a_3$};
\draw (4.2,-1.5) node[anchor=north west] {$a_0$};
\draw (7.25,-1.5) node[anchor=north west] {$a_1$};
\draw (5.75,2) node[anchor=north west] {$a_3$};
\draw [fill=black] (-3.0,-2.0) circle (1.5pt);
\draw [fill=black] (0.0,-2.0) circle (1.5pt);
\draw [fill=black] (1.0,0.0) circle (1.5pt);
\draw [fill=black] (-2.0,2.0) circle (1.5pt);
\draw [fill=black] (6.0,1.5) circle (1.5pt);
\draw [fill=black] (4.5,-1.5) circle (1.5pt);
\draw [fill=black] (7.5,-1.5) circle (1.5pt);
\end{tikzpicture}
\end{center}

\begin{example}\label{Kpicture}
Let $K$ be the $\Delta$-complex illustrated by the following
picture in which $K_i$, for $i\geq 3$, is empty, and the members
of $K_i$, for $2\geq i\geq 0$, are drawn according to their
intuitive meaning (we assume that the order of vertices follows
the order of integers).
\definecolor{qqccqq}{rgb}{0.0,0.8,0.0}
\definecolor{ffwwzz}{rgb}{1.0,0.4,0.6}
\definecolor{ffxfqq}{rgb}{1.0,0.4980392156862745,0.0}
\definecolor{ffqqtt}{rgb}{1.0,0.0,0.2}
\definecolor{wwqqcc}{rgb}{0.4,0.0,0.8}
\definecolor{ffffqq}{rgb}{1.0,1.0,0.0}
\definecolor{qqwuqq}{rgb}{0.0,0.39215686274509803,0.0}
\definecolor{qqzzff}{rgb}{0.0,0.6,1.0}
\definecolor{qqqqcc}{rgb}{0.0,0.0,0.8}
\begin{center}
\begin{tikzpicture}[scale=0.73][line cap=round,line join=round]
\draw [shift={(-2.0,0.0)}][line width=1.4pt] plot[domain=0.7867592563343833:2.352763304040887,variable=\t]({-0.0*2.8284271247461907*cos(\t r)+-1.0*2.0000000000000004*sin(\t r)},{1.0*2.8284271247461907*cos(\t r)+-0.0*2.0000000000000004*sin(\t r)});
\draw [shift={(-2.0,0.0)}][line width=1.4pt] plot[domain=3.92885737586601:5.49559720598992,variable=\t]({-0.0*2.8284271247461907*cos(\t r)+-1.0*2.0000000000000004*sin(\t r)},{1.0*2.8284271247461907*cos(\t r)+-0.0*2.0000000000000004*sin(\t r)});
\draw [line width=1.4pt] (-3.4190576580550873,-1.9931258681353832)-- (-0.5831491898674361,-1.996263400369149);
\draw [line width=1.4pt] (-3.416137127902406,1.997275962392942)-- (-4.0,3.0);
\draw [line width=1.4pt] (-0.5826927915366895,1.9956153320908017)-- (0.0,3.0);
\draw [line width=1.4pt] (-4.0,3.0)-- (0.0,3.0);
\draw [line width=1.4pt] [shift={(3.0000000000000004,0.0)}] plot[domain=0.7871214706877923:2.3536736843519437,variable=\t]({-0.0*2.828427124746191*cos(\t r)+-1.0*2.0000000000000004*sin(\t r)},{1.0*2.828427124746191*cos(\t r)+-0.0*2.0000000000000004*sin(\t r)});
\draw [line width=1.4pt] [shift={(3.0000000000000004,0.0)}] plot[domain=3.9286971619280417:5.507127094800736,variable=\t]({-0.0*2.828427124746191*cos(\t r)+-1.0*2.0000000000000004*sin(\t r)},{1.0*2.828427124746191*cos(\t r)+-0.0*2.0000000000000004*sin(\t r)});
\draw [line width=1.4pt] [shift={(8.0,0.0)}] plot[domain=0.7753229437430315:2.352797277828895,variable=\t]({-0.0*2.82842712474619*cos(\t r)+-1.0*1.9999999999999998*sin(\t r)},{1.0*2.82842712474619*cos(\t r)+-0.0*1.9999999999999998*sin(\t r)});
\draw [line width=1.4pt] [shift={(6.0,0.0)}] plot[domain=3.927816279458381:5.502172784473228,variable=\t]({-0.0*2.82842712474619*cos(\t r)+-1.0*1.9999999999999998*sin(\t r)},{1.0*2.82842712474619*cos(\t r)+-0.0*1.9999999999999998*sin(\t r)});
\draw [line width=1.4pt] (1.5833514142477805,1.996550417337983)-- (1.0,3.0);
\draw [line width=1.4pt] (5.0,3.0)-- (4.400943385223519,2.0185923963982786);
\draw [line width=1.4pt] (6.0,3.0)-- (6.600106486684388,2.0200485892061466);
\begin{scope}[shift={(-2,0)}]
\draw [line width=1.4pt] (10.0,3.0)-- (9.407997749350256,2.008752019451188);
\end{scope}

\draw [line width=1.4pt] (6.580990223817692,-1.993194047301484)-- (7.415380460648183,-1.998348393856955);
\draw [line width=1.4pt] (1.5822259768757159,-1.9949520391999316)-- (4.416624638538606,-1.9965844001621214);
\draw [line width=1.4pt] (1.0,3.0)-- (5.0,3.0);
\draw [line width=1.4pt] (6.0,3.0)-- (8.0,3.0);
\draw [line width=1.4pt,color=qqqqcc] (-3.5,0.0)-- (-2.5,0.0);
\draw [line width=1.4pt,color=qqzzff] (-3.0,-1.0)-- (-3.5,0.0);
\draw [line width=1.4pt,color=qqwuqq] (-2.5,0.0)-- (-3.0,-1.0);
\draw [line width=1.4pt,color=qqwuqq] (-2.0,0.0)-- (-2.5,-1.0);
\draw [line width=1.4pt,color=ffffqq] (-1.5,-1.0)-- (-2.0,0.0);
\draw [line width=1.4pt,color=wwqqcc] (-2.5,-1.0)-- (-1.5,-1.0);
\draw [line width=1.4pt,color=ffqqtt] (-1.5,0.0)-- (-0.5,0.0);
\draw [line width=1.4pt,color=ffffqq] (-1.0,-1.0)-- (-1.5,0.0);
\draw [line width=1.4pt,color=ffxfqq] (-0.5,0.0)-- (-1.0,-1.0);
\draw [line width=1.4pt,color=ffxfqq] (-3.0,2.0)-- (-3.5,1.0);
\draw [line width=1.4pt,color=ffwwzz] (-2.5,1.0)-- (-3.0,2.0);
\draw [line width=1.4pt,color=qqqqcc] (-3.5,1.0)-- (-2.5,1.0);
\draw [line width=1.4pt,color=wwqqcc] (-2.5,2.0)-- (-1.5,2.0);
\draw [line width=1.4pt,color=qqccqq] (-2.0,1.0)-- (-1.5,2.0);
\draw [line width=1.4pt,color=ffwwzz] (-2.5,2.0)-- (-2.0,1.0);
\draw [line width=1.4pt,color=qqccqq] (-1.0,2.0)-- (-1.5,1.0);
\draw [line width=1.4pt,color=qqzzff] (-0.5,1.0)-- (-1.0,2.0);
\draw [line width=1.4pt,color=ffqqtt] (-1.5,1.0)-- (-0.5,1.0);

\draw [line width=1.4pt,color=wwqqcc] (2.5,2.0)-- (2.5,1.0); %x
\draw [line width=1.4pt,color=qqzzff] (3.0,2.0)-- (3.0,1.0); %y
\draw [line width=1.4pt,color=ffxfqq] (3.5,2.0)-- (3.5,1.0); %z

\draw [line width=1.4pt,color=qqwuqq] (1.7,0.0)-- (1.7,-1.0);
\draw [line width=1.4pt,color=ffqqtt] (2.2,0.0)-- (2.2,-1.0);
\draw [line width=1.4pt,color=ffwwzz] (2.7,0.0)-- (2.7,-1.0);
\draw [line width=1.4pt,color=qqccqq] (3.2,0.0)-- (3.2,-1.0);
\draw [line width=1.4pt,color=ffffqq] (3.7,0.0)-- (3.7,-1.0);
\draw [line width=1.4pt,color=qqqqcc] (4.2,0.0)-- (4.2,-1.0);

\begin{scriptsize}
\draw (-3.7,1) node[anchor=north west] {$3$};
\draw (-3.25,2.5) node[anchor=north west] {$1$};
\draw (-2.7,2.5) node[anchor=north west] {$1$};
\draw (-0.7,1) node[anchor=north west] {$1$};
\draw (-0.7,0.5) node[anchor=north west] {$1$};
\draw (-2.7,-1) node[anchor=north west] {$1$};
\draw (-3.25,-1) node[anchor=north west] {$1$};
\draw (-3.7,0.5) node[anchor=north west] {$3$};
\draw (-1.7,-1) node[anchor=north west] {$3$};
\draw (-1.2,-1) node[anchor=north west] {$3$};
\draw (-1.2,2.5) node[anchor=north west] {$3$};
\draw (-1.7,2.5) node[anchor=north west] {$3$};
\draw (-2.7,1) node[anchor=north west] {$2$};
\draw (-2.2,1) node[anchor=north west] {$2$};
\draw (-1.7,1) node[anchor=north west] {$2$};
\draw (-1.7,0.5) node[anchor=north west] {$2$};
\draw (-2.2,0.5) node[anchor=north west] {$2$};
\draw (-2.7,0.5) node[anchor=north west] {$2$};
\draw (-3.7,1.8) node[anchor=north west] {$Z$};
\draw (-0.85,-0.3) node[anchor=north west] {$Z$};
\draw (3.6,-0.3) node[anchor=north west] {$E$};
\draw (1.5,-1) node[anchor=north west] {$1$};
\draw (2.3,1) node[anchor=north west] {$1$};
\draw (2.8,1) node[anchor=north west] {$1$};
\draw (2.5,-1) node[anchor=north west] {$1$};
\draw (3,-1) node[anchor=north west] {$2$};
\draw (3.5,-1) node[anchor=north west] {$2$};
\draw (1.45,0.5) node[anchor=north west] {$2$};
\draw (2.25,2.5) node[anchor=north west] {$3$};
\draw (2.75,2.5) node[anchor=north west] {$3$};
\draw (3.25,2.5) node[anchor=north west] {$3$};
\draw (3.95,0.5) node[anchor=north west] {$3$};
\draw (2.45,0.5) node[anchor=north west] {$2$};
\draw (2.95,0.5) node[anchor=north west] {$3$};
\draw (3.45,0.5) node[anchor=north west] {$3$};
\draw (3.3,1) node[anchor=north west] {$1$};
\draw (4,-1) node[anchor=north west] {$2$};
\draw (2,-1) node[anchor=north west] {$1$};
\draw (1.95,0.5) node[anchor=north west] {$2$};
\draw (-2.3,2.5) node[anchor=north west] {$X$};
\draw (-2.3,-1) node[anchor=north west] {$X$};
\draw (2.6,-0.3) node[anchor=north west] {$C$};
\draw (-0.85,1.8) node[anchor=north west] {$Y$};
\draw (-3.75,-0.3) node[anchor=north west] {$Y$};
\draw (3.1,-0.3) node[anchor=north west] {$D$};
\draw (-2.83,1.8) node[anchor=north west] {$C$};
\draw (-1.78,1.8) node[anchor=north west] {$D$};
\draw (-1.3,1) node[anchor=north west] {$B$};
\draw (-1.3,0.5) node[anchor=north west] {$B$};
\draw (3.4,1.7) node[anchor=north west] {$Z$};
\draw (2.4,1.7) node[anchor=north west] {$X$};
\draw (-3.25,1) node[anchor=north west] {$F$};
\draw (-3.25,0.5) node[anchor=north west] {$F$};
\draw (-2.85,-0.3) node[anchor=north west] {$A$};
\draw (-1.77,-0.3) node[anchor=north west] {$E$};
\draw (1.6,-0.3) node[anchor=north west] {$A$};
\draw (2.9,1.7) node[anchor=north west] {$Y$};
\draw (4.1,-0.3) node[anchor=north west] {$F$};
\draw (2.1,-0.3) node[anchor=north west] {$B$};
\draw (6.8,0.5) node[anchor=north west] {$2$};
\draw (6.8,-0.5) node[anchor=north west] {$3$};
\draw (6.77,1.45) node[anchor=north west] {$1$};
\end{scriptsize}
\draw (-6.5,3.7) node[anchor=north west] {$K_3$};
\draw (-2.5,3.7) node[anchor=north west] {$K_2$};
\draw (2.5,3.7) node[anchor=north west] {$K_1$};
\draw (6.5,3.7) node[anchor=north west] {$K_0$};
\draw [->] (0.2,0.4) -- (0.8,0.4);
\draw [->] (0.2,1.2) -- (0.8,1.2);
\draw [->] (0.2,-0.4) -- (0.8,-0.4);
\draw [->] (5.2,-0.2) -- (5.8,-0.2);
\draw [->] (5.2,1.0) -- (5.8,1.0);
\draw [->] (-4.8,1.4) -- (-4.2,1.4);
\draw [->] (-4.8,0.6) -- (-4.2,0.6);
\draw [->] (-4.8,-0.2) -- (-4.2,-0.2);
\draw [->] (-4.8,-1.0) -- (-4.2,-1.0);
\draw (-8.6,0.95) node[anchor=north west] {$\Large{\cdots}$};
%\draw [->] (-7.8,0.69) -- (-7.2,0.69);
\draw (0.1,1.95) node[anchor=north west] {$d_0^2$};
\draw (0.1,1.15) node[anchor=north west] {$d_1^2$};
\draw (0.1,0.35) node[anchor=north west] {$d_2^2$};
\draw (5.1,1.75) node[anchor=north west] {$d_0^1$};
\draw (5.1,0.55) node[anchor=north west] {$d_1^1$};
\draw (-4.9,2.15) node[anchor=north west] {$d_0^3$};
\draw (-4.9,1.35) node[anchor=north west] {$d_1^3$};
\draw (-4.9,0.55) node[anchor=north west] {$d_2^3$};
\draw (-4.9,-0.25) node[anchor=north west] {$d_3^3$};
\begin{scriptsize}
\draw [fill=black] (-3.5,0.0) circle (1.5pt);
\draw [fill=black] (-2.5,0.0) circle (1.5pt);
\draw [fill=black] (-3.0,-1.0) circle (1.5pt);
\draw [fill=black] (-2.0,0.0) circle (1.5pt);
\draw [fill=black] (-2.5,-1.0) circle (1.5pt);
\draw [fill=black] (-1.5,-1.0) circle (1.5pt);
\draw [fill=black] (-1.5,0.0) circle (1.5pt);
\draw [fill=black] (-0.5,0.0) circle (1.5pt);
\draw [fill=black] (-1.0,-1.0) circle (1.5pt);
\draw [fill=black] (2.2,0.0) circle (1.5pt);
\draw [fill=black] (2.2,-1.0) circle (1.5pt);
\draw [fill=black] (2.7,0.0) circle (1.5pt);
\draw [fill=black] (2.7,-1.0) circle (1.5pt);
\draw [fill=black] (3.2,0.0) circle (1.5pt);
\draw [fill=black] (3.2,-1.0) circle (1.5pt);
\draw [fill=black] (3.7,0.0) circle (1.5pt);
\draw [fill=black] (3.7,-1.0) circle (1.5pt);
\draw [fill=black] (7.0,-0.5) circle (1.5pt);
\draw [fill=black] (7.0,0.5) circle (1.5pt);
\draw [fill=black] (-3.0,2.0) circle (1.5pt);
\draw [fill=black] (-3.5,1.0) circle (1.5pt);
\draw [fill=black] (-2.5,1.0) circle (1.5pt);
\draw [fill=black] (-2.5,2.0) circle (1.5pt);
\draw [fill=black] (-1.5,2.0) circle (1.5pt);
\draw [fill=black] (-2.0,1.0) circle (1.5pt);
\draw [fill=black] (-1.0,2.0) circle (1.5pt);
\draw [fill=black] (-1.5,1.0) circle (1.5pt);
\draw [fill=black] (-0.5,1.0) circle (1.5pt);
\draw [fill=black] (1.7,-1) circle (1.5pt);
\draw [fill=black] (1.7,0) circle (1.5pt);
\draw [fill=black] (2.5,2.0) circle (1.5pt);
\draw [fill=black] (2.5,1.0) circle (1.5pt);
\draw [fill=black] (3.0,2.0) circle (1.5pt);
\draw [fill=black] (3.0,1.0) circle (1.5pt);
\draw [fill=black] (3.5,2.0) circle (1.5pt);
\draw [fill=black] (3.5,1.0) circle (1.5pt);
\draw [fill=black] (4.2,-1) circle (1.5pt);
\draw [fill=black] (4.2,0) circle (1.5pt);
\draw [fill=black] (7.0,1.5) circle (1.5pt);
\end{scriptsize}

\begin{scope}[shift={(-13,0)}]
\draw [line width=1.4pt] [shift={(6.0,0.0)}] plot[domain=3.927816279458381:5.502172784473228,variable=\t]({-0.0*2.82842712474619*cos(\t r)+-1.0*1.9999999999999998*sin(\t r)},{1.0*2.82842712474619*cos(\t r)+-0.0*1.9999999999999998*sin(\t r)});
\draw [line width=1.4pt] [shift={(8.0,0.0)}] plot[domain=0.7753229437430315:2.352797277828895,variable=\t]({-0.0*2.82842712474619*cos(\t r)+-1.0*1.9999999999999998*sin(\t r)},{1.0*2.82842712474619*cos(\t r)+-0.0*1.9999999999999998*sin(\t r)});
\begin{scope}[shift={(-2,0)}]
\draw [line width=1.4pt] (10.0,3.0)-- (9.407997749350256,2.008752019451188);
\end{scope}
\draw [line width=1.4pt] (6.0,3.0)-- (6.600106486684388,2.0200485892061466);
\draw [line width=1.4pt] (6.580990223817692,-1.993194047301484)-- (7.415380460648183,-1.998348393856955);
\draw [line width=1.4pt] (6.0,3.0)-- (8.0,3.0);
\end{scope}
\end{tikzpicture}
\end{center}
The faces are indicated by the labels (and colours). The face
$d_0^2$ maps the $2$-cell represented by the triangle with the
sides $BDY$ to the $1$-cell $D\in K_1$, which is opposite to the
initial vertex $1$ of this triangle. Also, $d_2^2$ maps the same
triangle to $B\in K_1$.
\end{example}

The above intuitive interpretation of a $\Delta$-complex brings us closer to the notion of its geometric realisation. Formally, it goes as follows.

For the standard ordered simplices
\[
\Delta^n=\{(t_0,\ldots,t_n)\mid t_0,\ldots, t_n\geq 0, \sum_{i=0}^n
t_i=1\},
\]
in the Euclidean space, and the maps $\delta^n_i\colon \Delta^{n-1}\to \Delta^n$ defined by
\[
\delta^n_i(t_0,\ldots,t_i,\ldots,t_{n-1})=(t_0,\ldots,0,t_i,\ldots,t_{n-1}), \]
the \emph{geometric realisation} of a $\Delta$-complex $K$ is the following quotient space (a tensor product)
\[
|K|=\left(\coprod_n K_n\times\Delta^n\right){\Big\slash}\sim,
\]
where the equivalence relation $\sim$ is generated by
\[
(d^n_ix,t)\sim(x,\delta^n_it).
\]
Intuitively, as in our illustration of $K$ in Example~\ref{Kpicture}, we take copies of $\Delta^n$ as much as $K_n$ has elements, and then glue them along their sides according to the functions $d^n_i$. As the result of the procedure applied to this $K$ we obtain a torus triangulated as in Example~\ref{example-4} of Section~\ref{reading}.

In order to relax the notation, we omit the superscripts from $d^n_i$ when they are clear from the context. For every $n\geq 0$, let $C_n$ be the free abelian group generated by $K_n$ and let the \emph{boundary} homomorphism $\partial_n\colon C_n\to C_{n-1}$ be defined on generators by
\[
\partial_n x=\sum_{i=0}^n (-1)^i d_i x.
\]
Since we are interested just in the boundary homomorphism $\partial_2$, we omit the subscript 2. A 2-\emph{cycle} $c$ is an element of $C_2$ such that $\partial c=0$. For $n\geq 1$, let
\[
c=\varepsilon_1 x_1+\varepsilon_2 x_2+\ldots +\varepsilon_{n-1} x_{n-1} - x_n
\]
be a 2-cycle, where $\varepsilon_i\in\{-1,1\}$, and $x_i$, $x_j$ could be equal when $i\neq j$. (If the occurrence of $x_n$ in $c$ is positive, then one can replace $c$ by the 2-cycle $-c$.)

\begin{rem}\label{neven}
In the expression of $c$ as $\varepsilon_1 x_1+\varepsilon_2
x_2+\ldots +\varepsilon_{n-1} x_{n-1} - x_n$, $n$ is even.
\end{rem}

\begin{proof}
Since $c$ is a 2-cycle, and every boundary $\partial x_i$ is of the form $y_{3i-2}-y_{3i-1}+y_{3i}$, for $y_j\in K_1$, we have that
\[
0=\partial c=\sum_{j=1}^{3n} \tau_j y_j,
\]
where $\tau_j\in\{-1,1\}$. Here $y$'s must repeat, since $C_1$ is free, and the cancellation happens only when two identical $y$'s with the opposite signs occur in the sum. Therefore the number $3n$, and hence $n$, must be even.
\end{proof}

For an arbitrary function $v\colon K_0\cup K_1\to \mathbf{R}^2$, consider the operator $\mu\colon K_2\to (\mathbf{R}^2)^6$ defined by
\[
\mu x=(vd_1d_2 x, vd_0d_2 x, vd_0d_0 x, vd_0 x, vd_1 x, vd_2 x).
\]

\begin{example}
Assume that $v$ maps the elements of
\[
K_0\cup K_1=\{A,B,C,D,E,F,X,Y,Z,1,2,3\}
\]
from Example~\ref{Kpicture} to the points in $\mathbf{R}^2$ denoted by the same symbols. Let $x$ be the element of $K_2$ illustrated by the triangle with the sides $BDY$. Then $\mu x=(1,2,3,D,Y,B)$.
\end{example}

Intuitively, the operator $\mu$ maps an ordered triangle $ABC$, whose sides are $a$, $b$ and $c$ respectively, into the sextuple $(vA,vB,vC,va,vb,vc)$. We claim the following.

\begin{prop}\label{cycle}
If $\mu x_1,\ldots,\mu x_{n-1}$ make Menelaus configurations, then $\mu x_n$ makes a Menelaus configuration, too.
\end{prop}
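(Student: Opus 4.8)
The plan is to mimic, in full generality, the tetrahedron computation carried out above for $S^2$: multiply together the $n-1$ Menelaus relations coming from $\mu x_1,\dots,\mu x_{n-1}$ and show that, after cancellation, exactly the relation asserting that $\mu x_n$ is a Menelaus configuration survives. The key observation is that each $\mu x_i$ records a triangle together with the valuations of its three edges, and the Menelaus condition on $\mu x_i = (vd_1d_2x_i, vd_0d_2x_i, vd_0d_0x_i, vd_0x_i, vd_1x_i, vd_2x_i)$ is, by definition, the statement that a certain product of three cross-ratio-type factors $(X,Y;Z)$ — one for each edge $d_0x_i$, $d_1x_i$, $d_2x_i$, paired with the two vertices of that edge — equals $-1$. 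So the first step is to write out, for a generic $2$-cell $x$ with $\mu x = (P_0,P_1,P_2,Q_0,Q_1,Q_2)$ (where $Q_j = vd_jx$ and $P$'s are the vertex valuations), the Menelaus product explicitly as $f(d_0x)\cdot f(d_1x)\cdot f(d_2x) = -1$, where $f$ assigns to each $1$-cell $y$ a factor $(A,B;vy)$ determined by the two endpoints $A,B$ of $y$ — i.e.\ make precise the book-keeping that associates to every oriented occurrence of a $1$-cell in a boundary $\partial x_i$ a well-defined nonzero real number (using that $\mu x_i$ being a Menelaus configuration guarantees all three factors are defined and nonzero).

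The second and central step is the cancellation argument, and it is here that the hypothesis $\partial c = 0$ enters decisively — exactly as in Remark~\ref{neven}. Writing $c = \varepsilon_1 x_1 + \dots + \varepsilon_{n-1}x_{n-1} - x_n$ with $\partial c = 0$, we have $\sum_{i=1}^{n-1}\varepsilon_i\,\partial x_i = \partial x_n$. Now $\partial x_i = d_0x_i - d_1x_i + d_2x_i$, so every $1$-cell $y\in K_1$ appears in $\sum_{i=1}^{n-1}\varepsilon_i\,\partial x_i$ with a total signed multiplicity equal to its signed multiplicity in $\partial x_n$. The plan is to raise the $i$-th Menelaus relation to the power $\varepsilon_i$, take the product over $i=1,\dots,n-1$, and observe that the left-hand side becomes $\prod_{y\in K_1} f(y)^{m_y}$, where $m_y$ is precisely the signed multiplicity of $y$ in $\sum_i\varepsilon_i\partial x_i = \partial x_n$. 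Since $\partial x_n = d_0x_n - d_1x_n + d_2x_n$, all exponents $m_y$ vanish except those for the three edges of $x_n$, which get exponents $+1,-1,+1$ respectively; the right-hand side becomes $(-1)^{\sum_i \varepsilon_i}$. One checks $\sum_{i=1}^{n-1}\varepsilon_i$ has the right parity — indeed, applying the augmentation $C_1 \to \mathbf{Z}$ (sum of coefficients) to $\sum_i\varepsilon_i\partial x_i = \partial x_n$ gives $3\sum_i\varepsilon_i = 3$, so $\sum_i\varepsilon_i$ is odd and $(-1)^{\sum_i\varepsilon_i} = -1$. Hence $f(d_0x_n)\cdot f(d_1x_n)^{-1}\cdot f(d_2x_n) = -1$.

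The final step is cosmetic but must be handled carefully: the surviving product is $f(d_0x_n)\cdot f(d_1x_n)^{-1}\cdot f(d_2x_n)$, whereas the Menelaus condition for $\mu x_n$ is $f(d_0x_n)\cdot f(d_1x_n)\cdot f(d_2x_n) = -1$. One must reconcile the inverse, and this is where the precise definition of the factor $f(y) = (A,B;vy)$ in terms of the \emph{ordered} pair of endpoints of $y$ matters: the edge $d_1x_n$ occurs in $\partial x_n$ with a sign, and the orientation of that edge as seen from $x_i$ (for those $i$ where it appears) is opposite to its orientation as seen from $x_n$, which flips the corresponding cross-ratio factor $(A,B;Z)$ to $(B,A;Z) = (A,B;Z)^{-1}$ — using the elementary identity $(X,Y;Z)\cdot(Y,X;Z) = 1$. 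Tracking this orientation bookkeeping through the simplicial identities $d_jd_l = d_{l-1}d_j$ (for $j < l$), so that the two vertices of each $1$-cell are consistently named, is the main obstacle: the computation is not deep but demands a disciplined notation so that "the factor attached to edge $y$" is genuinely independent of which $2$-cell we view $y$ from, up to the sign dictated by $\varepsilon_i$ and by the coefficient of $y$ in $\partial x_i$. Once this is set up cleanly, the parity computation and the telescoping cancellation finish the proof, and we conclude by the definition of Menelaus configuration that $\mu x_n$ makes one.
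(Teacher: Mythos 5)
Your overall route is the paper's own: attach a multiplicative weight to $1$-cells, observe that the hypothesis on each $\mu x_i$ forces the weight of $\partial x_i$ to be $-1$, and telescope over the identity $\partial x_n=\sum_{i=1}^{n-1}\varepsilon_i\,\partial x_i$ together with a parity count (the paper packages exactly this as the partial homomorphism $h$ with $hy=(vd_0y,vd_1y;vy)$ and Remarks~\ref{homo} and~\ref{hmene}, plus Remark~\ref{neven}). But as written your sign bookkeeping is internally inconsistent, and the repair you sketch in the last paragraph is not the right mechanism. In step~1 you formulate the Menelaus condition for $\mu x_i$ as the \emph{plain} product $f(d_0x_i)f(d_1x_i)f(d_2x_i)=-1$; with that formulation, multiplying the relations raised to the powers $\varepsilon_i$ produces exponents equal to the multiplicities in $\sum_i\varepsilon_i(d_0x_i+d_1x_i+d_2x_i)$, not in $\sum_i\varepsilon_i\,\partial x_i$, so the $2$-cycle condition does not make them telescope. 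The issue is not confined to the single edge $d_1x_n$, and it cannot be fixed by saying that the orientation of an edge ``as seen from $x_i$'' is opposite to that ``as seen from $x_n$'': once $f(y)=(vd_0y,vd_1y;vy)$ is defined intrinsically from the faces of $y$, it is the same number no matter which $2$-cell contains $y$; the inverses come solely from the coefficient $-1$ of $d_1$ in $\partial$, and they occur in the relations for the $x_i$'s just as much as in the one for $x_n$. The missing step is precisely the paper's Remark~\ref{hmene}: a one-time verification, using the simplicial identities $d_0d_1=d_0d_0$, $d_0d_2=d_1d_0$, $d_1d_2=d_1d_1$, that for \emph{every} $2$-cell $x$ with $\mu x=(A,B,C,P,Q,R)$ one has $f(d_0x)=(C,B;P)$, $f(d_1x)=(C,A;Q)$, $f(d_2x)=(B,A;R)$, hence $f(d_0x)\,f(d_1x)^{-1}f(d_2x)=\bigl[(B,C;P)(C,A;Q)(A,B;R)\bigr]^{-1}$, so the Menelaus condition on $\mu x$ is \emph{equivalent} to the signed identity $h\partial x=-1$. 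With this in place every relation, hypotheses and conclusion alike, is uniformly in signed form, nothing needs to be ``reconciled'' at the end, and the telescoping plus the parity count finish the proof exactly as you intend.

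Two smaller points. Your augmentation computation should read $\sum_{i=1}^{n-1}\varepsilon_i=1$, since each $\partial x_i$ has coefficient sum $1-1+1=1$, not $3$; the conclusion that the exponent of $-1$ is odd is unaffected, and this is in fact a neat substitute for the paper's Remark~\ref{neven}. Also note that raising a relation whose right-hand side is $-1$ to the power $\varepsilon_i\in\{1,-1\}$ always yields $-1$, so one can avoid tracking $\sum_i\varepsilon_i$ altogether and simply multiply $n-1$ (an odd number of) copies of $-1$, which is how the paper argues.
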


For the proof of this proposition we introduce the following partial function (``partial homomorphism'') $h\colon (C_1,+,0)\to (\mathbf{R}-\{0\},\cdot,1)$. Note that every element $a$ of $C_1-\{0\}$ could be written uniquely (up to associativity and commutativity) as $\alpha_1 y_1+\ldots+\alpha_m y_m$, where $\alpha_i\in \mathbf{Z}-\{0\}$ and the $y_i$'s are mutually distinct elements of $K_1$. If for every $i\in\{1,\ldots,m\}$ we have that
\[
hy_i=(vd_0 y_i, vd_1 y_i; vy_i)
\]
is defined, then $ha=_{df} (hy_1)^{\alpha_1}\cdot\ldots\cdot (hy_m)^{\alpha_m}$, otherwise, $ha$ is undefined. To complete the definition, let $h0=1$.

\begin{rem}\label{homo}
If $ha_1$ and $ha_2$ are defined, then $h(a_1+a_2)$ is defined and equal to $ha_1 \cdot ha_2$.
\end{rem}

\begin{rem}\label{hmene}
A sextuple $\mu x$ makes a Menelaus configuration iff $h\partial x$ is defined and equal to -1.
\end{rem}

\begin{proof}[Proof of Proposition~\ref{cycle}]
Since $c$ is a 2-cycle, we have that
\[
\partial x_n=\sum_{i=1}^{n-1} \varepsilon_i\partial x_i.
\]
By Remark~\ref{hmene}, for every $i\in\{1,\ldots,n-1\}$, we have that
$h(\varepsilon_i \partial x_i)$, which is either $h(\partial x_i)$ or its reciprocal value, is equal to $-1$. By Remark~\ref{neven}, the number $n-1$ is odd, and by Remark~\ref{homo}, we have $h\partial x_n=-1$, which means, again by Remark~\ref{hmene}, that $\mu x_n$ makes a Menelaus configuration.
\end{proof}

\begin{rem}\label{mcomplex}
If for some $i\in\{1,\ldots,n\}$ the $x_i$ involved in the 2-cycle
$c$ is such that two different faces map $x_i$ to the same element
of $K_1$, then $\mu x_i$ does not make a Menelaus configuration.
In such a situation, the implication of Proposition~\ref{cycle}
holds vacuously since its antecedent is false. The same holds if
for some $j\in\{1,\ldots,3n\}$, the face $y_j$ of some $x_i$,
involved in $c$, is such that two different faces map $y_j$ to the
same element of $K_0$. Hence, the only interesting case is to
consider 2-cycles in $\Delta$-complexes $K$ in which two different
faces map each element of $K_2$ ($K_1$) to two different elements
of $K_1$ ($K_0$).
\end{rem}

Proposition~\ref{cycle} shows that Richter-Gebert's idea could be generalized from closed orientable triangulated surfaces to arbitrary 2-cycles of $\Delta$-complexes. However, we will show that this generalisation is not essential. What follows may serve as a light introduction to Steenrod's problem (see \cite[Section~7, Problem~25]{E47} for the formulation, and \cite{T54} for the solution).

Let $K$ be a $\Delta$-complex and let $c=\sum_{i=1}^n \varepsilon_i x_i$, where $\varepsilon_i\in\{-1,1\}$ and $x_i\in K_2$, be a 2-cycle as above. Moreover, assume that the 2-dimensional $\Delta$-complex structure involved in the 2-cycle $c$ satisfies the condition formulated at the end of Remark~\ref{mcomplex}.
For $\partial x_i=y_{3i-2}-y_{3i-1}+y_{3i}$, we have that
\[
0=\partial c=\sum_{j=1}^{3n} \tau_j y_j,
\]
where $\tau_j\in\{-1,1\}$, and $3n=2m$ for some $m\geq 1$. Choose a partition of the set $\{1,\ldots,2m\}$ with all classes containing exactly two elements, such that $i$ and $j$ belong to the same class when $y_i=y_j$ and $\tau_i=-\tau_j$. Denote these classes by $s_1,\ldots,s_m$.

Let $L$ be the $\Delta$-complex constructed as follows. For $m\geq 3$, we set $L_m=\emptyset$, $L_2=\{u_1,\ldots,u_n\}$ (for genuine $u_i$'s), and $L_1=\{s_1,\ldots,s_m\}$. For $k\in\{0,1,2\}$, let $d_k\colon L_2\to L_1$ be the functions defined so that
\[
d_k(u_i)=s_j\quad \mbox{\rm when}\quad 3i-2+k\in s_j.
\]
Let $L_0$ be the quotient set $\{(s_1,0),(s_1,1),\ldots,(s_m,0),(s_m,1)\}/\approx$, where $\approx$ is the smallest equivalence relation satisfying for every $i\in\{1,\ldots,n\}$:
\[
(d_1 u_i,0)\approx(d_0 u_i,0),\quad (d_2 u_i,0)\approx(d_0 u_i,1),\quad (d_2 u_i,1)\approx(d_1 u_i,1).
\]
Finally, let $d_0 s_j=(s_j,0)_\approx$ and $d_1 s_j=(s_j,1)_\approx$.

\begin{rem}
Note that $c'=\sum_{i=1}^n \varepsilon_i u_i$ is a 2-cycle of the $\Delta$-complex $L$. This complex depends on the 2-cycle $c$ of the $\Delta$-complex $K$, and the choice of an appropriate pairing of occurrences of 1-cells in the expression of $\partial c$ given above.
\end{rem}

A \emph{morphism} $f\colon L\to K$ between $\Delta$-complexes is a family of functions $\{f_i\colon L_i\to K_i\mid i\in \mathbf{N}\}$ that commute with the faces. For $K$ and $L$ as above, if $f_2$, $f_1$ and $f_0$ are defined so that $f_2(u_i)=x_i$, $f_1(s_j)=y_k$, for $k\in s_j$, and for $l\in\{0,1\}$, $f_0((s_j,l)_\approx)=d_l f_1(s_j)$, then $f=(f_0,f_1,f_2,\emptyset,\ldots)$ is a morphism from $L$ to $K$.

Every function $v\colon K_0\cup K_1\to \mathbf{R}^2$, as above, is lifted by $f$ to a function $v'\colon L_0\cup L_1\to \mathbf{R}^2$ (roughly $v'=v\circ (f_0\cup f_1)$). If the operator $\mu'\colon L_2\to \mathbf{R}^2$ is defined as the operator $\mu$ from above, save that $v$ is now replaced by $v'$, then we have that $\mu' u_i$ makes a Menelaus configuration iff $\mu x_i$ makes a Menelaus configuration. Hence, if an incidence result follows from an interpretation of the 2-cycle $c=\sum_{i=1}^n \varepsilon_i x_i$ of $K$, then it follows from an interpretation of the 2-cycle $c'=\sum_{i=1}^n \varepsilon_i u_i$ of $L$.

\vspace{1ex}

The complex $L$ that we just constructed satisfies special properties, which we now list. We first introduce some terminology.
For a 2-cell $u$, we call a face of $u$ an \emph{edge} of $u$ and a face of a face of $u$ a \emph{vertex} of $u$. Also, a face of a 1-cell is called its \emph{vertex}. A $\Delta$-complex is \emph{connected} when for each pair of mutually distinct 0-cells $w$ and $w'$, there is a sequence $w=w_0,\ldots,w_n=w'$ of 0-cells, such that every pair of consecutive elements in it is the pair of vertices of a 1-cell. A \emph{connected component} of a $\Delta$-complex is defined as expected.
We say that two 2-cells are $w$-\emph{neighbours} when they share an edge having $w$ as a vertex.

The $\Delta$-complex $L$ satisfies:
\begin{enumerate}
\item[(0)] $L$ is \emph{finite}, i.e.\ it has a finite number of cells;
\item[(1)] $L$ is \emph{homogeneous} 2-dimensional, i.e.\ for every $m\geq 3$ the set $L_m$ is empty and every element of $L_0\cup L_1$ is a face of some element of $L_1\cup L_2$;
\item[(2)] $L$ is \emph{regular}, i.e.\ two different faces map an element of $L_2$ ($L_1$) to two different elements of $L_1$ ($L_0$);
\item[(3)] for every 1-cell of $L$ there are exactly two 2-cells having this 1-cell as an edge;
\item[(4)] for every $w\in L_0$, the set $L_w=\{u\in L_2\mid w\; \mbox{\rm is a vertex of}\; u\}$ is \emph{linked} in the sense that if $u,u'\in L_w$, then there is a sequence of 2-cells starting at $u$ and ending at $u'$, such that every two consecutive 2-cells are $w$-neighbours;
\item[(5)] $L$ is \emph{orientable}, i.e.\ the second homology
group \[H_2(L;\mathbf{Z})=\mbox{\rm Ker}\, \partial_2/ \mbox{\rm
Im}\, \partial_3=\mbox{\rm Ker}\, \partial_2\cong\mathbf{Z},\]
which consists of all 2-cycles of $L$, is isomorphic to the direct
sum of $k$ copies of $\mathbf{Z}$, where $k$ is the number of
connected components of $L$ (cf.\ Remark~\ref{orientability}
below). If $L$ is connected, then one may take a generator
$\sum_{i=1}^n \varepsilon_i u_i$ of $H_2(L;\mathbf{Z})$ as an
\emph{orientation} of $L$.
\end{enumerate}
The regularity condition follows from our assumption that $c$ satisfies the condition formulated at the end of Remark~\ref{mcomplex}.
The condition~(4) follows from the definition of $\approx$. We call a connected $\Delta$-complex that satisfies (0)-(5), an $\mathcal{M}$-\emph{complex} ($\mathcal{M}$ comes from Menelaus).

\begin{rem}\label{triang}
By the regularity property, every 2-cell $u$ has three mutually distinct edges $d_0 u$, $d_1 u$ and $d_2 u$, and three mutually distinct \emph{vertices}, the 0th, $d_1d_2 u=d_1d_1 u$, the 1st, $d_0d_2 u=d_1d_0 u$ and the 2nd, $d_0d_0 u=d_0d_1 u$. Every vertex of $u$ is the common vertex of exactly two edges of $u$.
\end{rem}

\begin{rem}\label{orientability}
For a definition of orientability, one may consult textbooks in
algebraic topology. According to \cite[Theorem~3.25]{H00}, a
connected $n$-manifold $M$ is orientable iff
$H_n(M;\mathbf{Z})=\mathbf{Z}$. A generator of the infinite cyclic
group is called the \emph{fundamental class}. Since there are two
possible choices of a generator, there are exactly two
orientations of such a manifold. The fundamental class of a
connected orientable $n$-manifold having a structure of CW complex
is the sum of its $n$-cells oriented in accordance with the local
orientations of the manifold, see \cite[Problem~5,
Section~16.4]{TD08}, and this is the classical interpretation of
the fundamental class of a triangulated manifold.
\end{rem}

\begin{example}\label{Mcomplex} The ``dunce hat'' is a geometric realisation of a $\Delta$-complex obtained by identifying all the three edges of a single triangle, preserving the orientations of these edges. By construction, this is not a regular $\Delta$-complex. The ``dunce hat'' is not a manifold, and its second homology group contains just the trivial 2-cycle--hence, it cannot be triangulated in a manner interesting for the Menelaus reasoning.

\smallskip

Consider the $\Delta$-complexes (a), (b), (c), (d) and (e).
The $\Delta$-complex (a) is not homogeneous, since it contains an ``antenna'' which is not an edge of any triangle. The  $\Delta$-complex (b), obtained from the $\Delta$-complex (a) by removing the antenna, is homogeneous, but it does not satisfy the property (3) from the definition of $\mathcal{M}$-complexes. The $\Delta$-complex (d), obtained by identifying (without twisting) the opposite sides of the square, triangulated by a diagonal, is not regular. The   $\Delta$-complexes (c) and (e) are  $\mathcal{M}$-complexes.
\end{example}

\begin{center}
\begin{tabular}{ccc}
\begin{tikzpicture}
\node (1) [circle,draw=black,fill=black,inner sep=0,minimum size=1.5mm] at (0,0) {};
\node (2) [circle,draw=black,fill=black,inner sep=0,minimum size=1.5mm] at (1.5,-0.5) {};
\node (3) [circle,draw=black,fill=black,inner sep=0,minimum size=1.5mm] at (0.8,-1.3) {};
\node (4) [circle,draw=black,fill=black,inner sep=0,minimum size=1.5mm] at (-0.2,-1.4) {};
\node (5) [circle,draw=black,fill=black,inner sep=0,minimum size=1.5mm] at (-0.85,-0.9) {};
\node (6) [circle,draw=black,fill=black,inner sep=0,minimum size=1.5mm] at (-0.9,-0.3) {};
\node (7) [circle,draw=black,fill=black,inner sep=0,minimum size=1.5mm] at (2,-0.4) {};
\node (c) [circle,draw=black,inner sep=0,minimum size=1.5mm] at (0.1,-0.7) {};
\draw[thick] (1)--(2)--(3)--(4)--(5)--(6)--(1);
\draw[thick,fill=cyan] (2)--(c)--(1)--cycle;
\draw[thick] (3)--(c)--(4);
\draw[thick] (5)--(c)--(6);
\draw[thick] (2)--(7);
\end{tikzpicture} \quad\quad\quad & \quad\quad\quad \begin{tikzpicture}
\node (1) [circle,draw=black,fill=black,inner sep=0,minimum size=1.5mm] at (0,0) {};
\node (2) [circle,draw=black,fill=black,inner sep=0,minimum size=1.5mm] at (1.5,-0.5) {};
\node (3) [circle,draw=black,fill=black,inner sep=0,minimum size=1.5mm] at (0.8,-1.3) {};
\node (4) [circle,draw=black,fill=black,inner sep=0,minimum size=1.5mm] at (-0.2,-1.4) {};
\node (5) [circle,draw=black,fill=black,inner sep=0,minimum size=1.5mm] at (-0.85,-0.9) {};
\node (6) [circle,draw=black,fill=black,inner sep=0,minimum size=1.5mm] at (-0.9,-0.3) {};
\node (c) [circle,draw=black,inner sep=0,minimum size=1.5mm] at (0.1,-0.7) {};
\draw[thick] (1)--(2)--(3)--(4)--(5)--(6)--(1);
\draw[thick,fill=cyan] (2)--(c)--(1)--cycle;
\draw[thick] (3)--(c)--(4);
\draw[thick] (5)--(c)--(6);
\end{tikzpicture}\quad\quad\quad &  \quad\quad \begin{tikzpicture}
\node (1) [circle,draw=black,fill=black,inner sep=0,minimum size=1.5mm] at (0,0) {};
\node (2) [circle,draw=black,fill=black,inner sep=0,minimum size=1.5mm] at (1.5,-0.5) {};
\node (3) [circle,draw=black,fill=black,inner sep=0,minimum size=1.5mm] at (0.8,-1.3) {};
\node (4) [circle,draw=black,fill=black,inner sep=0,minimum size=1.5mm] at (-0.2,-1.4) {};
\node (5) [circle,draw=black,fill=black,inner sep=0,minimum size=1.5mm] at (-0.85,-0.9) {};
\node (6) [circle,draw=black,fill=black,inner sep=0,minimum size=1.5mm] at (-0.9,-0.3) {};
\node (7) [circle,draw=black,fill=gray,inner sep=0,minimum size=1.5mm] at (0.7,-0.4) {};
\node (c) [circle,draw=black,inner sep=0,minimum size=1.5mm] at (0.1,-0.7) {};
\draw[thick] (1)--(2)--(3)--(4)--(5)--(6)--(1);
\draw[thick,dashed] (2)--(c)--(1);
\draw[thick,dashed] (3)--(c)--(4);
\draw[thick,dashed] (5)--(c)--(6);
\draw[thick] (2)--(7)--(1);
\draw[thick] (3)--(7)--(4);
\draw[thick] (5)--(7)--(6);
\end{tikzpicture}  \\
(a) \quad\quad\quad   & \quad\quad  (b)  & \quad\quad  (c)
\end{tabular}
\end{center}
\newrgbcolor{qqwuqq}{0 0.39 0}
\psset{xunit=1.0cm,yunit=1.0cm,algebraic=true,dimen=middle,dotstyle=o,dotsize=3pt 0,linewidth=0.8pt,arrowsize=3pt 2,arrowinset=0.25}
\psscalebox{.35 .35}{\begin{pspicture*}(-4.2,-5.0)(14.95,3.67)
\rput{0}(0,0){\psellipse[linewidth=2.8pt](0,0)(4.15,2.86)}
\parametricplot[linewidth=2.8pt]{3.3936126414032994}{6.021215152704372}{1*2.08*cos(t)+0*2.08*sin(t)+0.01|0*2.08*cos(t)+1*2.08*sin(t)+1.08}
\parametricplot[linewidth=2.8pt]{0.545463398049105}{2.5961292555406885}{1*2.08*cos(t)+0*2.08*sin(t)+0.01|0*2.08*cos(t)+1*2.08*sin(t)+-1.08}
\parametricplot[linewidth=2pt,linestyle=dashed]{3.1725306936091595}{6.27699651909602}{0.03*0.91*cos(t)+1*0.44*sin(t)+0.05|-1*0.91*cos(t)+0.03*0.44*sin(t)+-1.94}
\parametricplot[linewidth=2pt,linestyle=dashed]{-0.006188788083566266}{3.1725306936091595}{0.03*0.91*cos(t)+1*0.44*sin(t)+0.05|-1*0.91*cos(t)+0.03*0.44*sin(t)+-1.94}
\rput{0.26}(-0.05,0.02){\psellipse[linewidth=3pt,linestyle=dotted, dotsep=2.2pt](0,0)(2.92,1.94)}

\parametricplot[linewidth=2pt]{0.566705036957486}{0.7054235896031844}{-0.9998760816978752*7.096177226644504*cos(t)+-0.015742339359308265*0.18510870365745755*sin(t)+5.301019427474969|0.015742339359308265*7.096177226644504*cos(t)+-0.9998760816978752*0.18510870365745755*sin(t)+1.031783664384778}
\parametricplot[linewidth=2pt]{0.380967606642863}{0.566705036957486}{-0.9998760816978752*7.096177226644504*cos(t)+-0.015742339359308265*0.18510870365745755*sin(t)+5.301019427474969|0.015742339359308265*7.096177226644504*cos(t)+-0.9998760816978752*0.18510870365745755*sin(t)+1.031783664384778}
\parametricplot[linewidth=2pt]{2.3425091746090967}{2.8387547334015952}{0.7426377437737344*1.8076922130348425*cos(t)+
0.6696933488714497*1.1946599732112166*sin(t)+-0.9238970303772363|-0.6696933488714497*1.8076922130348425*cos(t)+0.7426377437737344*1.1946599732112166*sin(t)+-0.41343291695143974}
\parametricplot[linewidth=2pt]{2.8387547334015952}{3.683110804165384}{0.74*1.81*cos(t)+0.67*1.19*sin(t)+-0.92|-0.67*1.81*cos(t)+0.74*1.19*sin(t)+-0.41}
\parametricplot[linewidth=2pt]{3.683110804165384}{4.391376177691744}{0.74*1.81*cos(t)+0.67*1.19*sin(t)+-0.92|-0.67*1.81*cos(t)+0.74*1.19*sin(t)+-0.41}
\parametricplot[linewidth=2pt]{4.391376177691744}{5.597148995225213}{0.74*1.81*cos(t)+0.67*1.19*sin(t)+-0.92|-0.67*1.81*cos(t)+0.74*1.19*sin(t)+-0.41}
\parametricplot[linewidth=2pt]{5.1095219063415405}{5.491304767012243}{0.88*5.19*cos(t)+0.47*3.43*sin(t)+-0.66|-0.47*5.19*cos(t)+0.88*3.43*sin(t)+1.82}
\parametricplot[linewidth=2pt]{5.491304767012243}{5.761969349776725}{0.88*5.19*cos(t)+0.47*3.43*sin(t)+-0.66|-0.47*5.19*cos(t)+0.88*3.43*sin(t)+1.82}
\parametricplot[linewidth=2pt]{5.761969349776725}{6.187839890934538}{0.88*5.19*cos(t)+0.47*3.43*sin(t)+-0.66|-0.47*5.19*cos(t)+0.88*3.43*sin(t)+1.82}
\parametricplot[linewidth=2pt]{-0.09534541624504822}{0.21017604257805264}{0.88*5.19*cos(t)+0.47*3.43*sin(t)+-0.66|-0.47*5.19*cos(t)+0.88*3.43*sin(t)+1.82}
\parametricplot[linewidth=2pt]{1.6068552714148963}{1.9688311330728567}{-0.99*2.28*cos(t)+0.11*0.48*sin(t)+-0.14|-0.11*2.28*cos(t)+-0.99*0.48*sin(t)+1.47}
\parametricplot[linewidth=2pt]{2.9814526399487637}{3.585684166480425}{-0.98*2.55*cos(t)+0.19*1.49*sin(t)+1.63|-0.19*2.55*cos(t)+-0.98*1.49*sin(t)+-0.19}
\parametricplot[linewidth=2pt]{4.236273924276731}{4.561990599550362}{-0.98*2.55*cos(t)+0.19*1.49*sin(t)+1.63|-0.19*2.55*cos(t)+-0.98*1.49*sin(t)+-0.19}
\parametricplot[linewidth=2pt]{4.561990599550362}{4.941608992984511}{-0.98*2.55*cos(t)+0.19*1.49*sin(t)+1.63|-0.19*2.55*cos(t)+-0.98*1.49*sin(t)+-0.19}
\parametricplot[linewidth=2pt]{3.585684166480425}{4.236273924276731}{-0.98*2.55*cos(t)+0.19*1.49*sin(t)+1.63|-0.19*2.55*cos(t)+-0.98*1.49*sin(t)+-0.19}

\psline[linewidth=2.5pt,linestyle=dashed, dash=10pt 5pt](4.99,2.55)(4.99,-2.51)
\psline[linewidth=3pt,linestyle=dotted, dotsep=3pt](4.99,-2.51)(10.05,-2.51)
\psline[linewidth=2.5pt,linestyle=dashed, dash=10pt 5pt](10.05,2.55)(10.05,-2.51)
\psline[linewidth=3pt,linestyle=dotted, dotsep=3pt](4.99,2.55)(10.05,2.55)
\psline[linewidth=2.5pt](10.05,2.55)(4.99,-2.51)
\begin{scriptsize}
\psdots[dotsize=12pt 0,dotstyle=*](-0.39,-1.91)
\psdots[dotsize=12pt 0,dotstyle=*](4.99,2.55)
\psdots[dotsize=12pt 0,dotstyle=*](4.99,-2.51)
\psdots[dotsize=12pt 0,dotstyle=*](10.05,-2.51)
\psdots[dotsize=12pt 0,dotstyle=*](10.05,2.55)
\rput(3.8,-3.8){\psscalebox{3.7 3.7}{(d)}}
\end{scriptsize}
\end{pspicture*}}
\newrgbcolor{qqwuqq}{0 0.39 0}
\psset{xunit=1.0cm,yunit=1.0cm,algebraic=true,dimen=middle,dotstyle=o,dotsize=3pt 0,linewidth=0.8pt,arrowsize=3pt 2,arrowinset=0.25}
\psscalebox{.45 .45}{\begin{pspicture*}(-3.2,-3.9)(14.95,3.4)
\parametricplot[linewidth=2.8pt]{1.0243541906660873}{1.433210851110643}{-0.45*1.76*cos(t)+0.89*1.6*sin(t)+-3.8|-0.89*1.76*cos(t)+-0.45*1.6*sin(t)+1.44}
\parametricplot[linewidth=2.8pt]{4.849974456068943}{5.258831116513499}{-0.45*1.76*cos(t)+-0.89*1.6*sin(t)+-3.8|0.89*1.76*cos(t)+-0.45*1.6*sin(t)+-1.44}
\parametricplot[linewidth=2.8pt]{1.7083818024790887}{2.1172384629236785}{-0.45*1.76*cos(t)+-0.89*1.6*sin(t)+3.8|0.89*1.76*cos(t)+-0.45*1.6*sin(t)+1.44}
\parametricplot[linewidth=2.8pt]{4.16594684425588}{4.574803504700436}{-0.45*1.76*cos(t)+0.89*1.6*sin(t)+3.8|-0.89*1.76*cos(t)+-0.45*1.6*sin(t)+-1.44}
\parametricplot[linewidth=2.8pt]{5.40522191192191}{5.627886470648845}{0*4.15*cos(t)+-1*3.25*sin(t)+0|1*4.15*cos(t)+0*3.25*sin(t)+-2.15}
\parametricplot[linewidth=2.8pt]{-0.6552988365307408}{0.0}{0*4.15*cos(t)+-1*3.25*sin(t)+0|1*4.15*cos(t)+0*3.25*sin(t)+-2.15}
\parametricplot[linewidth=2.8pt]{0.0}{0.6552988365307406}{0*4.15*cos(t)+-1*3.25*sin(t)+0|1*4.15*cos(t)+0*3.25*sin(t)+-2.15}
\parametricplot[linewidth=2.8pt]{0.6552988365307406}{0.8779633952576767}{0*4.15*cos(t)+-1*3.25*sin(t)+0|1*4.15*cos(t)+0*3.25*sin(t)+-2.15}
\parametricplot[linewidth=2.8pt]{2.2636292583321147}{2.4862938170590523}{0*4.15*cos(t)+-1*3.25*sin(t)+0|1*4.15*cos(t)+0*3.25*sin(t)+2.15}
\parametricplot[linewidth=2.8pt]{2.4862938170590523}{3.141592653589793}{0*4.15*cos(t)+-1*3.25*sin(t)+0|1*4.15*cos(t)+0*3.25*sin(t)+2.15}
\parametricplot[linewidth=2.8pt]{3.141592653589793}{3.796891490120534}{0*4.15*cos(t)+-1*3.25*sin(t)+0|1*4.15*cos(t)+0*3.25*sin(t)+2.15}
\parametricplot[linewidth=2.8pt]{3.796891490120534}{4.01955604884747}{0*4.15*cos(t)+-1*3.25*sin(t)+0|1*4.15*cos(t)+0*3.25*sin(t)+2.15}
\parametricplot[linewidth=2pt,linestyle=dashed, dash=3pt 3pt 6pt 3pt 6pt 3pt 6pt 3pt 6pt 3pt]
{0.7609531576085978}{1.174256684761854}{1*4.14*cos(t)+0*2.58*sin(t)+0|0*4.14*cos(t)+1*2.58*sin(t)+-1.78}
\parametricplot[linewidth=2pt]
{1.5707963267948966}{1.9673359688279375}{1*4.14*cos(t)+0*2.58*sin(t)+0|0*4.14*cos(t)+1*2.58*sin(t)+-1.78}
\parametricplot[linewidth=2pt]{1.9673359688279375}{2.3806394959811943}{1*4.14*cos(t)+0*2.58*sin(t)+0|0*4.14*cos(t)+1*2.58*sin(t)+-1.78}
\parametricplot[linewidth=1pt, doubleline=true]{3.902545811198392}{4.315849338351649}{1*4.14*cos(t)+0*2.58*sin(t)+0|0*4.14*cos(t)+1*2.58*sin(t)+1.78}
\parametricplot[linewidth=1pt, doubleline=true]{4.315849338351649}{4.71238898038469}{1*4.14*cos(t)+0*2.58*sin(t)+0|0*4.14*cos(t)+1*2.58*sin(t)+1.78}
\parametricplot[linewidth=1pt, doubleline=true]{4.71238898038469}{5.108928622417732}{1*4.14*cos(t)+0*2.58*sin(t)+0|0*4.14*cos(t)+1*2.58*sin(t)+1.78}
\parametricplot[linewidth=1pt, doubleline=true]{5.108928622417732}{5.522232149570988}{1*4.14*cos(t)+0*2.58*sin(t)+0|0*4.14*cos(t)+1*2.58*sin(t)+1.78}
\parametricplot[linewidth=2pt,linestyle=dashed, dash=3pt 3pt 6pt 3pt 6pt]
{1.174256684761854}{1.397143184659889}{1*4.14*cos(t)+0*2.58*sin(t)+0|0*4.14*cos(t)+1*2.58*sin(t)+-1.78}
\parametricplot[linewidth=2pt]{1.397143184659889}{1.5707963267948966}{1*4.14*cos(t)+0*2.58*sin(t)+0|0*4.14*cos(t)+1*2.58*sin(t)+-1.78}
\psdots[dotsize=9pt 0,dotstyle=*](-3,0)
\psdots[dotsize=9pt 0,dotstyle=o,linecolor=black,fillcolor=gray](3,0)
\psdots[dotsize=9pt 0,dotstyle=o,linecolor=black,fillcolor=white](0.72,0.76)
\psscalebox{1.30 1.30}{
\psline[linewidth=1.5pt,linestyle=dashed](3.7,1.54)(3.7,-1.54)
\psline[linewidth=1.5pt](6.78,-1.54)(6.78,1.54)
\psline[linewidth=1.5pt](3.7,1.54)(6.78,1.54)
\psline[linewidth=1.5pt,linestyle=dashed](3.7,-1.54)(6.78,-1.54)
\psline[linewidth=1pt,doubleline=true, doublesep=0.8pt](6.78,1.54)(3.7,-1.54)
\begin{scriptsize}
\psdots[dotsize=7pt 0,dotstyle=o,linecolor=black,fillcolor=white](3.7,1.54)
\psdots[dotsize=7pt 0,dotstyle=o,linecolor=black,fillcolor=gray](3.7,-1.54)
\psdots[dotsize=7pt 0,dotstyle=o,linecolor=black,fillcolor=white](6.78,-1.54)
\psdots[dotsize=7pt 0,dotstyle=*](6.78,1.54)
\end{scriptsize}}
\rput(3.8,-2.9){\psscalebox{2.3 2.3}{(e)}}
\end{pspicture*}}

\begin{prop}\label{surface}
The geometric realisation of any $\mathcal{M}$-complex is a closed, connected, orientable surface.
\end{prop}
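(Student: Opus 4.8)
The plan is to verify, point by point, that $|L|$ is locally Euclidean of dimension $2$ without boundary, and then to read off the adjectives ``closed'', ``connected'', ``orientable'' from the defining properties of an $\mathcal{M}$-complex. Because $L_m=\emptyset$ for $m\geq 3$ (property~(1)), every point of $|L|$ is represented by a pair $(x,t)$ with $x\in L_n$, $t\in\Delta^n$, $n\le 2$, and after applying the identifications it lies in exactly one of three classes: the interior of a $2$-cell, the interior of a $1$-cell, or the image of a $0$-cell. An interior point of a $2$-cell already has a disc neighbourhood, being an interior point of a copy of $\Delta^2$. For an interior point $p$ of a $1$-cell $e$, property~(3) furnishes exactly two $2$-cells having $e$ as an edge, and by regularity together with Remark~\ref{triang} these are two distinct cells, each having $e$ as precisely one of its three edges; a small neighbourhood of $p$ is thus the union of two half-discs (one in each of these $2$-cells) glued along the segment of $e$ through $p$, which is an open disc.

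The substantial case is a point $w$ that is the image of a $0$-cell. I would encode its local structure by a graph $\mathrm{Lk}(w)$ whose nodes are the $1$-cells of $L$ having $w$ as a vertex and whose arcs are the $2$-cells of $L$ having $w$ as a vertex, the arc of a $2$-cell $u$ joining the two edges of $u$ that meet at the corner of $u$ at $w$ (these two edges exist and are distinct by Remark~\ref{triang}). A regular neighbourhood of $w$ in $|L|$ is homeomorphic to the open cone over the topological realisation of $\mathrm{Lk}(w)$, so it is enough to prove that $\mathrm{Lk}(w)$ is a single circle.

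To that end: each node $e$ of $\mathrm{Lk}(w)$ has degree exactly $2$. Indeed, $w$ is a vertex of $e$, hence a vertex of every $2$-cell having $e$ as an edge, and $e$ is necessarily one of the two corner-edges at $w$ of such a cell (otherwise $e$ would be opposite $w$); by property~(3) there are exactly two such cells, and by Remark~\ref{triang} each contributes exactly one arc at $e$. A finite graph all of whose nodes have degree $2$ is a disjoint union of circles. Moreover two $2$-cells are $w$-neighbours precisely when their arcs share a node, so the linkedness hypothesis (property~(4)), which is nonvacuous since every $0$-cell is a vertex of some $2$-cell by property~(1) and property~(3), says exactly that $\mathrm{Lk}(w)$ is connected. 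Hence $\mathrm{Lk}(w)$ is one circle and $w$ has a disc neighbourhood.

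This establishes that $|L|$ is a $2$-manifold without boundary; it is compact, hence closed, because $L$ is finite (property~(0)), and connected because $L$ is a connected $\Delta$-complex. Finally, since simplicial and singular homology agree for $\Delta$-complexes, $H_2(|L|;\mathbf{Z})\cong H_2(L;\mathbf{Z})$, which is infinite cyclic by property~(5); by the orientability criterion for closed connected surfaces recalled in Remark~\ref{orientability}, $|L|$ is orientable. The only delicate step is the vertex case, and inside it the observation that the linkedness condition is precisely what forbids the vertex link from being several disjoint circles (i.e.\ forbids a pinch point); the edge case and the bookkeeping of point-types are the routine local analysis of a $\Delta$-complex.
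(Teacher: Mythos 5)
Your proof is correct and follows essentially the same route as the paper's: locally Euclidean structure away from the $0$-cells via properties (1)--(3), a disc neighbourhood at each $0$-cell obtained from (3), (4) and regularity, then closedness from (0), connectedness from connectedness of $L$, and orientability from $H_2(L;\mathbf{Z})\cong\mathbf{Z}$ together with the criterion recalled in Remark~\ref{orientability}. Your ``every node has degree two and the link graph is connected, hence a single circle'' argument is just a tidier packaging of the paper's explicit cyclic enumeration of $L_w$ (the idea later formalised as Lemma~\ref{camembert}), so the two proofs coincide in substance.
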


\begin{proof}
Let $L$ be an $\mathcal{M}$-complex. By (1) and (3) it is evident
that $|L|$ is a manifold, locally homeomorphic to $\mathbf{R}^2$,
except at the realisations of 0-cells. Let $w\in L_0$, and let
$L_w$ be defined as above. If $u\in L_w$ and $w$ is its $i$th
vertex, then let $\Delta^2_u$ be the intersection of $\Delta^2$
and the open halfspace $t_i>1/2$. For $\sim$ obtained by
restricting the equivalence relation that defines geometric
realisation, we claim that
\[
U_w=\left(\coprod_{u\in L_w} \{u\}\times\Delta^2_u\right){\Big\slash}\sim,
\]
is an open neighbourhood of the realisation of $w$, which is homeomorphic to $\mathbf{R}^2$.

Let $u\in L_w$ and let $y$ and $y'$ be the edges of $u$ having $w$ as a vertex. Let $u'$ be the 2-cell sharing $y'$ with $u$, and let $y''$ be the second edge of $u'$ having $w$ as a vertex. If $y=y''$, then $L_w=\{u,u'\}$, otherwise, $L_w$ cannot be linked. If $y\neq y''$, then let $u''$ be the 2-cell sharing $y''$ with $u'$, and let $y'''$ be the second edge of $u''$ having $w$ as a vertex. We have that $y'''\neq y',y''$ and if $y=y'''$, then $L_w=\{u,u',u''\}$ by the same reasons as above. If $y\neq y'''$, then it is evident how to proceed with this listing of $L_w$ until we reach $L_w=\{u,u',u'',\ldots,u^{(k)}\}$ such that every two consecutive members, as well as $u$ and $u^{(k)}$, are $w$-neighbours. Then $U_w$ is an open disc triangulated in $k+1$ triangles. By (5), we have that $|L|$ is orientable. Since $L$ is an $\mathcal{M}$-complex, it is connected. Hence, $|L|$ is connected and it has just two possible orientations.
\end{proof}

\section{Permutations and switching of triangles}\label{permutations}
All the entailments that we have, up to now, are of the form: conclude a Menelaus configuration from several such configurations. Is the above ``surface'' reasoning the only one of such a form? We show that there are some other, quite elementary, reasonings with Menelaus configurations, which keep this form.

\begin{rem}\label{permutation}
If $(A_1,A_2,A_3,B_1,B_2,B_3)$ makes a Menelaus configuration and $\pi$ is a permutation of the set $\{1,2,3\}$, then it is easy to check that \[(A_{\pi(1)},A_{\pi(2)},A_{\pi(3)},B_{\pi(1)},B_{\pi(2)},B_{\pi(3)})\] makes a Menelaus configuration, too.
\end{rem}

\begin{rem}\label{switching}
If $(A,B,C,P,Q,R)$ makes a Menelaus configuration, then the sextuples $(B,P,R,Q,A,C)$, $(A,R,Q,P,C,B)$ and $(C,P,Q,R,A,B)$ make Menelaus configurations, too.
\end{rem}

\begin{center}
\psset{xunit=1.0cm,yunit=1.0cm,algebraic=true,dimen=middle,dotstyle=o,dotsize=3pt 0,linewidth=0.8pt,arrowsize=3pt 2,arrowinset=0.25}
\psscalebox{.9 .9}{\begin{pspicture*}(-3.5,-1.0)(16.84,3.5)
\pspolygon[fillcolor=gray,fillstyle=solid,opacity=0.1](-3,0)(0,3)(1.48,0)
\pspolygon[fillcolor=gray,fillstyle=solid,opacity=0.1](4,0)(6,2)(10,0)
\psline(-3,0)(0,3)
\psline(1.48,0)(0,3)
\psline(-3,0)(1.48,0)
\psline(-1,2)(3,0)
\psline(4,0)(7,3)
\psline(7,3)(8.48,0)
\psline(4,0)(8.48,0)
\psline(8.48,0)(10,0)
\psline(6,2)(10,0)
\psline(1.48,0)(3,0)
\psline(-3,0)(0,3)
\psline(0,3)(1.48,0)
\psline(1.48,0)(-3,0)
\psline(4,0)(6,2)
\psline(6,2)(10,0)
\psline(10,0)(4,0)
\rput[tl](-0.1,3.4){$A$}
\rput[tl](-3.18,-0.1){$B$}
\rput[tl](1.28,-0.1){$C$}
\rput[tl](1.05,1.4){$Q$}
\rput[tl](2.9,-0.1){$P$}
\rput[tl](-1.4,2.3){$R$}
\rput[tl](6.9,3.4){$A$}
\rput[tl](3.8,-0.1){$B$}
\rput[tl](8.3,-0.1){$C$}
\rput[tl](8.05,1.4){$Q$}
\rput[tl](5.6,2.3){$R$}
\rput[tl](9.9,-0.1){$P$}
\rput[tl](-1.56,-0.4){$(A,B,C,P,Q,R)$}
\rput[tl](5.2,-0.4){$(B,P,R,Q,A,C)$}
\begin{scriptsize}
\psdots[dotstyle=*](-3,0)
\psdots[dotstyle=*](0,3)
\psdots[dotstyle=*](1.48,0)
\psdots[dotstyle=*](-1,2)
\psdots[dotstyle=*](3,0)
\psdots[dotstyle=*](0.98,1.01)
\psdots[dotstyle=*](4,0)
\psdots[dotstyle=*](7,3)
\psdots[dotstyle=*](8.48,0)
\psdots[dotstyle=*](10,0)
\psdots[dotstyle=*](6,2)
\psdots[dotstyle=*](7.98,1.01)
\end{scriptsize}
\end{pspicture*}}
\end{center}

\begin{center}
\psset{xunit=1.0cm,yunit=1.0cm,algebraic=true,dimen=middle,dotstyle=o,dotsize=3pt 0,linewidth=0.8pt,arrowsize=3pt 2,arrowinset=0.25}
\psscalebox{.9 .9}{\begin{pspicture*}(-3.5,-1.0)(16.84,3.5)
\pspolygon[fillcolor=gray,fillstyle=solid,opacity=0.1](-1,2)(0,3)(0.98,1.01)
\pspolygon[fillcolor=gray,fillstyle=solid,opacity=0.1](7.98,1.01)(8.48,0)(10,0)
\psline(-3,0)(0,3)
\psline(1.48,0)(0,3)
\psline(-3,0)(1.48,0)
\psline(-1,2)(3,0)
\psline(4,0)(7,3)
\psline(7,3)(8.48,0)
\psline(4,0)(8.48,0)
\psline(8.48,0)(10,0)
\psline(6,2)(10,0)
\psline(1.48,0)(3,0)
\rput[tl](-0.1,3.4){$A$}
\rput[tl](-3.18,-0.1){$B$}
\rput[tl](1.28,-0.1){$C$}
\rput[tl](1.05,1.4){$Q$}
\rput[tl](2.9,-0.1){$P$}
\rput[tl](-1.4,2.3){$R$}
\rput[tl](6.9,3.4){$A$}
\rput[tl](3.8,-0.1){$B$}
\rput[tl](8.3,-0.1){$C$}
\rput[tl](8.05,1.4){$Q$}
\rput[tl](5.6,2.3){$R$}
\rput[tl](9.9,-0.1){$P$}
\rput[tl](-1.56,-0.4){$(A,R,Q,P,C,B)$}
\rput[tl](5.2,-0.4){$(C,P,Q,R,A,B)$}
\psline(-1,2)(0,3)
\psline(0,3)(0.98,1.01)
\psline(0.98,1.01)(-1,2)
\psline(7.98,1.01)(8.48,0)
\psline(8.48,0)(10,0)
\psline(10,0)(7.98,1.01)
\begin{scriptsize}
\psdots[dotstyle=*](-3,0)
\psdots[dotstyle=*](0,3)
\psdots[dotstyle=*](1.48,0)
\psdots[dotstyle=*](-1,2)
\psdots[dotstyle=*](3,0)
\psdots[dotstyle=*](0.98,1.01)
\psdots[dotstyle=*](4,0)
\psdots[dotstyle=*](7,3)
\psdots[dotstyle=*](8.48,0)
\psdots[dotstyle=*](10,0)
\psdots[dotstyle=*](6,2)
\psdots[dotstyle=*](7.98,1.01)
\end{scriptsize}
\end{pspicture*}}
\end{center}

\begin{proof}
If $A$, $B$ and $C$ are not collinear, then this follows by using the Menelaus theorem in both
directions. If $A$, $B$ and $C$ are collinear, then let $A'$ be a
point outside the line $BC$ (for example, $A'A$ is perpendicular
to $BC$, as below) and let $Q'\in A'C$ and $R'\in A'B$ be such
that $A'A\parallel Q'Q\parallel R'R$. Then it remains to apply the
Thales theorem, and the Menelaus theorem in both directions.
\end{proof}

\begin{center}
\begin{picture}(200,120)(0,-17)
\unitlength.8pt
{\thicklines
\put(0,0){\line(1,0){180}} \put(180,0){\line(1,0){60}}
\put(0,0){\line(1,1){80}} \put(80,80){\line(1,1){40}}
\put(180,0){\line(-1,2){20}} \put(160,40){\line(-1,2){40}}
\put(240,0){\line(-2,1){80}} \put(160,40){\line(-2,1){80}}}

{\thinlines \put(80,0){\line(0,1){80}} \put(120,0){\line(0,1){120}} \put(160,0){\line(0,1){40}}
}

\put(0,0){\circle*{3}} \put(180,0){\circle*{3}}
\put(240,0){\circle*{3}} \put(80,80){\circle*{3}}
\put(120,120){\circle*{3}} \put(160,40){\circle*{3}} \put(120,0){\circle*{3}} \put(80,0){\circle*{3}} \put(160,0){\circle*{3}}

\put(0,-3){\makebox(0,0)[tr]{$B$}}
\put(180,-3){\makebox(0,0)[t]{$C$}}
\put(240,-3){\makebox(0,0)[tl]{$P$}}
\put(78,82){\makebox(0,0)[br]{$R'$}}
\put(120,123){\makebox(0,0)[b]{$A'$}}
\put(162,42){\makebox(0,0)[bl]{$Q'$}} \put(80,-3){\makebox(0,0)[t]{$R$}} \put(120,-3){\makebox(0,0)[t]{$A$}} \put(160,-3){\makebox(0,0)[t]{$Q$}}

\end{picture}
\end{center}

The symmetric group $S_6$, which acts naturally on sextuples,
contains a subgroup $G$ generated by the permutations
$s=(123)(456)$ and $t=(26)(35)$. The group $G$ is of order 24 and it is isomorphic to the \emph{octahedral group} (also isomorphic to $S_4$), presented by $\langle s,t\mid s^3,t^2,(st)^4\rangle$. By Remarks \ref{permutation} and \ref{switching}, it follows that if a sextuple makes a Menelaus configuration, then every sextuple from its $G$-orbit makes a Menelaus configuration, too.

\section{The Menelaus system}\label{system}
The aim of this section is to introduce a one-sided sequent
system, which deals with propositions of the form ``this sextuple
of points makes a Menelaus configuration''. An intuition behind
the sequents of our system is that an arbitrary formula in a
sequent is entailed by the remaining formulae of the sequent.

Probably the most prominent one-sided sequent system is the system
for the multiplicative fragment of \textit{linear logic},
introduced by Girard \cite{girard87}. It consists of one axiom
scheme $\vdash \varphi^\bot,\varphi$, where $\varphi^\bot$ denotes
the \emph{linear negation} of $\varphi$, two structural rules: cut
and exchange
\[
\f{\vdash\Gamma,\varphi \quad \vdash\Delta,\varphi^\bot}{\vdash\Gamma,\Delta} \quad\quad\quad \f{\vdash\Gamma,\varphi,\psi,\Delta}{\vdash\Gamma,\psi,\varphi,\Delta},
\]
and two rules for connectives
\[
\f{\vdash\Gamma,\varphi\quad \vdash\Delta,\psi}{\vdash\Gamma,\Delta,\varphi\otimes \psi} \quad\quad\quad \f{\Gamma,\varphi,\psi}{\vdash\Gamma, \varphi\wp \psi}.
\]

An alternative formal system for the same fragment of linear logic is the system of \textit{proof nets}.
There are many criteria of correctness to ensure that a given derivation is actually a proof net. One geometric criterion that differs from but is comparable to what we have mentioned about reasoning with triangulations of surfaces in Section~\ref{configurations} is the \textit{acyclic-connected correctness criterion} of Danos and Regnier, \cite{danos-regnier}.

Some further examples of one-sided sequent systems are the Gentzen-Sch\" utte-Tait system (see \cite{schutte} and \cite{tait}), which is a standard one-sided formulation of the propositional fragment of Gentzen's classical sequent calculus, and the one-sided sequent system called \textit{minimal sequent calculus} introduced in \cite{hughes10}.

In order to build the formal language, we introduce the following
set whose elements take the role of atomic formulae. For an
arbitrary countable set $\mathcal{W}$, let
\[
F^6(\mathcal{W})=\mathcal{W}^6-\{(X_1,\ldots,X_6)\in
\mathcal{W}^6\mid X_i=X_j\;\mbox{\rm for some}\; i\neq j\}.
\]

The \emph{atomic formulae} of our language are the elements of
$F^6(\mathcal{W})$. (It would be more convenient to write a
predicate symbol in front of a sextuple, but since we deal with
one predicate only, we will use no symbol for it.) The
\emph{formulae} are built out of atomic formulae by using the
connective $\diskon$, which plays the role of conjunction and
disjunction, simultaneously, and $\leftrightarrow$, which plays
the role of two implications, simultaneously. The metavariables we
use for formulae are $\varphi,\psi,\theta,\ldots$, possibly with
indices. A \emph{sequent} is a finite multiset of formulae, and
the sequent consisting of a multiset $\Gamma$ is denoted by
$\vdash \Gamma$.

The axiomatic sequents are formed in the following manner. For
every $\mathcal{M}$-complex $L$ such that $L_0\cup L_1\subseteq
\mathcal{W}$, let $\nu\colon L_2\to F^6(\mathcal{W})$ be defined
as

\[\nu x=(d_1d_2 x, d_0d_2 x, d_0d_0 x, d_0 x, d_1 x, d_2 x)\]
 (cf.\ the definition of the operator $\mu$ in
Section~\ref{configurations}).

The set of axiomatic sequents includes the sequents of the form
\begin{equation}\label{ax-com}
\vdash\{\nu x\mid x\in L_2\},
\end{equation}
for every $\mathcal{M}$-complex $L$ whose 0 and 1-cells belong to
$\mathcal{W}$. For example, let $L$ be the sphere $S^2$, together
with the $\mathcal{M}$-complex structure given by two 2-cells
having the same boundaries (see Section~\ref{configurations}, Example~\ref{Mcomplex}~(e)).
If the 0-cells are $A$, $B$, $C$, and the corresponding 1-cells are $P$, $Q$, $R$ respectively, then
\begin{equation}\label{identity}
\vdash (A,B,C,P,Q,R), (A,B,C,P,Q,R)
\end{equation}
is an axiomatic sequent playing the role of the \emph{identity} derivation.

Moreover, we have the following two axiom schemata corresponding to permutations of vertices and switching of triangles (cf.\ Section~\ref{permutations}).
\begin{equation}\label{perm-swit}
\begin{array}l
\vdash (A,B,C,P,Q,R), (B,C,A,Q,R,P),
\\[.5ex]
\vdash(A,B,C,P,Q,R), (A,R,Q,P,C,B).
\end{array}
\end{equation}

The rules of inference of the system are introduced as follows. Besides the \emph{cut} rules:
\begin{equation}\label{rule-cut}
\f{\vdash\Gamma,\varphi \quad \vdash\Delta,\varphi}{\vdash\Gamma,\Delta} \quad\quad\quad \f{\vdash\Gamma \quad \vdash\Delta}{\vdash\Gamma,\Delta}
\end{equation}
there are the following inference rules of $\diskon$-\emph{introduction} and $\leftrightarrow$-\emph{introduction}:
\begin{equation}\label{rule-inf}
\f{\vdash\Gamma,\varphi \quad
\vdash\Gamma,\psi}{\vdash\Gamma,\varphi\diskon\psi}
\quad\quad\quad \f{\vdash\Gamma,\varphi \quad
\vdash\Delta,\psi}{\vdash\Gamma,\Delta,\varphi\leftrightarrow\psi}.
\end{equation}
The formula $\varphi$ in the first cut rule is called the
\emph{cut formula}, and we refer to the second cut rule as the one
``whose cut formula is empty''. The second cut rule enables us to
take into account just connected complexes while creating the
family of $\mathcal{M}$-complexes.

%irreducible M-complexes
 Analogously, the first cut rule enables us to build the axiomatic sequents not with respect to arbitrary $\mathcal{M}$-complexes, but to restrict this family to those not expressible as \emph{connected sums} of two simpler complexes (see Section \ref{sneg}, and in particular Proposition \ref{connected-cut}, see also Example~\ref{example-3}  in Section~\ref{reading} and  Example \ref{counterexample} in Section  \ref{long-normal-form}). We do not take advantage of this opportunity in the present paper.

Note that our connective $\diskon$ corresponds to the \textit{additive} connective $\&$ in linear logic (cf.\ \cite[Section~1.15]{girard87}). On the other hand, our $\leftrightarrow$ corresponds to the \textit{multiplicative} connective $\otimes$ (cf.\ the beginning of this section). The only difference between the cut rules is that, in linear logic, the cut formula in the right premise is linearly negated. Informally, in the Menelaus system, a formula $\varphi$ coincides with its linear negation $\varphi^{\bot}$.

\vspace{1ex}

\begin{rem} \label{less-syntactical}
Alternatively, we could introduce the system in a slightly less
syntactical manner. Instead of taking $F^6(\mathcal{W})$ as the
set of atomic formulae, we could start with the orbit set
$F^6(\mathcal{W})/G$, where $G$ is the octahedral group introduced in Section~\ref{permutations}, and omit the two axiom schemata  (\ref{perm-swit}). Similar
``strictifications'' are ubiquitous in logic, e.g.\ omitting
parentheses and neglecting the order of conjuncts in purely
conjunctive formulae. In such cases one works not with syntactical
objects but with classes of equivalence. We shall stick here  to
the non-quotiented syntax, but shall freely use implicitly the
more practical quotiented one for the examples of derivations
shown in Section \ref{reading}.
 \end{rem}

\section{The soundness}\label{secsound}
A \emph{Euclidean interpretation} is a function from $\mathcal{W}$
to $\mathbf{R}^2$, and we abuse the notation by denoting by $X$
the point which is the interpretation of $X\in \mathcal{W}$. We
say that an interpretation \emph{satisfies} the atomic formula
$(A,B,C,P,Q,R)$, when the sextuple $(A,B,C,P,Q,R)$ of points in
$\mathbf{R}^2$ makes a Menelaus configuration.

Let $\Gamma\models_E \varphi$ mean that every Euclidean interpretation that satisfies every formula in $\Gamma$ also satisfies $\varphi$, where every occurrence of $\diskon$ in $\Gamma$ is interpreted as disjunction $\vee$ and every occurrence of $\diskon$ in $\varphi$ is interpreted as conjunction $\wedge$. Concerning the connective $\leftrightarrow$, it is always interpreted as classical equivalence.

\begin{prop}[Soundness]\label{sound}
If $\vdash\Gamma,\varphi$ is derivable, then $\Gamma\models_E\varphi$.
\end{prop}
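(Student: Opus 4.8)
The plan is to prove, by induction on the derivation, a statement stronger than the one asked for, so that the induction hypothesis can be applied however a derivable sequent is split into $\Gamma$ and a single formula. For a formula $\theta$ and a Euclidean interpretation $e$, write $\theta^-(e)$ for the truth value of $\theta$ obtained by reading every occurrence of $\diskon$ as $\vee$, and $\theta^+(e)$ for the one obtained by reading every occurrence of $\diskon$ as $\wedge$ (in both cases $\leftrightarrow$ is the classical biconditional); on an atomic formula $\theta^-(e)=\theta^+(e)$ just says that the sextuple $\theta$ makes a Menelaus configuration under $e$. Call a sequent $\vdash\psi_1,\dots,\psi_n$ \emph{valid} if for every $e$ either (i) $\psi_i^-(e)$ and $\psi_i^+(e)$ hold for every $i$, or (ii) $\psi_i^-(e)$ fails for at least two values of $i$. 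Validity of $\vdash\Gamma,\varphi$ gives $\Gamma\models_E\varphi$ at once: if $e$ satisfies every formula of $\Gamma$ with $\diskon$ read as $\vee$, then $\varphi$ is the only formula of $\vdash\Gamma,\varphi$ whose $(\cdot)^-$-value can be false, so (ii) is impossible, hence (i) holds and in particular $\varphi^+(e)$ is true. So it suffices to show that every derivable sequent is valid.

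For the base cases, the axioms (\ref{identity}) and (\ref{perm-swit}) are valid because, by Remarks~\ref{permutation} and~\ref{switching}, the two atomic formulae in each of them hold under exactly the same interpretations; hence for every $e$ they are both true (case (i), which for atomic formulae just requires that common truth value to be true) or both false (case (ii)). For an axiom (\ref{ax-com}) built from an $\mathcal{M}$-complex $L$, let $v$ be the restriction of $e$ to $L_0\cup L_1$ and $\mu$ the associated operator of Section~\ref{configurations}; then $e$ satisfies the atom $\nu x$ precisely when $\mu x$ makes a Menelaus configuration. Since $L$ is an $\mathcal{M}$-complex, the sum of its $2$-cells with appropriate signs is a generating $2$-cycle of $H_2(L;\mathbf{Z})$ (Proposition~\ref{surface} and the orientability clause~(5)), so Proposition~\ref{cycle}, applied with an arbitrary cell of $L_2$ in the role of $x_n$, shows that whenever all but one of the sextuples $\mu x$ ($x\in L_2$) make Menelaus configurations, so does the last; equivalently, the number of $x\in L_2$ with $\mu x$ not a Menelaus configuration is never exactly $1$, which is validity (clause (i) being automatic for atomic formulae).

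For the inductive step I would first isolate the observation that \emph{if $\vdash\Gamma,\chi$ is valid and $e$ makes the $(\cdot)^-$-value of every formula of $\Gamma$ true, then $\chi^-(e)$, $\chi^+(e)$ and $\gamma^+(e)$ for all $\gamma\in\Gamma$ are true}: indeed $\chi$ is then the only formula whose $(\cdot)^-$-value can fail, so there are $0$ or $1$ failures, (ii) excludes $1$, so there are none and (i) applies. The two introduction rules of (\ref{rule-inf}) and the cut rule of (\ref{rule-cut}) with empty cut formula follow from this. For instance, for $\diskon$-introduction from $\vdash\Gamma,\varphi$ and $\vdash\Gamma,\psi$ (where $\varphi\diskon\psi$ reads as $\varphi^-\vee\psi^-$ on the left and as $\varphi^+\wedge\psi^+$ on the right): if the conclusion $\vdash\Gamma,\varphi\diskon\psi$ had exactly one $(\cdot)^-$-failure under some $e$, that failure is either $\varphi\diskon\psi$ itself --- so $\varphi^-(e),\psi^-(e)$ are both false and all of $\Gamma$ is true, whence $\vdash\Gamma,\varphi$ has exactly one $(\cdot)^-$-failure --- or some $\gamma_0\in\Gamma$ --- so $\varphi^-(e)$ or $\psi^-(e)$ holds, say the former, and again $\vdash\Gamma,\varphi$ has exactly one $(\cdot)^-$-failure --- contradicting the validity of a premise either way; and if the conclusion has no $(\cdot)^-$-failure under $e$, then all of $\Gamma$ is true in the $(\cdot)^-$-reading, so the observation applied to both premises makes $\varphi^+(e),\psi^+(e)$ and all $\gamma^+(e)$ true, hence $(\varphi\diskon\psi)^+(e)$ true. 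The rule for $\leftrightarrow$ and the empty-cut rule are handled in the same style, using the truth table of the biconditional.

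The remaining case, the cut rule with cut formula $\varphi$ from $\vdash\Gamma,\varphi$ and $\vdash\Delta,\varphi$ to $\vdash\Gamma,\Delta$, is the one requiring real care, and it explains the shape of the invariant. The naive strengthening ``every formula of a derivable sequent is $\models_E$-entailed by the rest'' would, to compose through cut, need $\varphi^+(e)\Rightarrow\varphi^-(e)$; but this fails once a $\diskon$ sits inside a $\leftrightarrow$ --- for example $\varphi=(a\diskon b)\leftrightarrow c$ with $a$ true and $b,c$ false has $\varphi^+(e)$ true and $\varphi^-(e)$ false --- which is precisely why clause (i) demands $\varphi^-(e)$ \emph{and} $\varphi^+(e)$. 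Now, if $\vdash\Gamma,\Delta$ had exactly one $(\cdot)^-$-failure under some $e$, at a formula of $\Gamma$, say $\gamma_0$ (the case of a formula of $\Delta$ being symmetric), then every formula of $\Delta$ has a true $(\cdot)^-$-value, so the observation applied to $\vdash\Delta,\varphi$ forces $\varphi^-(e)$ true, and then $\vdash\Gamma,\varphi$ has exactly one $(\cdot)^-$-failure, namely $\gamma_0$ --- contradicting its validity. And if $\vdash\Gamma,\Delta$ has no $(\cdot)^-$-failure under $e$, then all of $\Gamma$ and all of $\Delta$ have true $(\cdot)^-$-values, so the observation applied to $\vdash\Gamma,\varphi$ gives $\varphi^-(e),\varphi^+(e)$ and all $\gamma^+(e)$ true, and applied to $\vdash\Delta,\varphi$ gives all $\delta^+(e)$ true, so clause (i) holds for $\vdash\Gamma,\Delta$. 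This closes the induction, hence the Proposition. The only genuinely delicate point is the choice of this strengthened invariant; everything else is bookkeeping of truth values.
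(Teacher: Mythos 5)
Your proof is correct, and while it follows the paper's broad strategy of induction on derivations, the induction statement is genuinely different. The paper inducts directly on the claim $\Gamma\models_E\varphi$, taken for every way of singling out one formula of the derived sequent (this is its Proposition~\ref{sound2}), and it treats the cut in one sentence: the induction hypothesis applied to $\vdash\Gamma_1,\psi$ yields that the interpretation satisfies the cut formula $\psi$, which is then fed as a hypothesis into the other premise. As you observe, this silently passes from the conjunctive reading of $\diskon$ inside $\psi$ (what the first application of the induction hypothesis delivers) to the disjunctive reading (what the second application needs); that implication holds for purely $\diskon$-built or $\diskon$-free formulas, but fails once a $\diskon$ sits under a $\leftrightarrow$, as in your example $(a\diskon b)\leftrightarrow c$. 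Your invariant --- for every interpretation, either all formulas of the sequent are true in both readings, or at least two fail in the disjunctive reading --- is exactly the strengthening needed to make the cut case compose without that hidden monotonicity assumption, and your verification of the axioms is the same as the paper's (Proposition~\ref{cycle} applied with each $2$-cell in turn in the role of $x_n$, after possibly negating the orientation, plus Remarks~\ref{permutation} and~\ref{switching}), as is the routine handling of the two introduction rules. What the paper's formulation buys is brevity and an induction phrased directly in terms of the semantic consequence relation; what yours buys is a per-interpretation invariant that is stable under all the rules, in particular an airtight cut case, and it yields the paper's validity reformulation (Proposition~\ref{sound2}) as an immediate byproduct.
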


\begin{proof}
We proceed by induction on the complexity of a derivation of $\vdash\Gamma,\varphi$. Assume that an interpretation satisfies every formula in $\Gamma$.

If $\vdash\Gamma,\varphi$ is an axiomatic sequent derived from an $\mathcal{M}$-complex $L$, then we proceed as in the proof of Proposition~\ref{cycle}, with $c$ being an orientation of $L$. If $\vdash\Gamma,\varphi$ is an instance of one of the two axiomatic schemata, then we rely on Remarks~\ref{permutation} and~\ref{switching}.

If the last inference rule in the derivation of $\vdash\Gamma,\varphi$ is
\[
\f{\vdash\Gamma_1,\psi \quad \vdash\Gamma_2,\varphi,\psi}{\vdash\Gamma_1,\Gamma_2,\varphi},
\]
then by the induction hypothesis applied to $\vdash\Gamma_1,\psi$, the interpretation satisfies $\psi$. Thence, by the induction hypothesis  applied to $\vdash\Gamma_2,\varphi,\psi$, the interpretation satisfies $\varphi$. If the cut formula $\psi$ is empty, then the induction hypothesis applied to the right premise $\vdash\Gamma_2,\varphi$ is sufficient. We proceed analogously when $\varphi$ occurs in the left premise of the cut rule.

If the last inference rule is
\[
\f{\vdash\Gamma_1,\varphi,\psi \quad
\vdash\Gamma_1,\varphi,\theta}{\vdash\Gamma_1,\varphi,\psi\diskon\theta},
\]
then the interpretation that satisfies $\Gamma_1,\psi\diskon\theta$, by the definition of the interpretation of $\diskon$ in $\Gamma$, must satisfy $\Gamma_1,\psi$ or $\Gamma_1,\theta$, and by the induction hypothesis applied to the corresponding premise, one obtains that the interpretation satisfies $\varphi$ too.

If the last inference rule is
\[
\f{\vdash\Gamma,\varphi_1 \quad
\vdash\Gamma,\varphi_2}{\vdash\Gamma,\varphi_1\diskon\varphi_2},
\]
then by the induction hypothesis applied to both premisses, one obtains that both $\varphi_1$ and $\varphi_2$ are satisfied by the interpretation. Hence, $\varphi=\varphi_1\diskon\varphi_2$ is satisfied by the definition of the interpretation of $\diskon$ in $\varphi$.

If the last inference rule is
\[
\f{\vdash\Gamma_1,\psi \quad
\vdash\Gamma_2,\varphi,\theta}{\vdash\Gamma_1,\Gamma_2,\varphi, \psi\leftrightarrow\theta},
\]
then by the induction hypothesis applied to the left premise, one obtains that $\psi$ is satisfied by the interpretation, and since $\psi\leftrightarrow\theta$ is satisfied, $\theta$ must be satisfied too. Then by the induction hypothesis applied to the right premise, one obtains that $\varphi$ is satisfied. We proceed analogously when $\varphi$ occurs in the left premise of the rule.

Finally, if the last inference rule is
\[
\f{\vdash\Gamma_1,\varphi_1 \quad
\vdash\Gamma_2,\varphi_2}{\vdash\Gamma_1,\Gamma_2,\varphi_1\leftrightarrow \varphi_2},
\]
then by the induction hypothesis applied to both premisses, one obtains that both $\varphi_1$ and $\varphi_2$ are satisfied by the interpretation. Hence, $\varphi=\varphi_1\leftrightarrow \varphi_2$ is satisfied too.
\end{proof}

A multiset $\Delta$ of formulae is \emph{valid} when for every
$\delta\in \Delta$ we have that $$\Delta-\{\delta\}\models_E
\delta.$$ Proposition~\ref{sound} has the following reformulation.

\begin{prop}\label{sound2}
If $\vdash\Delta$ is derivable, then $\Delta$ is valid.
\end{prop}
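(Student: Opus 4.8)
The plan is to obtain Proposition~\ref{sound2} as an immediate corollary of Proposition~\ref{sound}, with no new inductive work. Suppose $\vdash\Delta$ is derivable and let $\delta\in\Delta$ be arbitrary. Viewing $\Delta$ as a multiset, I single out \emph{one} occurrence of $\delta$ and write $\Delta=\Gamma,\delta$, where $\Gamma$ is the multiset $\Delta-\{\delta\}$. Then $\vdash\Gamma,\delta$ is literally the same sequent as $\vdash\Delta$, hence derivable, so Proposition~\ref{sound} applies and gives $\Gamma\models_E\delta$, i.e.\ $\Delta-\{\delta\}\models_E\delta$. Since $\delta$ was an arbitrary element of $\Delta$, this establishes exactly the condition defining validity of $\Delta$. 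The degenerate case $\Delta=\emptyset$ is vacuously valid.

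The single point deserving a comment is the asymmetric reading of $\diskon$: in $\Gamma\models_E\varphi$, occurrences of $\diskon$ in the context $\Gamma$ are read disjunctively while occurrences of $\diskon$ in the distinguished formula $\varphi$ are read conjunctively (the connective $\leftrightarrow$ is read as classical equivalence in either position, so it causes no asymmetry). This is precisely why the definition of validity is stated fibrewise, as $\Delta-\{\delta\}\models_E\delta$ for each $\delta$ separately: distinguishing a different occurrence simply re-invokes Proposition~\ref{sound} with that occurrence in the role of $\varphi$, and the $\diskon$'s are re-read accordingly. Thus the only thing to be careful about is the bookkeeping of multiset membership — that when $\delta$ occurs with multiplicity $k>1$ in $\Delta$ we remove just one copy — and this I expect to be the whole of the ``difficulty''; there is no genuine obstacle, the statement being a packaging of Proposition~\ref{sound}.
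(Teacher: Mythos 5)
Your proposal is correct and coincides with the paper's treatment: the paper offers no separate argument for Proposition~\ref{sound2}, presenting it simply as a reformulation of Proposition~\ref{sound}, which is precisely the unpacking you carry out (for each $\delta\in\Delta$, write $\Delta=\Gamma,\delta$ and apply Proposition~\ref{sound} to that occurrence). The remarks on the asymmetric reading of $\diskon$ and on removing a single copy of $\delta$ are accurate but add nothing beyond what the definition of validity already encodes.
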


The converse of Proposition~\ref{sound2}, which would be
completeness proper, however, does not hold. For a simple
counterexample consider the sequent
\begin{equation}\label{unprov1}
\vdash (A,B,P,C,X,R), (A,C,P,B,X,Q), (B,R,C,X,P,A), (A,R,C,X,Q,B)
\end{equation}
(One could take the following picture as an illustration.)
\begin{center}
\begin{tikzpicture}[scale=0.75][line cap=round,line join=round,>=triangle 45,x=1.0cm,y=1.0cm]
\draw (4.0,3.0)-- (2.0,-1.0);
\draw (4.0,3.0)-- (8.0,-1.0);
\draw (2.0,-1.0)-- (8.0,-1.0);
\draw (2.0,-1.0)-- (5.69,1.3099999999999996);
\draw (4.0,3.0)-- (5.0,-1.0);
\draw (3.1550000000000007,1.3100000000000007)-- (8.0,-1.0);
\draw (3.7,3.65) node[anchor=north west] {$A$};
\draw (1.7,-1) node[anchor=north west] {$B$};
\draw (7.7,-1) node[anchor=north west] {$C$};
\draw (4.7,-1) node[anchor=north west] {$P$};
\draw (5.6,1.8) node[anchor=north west] {$Q$};
\draw (2.5,1.8) node[anchor=north west] {$R$};
\draw (4.1,0.6) node[anchor=north west] {$X$};
\begin{scriptsize}
\draw [fill=black] (4.0,3.0) circle (1.5pt);
\draw [fill=black] (2.0,-1.0) circle (1.5pt);
\draw [fill=black] (8.0,-1.0) circle (1.5pt);
\draw [fill=black] (5.69,1.3099999999999996) circle (1.5pt);
\draw [fill=black] (5.0,-1.0) circle (1.5pt);
\draw [fill=black] (4.594024604569421,0.6239015817223198) circle (1.5pt);
\draw [fill=black] (3.1550000000000007,1.3100000000000007) circle (1.5pt);
\end{scriptsize}
\end{tikzpicture}
\end{center}
In order to prove that the sequent \ref{unprov1} is valid, assume
that an interpretation satisfies the first three formulae of that
sequent. By using the fact that for three mutually distinct collinear points $U$, $V$ and $W$
\[
(U,V;W)\cdot (V,W;U)\cdot (W,U;V)=1,
\]
it is straightforward to show that
\[
(A,R;B)\cdot (R,C;X)\cdot (C,A;Q)=-1,
\]
which means that the last formula in the sequent is satisfied by
the interpretation. We proceed analogously for the other three
cases in the proof that \ref{unprov1} is valid.

Our proof that the sequent \ref{unprov1} is not derivable, requires some results concerning decidability of the Menelaus system and it is postponed to the end of Section~\ref{secdecidability} (see Example~\ref{exunprov1}).

\section{The projective interpretation}
We define the projective interpretation and the satisfiability relation along the lines of the previous section. Then the corresponding soundness result is a corollary of Proposition~\ref{sound}.

A \emph{projective interpretation} is a function from
$\mathcal{W}$ to the \emph{projective plane} $\mathbf{RP}^2$.
Again, we denote by $X$ the point in $\mathbf{RP}^2$, which is the
interpretation of $X\in \mathcal{W}$.

We consider the points of $\mathbf{RP}^2$ as lines through the origin in $\mathbf{R}^3$. For a finite set $\mathcal{S}$ of such lines there is a plane $\alpha$ in $\mathbf{R}^3$, not containing the origin, which intersects all of them. (It is sufficient to choose a plane whose normal vector is not orthogonal to a direction vector of any line in $\mathcal{S}$, and such a plane exists since $\mathbf{R}^3$ cannot be covered by a finite number of planes.) In that case, we say that $\alpha$ \emph{intersects properly} the points from $\mathcal{S}$, and for every $A\in \mathcal{S}$ we denote the intersection of $A$ and $\alpha$ by~$A_\alpha$.

\begin{lem}
Let $\alpha$ and $\beta$ be two planes in $\mathbf{R}^3$ that intersect properly the points $A$, $B$, $C$, $P$, $Q$ and $R$ from $\mathbf{RP}^2$. If the sextuple $(A_\alpha,B_\alpha,C_\alpha,P_\alpha,Q_\alpha,R_\alpha)$ makes a Menelaus configuration, then $(A_\beta,B_\beta,C_\beta,P_\beta,Q_\beta,R_\beta)$ makes it too.
\end{lem}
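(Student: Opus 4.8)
The plan is to reduce everything to one fact about the central projection from the origin, namely the map $\pi$ sending each $X_\alpha$ to $X_\beta$ (for $X$ among $A,B,C,P,Q,R$): it multiplies a simple ratio by a factor depending only on the two endpoints, and these factors cancel cyclically in the Menelaus product. First I would make $\pi$ explicit. Write $\beta=\{v\in\mathbf{R}^3\mid\langle n,v\rangle=1\}$ for the appropriate normal $n$, and set $g(v)=\langle n,v\rangle$, a linear functional on $\mathbf{R}^3$. Since $X_\alpha$ and $X_\beta$ lie on the same line through the origin, $X_\beta=tX_\alpha$, and $X_\beta\in\beta$ forces $t=1/g(X_\alpha)$; thus $\pi(X_\alpha)=X_\alpha/g(X_\alpha)$. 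Because $\beta$ intersects $A,B,C,P,Q,R$ properly, $g$ is nonzero at each of the six points $A_\alpha,\dots,R_\alpha$, so $\pi$ is well defined on them. It is injective there (if $X_\beta=Y_\beta$ then $X_\alpha,Y_\alpha$ span the same line through the origin, so $X=Y$ in $\mathbf{RP}^2$), and it preserves collinearity (the image of a line of $\alpha$ lies in the intersection with $\beta$ of the plane spanned by that line and the origin). Hence, for any three of the six points, $(X_\alpha,Y_\alpha;Z_\alpha)$ is defined iff $(X_\beta,Y_\beta;Z_\beta)$ is defined.

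Next I would compute how a simple ratio transforms. Given distinct collinear $X_\alpha,Y_\alpha,Z_\alpha$ among the six, write $Z_\alpha=(1-u)X_\alpha+uY_\alpha$ in $\mathbf{R}^3$, so that $(X_\alpha,Y_\alpha;Z_\alpha)=u/(1-u)$. Applying $\pi$ and using linearity of $g$ to expand $g(Z_\alpha)=(1-u)g(X_\alpha)+u\,g(Y_\alpha)$, a short calculation rewrites $Z_\beta$ as $(1-u')X_\beta+u'Y_\beta$ with $u'/(1-u')=\bigl(u/(1-u)\bigr)\cdot\bigl(g(Y_\alpha)/g(X_\alpha)\bigr)$, i.e.
\[
(X_\beta,Y_\beta;Z_\beta)=\frac{g(Y_\alpha)}{g(X_\alpha)}\,(X_\alpha,Y_\alpha;Z_\alpha).
\]
The correction factor depends only on the endpoints. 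Applying this to the triples $(B,C;P)$, $(C,A;Q)$, $(A,B;R)$, the three factors are $g(C_\alpha)/g(B_\alpha)$, $g(A_\alpha)/g(C_\alpha)$ and $g(B_\alpha)/g(A_\alpha)$, whose product is $1$, so
\[
(B_\beta,C_\beta;P_\beta)\,(C_\beta,A_\beta;Q_\beta)\,(A_\beta,B_\beta;R_\beta)=(B_\alpha,C_\alpha;P_\alpha)\,(C_\alpha,A_\alpha;Q_\alpha)\,(A_\alpha,B_\alpha;R_\alpha).
\]
Together with the preservation of definedness from the first paragraph, this shows that if the $\alpha$-sextuple makes a Menelaus configuration (all three ratios defined, product $-1$), then so does the $\beta$-sextuple.

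I expect the only real subtlety to be isolating the clean form $\pi(p)=p/g(p)$ with $g$ linear: once one has it, the cyclic cancellation of the endpoint factors is immediate, and in particular one avoids splitting into the cases ``$A,B,C$ collinear'' and ``$A,B,C$ not collinear''. (In the non-collinear case one could instead invoke the Menelaus theorem together with the projective invariance of collinearity and of the incidences $P\in BC$, $Q\in CA$, $R\in AB$; but the collinear case, which the earlier discussion shows does occur, does not reduce to that theorem, so the uniform argument via $g$ is the more economical route.)
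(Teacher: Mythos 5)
Your proposal is correct, and it takes a genuinely different route from the paper. The paper argues by cases: when $A,B,C$ are not collinear it invokes the Menelaus theorem in both directions, using only that central projection through the origin preserves collinearity and the incidences $P\in BC$, $Q\in CA$, $R\in AB$; when $A,B,C$ are collinear it introduces an auxiliary point $D_\alpha$ off the line together with points $U_\alpha,V_\alpha,W_\alpha$, feeds the resulting sextuples into the tetrahedral axiomatic sequent, and applies the Euclidean soundness result (Proposition~\ref{sound}) twice to transfer the configuration to $\beta$, reducing the degenerate case to the non-degenerate one. You instead make the central projection explicit as $\pi(p)=p/g(p)$ with $g$ the linear functional cutting out $\beta$, check (correctly, given the paper's sign convention, since $Z=(1-u)X+uY$ gives $(X,Y;Z)=u/(1-u)$ in all positions of $Z$) that each simple ratio gets multiplied by $g(Y_\alpha)/g(X_\alpha)$, and observe that these endpoint factors cancel cyclically, so the whole Menelaus product is literally invariant under the projection. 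This buys a uniform, case-free and elementary argument that in fact proves more than the lemma (invariance of the product, not just of its value $-1$), and it needs no general-position choices of auxiliary points; what the paper's route buys is a proof that stays within its own toolkit (the Menelaus theorem plus the soundness of the sequent system), illustrating how the formal system can be used as an assistant even inside the semantic development. The only points to make sure are spelled out in a written-up version are the ones you already flag: properness of the intersection gives $g\neq 0$ at the six points (so $\pi$ and the denominators are defined), and distinctness and collinearity of each triple transfer between $\alpha$ and $\beta$, so the three $\beta$-ratios are defined.
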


\begin{proof}
If $A$, $B$ and $C$ are not collinear, then neither $A_\alpha$, $B_\alpha$ and $C_\alpha$, nor
$A_\beta$, $B_\beta$ and $C_\beta$ are collinear and since $P_\alpha$, $Q_\alpha$ and $R_\alpha$ are three collinear points lying on the lines $B_\alpha C_\alpha$, $C_\alpha A_\alpha$
and $A_\alpha B_\alpha$ respectively, we have that $P_\beta$,
$Q_\beta$ and $R_\beta$ are three collinear points lying on the lines $B_\beta C_\beta$, $C_\beta A_\beta$ and
$A_\beta B_\beta$ respectively, which means that
$(A_\beta,B_\beta,C_\beta,P_\beta,Q_\beta,R_\beta)$ makes a
Menelaus configuration.

\newrgbcolor{zzttqq}{0.7 0.7 0.7}
\psset{xunit=1.0cm,yunit=1.0cm,algebraic=true,dimen=middle,dotstyle=o,dotsize=3pt 0,linewidth=0.8pt,arrowsize=3pt 2,arrowinset=0.25}
\begin{pspicture*}(0,-3.1)(21.26,5.5)
\pspolygon[linecolor=zzttqq,fillcolor=zzttqq,fillstyle=solid,opacity=0.1](3.5,4.72)(10.44,4.78)(8.52,1.76)(1.42,1.74)
\pspolygon[linecolor=zzttqq,fillcolor=zzttqq,fillstyle=solid,opacity=0.1](2.66,0.45)(10.01,1.97)(8.88,-1.11)(1.51,-2.61)
\psline[linecolor=zzttqq](3.5,4.72)(10.44,4.78)
\psline[linecolor=zzttqq](10.44,4.78)(8.52,1.76)
\psline[linecolor=zzttqq](8.52,1.76)(1.42,1.74)
\psline[linecolor=zzttqq](1.42,1.74)(3.5,4.72)
\psline(3.14,2.74)(8.12,2.7)
\psline(4.76,4.46)(3.14,2.74)
\psline(4.76,4.46)(6.42,2.74)
\psline(4.24,3.9)(8.12,2.7)
\psline(4.04,-1.23)(7.46,-0.53)
\psline(4.86,0.4)(6.16,-0.79)
\psline(4.86,0.4)(4.04,-1.23)
\psline(4.59,-0.14)(7.46,-0.53)
\psline(3.36,1.75)(4.04,-1.23)
\psline[linestyle=dashed,dash=3pt 3pt](3.14,2.74)(3.36,1.75)
\psline[linestyle=dashed,dash=3pt 3pt](4.24,3.9)(4.42,1.75)
\psline(4.42,1.75)(4.59,-0.14)
\psline[linestyle=dashed,dash=3pt 3pt](4.76,4.46)(4.82,1.75)
\psline(4.82,1.75)(4.86,0.4)
\psline[linestyle=dashed,dash=3pt 3pt](5.75,3.43)(5.68,1.75)
\psline(5.68,1.75)(5.59,-0.28)
\psline[linestyle=dashed,dash=3pt 3pt](6.42,2.74)(6.35,1.75)
\psline(6.35,1.75)(6.16,-0.79)
\psline(7.46,-0.53)(7.93,1.76)
\psline[linestyle=dashed,dash=3pt 3pt](8.12,2.7)(7.93,1.76)
\psline[linecolor=zzttqq](2.66,0.45)(10.01,1.97)
\psline[linecolor=zzttqq](10.01,1.97)(8.88,-1.11)
\psline[linecolor=zzttqq](8.88,-1.11)(1.51,-2.61)
\psline[linecolor=zzttqq](1.51,-2.61)(2.66,0.45)
\psline(4.75,5.1)(4.76,4.46)
\psline(4.13,5.09)(4.24,3.9)
\psline(2.65,5.08)(3.14,2.74)
\psline(5.82,5.1)(5.75,3.43)
\psline(6.59,5.11)(6.42,2.74)
\psline(8.62,5.13)(8.12,2.7)
\psline[linestyle=dashed,dash=3pt 3pt](4.04,-1.23)(4.19,-2.06)
\psline[linestyle=dashed,dash=3pt 3pt](4.59,-0.14)(4.74,-1.95)
\psline[linestyle=dashed,dash=3pt 3pt](4.86,0.4)(4.91,-1.91)
\psline[linestyle=dashed,dash=3pt 3pt](5.59,-0.28)(5.53,-1.79)
\psline[linestyle=dashed,dash=3pt 3pt](6.16,-0.79)(6.1,-1.67)
\psline[linestyle=dashed,dash=3pt 3pt](7.46,-0.53)(7.27,-1.43)
\psline(7.27,-1.43)(7,-2.73)
\psline(6.1,-1.67)(6.02,-2.74)
\psline(5.53,-1.79)(5.49,-2.74)
\psline(4.19,-2.06)(4.34,-2.75)
\psline(4.74,-1.95)(4.81,-2.75)
\psline(4.91,-1.91)(4.93,-2.75)
\begin{scriptsize}
\psdots[dotstyle=*](3.14,2.74)
\rput[bl](2.7,2.5){$B_\alpha$}
\psdots[dotstyle=*](8.12,2.7)
\rput[bl](8.15,2.4){$P_\alpha$}
\psdots[dotstyle=*](4.76,4.46)
\rput[bl](4.85,4.42){$A_\alpha$}
\psdots[dotstyle=*](6.42,2.74)
\rput[bl](6.45,2.4){$C_\alpha$}
\psdots[dotstyle=*](4.24,3.9)
\rput[bl](3.8,3.9){$R_\alpha$}
\psdots[dotstyle=*](5.75,3.43)
\rput[bl](5.86,3.5){$Q_\alpha$}
\psdots[dotstyle=*](4.04,-1.23)
\rput[bl](3.66,-1.53){$B_\beta$}
\psdots[dotstyle=*](4.86,0.4)
\rput[bl](4.9,0.4){$A_\beta$}
\psdots[dotstyle=*](7.46,-0.53)
\rput[bl](7.5,-0.8){$P_\beta$}
\psdots[dotstyle=*](6.16,-0.79)
\rput[bl](6.25,-1.05){$C_\beta$}
\psdots[dotstyle=*](4.59,-0.14)
\rput[bl](4.1,-0.25){$R_\beta$}
\psdots[dotstyle=*](5.59,-0.28)
\rput[bl](5.66,-0.2){$Q_\beta$}
\end{scriptsize}
\end{pspicture*}

If $A$, $B$ and $C$ are collinear, then for the plane $\beta_0$ such that $0\in\beta_0$ and
$\beta\parallel \beta_0$, denote by $b$ the intersection of
$\alpha$ and $\beta_0$. Hence $b$ is either a line in $\alpha$ or
the empty set. In the sequel we assume that all the chosen points
are outside $b$. Let $D_\alpha$ be a point in $\alpha$ outside the
line $A_\alpha B_\alpha$, and let $U_\alpha$ be a point on the
segment $A_\alpha D_\alpha$ such that the lines $R_\alpha
U_\alpha$ and $B_\alpha D_\alpha$, as well as the lines $Q_\alpha
U_\alpha$ and $C_\alpha D_\alpha$, intersect (we can always choose
such a point $U_\alpha$). Let $\{V_\alpha\}=R_\alpha U_\alpha \cap
B_\alpha D_\alpha$ and $\{W_\alpha\}=Q_\alpha U_\alpha \cap
C_\alpha D_\alpha$. Note that the sextuples $(A_\alpha, B_\alpha,
D_\alpha, V_\alpha, U_\alpha, R_\alpha)$ and $(A_\alpha, C_\alpha,
D_\alpha, W_\alpha, U_\alpha, Q_\alpha)$ make Menelaus
configurations.

\begin{center}
\psscalebox{.75 .70} % Change this value to rescale the drawing.
{
\begin{pspicture}(-4.5,-0.5)(12,5.5)
%\psframe[linewidth=0.04,dimen=outer](3.58,-1.38)(1.34,-2.3)
%\psline[linewidth=0.04cm](1.28,-2.2)(1.28,-1.42)
%\usefont{T1}{ptm}{m}{n}

\psline[linewidth=0.02,dimen=outer](-3.0,0.0)(10.22,0.0)
\psline[linewidth=0.02,dimen=outer](-3.0,0.0)(0.0,5.0)
\psline[linewidth=0.02,dimen=outer](1.86,0.0)(0.0,5.0)
\psline[linewidth=0.02,dimen=outer](6.0,0.0)(0.0,5.0)
\psline[linewidth=0.02,dimen=outer](-0.6,0.0)(3.34,2.22)
\psline[linewidth=0.02,dimen=outer](2.5,0.0)(-0.86,3.57)
\psline[linewidth=0.01,dimen=outer](10.22,0.0)(-0.86,3.57)

\rput(-3.0,0.0){\pscircle*{.05}} \rput(6.0,0.0){\pscircle*{.05}} \rput(0.0,5.0){\pscircle*{.05}} \rput(2.5,0.0){\pscircle*{.05}} \rput(-0.6,0.0){\pscircle*{.05}} \rput(-0.86,3.57){\pscircle*{.05}} \rput(1.86,0.0){\pscircle*{.05}} \rput(1.44,1.13){\pscircle*{.05}} \rput(3.34,2.22){\pscircle*{.05}} \rput(10.22,0.0){\pscircle*{.05}}

\rput(-3.1,-0.3){\psscalebox{1.1}{$B_\alpha$}}
\rput(6.1,-0.3){\psscalebox{1.1}{$C_\alpha$}}
\rput(0.1,5.3){\psscalebox{1.1}{$D_\alpha$}}
\rput(-0.6,-0.3){\psscalebox{1.1}{$Q_\alpha$}}
\rput(2.6,-0.3){\psscalebox{1.1}{$R_\alpha$}}
\rput(-1.1,3.8){\psscalebox{1.1}{$V_\alpha$}}
\rput(1.9,-0.3){\psscalebox{1.1}{$A_\alpha$}}
\rput(1.9,1.1){\psscalebox{1.1}{$U_\alpha$}}
\rput(3.7,2.4){\psscalebox{1.1}{$W_\alpha$}}
\rput(10.5,-0.3){\psscalebox{1.1}{$P_\alpha$}}

\end{pspicture}
}
\end{center}

Consider the following axiomatic sequent derived from a tetrahedral triangulation of the sphere $S^2$ (cf.\ Section~\ref{reading}, Example~1)
\[
\vdash (A,B,C,P,Q,R), (A,B,D,V,U,R), (A,C,D,W,U,Q), (B,C,D,W,V,P),
\]
and an Euclidean interpretation that interprets $X\in\{A,B,C,D,P,Q,R,U,V,W\}$ as $X_\alpha$. This interpretation satisfies the first three sextuples, and by Proposition~\ref{sound} the sextuple $(B_\alpha,C_\alpha,D_\alpha,W_\alpha,V_\alpha,P_\alpha)$ makes a Menelaus configuration.

By appealing to the case of non-collinear $A$, $B$ and $C$, we know that an Euclidean
interpretation that interprets $X\in\{A,B,C,D,P,Q,R,U,V,W\}$ as
$X_\beta$ satisfies the last three sextuples, and by Proposition~\ref{sound} the sextuple
$(A_\beta,B_\beta,C_\beta,P_\beta,Q_\beta,R_\beta)$ makes a
Menelaus configuration.
\end{proof}

After this lemma, we can say that a sextuple $(A,B,C,P,Q,R)$ of points in $\mathbf{RP}^2$ \emph{makes a Menelaus configuration} when for some (or every) plane $\alpha$ in $\mathbf{R}^3$, which intersects properly all these points, the sextuple $(A_\alpha,B_\alpha,C_\alpha,P_\alpha,Q_\alpha,R_\alpha)$ makes a Menelaus configuration in the Euclidean sense. We say that a projective
interpretation \emph{satisfies} the atomic formula
$(A,B,C,P,Q,R)$, when the sextuple $(A,B,C,P,Q,R)$ of points in
$\mathbf{RP}^2$ makes a Menelaus configuration.

Let $\Gamma\models_P \varphi$ mean that every projective interpretation that satisfies every formula in $\Gamma$ also satisfies $\varphi$. As a corollary of Proposition~\ref{sound} we have the following result.

\begin{prop}[Projective Soundness]\label{prsound}
If $\vdash\Gamma,\varphi$ is derivable, then $\Gamma\models_P\varphi$.
\end{prop}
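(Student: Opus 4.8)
The plan is to deduce projective soundness from the Euclidean case (Proposition~\ref{sound}) by passing to a suitably chosen affine slice. Suppose $\vdash\Gamma,\varphi$ is derivable, and let $\iota\colon\mathcal{W}\to\mathbf{RP}^2$ be a projective interpretation satisfying every formula in $\Gamma$. Only finitely many elements of $\mathcal{W}$ occur in $\Gamma$ and $\varphi$; let $\mathcal{S}\subseteq\mathbf{RP}^2$ be the (finite) set of their $\iota$-images, viewed as lines through the origin in $\mathbf{R}^3$. As observed just before the preceding Lemma, $\mathbf{R}^3$ cannot be covered by finitely many planes, so there is a plane $\alpha$ with $0\notin\alpha$ which intersects properly all the points of $\mathcal{S}$.

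Next I would fix an affine isomorphism $\alpha\cong\mathbf{R}^2$ and define a Euclidean interpretation $\iota_\alpha\colon\mathcal{W}\to\mathbf{R}^2$ by $\iota_\alpha(X)=(\iota(X))_\alpha$ for each $X$ occurring in $\Gamma,\varphi$, and arbitrarily on the remaining elements of $\mathcal{W}$. The key claim is that for every atomic formula $(A,B,C,P,Q,R)$ all of whose points occur in $\Gamma,\varphi$, the interpretation $\iota$ satisfies it if and only if $\iota_\alpha$ satisfies it: indeed, $\iota$ satisfies it precisely when the sextuple of lines makes a Menelaus configuration in $\mathbf{RP}^2$, and, by the definition adopted right after the preceding Lemma (whose correctness is exactly the content of that Lemma), this holds iff the sextuple $(A_\alpha,B_\alpha,C_\alpha,P_\alpha,Q_\alpha,R_\alpha)$ makes a Euclidean Menelaus configuration, i.e.\ iff $\iota_\alpha$ satisfies the formula. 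Since satisfaction of a compound formula is computed from the truth values of its atomic subformulas by the same rules in both settings (each occurrence of $\diskon$ on the left of $\models$ read as $\vee$, each on the right read as $\wedge$, and $\leftrightarrow$ read as classical equivalence in both $\models_E$ and $\models_P$), it follows that $\iota$ satisfies a given formula from $\Gamma$ or $\varphi$ exactly when $\iota_\alpha$ does.

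Finally, I would conclude: since $\iota$ satisfies every formula of $\Gamma$, so does $\iota_\alpha$; by Proposition~\ref{sound}, $\iota_\alpha$ satisfies $\varphi$; hence, by the equivalence just established, $\iota$ satisfies $\varphi$. As $\iota$ was an arbitrary projective interpretation satisfying $\Gamma$, we obtain $\Gamma\models_P\varphi$.

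The only point requiring care — and the one I expect to be the crux of the write-up — is the choice of $\alpha$: it must be a single plane that intersects \emph{all} the finitely many relevant points properly and simultaneously, which is guaranteed by the impossibility of covering $\mathbf{R}^3$ by finitely many planes; and the transfer of the defining conditions of a Menelaus configuration (mutual distinctness, collinearity, the signed-ratio product equal to $-1$) from $\mathbf{RP}^2$ to the slice $\alpha$ is precisely what the preceding Lemma secures, so no further geometric work is needed here.
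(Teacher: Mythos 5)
Your argument is correct and is essentially the paper's own: the paper states Projective Soundness as a corollary of Proposition~\ref{sound}, the implicit justification being exactly your slicing argument, with the preceding Lemma guaranteeing that Menelaus configurations in $\mathbf{RP}^2$ can be tested on any single plane $\alpha$ that properly intersects the finitely many relevant points. You merely spell out the details (choice of one common $\alpha$, transfer of atomic and then compound satisfaction, appeal to Euclidean soundness) that the paper leaves implicit.
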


\section{From derivable sequents to incidence results}\label{reading}
A general pattern for extracting an incidence result (its formulation and a proof) from derivable sequents of our system is the following. One has to use interpretations that satisfy all but one formulae in a derivable sequent. By the soundness result, such an interpretation satisfies the last formula too. For some results it is enough to use just one sequent and one interpretation. On the other hand, some results require several interpretations and one derivable sequent, and perhaps, there are cases when several derivable sequents are involved in one incidence result.

Our general position is such that we treat the Menelaus system as
the syntax and the projective plane as the semantics. In order to
use a provable sequent, one has to choose one of its formulae that
should be treated as a consequence of the others. However, for
proving that the other formulae of the sequent are satisfied by
the assumptions used in the statement of the incidence result one
cannot rely exclusively on the Menelaus system. This system does
not deal with some negative statements (expressing that some
points are not collinear or that they are mutually distinct) or some positive statements
(expressing that some lines coincide), which are left to (the
axioms of) projective geometry. Hence, the Menelaus system is
somehow treated as an assistant for a larger system staying behind
the projective geometry.

In the examples below, we shall use the quotiented system (cf.
discussion in Remark \ref{less-syntactical}),  where no
distinction is made between a sextuple of elements of
$\mathcal{W}$ and any other member of its $G$-orbit.

\setcounter{example}{0}

\begin{example}\label{example-1} This example shows how the Desargues theorem could be derived by using one sequent obtained by $\diskon$-introduction and two interpretations.

\begin{thm}[Desargues] Let $ABC$ and $UVW$ be two triangles in $\mathbf{RP}^2$ such that $A\neq U$, $B\neq V$ and $C\neq W$. Let $BC\cap VW=\{P\}$, $AC\cap UW=\{Q\}$ and $AB\cap UV=\{R\}$. Then the lines $AU$, $BV$ and $CW$ are concurrent iff the points $P$, $Q$ and $R$ are
collinear.
\end{thm}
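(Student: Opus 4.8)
The plan is to realise the Desargues configuration as two different tetrahedral triangulations of $S^2$. For the implication ``$AU,BV,CW$ concurrent $\Rightarrow$ $P,Q,R$ collinear'' one uses the tetrahedron with vertices $A,B,C,D$, where $D$ is the common point of $AU$, $BV$, $CW$, and edges $P=BC$, $Q=CA$, $R=AB$, $U=AD$, $V=BD$, $W=CD$; this configuration is a projective interpretation of the $\mathcal{M}$-complex giving the axiom
\[
\vdash (A,B,C,P,Q,R),\ (A,B,D,V,U,R),\ (A,C,D,W,U,Q),\ (B,C,D,W,V,P).
\]
For the implication ``$P,Q,R$ collinear $\Rightarrow$ $AU,BV,CW$ concurrent'' one instead introduces the auxiliary point $D:=AU\cap BV$ (well defined, since the non-degeneracy hypotheses force $AU\neq BV$) and uses the tetrahedron with vertices $A,R,Q,U$, whose six edges, using the standing hypothesis that $P,Q,R$ are collinear together with the definition of $D$, are $B=AR$, $C=AQ$, $D=AU$, $P=RQ$, $V=RU$, $W=QU$; its axiom is
\[
\vdash (A,R,Q,P,C,B),\ (A,R,U,V,D,B),\ (A,Q,U,W,D,C),\ (R,Q,U,W,V,P).
\]

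In the quotiented system of Remark~\ref{less-syntactical} the first three formulae of the second axiom coincide with the first three of the first, because each of $(A,R,Q,P,C,B)$, $(A,R,U,V,D,B)$, $(A,Q,U,W,D,C)$ is a $G$-image of $(A,B,C,P,Q,R)$, $(A,B,D,V,U,R)$, $(A,C,D,W,U,Q)$ respectively (each such pair names the same complete quadrilateral, i.e.\ the same Menelaus configuration). So both axioms lie over the common context $\Gamma$ formed by those three formulae, and $\diskon$-introduction yields the single derivable sequent
\[
\vdash (A,B,C,P,Q,R),\ (A,B,D,V,U,R),\ (A,C,D,W,U,Q),\ (B,C,D,W,V,P)\diskon(R,Q,U,W,V,P).
\]

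Next I would use this one sequent under two projective interpretations. In the first, used when $AU$, $BV$, $CW$ concur and $D$ is their common point, I read $(A,B,C,P,Q,R)$ as the distinguished formula; the remaining formulae hold, since $(A,B,D,V,U,R)$ and $(A,C,D,W,U,Q)$ are Menelaus configurations (in each, the three ``transversal'' points lie on a side of the triangle $UVW$ -- for instance $R,U,V\in UV$ -- while the three named vertices form an honest triangle) and the disjunct $(B,C,D,W,V,P)$ is a Menelaus configuration for the same reason; Projective Soundness (Proposition~\ref{prsound}) then gives that $P,Q,R$ are collinear. In the second, used when $P,Q,R$ are collinear and $D:=AU\cap BV$, I read $(A,C,D,W,U,Q)$ as the distinguished formula; the remaining formulae hold, since $(A,B,C,P,Q,R)$ is a genuine transversal of the triangle $ABC$, $(A,B,D,V,U,R)$ is a Menelaus configuration (here $U\in AD$ and $V\in BD$ hold \emph{automatically} by the definition of $D$, and $R\in AB$, $R,U,V\in UV$), and the disjunct $(R,Q,U,W,V,P)$ is a Menelaus configuration; Projective Soundness now forces $(A,C,D,W,U,Q)$ to be a Menelaus configuration, which in particular puts $W$ on the line $CD$, and since $C\neq W$ this says $D\in CW$, so $AU$, $BV$, $CW$ concur at $D$.

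The step I expect to be the real obstacle is the one that makes the $\diskon$ unavoidable: ``three lines are concurrent'' is not literally of the form ``this sextuple makes a Menelaus configuration'', so it must be re-encoded, through the auxiliary point $D$, as the Menelaus character of $(A,C,D,W,U,Q)$, and the two tetrahedra witnessing the two implications genuinely differ in one face -- precisely the disjunction built by $\diskon$-introduction. A further, routine but unavoidable burden is that the Menelaus system is silent about the ``negative'' and ``coincidence'' side-conditions (that $ABC$ and $UVW$ are honest triangles, that $A\neq U$, $B\neq V$, $C\neq W$, that the stated intersections are single and pairwise distinct points, that a line named through two of the points is the expected line), so all of these must be handled inside projective geometry, outside the calculus, as emphasized at the start of Section~\ref{reading}.
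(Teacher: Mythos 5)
Your proof is correct and follows essentially the same route as the paper: the same two tetrahedral triangulations (on $A,B,C,D$ and on $A,R,Q,U$), merged over their three shared formulae (up to the $G$-action) by $\diskon$-introduction into one sequent, then read under the two projective interpretations with $D$ the common point, respectively $D=AU\cap BV$, with the negative/coincidence side-conditions left to projective geometry exactly as the paper does.
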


\begin{proof} For the \emph{only if} part of the theorem, one can use the following tetrahedral triangulation of~$S^2$ (see \cite[Section~3.2, second row, first scheme]{RG06}),

\begin{center}
\psset{xunit=1.0cm,yunit=1.0cm,algebraic=true,dimen=middle,dotstyle=o,dotsize=3pt 0,linewidth=0.8pt,arrowsize=3pt 2,arrowinset=0.25}
\begin{pspicture*}(-3.3,-2.4)(16.64,3.5)
\pspolygon[fillcolor=gray,fillstyle=solid,opacity=0.1](0.02,0.32)(2.66,3.06)(2.78,-1.32)
\pspolygon[fillcolor=lightgray,fillstyle=solid,opacity=0.1](2.66,3.06)(2.78,-1.32)(6.08,0.34)
\psline[linestyle=dashed,dash=2pt 2pt](0.02,0.32)(6.08,0.34)
\psline(0.02,0.32)(2.78,-1.32)
\psline(6.08,0.34)(2.78,-1.32)
\psline(2.66,3.06)(0.02,0.32)
\psline(2.66,3.06)(6.08,0.34)
\psline(2.66,3.06)(2.78,-1.32)
\psline(2.78,-1.32)(0.02,0.32)
\psline(2.78,-1.32)(6.08,0.34)
\psline(6.08,0.34)(2.66,3.06)
\psline(2.78,-1.32)(4.01,-2.04)
\psline(4.01,-2.04)(1.4,1.76)
\psline[linestyle=dashed,dash=2pt 2pt](1.4,1.76)(5.58,0.74)
\psline(5.58,0.74)(7.04,0.34)

\psline(4.01,-2.04)(8.94,1.84)
\psline(2.75,-0.18)(8.94,1.84)
\psline(6.08,0.34)(7.04,0.34)
\psline(6.08,0.34)(8.94,1.84)
\rput[tl](-0.25,0.2){$A$}
\rput[tl](2.58,-1.45){$B$}
\rput[tl](5.9,0.2){$C$}
\rput[tl](8.85,2.2){$P$}
\rput[tl](7.02,0.2){$Q$}
\rput[tl](3.86,-2.15){$R$}
\rput[tl](1.1,2.1){$U$}
\rput[tl](2.34,-0.2){$V$}
\rput[tl](5.5,1.15){$W$}
\rput[tl](2.5,3.4){$D$}
\begin{scriptsize}
\psdots[dotstyle=*](0.02,0.32)
\psdots[dotstyle=*](6.08,0.34)
\psdots[dotstyle=*](2.78,-1.32)
\psdots[dotstyle=*](2.66,3.06)
\psdots[dotstyle=*](4.01,-2.04)
\psdots[dotstyle=*](1.4,1.76)
\psdots[dotstyle=*](7.04,0.34)
\psdots[dotstyle=*](2.75,-0.18)
\psdots[dotstyle=*](5.58,0.74)
\psdots[dotstyle=*](8.94,1.84)
\end{scriptsize}
\end{pspicture*}
\end{center}
which delivers the axiomatic sequent
\begin{equation}\label{des1}
\vdash (A,B,D,V,U,R), (B,C,D,W,V,P), (A,C,D,W,U,Q), (A,B,C,P,Q,R).
\end{equation}
Consider a projective interpretation that maps $A,B,C,U,V,W,P,Q,R$ as indicated and $D$ is interpreted as the common point of  $AU$, $BV$ and $CW$. This interpretation satisfies
\[
(A,B,D,V,U,R), (B,C,D,W,V,P), \; \mbox{\rm and}\; (A,C,D,W,U,Q).
\]
By Proposition~7.2 applied to the sequent (\ref{des1}), this interpretation satisfies \linebreak $(A,B,C,P,Q,R)$. Therefore, $P$, $Q$ and $R$ are collinear.

For the \emph{if} part of the theorem, we proceed analogously with the following tetrahedral triangulation of $S^2$,
\begin{center}
\psset{xunit=1.0cm,yunit=1.0cm,algebraic=true,dimen=middle,dotstyle=o,dotsize=3pt 0,linewidth=0.8pt,arrowsize=3pt 2,arrowinset=0.25}
\begin{pspicture*}(-3.3,-2.4)(16.64,3.5)
\pspolygon[fillcolor=gray,fillstyle=solid,opacity=0.1](0.02,0.32)(1.4,1.76)(4.01,-2.04)
\pspolygon[fillcolor=lightgray,fillstyle=solid,opacity=0.1](1.4,1.76)(7.04,0.34)(4.01,-2.04)
\psline[linestyle=dashed,dash=2pt 2pt](0.02,0.32)(6.08,0.34)
\psline(0.02,0.32)(2.78,-1.32)
\psline[linestyle=dashed,dash=2pt 2pt](6.08,0.34)(2.78,-1.32)
\psline(2.66,3.06)(0.02,0.32)
\psline(2.66,3.06)(2.78,-1.32)

\psline(2.66,3.06)(5.58,0.74)
\psline[linestyle=dashed,dash=2pt 2pt](5.58,0.74)(6.08,0.34)

\psline(2.78,-1.32)(4.01,-2.04)
\psline(4.01,-2.04)(1.4,1.76)
\psline(1.4,1.76)(7.04,0.34)
\psline(4.01,-2.04)(8.94,1.84)
\psline(2.75,-0.18)(8.94,1.84)
\psline[linestyle=dashed,dash=2pt 2pt](6.08,0.34)(6.35,0.51)
\psline(6.35,0.51)(8.94,1.84)
\psline[linestyle=dashed,dash=2pt 2pt](6.08,0.34)(7.04,0.34)
\rput[tl](-0.25,0.2){$A$}
\rput[tl](2.58,-1.45){$B$}
\rput[tl](5.9,0.2){$C$}
\rput[tl](8.85,2.2){$P$}
\rput[tl](7.02,0.2){$Q$}
\rput[tl](3.86,-2.15){$R$}
\rput[tl](1.1,2.1){$U$}
\rput[tl](2.34,-0.2){$V$}
\rput[tl](5.5,1.15){$W$}
\rput[tl](2.5,3.4){$D$}
\psline(0.02,0.32)(1.4,1.76)
\psline(1.4,1.76)(4.01,-2.04)
\psline(4.01,-2.04)(0.02,0.32)
\psline(1.4,1.76)(7.04,0.34)
\psline(7.04,0.34)(4.01,-2.04)
\psline(4.01,-2.04)(1.4,1.76)
\begin{scriptsize}
\psdots[dotstyle=*](0.02,0.32)
\psdots[dotstyle=*](6.08,0.34)
\psdots[dotstyle=*](2.78,-1.32)
\psdots[dotstyle=*](2.66,3.06)
\psdots[dotstyle=*](4.01,-2.04)
\psdots[dotstyle=*](1.4,1.76)
\psdots[dotstyle=*](7.04,0.34)
\psdots[dotstyle=*](2.75,-0.18)
\psdots[dotstyle=*](5.58,0.74)
\psdots[dotstyle=*](8.94,1.84)
\end{scriptsize}
\end{pspicture*}
\end{center}
which leads to the axiomatic sequent
\begin{equation}\label{des2}
\vdash (A,R,U,V,D,B), (A,R,Q,P,C,B), (U,R,Q,P,W,V), (A,Q,U,W,D,C).
\end{equation}
Consider now a projective interpretation that maps again $A,B,C,U,V,W,P,Q,R$ as indicated, while $D$ is interpreted as the intersection point of $AU$ and $BV$. This interpretation satisfies
\[
(A,R,U,V,D,B), (A,R,Q,P,C,B), \; \mbox{\rm and}\; (U,R,Q,P,W,V),
\]
and, by Proposition 7.2 applied to the sequent (\ref{des2}), it
satisfies $(A,C,D,W,U,Q)$, i.e.\ $(A,Q,U,W,D,C)$. Therefore $W$,
$D$ and $C$ are collinear, which means that the lines $AU$, $BV$ and $CW$ are concurrent.
\end{proof}

The above proof shows how the Menelaus system may assist in
proving some concrete projective results. For one direction just
an axiomatic sequent is used and for the other direction, if we
stick to the original system, a cut applied to the axiomatic
sequent (\ref{des2}) and the axiomatic sequent
\[
\vdash (A,C,D,W,U,Q),(A,Q,U,W,D,C),
\]
coming from switching of triangles (cf.\ the second axiomatic sequent in (\ref{perm-swit})), is sufficient. However, our intention is to cover both directions of the Desargues theorem with a single sequent. In order to do this, consider the two axiomatic sequents (\ref{des1}) and (\ref{des2}). Since they share the three elements
\[
(A,B,D,V,U,R), (A,C,D,W,U,Q), (A,B,C,P,Q,R),
\]
one may derive the sequent
\begin{equation}\label{des3}
\begin{array}{rl}
\vdash (A,B,D,V,U,R),\!\!\!\!\! & (A,C,D,W,U,Q), (A,B,C,P,Q,R),
\\[1ex]
 & (B,C,D,W,V,P)\diskon (U,R,Q,P,W,V),
\end{array}
\end{equation}
which encapsulates all that we need to ``prove'' for the Desargues
theorem. It remains to use the appropriate interpretations. To see
that (\ref{des3}) delivers the \emph{only if} part, let
$A,B,C,U,V,W,P,Q,R$ be nine points in $\mathbf{RP}^2$ satisfying
the first sentence of the theorem and the condition that the lines
$AU$, $BV$ and $CW$ are concurrent. The projective interpretation
that maps $A,B,C,U,V,W,P,Q,R$ as indicated and $D$ to the common
point of $AU$, $BV$ and $CW$ satisfies
\[
(A,B,D,V,U,R), (A,C,D,W,U,Q), \; \mbox{\rm and}\; (B,C,D,W,V,P),
\]
and hence
\[
(A,B,D,V,U,R), (A,C,D,W,U,Q), \; \mbox{\rm and}\;
(B,C,D,W,V,P)\vee (U,R,Q,P,W,V).
\]
By Proposition~7.2 applied to the sequent (\ref{des3}), this interpretation satisfies \linebreak $(A,B,C,P,Q,R)$. Therefore, $P$, $Q$ and $R$ are collinear.

To see that (\ref{des3}) delivers the \emph{if} part, let $A,B,C,U,V,W,P,Q,R$ be nine points in $\mathbf{RP}^2$ satisfying the first sentence of the theorem and the condition that the points $P$, $Q$ and $R$ are collinear. The projective interpretation that maps
$A$, $B$, $C$, $U$, $V$, $W$, $P$, $Q$, $R$ as indicated and $D$ to the intersection point
of $AU$ and $BV$ satisfies
\[
(A,R,U,V,D,B), (A,R,Q,P,C,B), \; \mbox{\rm and}\; (U,R,Q,P,W,V),
\]
and hence
\[
(A,B,D,V,U,R), (A,B,C,P,Q,R),  \; \mbox{\rm and}\; (B,C,D,W,V,P)\vee (U,R,Q,P,W,V).
\]
By Proposition~7.2 applied to the sequent (\ref{des3}), this interpretation satisfies \linebreak $(A,C,D,W,U,Q)$, i.e.\ $(A,Q,U,W,D,C)$. Therefore $W$, $D$ and $C$ are collinear, which means that the lines $AU$, $BV$ and $CW$ are concurrent.

Hence, the role of $\diskon$ is just to ``pack'' two sequents into a single sequent sufficient for the proof of the theorem. As it is noted at the beginning of this section, single sequent proofs could be impossible for some results possessing proofs assisted with several provable sequents. However, all the examples that follow have single sequent proofs.
\end{example}

\begin{example}\label{example-2}

{\it
Let $AU$, $BV$ and $CW$ be three concurrent lines in $\mathbf{RP}^2$, and let $X$ and $E$ be such that $B$, $X$ and $E$ are collinear.
For
\[
\begin{array}{c}
\{P\}=BC\cap VW,\quad \{Q\}=AC\cap UW,\quad  \{R\}=AB\cap UV,
\\[1ex]
\{Y\}=AX\cap RE,\quad  \{Z\}=XC\cap EP,
\end{array}
\]
the points $Q$, $Y$ and $Z$ are collinear.
}

One can prove this incidence result by relying on a sequent obtained by a non-eliminable cut. Consider the following two axiomatic sequents obtained from tetrahedral triangulations $ABCD$ and $BRPE$ of two spheres.
\[
\begin{array}{l}
\vdash (A,B,D,V,U,R), (B,C,D,W,V,P), (A,C,D,W,U,Q), (A,B,C,P,Q,R)
\\[2ex]
\vdash (B,R,E,Y,X,A), (B,P,E,Z,X,C), (R,P,E,Z,Y,Q), (B,P,R,Q,A,C)
\end{array}
\]

Since $(A,B,C,P,Q,R)$ and $(B,P,R,Q,A,C)$ are identified, one may apply the cut rule to these two sequents in order to obtain the sequent
\[
\begin{array}{rl}
\vdash (A,B,D,V,U,R), (B,C,D,W,V,P), \!\!\!\!\! &  (A,C,D,W,U,Q), (B,R,E,Y,X,A),
\\[1ex]
 & (B,P,E,Z,X,C), (R,P,E,Z,Y,Q).
\end{array}
\] The use of the permutation axiom here is reflected
geometrically in the way the tetrahedra are glued together, so as
not to form an $\mathcal{M}$-complex.

With the indicated interpretation of the members of these sextuples, the conditions in the statement guarantee that the first five sextuples are satisfied. Hence, this interpretation satisfies $(R,P,E,Z,Y,Q)$, which means that $Q$, $Y$ and $Z$ are
 collinear.

\vspace{2ex}

\psset{xunit=1cm,yunit=1cm,algebraic=true,dimen=middle,dotstyle=o,dotsize=5pt 0,linewidth=1.6pt,arrowsize=3pt 2,arrowinset=0.25}
\psscalebox{.35 .3}{
\begin{pspicture*}(-2,-2.7)(31,32)
\pspolygon[linewidth=2pt,fillcolor=lightgray,fillstyle=solid,opacity=0.1](10,29.95)(10,14.95)(20,19.95)
\pspolygon[linewidth=2pt,fillcolor=gray,fillstyle=solid,opacity=0.1](0,20)(10,29.95)(10,14.95)
\pspolygon[linewidth=2pt,fillcolor=lightgray,fillstyle=solid,opacity=0.1](28,24)(16,12)(20,0)
\pspolygon[linewidth=2pt,fillcolor=gray,fillstyle=solid,opacity=0.1](10,14.95)(16,12)(28,24)
\pspolygon[linewidth=2pt,fillcolor=darkgray,fillstyle=solid,opacity=0.1](10,14.95)(20,0)(16,12)
\psline[linewidth=2pt](0,20)(10,29.95)
\psline[linewidth=2pt](10,29.95)(20,19.95)
\psline[linewidth=2pt,linestyle=dashed,dash=7pt 7pt](0,20)(20,19.95)
\psline[linewidth=2pt](20,19.95)(10,14.95)
\psline[linewidth=2pt](0,20)(10,14.95)
\psline[linewidth=2pt](10,29.95)(10,14.95)
\psline[linewidth=2pt](4,24)(16,12)
\psline[linewidth=2pt,linestyle=dashed,dash=7pt 7pt](20,20)(12,12)
\psline[linewidth=2pt](20,20)(30,30)
\psline[linewidth=2pt](10,14.95)(20,0)
\psline[linewidth=2pt](16,12)(20,0)
\psline[linewidth=2pt](28,24)(20,0)
\psline[linewidth=2pt](28,24)(16,12)
\psline[linewidth=2pt](28,24)(20,19.95)
\psline[linewidth=2pt](28,24)(10,18)
\psline[linewidth=2pt](30,30)(17.2,8.6)
\psline[linewidth=2pt](0,20)(17.2,8.6)
\psline[linewidth=2pt](20,19.95)(24,20)
\psline[linewidth=2pt,linestyle=dashed,dash=7pt 7pt](4,24)(19,21)
\psline[linewidth=2pt](19,21)(24,20)
\psline[linewidth=2pt](10,14.95)(16,12)
\psline[linewidth=2pt](30,30)(28,24)

\rput(-0.7,20){\psscalebox{3}{$A$}} \rput(9.5,14.5){\psscalebox{3}{$B$}} \rput(20.2,19.3){\psscalebox{3}{$C$}} \rput(10,31){\psscalebox{3}{$D$}} \rput(3.5,24.5){\psscalebox{3}{$U$}} \rput(9.5,17.3){\psscalebox{3}{$V$}} \rput(19.2,21.8){\psscalebox{3}{$W$}} \rput(30,31){\psscalebox{3}{$Z$}} \rput(28.7,24){\psscalebox{3}{$P$}} \rput(24.8,19.7){\psscalebox{3}{$Q$}} \rput(17,12){\psscalebox{3}{$R$}} \rput(11.3,11.3){\psscalebox{3}{$X$}} \rput(18,8.7){\psscalebox{3}{$Y$}} \rput(20,-1){\psscalebox{3}{$E$}}

\rput(20,0){\pscircle*{.15}} \rput(17.2,8.6){\pscircle*{.15}} \rput(4,24){\pscircle*{.15}} \rput(12,12){\pscircle*{.15}} \rput(16,12){\pscircle*{.15}} \rput(10,15){\pscircle*{.15}} \rput(10,18){\pscircle*{.15}} \rput(0,20){\pscircle*{.15}} \rput(20,20){\pscircle*{.15}} \rput(24,20){\pscircle*{.15}} \rput(19,21){\pscircle*{.15}} \rput(47,24){\pscircle*{.15}} \rput(28,24){\pscircle*{.15}} \rput(10,30){\pscircle*{.15}} \rput(30,30){\pscircle*{.15}}
\end{pspicture*}}
\end{example}

\begin{example}\label{example-3} This example provides an incidence result tied to a more involved axiomatic sequent. Consider the following triangulation of the torus with two holes into ten triangles and three zero-cells in total. The corresponding axiomatic sequent is
\[
\begin{array}{rl}
\vdash (X,Y,Z,B,1,A),\!\!\!\!\! & (X,Y,Z,B,2,C), (X,Y,Z,D,3,C), (X,Y,Z,D,4,E)
\\[1ex]
 & (X,Y,Z,F,5,E), (X,Y,Z,F,1,G),(X,Y,Z,H,4,G),
\\[1ex]
 & (X,Y,Z,H,5,I), (X,Y,Z,J,2,I),(X,Y,Z,J,3,A).
\end{array}
\]
\begin{center}
\psscalebox{.4} % Change this value to rescale the drawing.
{
\begin{pspicture}(3,-10)(17,5)
%\psframe[linewidth=0.04,dimen=outer](3.58,-1.38)(1.34,-2.3)
%\psline[linewidth=0.04cm](1.28,-2.2)(1.28,-1.42)
%\usefont{T1}{ptm}{m}{n}

\psline[linewidth=0.05,dimen=outer](8,-8.2)(11.88,3.33)
\psline[linewidth=0.05,dimen=outer](11.76,-8.24)(8.12,3.37)
\psline[linewidth=0.05,dimen=outer](14.83,-6.06)(5.06,1.19)
\psline[linewidth=0.05,dimen=outer](16.03,-2.5)(3.86,-2.37)
\psline[linewidth=0.05,dimen=outer](4.98,-5.96)(14.9,1.09)

\psline[linewidth=0.15,dimen=outer,linecolor=red](8,-8.2)(11.76,-8.24)
\psline[linewidth=0.15,dimen=outer,linecolor=red](8.12,3.37)(11.88,3.33)

\psline[linewidth=0.15,dimen=outer,linecolor=blue](11.76,-8.24)(14.83,-6.06) \psline[linewidth=0.15,dimen=outer,linecolor=orange](5.06,1.19)(8.12,3.37)

\psline[linewidth=0.15,dimen=outer,linecolor=green](14.83,-6.06)(16.03,-2.5) \psline[linewidth=0.15,dimen=outer,linecolor=britishracinggreen](3.86,-2.37)(5.06,1.19)

\psline[linewidth=0.15,dimen=outer,linecolor=blue](4.98,-5.96)(3.86,-2.37) \psline[linewidth=0.15,dimen=outer,linecolor=orange](16.03,-2.5)(14.9,1.09)

\psline[linewidth=0.15,dimen=outer,linecolor=britishracinggreen](14.9,1.09)(11.88,3.33) \psline[linewidth=0.15,dimen=outer,linecolor=green](8,-8.2)(4.98,-5.96)

\rput(8,-9.0){\psscalebox{2}{$X$}} \rput(11.76,-9.0){\psscalebox{2}{$Z$}} \rput(15.5,-6.06){\psscalebox{2}{$X$}} \rput(16.6,-2.5){\psscalebox{2}{$Z$}} \rput(15.5,1.09){\psscalebox{2}{$X$}} \rput(11.88,4.0){\psscalebox{2}{$Z$}} \rput(8.12,4.0){\psscalebox{2}{$X$}} \rput(4.4,1.19){\psscalebox{2}{$Z$}} \rput(3.2,-2.37){\psscalebox{2}{$X$}} \rput(4.3,-5.96){\psscalebox{2}{$Z$}} \rput(9.94,-4.0){\psscalebox{2}{$Y$}}

\rput(9.94,-9.0){\psscalebox{2}{$1$}} \rput(9.94,4.0){\psscalebox{2}{$1$}}
\rput(13.5,-7.7){\psscalebox{2}{$2$}} \rput(3.8,-4.5){\psscalebox{2}{$2$}}
\rput(16.0,-4.5){\psscalebox{2}{$3$}}
\rput(6.3,-7.7){\psscalebox{2}{$3$}}
\rput(16.0,-0.8){\psscalebox{2}{$4$}}
\rput(6.3,2.8){\psscalebox{2}{$4$}}
\rput(13.5,2.8){\psscalebox{2}{$5$}}
\rput(3.8,-0.8){\psscalebox{2}{$5$}}

\rput(8.8,-6.8){\psscalebox{2}{$A$}} \rput(11.8,-6.8){\psscalebox{2}{$B$}} \rput(13.6,-4.6){\psscalebox{2}{$C$}}
\rput(14.8,-2.0){\psscalebox{2}{$D$}}
\rput(13.6,0.8){\psscalebox{2}{$E$}}
\rput(11.0,2.1){\psscalebox{2}{$F$}}
\rput(8.0,2.1){\psscalebox{2}{$G$}}
\rput(5.7,0.0){\psscalebox{2}{$H$}}
\rput(4.7,-2.8){\psscalebox{2}{$I$}}
\rput(6.1,-5.6){\psscalebox{2}{$J$}}

\rput(8,-8.2){\pscircle*{.15}} %A
\rput(11.76,-8.24){\pscircle*{.15}} %B
\rput(14.83,-6.06){\pscircle*{.15}} %O
\rput(16.03,-2.5){\pscircle*{.15}} %N
\rput(14.9,1.09){\pscircle*{.15}} %M
\rput(11.88,3.33){\pscircle*{.15}} %L
\rput(8.12,3.37){\pscircle*{.15}} %K
\rput(5.06,1.19){\pscircle*{.15}} %H
\rput(3.86,-2.37){\pscircle*{.15}} %G
\rput(4.98,-5.96){\pscircle*{.15}} %F
\rput(9.94,-2.43){\pscircle*{.15}} %D

\end{pspicture}
}
\end{center}

On the other hand, this sequent could be obtained by the cut rule applied to two axiomatic sequents corresponding to the Pappus theorem, which are derived from triangulations of the torus in six triangles (cf.\ the next example and see \cite[Section~3.4]{RG06} for more details).

\newrgbcolor{qqzzff}{0 0.6 1}
\newrgbcolor{ffxfqq}{1 0.5 0}
\newrgbcolor{qqqqcc}{0 0 0.8}
\newrgbcolor{qqttcc}{0 0.2 0.8}
\newrgbcolor{qqwuqq}{0 0.39 0}
\newrgbcolor{ffwwzz}{1 0.4 0.6}
\newrgbcolor{ffqqtt}{1 0 0.2}
\newrgbcolor{qqccqq}{0 0.8 0}
\newrgbcolor{wwqqcc}{0.4 0 0.8}
\psset{xunit=1.0cm,yunit=1.0cm,algebraic=true,dimen=middle,dotstyle=o,dotsize=3pt 0,linewidth=0.8pt,arrowsize=3pt 2,arrowinset=0.25}
\begin{center}
\psscalebox{.8 .8} % Change this value to rescale the drawing.
{
% [inline block 0: 3 envs, 121424 chars -> data_tex | \begin{pspicture*}(-4.2,-3.2)(16.69,3.2) \psline[linewidth=2pt,linecolor=red](-1.53,2.51)(1.47,2.51)...]


\end{center}
%KRAJ MARININE SLIKE

\vspace{2ex}

{\it Let $p$, $q$ and $s$ be three non-concurrent lines in the projective plane. Choose a point $A$ on $p$, then choose $B$ on $q$ and let the line $AB$ intersects $s$ in 1. Then choose $C$ on $p$ and let $BC$ intersect $s$ in 2, and so on up to the point $F$ on $q$ and the intersection point 5 of $EF$ and $s$. Continue with this zigzag game, save that the path now crosses $s$ in the ``old'' points 1, 4, 5, 2, 3, respectively, i.e.\ the line $FG$ intersects $s$ in 1, $GH$ intersects $s$ in 4, and so on. Eventually, the last segment, which starts in $J$ and goes in the direction of 3 toward $p$, ends in the initial point~$A$.}

\begin{center}
\psscalebox{.18 .23} % Change this value to rescale the drawing.
{
\begin{pspicture}(-15,-23)(52,10)
%\psframe[linewidth=0.04,dimen=outer](3.58,-1.38)(1.34,-2.3)
%\psline[linewidth=0.04cm](1.28,-2.2)(1.28,-1.42)
%\usefont{T1}{ptm}{m}{n}

\psline[linewidth=0.2,dimen=outer](-16.55,-21.58)(42.73,-17.07)
\psline[linewidth=0.2,dimen=outer](-1.84,8.54)(51.97,-22.38)

\psline[linewidth=0.1,dimen=outer,linestyle=dashed, dash=20pt 10pt](-2.94,-9.63)(51.97,-22.38)

\psline[linewidth=0.1,dimen=outer](-6.74,-20.83)(8.32,2.69)
\psline[linewidth=0.1,dimen=outer](8.32,2.69)(10.23,-19.54)
\psline[linewidth=0.1,dimen=outer](10.23,-19.54)(0.2,7.36)
\psline[linewidth=0.1,dimen=outer](0.2,7.36)(21.49,-18.69)
\psline[linewidth=0.1,dimen=outer](21.49,-18.69)(12.85,0.1)
\psline[linewidth=0.1,dimen=outer](12.85,0.1)(-13.64,-21.36)
\psline[linewidth=0.1,dimen=outer](-13.64,-21.36)(32.75,-11.33)
\psline[linewidth=0.1,dimen=outer](32.75,-11.33)(-0.11,-20.33)
\psline[linewidth=0.1,dimen=outer](-0.11,-20.33)(20.18,-4.12)
\psline[linewidth=0.1,dimen=outer](20.18,-4.12)(-6.74,-20.83)

\rput(-13.7,-22.3){\psscalebox{4.5 4}{$G$}}
\rput(-6.8,-21.8){\psscalebox{4.5 4}{$A$}}
\rput(-0.1,-21.3){\psscalebox{4.5 4}{$I$}}
\rput(10.2,-20.5){\psscalebox{4.5 4}{$C$}}
\rput(21.5,-19.6){\psscalebox{4.5 4}{$E$}}
\rput(32.5,-18.8){\psscalebox{4.5 4}{$X$}}
\rput(42.8,-16){\psscalebox{4.5 4}{$Y$}}
\rput(52,-21){\psscalebox{4.5 4}{$Z$}}

\rput(-13.64,-21.36){\pscircle*{.15}} %G
\rput(-6.74,-20.83){\pscircle*{.15}} %A
\rput(-0.11,-20.33){\pscircle*{.15}} %I
\rput(10.23,-19.54){\pscircle*{.15}} %C
\rput(21.49,-18.69){\pscircle*{.15}} %E

\rput(32.48,-17.85){\pscircle*{.15}} %X
\rput(42.73,-17.07){\pscircle*{.15}} %Y
\rput(51.97,-22.38){\pscircle*{.15}} %Z

\rput(-0.5,-9.31){\psscalebox{4.5 4}{$1$}}
\rput(11.2,-12.1){\psscalebox{4.5 4}{$2$}}
\rput(6.4,-11.03){\psscalebox{4.5 4}{$3$}}
\rput(18.07,-13.51){\psscalebox{4.5 4}{$4$}}
\rput(22.6,-14.7){\psscalebox{4.5 4}{$5$}}

\rput(0,-10.31){\pscircle*{.15}} %1
\rput(9.63,-12.54){\pscircle*{.15}} %2
\rput(7.43,-12.03){\pscircle*{.15}} %3
\rput(18.07,-14.51){\pscircle*{.15}} %4
\rput(19.75,-14.89){\pscircle*{.15}} %5

\rput(0.2,8.46){\psscalebox{4.5 4}{$D$}}
\rput(8.32,3.79){\psscalebox{4.5 4}{$B$}}
\rput(12.85,1.2){\psscalebox{4.5 4}{$F$}}
\rput(20.18,-3.02){\psscalebox{4.5 4}{$J$}}
\rput(32.75,-10.23){\psscalebox{4.5 4}{$H$}}

\rput(0.2,7.36){\pscircle*{.15}} %D
\rput(8.32,2.69){\pscircle*{.15}} %B
\rput(12.85,0.1){\pscircle*{.15}} %F
\rput(20.18,-4.12){\pscircle*{.15}} %J
\rput(32.75,-11.33){\pscircle*{.15}} %H

\end{pspicture}
}
\end{center}
\end{example}

\begin{example}\label{example-4}
The proof of the following incidence result uses two axiomatic sequents connected by $\leftrightarrow$-introduction.

\begin{prop}\label{propex-4}
Consider the Pappus configuration consisting of two triples $(A,B,C)$ and $(D,E,F)$ of collinear points, all mutually distinct. Assume that for
$\{X\}=CD\cap AE$ and $\{Z\}=BE\cap CF$, the lines $AB$, $DE$ and
$XZ$ are not concurrent. For
\[
\begin{array}{lll}
\{K\}=BE\cap CD, & \{L\}=AF\cap CD, & \{M\}=AF\cap BE,
\\
\{U\}=AE\cap CF, & \{V\}=AE\cap BD, & \{W\}=CF\cap BD,
\end{array}
\]
the Pappus lines $KU$, $LV$ and $MW$ are concurrent.
\end{prop}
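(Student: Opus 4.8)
The plan is to follow the recipe of Section~\ref{reading}: to exhibit a provable sequent of the Menelaus system whose atomic formulae, read under a suitable projective interpretation, become exactly the Menelaus configurations carried by the triangles of two triangulated closed orientable surfaces, all of them satisfied but one, and then to invoke Projective Soundness (Proposition~\ref{prsound}) with the atomic formula recording the asserted concurrency singled out as the consequence. Since the cut rule of the system can only connect two sequents through a \emph{syntactically} identical atomic formula, and the two surface constructions will share only a \emph{geometrically} equivalent pair of sextuples, the two axiomatic sequents will be glued by $\leftrightarrow$-introduction; this is precisely what lets one bridge such a pair and discharge the link semantically.

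First I would fix the points to be interpreted: the nine points $A,B,C,D,E,F$, the six points $K,L,M,U,V,W$, the two auxiliary points $X,Z$, and---to turn a concurrency into a collinearity, as in the \emph{if} part of the Desargues theorem---the point $N:=KU\cap LV$, all assigned to distinct elements of $\mathcal{W}$. The hypothesis that $A,\dots,F$ are mutually distinct, together with the non-concurrency of $AB$, $DE$, $XZ$, guarantees that the sextuples below have pairwise distinct entries and that the interpreting planes can be chosen to meet all these points properly, so they are genuine atomic formulae and the interpretations are legitimate. One then records the incidences already visible in the figure: $K,L,X$ lie on $CD$; $X,U,V$ on $AE$; $K,M,Z$ on $BE$; $U,W,Z$ on $CF$; $L,M$ on $AF$; $V,W$ on $BD$; and $A,B,C$, resp.\ $D,E,F$, are collinear. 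Next I would design two triangulations---most naturally tori or connected sums of tori, in the spirit of Example~\ref{example-3}---whose triangles read off these incidences. The first, $L^{(1)}$, is to be arranged so that all of its triangles but two deliver Menelaus configurations that are \emph{free} consequences of the listed collinearities (their opposite-edge triple being collinear and incident to the three sides, so that the Menelaus theorem in $\mathbf{RP}^2$ applies, exactly as for the sextuples of the Desargues sequents); of the two exceptional triangles, one encodes a sextuple $\varphi$ whose satisfaction is, in projective geometry, equivalent to ``$M$, $N$, $W$ are collinear'', i.e.\ to the concurrency of $KU$, $LV$, $MW$, while the other carries a sextuple $\theta$ whose validity is not forced by the defining incidences of the figure alone but requires a further Pappus-type incidence---one in which the non-concurrency of $AB$, $DE$, $XZ$ is exactly what keeps the ambient configuration non-degenerate.

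To discharge $\theta$ I would build a second triangulation $L^{(2)}$ whose triangles are all Menelaus configurations forced by the listed incidences save one, carrying a sextuple $\psi$, and then verify---by a short cross-ratio computation in $\mathbf{RP}^2$, using the identity $(U,V;W)\cdot(V,W;U)\cdot(W,U;V)=1$ in the manner of the validity argument for~(\ref{unprov1})---that under our interpretation $\psi$ holds if and only if $\theta$ holds. Applying $\leftrightarrow$-introduction to the axiomatic sequents coming from $L^{(1)}$ and $L^{(2)}$ then produces a single provable sequent containing $\varphi$ together with the formula $\psi\leftrightarrow\theta$. An interpretation realising the hypotheses of the proposition satisfies every atomic formula of this sequent except $\varphi$: the formulae inherited from $L^{(1)}$ and $L^{(2)}$ by the listed incidences, and $\psi\leftrightarrow\theta$ because $\psi$ is then forced (Proposition~\ref{prsound} applied to the $L^{(2)}$-part) and $\psi\Leftrightarrow\theta$ has just been checked. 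Hence Proposition~\ref{prsound} yields $\varphi$, and therefore $KU$, $LV$, $MW$ are concurrent.

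The main obstacle I anticipate is the combinatorial engineering of the two surface triangulations: one must glue the triangles so that, for all triangles but the designated ones, two of the six entries of the associated sextuple are forced collinear with a third by the nine lines of the figure---a bookkeeping problem---and one must locate precisely which intermediate Pappus-type collinearity $\theta$ is genuinely needed, so that the bridging sextuple $\psi$ and the cross-ratio identification are set up correctly. Once $L^{(1)}$, $L^{(2)}$, $\theta$ and $\psi$ are in hand, the remaining work---checking $\psi\leftrightarrow\theta$ and the non-degeneracy (distinctness of the points, proper intersection by the interpreting planes), which is exactly where the hypothesis on $AB$, $DE$, $XZ$ enters---is routine, but should be carried out with care.
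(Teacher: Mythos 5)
Your strategy coincides with the one the paper actually uses: convert the concurrency into a collinearity by adjoining the point $KU\cap LV$, produce two axiomatic sequents from two triangulated surfaces whose sextuples are forced by the incidences of the figure except for one in each, glue them with $\leftrightarrow$-introduction (cut is unavailable because the two bridging sextuples are only geometrically, not syntactically, equivalent), and finish with Projective Soundness. However, there is a genuine gap: the entire substance of the proof is the explicit construction that you defer as ``combinatorial engineering'' and a ``bookkeeping problem.'' You never exhibit the two $\mathcal{M}$-complexes, the bridging sextuples $\theta,\psi$, or the additional auxiliary points they require, and the existence of triangulations with the ``all formulae but the designated ones are forced by the listed incidences'' property is precisely what has to be demonstrated --- it is not guaranteed by the general recipe. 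In the paper these data are concrete: one takes $1=XZ\cap AB$, $2=AB\cap DE$, $3=XZ\cap DE$, $Y=13\cap BD$, $O=KU\cap LV$; the first complex is the hexagonal torus triangulation on the vertices $1,2,3$ (the Pappus torus of Example~\ref{example-3}) giving the sequent with formulae $(1,2,3,E,X,A),\dots,(1,2,3,F,Y,A)$, and the second is the tetrahedral sphere on $U,X,Z,K$ giving $(U,X,K,L,O,V)$, $(U,X,Z,Y,W,V)$, $(K,X,Z,Y,M,L)$, $(U,Z,K,M,O,W)$; the bridge is $(1,2,3,F,Y,A)\leftrightarrow(K,X,Z,Y,M,L)$, and the target is $(U,Z,K,M,O,W)$.

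Two further points. First, your non-concurrency hypothesis on $AB$, $DE$, $XZ$ does not merely provide ``general position'': in the actual construction it is exactly what makes $1,2,3$ non-collinear, i.e.\ what makes the six torus sextuples genuine Menelaus configurations; without identifying the auxiliary points $1,2,3,Y$ you cannot locate where this hypothesis is used. Second, your verification of the biconditional is more roundabout than necessary (and slightly misdirected): you propose to force $\psi$ by soundness on the $L^{(2)}$-part and to relate $\psi$ and $\theta$ by a cross-ratio computation, whereas the satisfaction of the equivalence needs no forcing at all --- in the paper both sides reduce, under the interpretation, to one and the same collinearity ($L$, $Y$, $M$ collinear, equivalently $F$, $Y$, $A$ collinear), because the lines $AF$ and $LM$ coincide. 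Until the two complexes, the auxiliary points and this equivalence are written down and checked against the incidences of the figure, your text is a plan for a proof rather than a proof of Proposition~\ref{propex-4}.
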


\begin{proof}
Let
$\{1\}=XZ\cap AB$, $\{2\}=AB\cap DE$, $\{3\}=XZ\cap DE$,
$\{O\}=KU\cap LV$, and $\{Y\}=13\cap BD$.

\newrgbcolor{qqzzff}{0 0.6 1}
\newrgbcolor{ffxfqq}{1 0.5 0}
\newrgbcolor{qqqqcc}{0 0 0.8}
\newrgbcolor{qqttcc}{0 0.2 0.8}
\newrgbcolor{qqwuqq}{0 0.39 0}
\newrgbcolor{ffwwzz}{1 0.4 0.6}
\newrgbcolor{ffqqtt}{1 0 0.2}
\newrgbcolor{qqccqq}{0 0.8 0}
\newrgbcolor{wwqqcc}{0.4 0 0.8}
\psset{xunit=1.0cm,yunit=1.0cm,algebraic=true,dimen=middle,dotstyle=o,dotsize=3pt 0,linewidth=0.8pt,arrowsize=3pt 2,arrowinset=0.25}
\begin{center}
\psscalebox{.8 .8} % Change this value to rescale the drawing.
{
\begin{pspicture*}(-4.2,-3.2)(16.69,3.2)
\rput{0}(0,0){\psellipse[linewidth=2.8pt](0,0)(4.15,2.86)}
\parametricplot[linewidth=2.8pt]{3.3936126414032994}{6.021215152704372}{1*2.08*cos(t)+0*2.08*sin(t)+0.01|0*2.08*cos(t)+1*2.08*sin(t)+1.08}
\parametricplot[linewidth=2.8pt]{0.545463398049105}{2.5961292555406885}{1*2.08*cos(t)+0*2.08*sin(t)+0.01|0*2.08*cos(t)+1*2.08*sin(t)+-1.08}
\parametricplot[linewidth=2pt,linecolor=qqzzff]{2.279976774261442}{4.024668457434242}{1*1.22*cos(t)+0*1.22*sin(t)+0.77|0*1.22*cos(t)+1*1.22*sin(t)+-1.92}
\parametricplot[linewidth=2pt,linestyle=dashed,dash=2pt 2pt,linecolor=ffxfqq]{-0.7960284050386095}{0.8174883295547072}{1*1.29*cos(t)+0*1.29*sin(t)+-0.9|0*1.29*cos(t)+1*1.29*sin(t)+-1.94}
\parametricplot[linewidth=2pt,linecolor=qqqqcc]{4.876359128662692}{5.921249764843227}{-0.93*2.85*cos(t)+-0.37*2.37*sin(t)+-0.41|0.37*2.85*cos(t)+-0.93*2.37*sin(t)+-0.42}
\parametricplot[linewidth=2pt,linecolor=qqqqcc]{-0.36193554233635883}{0.28801900984834206}{-0.93*2.85*cos(t)+-0.37*2.37*sin(t)+-0.41|0.37*2.85*cos(t)+-0.93*2.37*sin(t)+-0.42}
\parametricplot[linewidth=2pt,linecolor=qqttcc]{0.28801900984834206}{0.9867712530003544}{-0.93*2.85*cos(t)+-0.37*2.37*sin(t)+-0.41|0.37*2.85*cos(t)+-0.93*2.37*sin(t)+-0.42}
\parametricplot[linewidth=2pt,linecolor=qqqqcc]{0.9867712530003544}{2.035888472841568}{-0.93*2.85*cos(t)+-0.37*2.37*sin(t)+-0.41|0.37*2.85*cos(t)+-0.93*2.37*sin(t)+-0.42}
\parametricplot[linewidth=2pt,linecolor=qqwuqq]{4.796287866793456}{5.513370746979559}{-0.99*2.96*cos(t)+0.16*1.46*sin(t)+0.5|-0.16*2.96*cos(t)+-0.99*1.46*sin(t)+0.52}
\parametricplot[linewidth=2pt,linecolor=qqwuqq]{-0.769814560200027}{0.09751614892685402}{-0.99*2.96*cos(t)+0.16*1.46*sin(t)+0.5|-0.16*2.96*cos(t)+-0.99*1.46*sin(t)+0.52}
\parametricplot[linewidth=2pt,linecolor=qqwuqq]{0.09751614892685402}{0.7823645134637963}{-0.99*2.96*cos(t)+0.16*1.46*sin(t)+0.5|-0.16*2.96*cos(t)+-0.99*1.46*sin(t)+0.52}
\parametricplot[linewidth=2pt,linecolor=qqwuqq]{0.7823645134637963}{1.3099205773499074}{-0.99*2.96*cos(t)+0.16*1.46*sin(t)+0.5|-0.16*2.96*cos(t)+-0.99*1.46*sin(t)+0.52}
\parametricplot[linewidth=2pt,linecolor=ffwwzz]{5.101666532076473}{5.812100631499277}{-0.73*2.99*cos(t)+-0.68*2.81*sin(t)+-0.93|0.68*2.99*cos(t)+-0.73*2.81*sin(t)+-0.75}
\parametricplot[linewidth=2pt,linecolor=ffwwzz]{-0.4710846756803093}{0.01597084132437098}{-0.73*2.99*cos(t)+-0.68*2.81*sin(t)+-0.93|0.68*2.99*cos(t)+-0.73*2.81*sin(t)+-0.75}
\parametricplot[linewidth=2pt,linecolor=ffwwzz]{0.01597084132437098}{0.4930772245259334}{-0.73*2.99*cos(t)+-0.68*2.81*sin(t)+-0.93|0.68*2.99*cos(t)+-0.73*2.81*sin(t)+-0.75}
\parametricplot[linewidth=2pt,linecolor=ffwwzz]{0.4930772245259334}{0.9378310323419501}{-0.73*2.99*cos(t)+-0.68*2.81*sin(t)+-0.93|0.68*2.99*cos(t)+-0.73*2.81*sin(t)+-0.75}
\parametricplot[linewidth=2pt,linestyle=dashed,dash=2pt 2pt,linecolor=ffwwzz]{0.29980171239023573}{0.8026092272871177}{-0.96*2.4*cos(t)+0.28*1.62*sin(t)+-1.7|-0.28*2.4*cos(t)+-0.96*1.62*sin(t)+-0.1}
\parametricplot[linewidth=2pt,linestyle=dashed,dash=2pt 2pt,linecolor=ffwwzz]{0.8026092272871177}{1.2856576751432676}{-0.96*2.4*cos(t)+0.28*1.62*sin(t)+-1.7|-0.28*2.4*cos(t)+-0.96*1.62*sin(t)+-0.1}
\parametricplot[linewidth=2pt,linestyle=dashed,dash=2pt 2pt,linecolor=ffwwzz]{1.2856576751432676}{1.764156086743084}{-0.96*2.4*cos(t)+0.28*1.62*sin(t)+-1.7|-0.28*2.4*cos(t)+-0.96*1.62*sin(t)+-0.1}
\parametricplot[linewidth=2pt,linestyle=dashed,dash=2pt 2pt,linecolor=ffwwzz]{1.764156086743084}{2.178177947197833}{-0.96*2.4*cos(t)+0.28*1.62*sin(t)+-1.7|-0.28*2.4*cos(t)+-0.96*1.62*sin(t)+-0.1}
\parametricplot[linewidth=2pt,linecolor=ffqqtt]{4.3610629972392605}{5.012668775187195}{-0.98*2.29*cos(t)+-0.17*1.48*sin(t)+0.44|0.17*2.29*cos(t)+-0.98*1.48*sin(t)+0.41}
\parametricplot[linewidth=2pt,linecolor=ffqqtt]{3.6701867790263365}{4.3610629972392605}{-0.98*2.29*cos(t)+-0.17*1.48*sin(t)+0.44|0.17*2.29*cos(t)+-0.98*1.48*sin(t)+0.41}
\parametricplot[linewidth=2pt,linecolor=ffqqtt]{2.6745386261246695}{3.6701867790263365}{-0.98*2.29*cos(t)+-0.17*1.48*sin(t)+0.44|0.17*2.29*cos(t)+-0.98*1.48*sin(t)+0.41}
\parametricplot[linewidth=2pt,linecolor=ffqqtt]{1.4802438664873026}{2.6745386261246695}{-0.98*2.29*cos(t)+-0.17*1.48*sin(t)+0.44|0.17*2.29*cos(t)+-0.98*1.48*sin(t)+0.41}
\parametricplot[linewidth=2pt,linecolor=qqccqq]{3.952050680584713}{4.532538608784681}{-0.92*3.03*cos(t)+0.39*2.34*sin(t)+0.41|-0.39*3.03*cos(t)+-0.92*2.34*sin(t)+-0.41}
\parametricplot[linewidth=2pt,linecolor=qqccqq]{3.2064712294409152}{3.952050680584713}{-0.92*3.03*cos(t)+0.39*2.34*sin(t)+0.41|-0.39*3.03*cos(t)+-0.92*2.34*sin(t)+-0.41}
\parametricplot[linewidth=2pt,linecolor=qqccqq]{2.1186643232226268}{3.2064712294409152}{-0.92*3.03*cos(t)+0.39*2.34*sin(t)+0.41|-0.39*3.03*cos(t)+-0.92*2.34*sin(t)+-0.41}
\parametricplot[linewidth=2pt,linecolor=qqccqq]{1.1163143309152501}{2.1186643232226268}{-0.92*3.03*cos(t)+0.39*2.34*sin(t)+0.41|-0.39*3.03*cos(t)+-0.92*2.34*sin(t)+-0.41}
\parametricplot[linewidth=2pt,linecolor=yellow]{2.458508275316489}{3.66283004762946}{-0.93*2.38*cos(t)+-0.38*1.07*sin(t)+0.07|0.38*2.38*cos(t)+-0.93*1.07*sin(t)+0.77}
\parametricplot[linewidth=2pt,linecolor=yellow]{3.66283004762946}{4.353411313885404}{-0.93*2.38*cos(t)+-0.38*1.07*sin(t)+0.07|0.38*2.38*cos(t)+-0.93*1.07*sin(t)+0.77}
\parametricplot[linewidth=2pt,linecolor=yellow]{4.353411313885404}{4.91451518639337}{-0.93*2.38*cos(t)+-0.38*1.07*sin(t)+0.07|0.38*2.38*cos(t)+-0.93*1.07*sin(t)+0.77}

\parametricplot[linewidth=2pt,linestyle=dashed,dash=2pt 2pt,linecolor=yellow]{3.1877435668202536}{4.224426726437027}{-0.2*0.55*cos(t)+0.98*0.27*sin(t)+1.3|-0.98*0.55*cos(t)+-0.2*0.27*sin(t)+-1.2}

\parametricplot[linewidth=2pt,linestyle=dashed,dash=2pt 2pt,linecolor=yellow]{4.224426726437027}{4.962155189093001}{-0.2*0.55*cos(t)+0.98*0.27*sin(t)+1.3|-0.98*0.55*cos(t)+-0.2*0.27*sin(t)+-1.2}

\parametricplot[linewidth=2pt,linestyle=dashed,dash=2pt 2pt,linecolor=yellow]{3.365534736074497}{3.4091646125566917}{-0.43415047669753043*28.79714620222122*cos(t)+-0.9008403652053492*5.674955749185209*sin(t)+-12.39|
0.9008403652053492*28.79714620222122*cos(t)+-0.43415047669753043*5.674955749185209*sin(t)+23.13}

\parametricplot[linewidth=2pt,linestyle=dashed,dash=2pt 2pt,linecolor=yellow]{3.317991240386494}{3.365534736074497}{-0.43415047669753043*28.79714620222122*cos(t)+-0.9008403652053492*5.674955749185209*sin(t)+-12.39|
0.9008403652053492*28.79714620222122*cos(t)+-0.43415047669753043*5.674955749185209*sin(t)+23.13}

\parametricplot[linewidth=2pt,linestyle=dashed,dash=2pt 2pt,linecolor=yellow]{3.2675312276066593}{3.317991240386494}{-0.43415047669753043*28.79714620222122*cos(t)+-0.9008403652053492*5.674955749185209*sin(t)+-12.39|
0.9008403652053492*28.79714620222122*cos(t)+-0.43415047669753043*5.674955749185209*sin(t)+23.13}
\parametricplot[linewidth=2pt,linestyle=dashed,dash=2pt 2pt,linecolor=yellow]{3.1193320943659657}{3.2675312276066593}{-0.43415047669753043*28.79714620222122*cos(t)+-0.9008403652053492*5.674955749185209*sin(t)+-12.39|
0.9008403652053492*28.79714620222122*cos(t)+-0.43415047669753043*5.674955749185209*sin(t)+23.13}

\parametricplot[linewidth=2pt,linecolor=wwqqcc]{3.2714122191065176}{4.775270846461568}{-1*2.03*cos(t)+-0.06*1.21*sin(t)+-0.1|0.06*2.03*cos(t)+-1*1.21*sin(t)+0.21}
\parametricplot[linewidth=2pt,linecolor=wwqqcc]{4.775270846461568}{5.736341849300105}{-1*2.03*cos(t)+-0.06*1.21*sin(t)+-0.1|0.06*2.03*cos(t)+-1*1.21*sin(t)+0.21}
\parametricplot[linewidth=2pt,linecolor=wwqqcc]{-0.546843457879481}{0.49921065979275303}{-1*2.03*cos(t)+-0.06*1.21*sin(t)+-0.1|0.06*2.03*cos(t)+-1*1.21*sin(t)+0.21}
\parametricplot[linewidth=2pt,linecolor=wwqqcc]{0.49921065979275303}{1.6476451344447252}{-1*2.03*cos(t)+-0.06*1.21*sin(t)+-0.1|0.06*2.03*cos(t)+-1*1.21*sin(t)+0.21}
\parametricplot[linewidth=2pt,linestyle=dashed,dash=2pt 2pt,linecolor=wwqqcc]{0.47503988185286944}{1.5219882611117619}{-0.39*2.37*cos(t)+0.92*1.72*sin(t)+0.1|-0.92*2.37*cos(t)+-0.39*1.72*sin(t)+-0.62}
\parametricplot[linewidth=2pt,linestyle=dashed,dash=2pt 2pt,linecolor=wwqqcc]{1.5219882611117619}{1.8448740078507204}{-0.39*2.37*cos(t)+0.92*1.72*sin(t)+0.1|-0.92*2.37*cos(t)+-0.39*1.72*sin(t)+-0.62}
\parametricplot[linewidth=2pt,linestyle=dashed,dash=2pt 2pt,linecolor=wwqqcc]{1.8448740078507204}{2.0832626535555896}{-0.39*2.37*cos(t)+0.92*1.72*sin(t)+0.1|-0.92*2.37*cos(t)+-0.39*1.72*sin(t)+-0.62}
\parametricplot[linewidth=2pt,linestyle=dashed,dash=2pt 2pt,linecolor=wwqqcc]{2.0832626535555896}{2.2608210007427445}{-0.39*2.37*cos(t)+0.92*1.72*sin(t)+0.1|-0.92*2.37*cos(t)+-0.39*1.72*sin(t)+-0.62}
\rput[tl](-0.07,2.33){$2$}
\rput[tl](-0.11,-0.6){$1$}
\rput[tl](-0.11,-2.4){$3$}
\psline[linewidth=2pt,linecolor=wwqqcc](6.47,2.51)(9.47,2.51)
\psline[linewidth=2pt,linecolor=qqzzff](9.47,2.51)(10.97,-0.09)
\psline[linewidth=2pt,linecolor=ffxfqq](10.97,-0.09)(9.47,-2.69)
\psline[linewidth=2pt,linecolor=wwqqcc](9.47,-2.69)(6.47,-2.69)
\psline[linewidth=2pt,linecolor=qqzzff](6.47,-2.69)(4.97,-0.09)
\psline[linewidth=2pt,linecolor=ffxfqq](4.97,-0.09)(6.47,2.51)
\psline[linewidth=2pt,linecolor=ffwwzz](6.47,2.51)(7.97,-0.09)
\psline[linewidth=2pt,linecolor=yellow](7.97,-0.09)(9.47,-2.69)
\psline[linewidth=2pt,linecolor=qqccqq](9.47,2.51)(7.97,-0.09)
\psline[linewidth=2pt,linecolor=qqwuqq](7.97,-0.09)(6.47,-2.69)
\psline[linewidth=2pt,linecolor=qqqqcc](4.97,-0.09)(7.97,-0.09)
\psline[linewidth=2pt,linecolor=ffqqtt](7.97,-0.09)(10.97,-0.09)
\rput[tl](7.75,2.9){$X$}
\rput[tl](7.81,-2.8){$X$}
\rput[tl](10.3,1.5){$Y$}
\rput[tl](5.4,-1.4){$Y$}
\rput[tl](5.4,1.5){$Z$}
\rput[tl](10.3,-1.3){$Z$}
\rput[tl](6.8,-1.1){$A$}
\rput[tl](9.17,0.25){$B$}
\rput[tl](6.81,1.3){$C$}
\rput[tl](8.77,1.3){$D$}
\rput[tl](8.8,-1.1){$E$}
\rput[tl](6.39,0.25){$F$}
\rput[tl](6.2,2.9){$1$}
\rput[tl](11.1,0.06){$1$}
\rput[tl](6.29,-2.8){$1$}
\rput[tl](7.89,-0.3){$2$}
\rput[tl](9.5,2.9){$3$}
\rput[tl](4.65,0.06){$3$}
\rput[tl](9.43,-2.8){$3$}

\rput[tl](-3.6,1.3){$C$}
\rput[tl](-3.52,-0.3){$F$}
\rput[tl](-2.75,0.25){$A$}
\rput[tl](-0.77,-1.8){$Y$}
\rput[tl](0.02,-1.8){$Z$}
\rput[tl](3.35,0.5){$D$}
\rput[tl](2.7,0.25){$B$}
\rput[tl](-1.5,1.1){$X$}
\rput[tl](0.6,-1.5){$E$}

\begin{scriptsize}
\psdots[dotsize=4pt 0,dotstyle=*](0,-2.86)
\psdots[dotsize=4pt 0,dotstyle=*](-0.02,-1)
\psdots[dotsize=4pt 0,dotstyle=*](0.02,1.92)
\psdots[dotsize=4pt 0,dotstyle=*](9.47,2.51)
\psdots[dotsize=4pt 0,dotstyle=*](6.47,2.51)
\psdots[dotsize=4pt 0,dotstyle=*](4.97,-0.09)
\psdots[dotsize=4pt 0,dotstyle=*](6.47,-2.69)
\psdots[dotsize=4pt 0,dotstyle=*](9.47,-2.69)
\psdots[dotsize=4pt 0,dotstyle=*](10.97,-0.09)
\psdots[dotsize=4pt 0,dotstyle=*](7.97,-0.09)
\end{scriptsize}
\end{pspicture*}}
\end{center}

The following axiomatic sequent is obtained by a triangulation of the torus in six triangles all having vertices $1$, $2$ and $3$.
\[
\begin{array}{rl}
\vdash  (1,2,3,E,X,A), (1,2,3,E,Z,B), \!\!\!\!\! &   (1,2,3,D,Y,B), (1,2,3,D,X,C),
\\[1ex]
 & (1,2,3,F,Z,C), (1,2,3,F,Y,A).
\end{array}
\]
On the other hand, as in Example~1, from a tetrahedral triangulation of the sphere given by a tetrahedron $UXZK$, we obtain the axiomatic sequent
\[
\vdash (U,X,K,L,O,V), (U,X,Z,Y,W,V), (K,X,Z,Y,M,L), (U,Z,K,M,O,W).
\]
\begin{center}
\psset{xunit=1.0cm,yunit=1.0cm,algebraic=true,dimen=middle,dotstyle=o,dotsize=3pt 0,linewidth=0.8pt,arrowsize=3pt 2,arrowinset=0.25}
\begin{pspicture*}(-3.3,-2.5)(16.64,3.5)
\pspolygon[fillcolor=gray,fillstyle=solid,opacity=0.1](0.02,0.32)(1.4,1.76)(4.01,-2.04)
\pspolygon[fillcolor=lightgray,fillstyle=solid,opacity=0.1](1.4,1.76)(7.04,0.34)(4.01,-2.04)
\psline[linestyle=dashed,dash=2pt 2pt](0.02,0.32)(6.08,0.34)
\psline(0.02,0.32)(2.78,-1.32)
\psline[linestyle=dashed,dash=2pt 2pt](6.08,0.34)(2.78,-1.32)
\psline(2.66,3.06)(0.02,0.32)
\psline(2.66,3.06)(2.78,-1.32)

\psline(2.66,3.06)(5.58,0.74)
\psline[linestyle=dashed,dash=2pt 2pt](5.58,0.74)(6.08,0.34)

\psline(2.78,-1.32)(4.01,-2.04)
\psline(4.01,-2.04)(1.4,1.76)
\psline(1.4,1.76)(7.04,0.34)
\psline(4.01,-2.04)(8.94,1.84)
\psline(2.75,-0.18)(8.94,1.84)
\psline[linestyle=dashed,dash=2pt 2pt](6.08,0.34)(6.35,0.51)
\psline(6.35,0.51)(8.94,1.84)
\psline[linestyle=dashed,dash=2pt 2pt](6.08,0.34)(7.04,0.34)
\rput[tl](-0.25,0.2){$U$}
\rput[tl](2.58,-1.45){$V$}
\rput[tl](5.9,0.2){$W$}
\rput[tl](8.85,2.2){$Y$}
\rput[tl](7.02,0.2){$Z$}
\rput[tl](3.86,-2.15){$X$}
\rput[tl](1.1,2.1){$K$}
\rput[tl](2.34,-0.2){$L$}
\rput[tl](5.5,1.15){$M$}
\rput[tl](2.5,3.4){$O$}
\psline(0.02,0.32)(1.4,1.76)
\psline(1.4,1.76)(4.01,-2.04)
\psline(4.01,-2.04)(0.02,0.32)
\psline(1.4,1.76)(7.04,0.34)
\psline(7.04,0.34)(4.01,-2.04)
\psline(4.01,-2.04)(1.4,1.76)
\begin{scriptsize}
\psdots[dotstyle=*](0.02,0.32)
\psdots[dotstyle=*](6.08,0.34)
\psdots[dotstyle=*](2.78,-1.32)
\psdots[dotstyle=*](2.66,3.06)
\psdots[dotstyle=*](4.01,-2.04)
\psdots[dotstyle=*](1.4,1.76)
\psdots[dotstyle=*](7.04,0.34)
\psdots[dotstyle=*](2.75,-0.18)
\psdots[dotstyle=*](5.58,0.74)
\psdots[dotstyle=*](8.94,1.84)
\end{scriptsize}
\end{pspicture*}
\end{center}

By applying $\leftrightarrow$-introduction, one obtains the following nine-element sequent.
\[
\begin{array}{rl}
\vdash  (1,2,3,E,X,A), \!\!\!\!\! &    (1,2,3,E,Z,B), (1,2,3,D,Y,B), (1,2,3,D,X,C),
\\[1ex]
 & (1,2,3,F,Z,C), (1,2,3,F,Y,A)\leftrightarrow(K,X,Z,Y,M,L),
\\[1ex]
 & (U,X,K,L,O,V), (U,X,Z,Y,W,V), (U,Z,K,M,O,W).
\end{array}
\]

Consider now a projective interpretation that maps all points as indicated. (The pairs of corresponding sides of the triangles $KLM$ and $UVW$ are indicated in the picture.)

\begin{center}
\psscalebox{.27 .27} % Change this value to rescale the drawing.
{
\begin{pspicture}(-8,-26)(38,3)
%\psframe[linewidth=0.04,dimen=outer](3.58,-1.38)(1.34,-2.3)
%\psline[linewidth=0.04cm](1.28,-2.2)(1.28,-1.42)
%\usefont{T1}{ptm}{m}{n}

\psline[linewidth=0.1,dimen=outer,linestyle=dashed,dash=10pt 5pt](-7.66,-23.98)(10.53,-10.05)
\psline[linewidth=0.1,dimen=outer,linestyle=dashed,dash=10pt 5pt](14.9,-10.46)(16.88,-14.41)
\psline[linewidth=0.2,dimen=outer](10.53,-10.05)(12.18,-19.56)
\psline[linewidth=0.2,dimen=outer](16.88,-14.41)(12.63,-14.43)
\psline[linewidth=0.1,dimen=outer,linestyle=dashed,dash=2pt 4pt](-7.66,-23.98)(12.18,-19.56)
\psline[linewidth=0.1,dimen=outer,linestyle=dashed,dash=2pt 4pt](14.9,-10.46)(12.63,-14.43)

\psline[linewidth=0.06,dimen=outer](12.63,-14.43)(4.77,-14.46)
\psline[linewidth=0.06,dimen=outer](10.53,-10.05)(18.69,-3.8)
\psline[linewidth=0.06,dimen=outer](14.9,-10.46)(8.31,2.64)
\psline[linewidth=0.06,dimen=outer](12.63,-14.43)(9.35,-20.19)
\psline[linewidth=0.06,dimen=outer](12.18,-19.56)(35.59,-14.33)
\psline[linewidth=0.06,dimen=outer](10.53,-10.05)(8.31,2.64)
\psline[linewidth=0.06,dimen=outer](16.88,-14.41)(35.59,-14.33)
\psline[linewidth=0.06,dimen=outer](16.88,-14.41)(18.72,-18.1)

\psline[linewidth=0.03,dimen=outer](10.53,-10.05)(16.88,-14.41)
\psline[linewidth=0.03,dimen=outer](12.18,-19.56)(12.63,-14.43)
\psline[linewidth=0.03,dimen=outer](-7.66,-23.98)(14.9,-10.46)

\psline[linewidth=0.1,dimen=outer](4.77,-14.46)(37.13,-22.89)
\psline[linewidth=0.1,dimen=outer](8.31,2.64)(37.49,-15.52)
\psline[linewidth=0.06,dimen=outer](9.35,-20.19)(15.53,-1.82)
\psline[linewidth=0.06,dimen=outer](18.69,-3.8)(14.9,-10.46)

\rput(4.6,-15.2){\psscalebox{3}{$A$}} \rput(11.2,-17.8){\psscalebox{3}{$B$}} \rput(18.7,-18.9){\psscalebox{3}{$C$}}
\rput(8.3,3.3){\psscalebox{3}{$D$}}
\rput(18.7,-3){\psscalebox{3}{$E$}}
\rput(35.6,-13.5){\psscalebox{3}{$F$}}
\rput(12.5,-7.5){\psscalebox{3}{$X$}}
\rput(10.5,-13.85){\psscalebox{3}{$Y$}}
\rput(9.3,-21){\psscalebox{3}{$Z$}}
\rput(10.1,-16.5){\psscalebox{3}{$1$}}
\rput(35.5,-18.5){\psscalebox{3}{$2$}}
\rput(36.5,-19.2){\psscalebox{3}{$\searrow$}}
\rput(15.5,-1){\psscalebox{3}{$3$}}
\rput(15.7,-10.3){\psscalebox{3}{$K$}}
\rput(17.3,-13.8){\psscalebox{3}{$L$}}
\rput(13.2,-15){\psscalebox{3}{$M$}}
\rput(-7.7,-24.7){\psscalebox{3}{$U$}}
\rput(9.8,-10){\psscalebox{3}{$V$}}
\rput(12.2,-20.5){\psscalebox{3}{$W$}}
\rput(13,-10.8){\psscalebox{3}{$O$}}

\rput(4.77,-14.46){\pscircle*{.15}} %A
\rput(11.6,-16.24){\pscircle*{.15}} %B
\rput(18.72,-18.1){\pscircle*{.15}} %C
\rput(8.31,2.64){\pscircle*{.15}} %D
\rput(18.69,-3.8){\pscircle*{.15}} %E
\rput(35.59,-14.33){\pscircle*{.15}} %F
\rput(13.54,-7.74){\pscircle*{.15}} %X
\rput(11.29,-14.43){\pscircle*{.15}} %Y
\rput(9.35,-20.19){\pscircle*{.15}} %Z
\rput(10.75,-16.02){\pscircle*{.15}} %1
%\rput(58.05,-28.35){\pscircle*{.15}} %2
\rput(15.53,-1.82){\pscircle*{.15}} %3
\rput(14.9,-10.46){\pscircle*{.15}} %K
\rput(16.88,-14.41){\pscircle*{.15}} %L
\rput(12.63,-14.43){\pscircle*{.15}} %M
\rput(-7.66,-23.98){\pscircle*{.15}} %U
\rput(10.53,-10.05){\pscircle*{.15}} %V
\rput(12.18,-19.56){\pscircle*{.15}} %W
\rput(12.88,-11.67){\pscircle*{.15}} %O

\end{pspicture}
}
\end{center}

Let us prove that this interpretation satisfies the first eight formulae of this sequent. The proofs of the negative statements that we mention below require a lot of space and go beyond the scope of this paper. They are secondary for incidence results, and are usually assumed as a kind of general position of points involved in such results. However, we pay attention to all the positive statements involved in this example. The Menelaus theorem is tacitly used during the proof.

In order to show that our interpretation satisfies $(1,2,3,E,X,A)$, note that 1, 2 and 3 are not collinear and that 1, 2, 3, $E$, $X$, $A$ are mutually distinct.
The points $E$, 2, 3 are collinear by the definition of 2 and 3, the points $X$, 1, 3 are collinear by the definition of 1 and 3, and the points $A$, 1, 2
are collinear by the definition of 1 and 2. Finally, the points $E$, $X$, $A$ are collinear by the definition of $X$. We proceed analogously for the next four
formulae of the sequent.

In order to show that $(1,2,3,F,Y,A)\leftrightarrow(K,X,Z,Y,M,L)$ is satisfied by the interpretation, note that, by reasoning as in the preceding paragraph, the left-hand side of this equivalence is satisfied iff the points $F$, $Y$, $A$ are collinear. By the definition of $L$ and $M$, we have that the lines $AF$ and $LM$ coincide. Hence, the above condition is equivalent to the statement that $L$, $Y$, $M$ are collinear.

By reasoning as in the preceding paragraph, it is
evident that $L$, $Y$, $M$ are collinear iff the interpretation
satisfies $(K,L,M,Y,Z,X)$. This means that the interpretation
satisfies the above equivalence.

In order to show that our interpretation satisfies $(O,U,V,X,L,K)$, note that $O$, $U$, $V$ are not collinear
and that $O$, $U$, $V$, $X$, $L$, $K$ are mutually
distinct. The lines $UV$ and $AE$ coincide, hence, the definition
of $X$ implies that $X$, $U$, $V$ are collinear.
The definition of $O$ implies that $K$, $O$, $U$ as well as $L$,
$O$, $V$ are collinear. Finally, since the lines $CD$ and $KL$ coincide, the
definition of $X$ implies that $K$, $L$, $X$ are collinear.

In order to show that our interpretation satisfies $(U,V,W,Y,Z,X)$, note that $U$, $V$, $W$ are not collinear
and that $U$, $V$, $W$, $Y$, $Z$, $X$ are mutually
distinct. The lines $VW$ and $BD$ coincide, hence, the definition
of $Y$ implies that $Y$, $V$, $W$ are collinear. The lines $UW$ and $CF$ coincide, hence, the definition of $Z$
implies that $Z$, $U$, $W$ are collinear. As in the preceding paragraph, we have that $X$, $U$, $V$ are collinear.
Finally, since the lines $XZ$ and $13$ coincide, the
definition of $Y$ implies that $X$, $Y$, $Z$ are collinear.

By the soundness result, our interpretation satisfies $(U,Z,K,M,O,W)$, which means that the line ${W\!M}$ contains the intersection point of $KU$ and $LV$, and these three lines are concurrent.
\end{proof}
\end{example}

\section{Decidability}\label{secdecidability}
The aim of this section is to prove the following result.
\begin{prop}\label{decidability}
The Menelaus system is decidable.
\end{prop}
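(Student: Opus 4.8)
The plan is to show that provability in the Menelaus system is a decidable property of sequents by exhibiting an effective bound on the size of derivations that need to be searched. First I would observe that the only potentially unbounded feature of a derivation is the use of the two cut rules, since the two connective-introduction rules strictly increase the total size of the end-sequent (they replace two premise-sequents by one sequent whose multiset of formulae is obtained by merging and adding one new compound formula), and there are only finitely many axiomatic sequents of bounded size over the finitely many symbols occurring in the goal sequent. So the crux is a cut-elimination or, more modestly, a cut-bounding argument: every provable sequent $\vdash\Gamma$ should have a derivation whose cuts are ``small'' relative to $\Gamma$.

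The key steps, in order, would be: (1) Fix a goal sequent $\vdash\Gamma$. Note that every formula appearing anywhere in a cut-free-above portion of a derivation that ends in $\vdash\Gamma$ is built from subformulae of formulae in $\Gamma$ together with atomic formulae produced by axioms; since axiomatic sequents (\ref{ax-com}) over an $\mathcal{M}$-complex $L$ with $L_0\cup L_1\subseteq\mathcal{W}$ only ever use symbols from $\mathcal{W}$, and since a derivation of $\vdash\Gamma$ can be taken to use only symbols occurring in $\Gamma$ (any symbol not occurring in the conclusion can be uniformly renamed, and cuts on formulae over fresh symbols can be shown redundant), there are only finitely many atomic formulae available. (2) Bound the size of the $\mathcal{M}$-complexes that can appear: by Proposition~\ref{surface} an $\mathcal{M}$-complex realises a closed connected orientable surface, and the relevant data (the multiset $\{\nu x\mid x\in L_2\}$) is a multiset of atomic formulae over the finite symbol set, so up to the equivalence that matters for the sequent there are only finitely many axiomatic sequents of each given cardinality; one then argues that the cardinality needed is bounded by (a function of) $|\Gamma|$. (3) Establish a cut-bound: show that if $\vdash\Gamma$ is derivable, it is derivable by a derivation in which every cut formula is a subformula of some formula of $\Gamma$ (a subformula property), and every sequent occurring in the derivation has cardinality bounded in terms of $|\Gamma|$ and the formula-sizes in $\Gamma$. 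This is the analogue of cut-elimination; given it, the set of derivations to inspect is finite and provability is decidable.

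The main obstacle I expect is step (3): the empty-cut-formula rule $\dfrac{\vdash\Gamma\quad\vdash\Delta}{\vdash\Gamma,\Delta}$ can in principle concatenate arbitrarily many sequents, so a naive search is infinite. To tame it one must argue that in a cut-minimal derivation of $\vdash\Gamma$ no ``detour'' sequents are glued in that do not contribute formulae to $\Gamma$ (a connectedness/relevance argument reminiscent of the acyclic-connected correctness criterion alluded to in Section~\ref{system}): every leaf axiom must contribute at least one formula that survives to the conclusion, which bounds the number of leaves by $|\Gamma|$, hence bounds the whole derivation. Likewise for the ordinary cut rule one needs that the cut formula can be chosen among subformulae of $\Gamma$; this requires a permutation-of-rules argument pushing cuts upward past the $\diskon$- and $\leftrightarrow$-introductions, using that these connectives are essentially conjunction and equivalence so cut commutes with them in the familiar way.

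Once the finite search space is in place, the decision procedure is: enumerate all derivations built from axiomatic sequents over the symbols of $\Gamma$, of $\mathcal{M}$-complexes of bounded cardinality, and from the two axiom schemata (\ref{perm-swit}), closed under the cut and introduction rules, with every intermediate sequent of bounded cardinality; accept iff $\vdash\Gamma$ appears. I would also remark (as the paper does in Remark~\ref{less-syntactical}) that one may work modulo the octahedral $G$-action on atomic formulae, which does not affect decidability but streamlines the bookkeeping. The only genuinely new technical content is the subformula/relevance lemma bounding cut, and I would isolate it as a separate lemma before concluding Proposition~\ref{decidability}.
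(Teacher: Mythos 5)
Your overall architecture---bound the alphabet and the sequent size, permute cuts above the introduction rules, then search a finite space---is essentially the paper's proof (Lemmas~\ref{normalisation1}--\ref{normalisation2}, Corollary~\ref{atomisation}, Lemmas~\ref{materijal}--\ref{decidatom}). The genuine gap is the lemma you rest step (3) on: that every derivable $\vdash\Gamma$ has a derivation in which every cut formula is a \emph{subformula} of $\Gamma$. Nothing of the sort holds in this system, nor would your rule-permutation argument produce it: permuting cuts upward past $\diskon$- and $\leftrightarrow$-introductions (and reducing cuts on compound principal formulas) only makes the cut formulas \emph{atomic}; it never changes \emph{which} atomic formula is being cut, and that formula typically does not occur in the conclusion at all. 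Indeed cut is not eliminable here---Example~\ref{example-2} derives a six-formula sequent by a cut on $(A,B,C,P,Q,R)$, which is absent from the conclusion, and the paper explicitly flags this cut as non-eliminable (geometrically, the cut triangle disappears in the connected sum and the glued complex need not be an $\mathcal{M}$-complex). So a search restricted to cuts on subformulas of $\Gamma$ would wrongly reject derivable sequents, and you give no argument (and the structure of the system speaks against one) that such a restriction is complete.

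The correct replacement bounds the \emph{material} of cut formulas rather than their shape, and it also repairs your step (1): Lemma~\ref{materijal} shows that in any derivable $\vdash\Delta,\varphi$ the letters of $\varphi$ already occur in $\Delta$; hence (Lemma~\ref{materijal2}) every sequent in \emph{any} derivation of $\vdash\Gamma$ uses only letters of $\lambda(\Gamma)$ and has cardinality at most $\kappa(\Gamma)$. This makes your renaming/``fresh symbols are redundant'' manoeuvre unnecessary (and as stated it is unjustified: identifying symbols can violate the distinctness constraints on sextuples and on $\mathcal{M}$-complexes). After normalisation the cut formulas, being atomic, range over the finitely many sextuples with entries in $\lambda(\Gamma)$, which is all the finiteness the search needs; the atomic part is then decided by saturation (Lemma~\ref{decidatom}) and the introduction part by saturation over subformulas of $\Gamma$. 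Likewise, the empty-cut rule needs no relevance/connectedness argument: its conclusion's cardinality is the sum of the premises' cardinalities, each at least $2$, so the cardinality bound of Lemma~\ref{materijal2} already tames it. (Minor point: $\diskon$-introduction does not strictly increase the sequent's cardinality---it preserves it---but only monotonicity is used, so this is harmless.)
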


We say that a derivation in the Menelaus system is \emph{normal}, when neither $\diskon$-in\-tro\-duc\-tion, nor $\leftrightarrow$-introduction  precedes an application of a cut rule in this derivation. By a formula in a derivation, we mean here a particular \emph{occurrence} of this formula in the derivation.

\begin{lem}\label{normalisation1}
If the last inference rule in a derivation is
\[
\f{\vdash\Gamma \quad \vdash\Delta}{\vdash\Gamma,\Delta}
\]
and there are no other applications of cut in this derivation
preceded by some $\diskon$-introduction or
$\leftrightarrow$-introduction, then there is a normal derivation
of the sequent $\vdash\Gamma,\Delta$.
\end{lem}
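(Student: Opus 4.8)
The plan is to push the bottom cut upward through the derivation by a standard cut-elimination style argument, but tailored to the very limited situation described in the statement: the two premises $\vdash\Gamma$ and $\vdash\Delta$ are each derived \emph{without} any cut that is preceded by a $\diskon$- or $\leftrightarrow$-introduction. I would proceed by induction on the total size of the derivation, with a case analysis on the last inference rules used to derive $\vdash\Gamma$ and $\vdash\Delta$. The key observation to exploit is that in \emph{both} sub-derivations every connective-introduction (if any) occurs below every cut, i.e.\ the sub-derivations already have the shape: a (possibly empty) block of cuts applied to axiomatic sequents, followed on top by a block of $\diskon$- and $\leftrightarrow$-introductions. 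So each premise is ``almost normal'', and the only defect in $\vdash\Gamma,\Delta$ is the single new cut sitting at the very bottom, potentially below connective introductions inherited from the premises.

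The heart of the argument is a permutation lemma: a cut rule whose premise is the conclusion of a $\diskon$-introduction or a $\leftrightarrow$-introduction can be commuted above that introduction. Concretely, if $\vdash\Gamma$ was obtained by $\diskon$-introduction
\[
\f{\vdash\Gamma_0,\varphi_1 \quad \vdash\Gamma_0,\varphi_2}{\vdash\Gamma_0,\varphi_1\diskon\varphi_2}
\]
(so $\Gamma=\Gamma_0,\varphi_1\diskon\varphi_2$), and we cut this against $\vdash\Delta$ with empty cut formula, then I replace the derivation by first cutting each premise against $\vdash\Delta$,
\[
\f{\vdash\Gamma_0,\varphi_i \quad \vdash\Delta}{\vdash\Gamma_0,\Delta,\varphi_i}\qquad (i=1,2),
\]
and then applying $\diskon$-introduction to recover $\vdash\Gamma_0,\Delta,\varphi_1\diskon\varphi_2$. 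The analogous moves handle $\leftrightarrow$-introduction (where one splits the context $\Delta$ trivially, since our cut formula is empty, so $\Delta$ goes entirely into one branch) and the case where $\varphi$ or the active formula of the introduction lies in $\Delta$ rather than $\Gamma$. After one such commutation the new cuts are strictly higher in the derivation and the connective introductions that were above them are still above them; one then invokes the induction hypothesis on the two smaller derivations $\vdash\Gamma_0,\varphi_i$ (resp.\ the halves for $\leftrightarrow$) cut against $\vdash\Delta$, each of which again satisfies the hypothesis ``no cut preceded by a connective introduction except possibly the bottom one''. The base case is when neither $\vdash\Gamma$ nor $\vdash\Delta$ ends in a connective introduction: then the bottom cut is preceded only by other cuts (or axioms), so the whole derivation is already normal.

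The main obstacle I anticipate is bookkeeping the induction measure so that it genuinely decreases under the commutation: naively, replacing one cut by two cuts can increase the number of cut instances, so I would take as measure something like the number of connective-introduction inferences that sit \emph{above} the bottom cut (or, more robustly, a lexicographic pair: height of the highest connective introduction lying below some cut, together with derivation size). Each commutation step strictly lowers the number of connective introductions appearing below the pushed cut(s), while all other cuts in the derivation remain, by hypothesis, free of connective introductions beneath them; so termination is guaranteed. A secondary technical point is the $\leftrightarrow$-introduction case, where the rule genuinely splits its context; but since the cut in our Lemma has \emph{empty} cut formula, the formula being eliminated is not among $\varphi,\psi$, and one simply routes all of $\Delta$ into whichever premise of the $\leftrightarrow$-introduction contains the relevant occurrence, leaving the other premise untouched. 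Once this Lemma is in place, the general decidability proof (Proposition~\ref{decidability}) will follow by iterating it to normalise arbitrary derivations and then bounding the search space of normal derivations, but that lies beyond the present statement.
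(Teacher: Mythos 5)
Your proposal is correct and takes essentially the same route as the paper's proof: there, too, the bottom empty-cut is permuted above the last $\diskon$- or $\leftrightarrow$-introduction (duplicating the cut against $\vdash\Delta$ in the $\diskon$ case, and routing all of $\Delta$ into one premise in the $\leftrightarrow$ case), followed by an appeal to the induction hypothesis. The paper's induction measure is simply the total number of $\diskon$- and $\leftrightarrow$-introductions occurring in the derivation, which resolves the termination worry you raise, since each new subderivation produced by a commutation contains strictly fewer introductions.
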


\begin{proof}
We proceed by induction on the number $n\geq 0$ of $\diskon$-introductions and $\leftrightarrow$-introductions in this derivation.
If $n=0$, then the derivation is already normal. If $n>0$, then by the assumption, one of the premisses is obtained by either $\diskon$-introduction or $\leftrightarrow$-introduction. If the end of the derivation is of the form
\[
\f{\f{\vdash\Gamma',\gamma_1\quad \vdash\Gamma',\gamma_2}{\vdash\Gamma',\gamma_1\diskon \gamma_2}\quad \afrac{\vdash\Delta}}{\vdash\Gamma',\gamma_1\diskon\gamma_2,\Delta},
\]
then we transform it into the derivation ending as
\[
\f{\f{\vdash\Gamma',\gamma_1\quad \vdash\Delta}{\vdash\Gamma',\gamma_1,\Delta}\quad \f{\vdash\Gamma',\gamma_2 \quad \vdash\Delta}{\vdash\Gamma',\gamma_2,\Delta}}{\vdash\Gamma', \gamma_1\diskon\gamma_2,\Delta},
\]
where we can apply the induction hypothesis to the subderivations ending with $\vdash\Gamma',\gamma_1,\Delta$ and $\vdash\Gamma',\gamma_2,\Delta$.

If the end of the derivation is of the form
\[
\f{\f{\vdash\Gamma',\gamma_1\quad \vdash\Gamma'',\gamma_2}{\vdash\Gamma',\Gamma'',\gamma_1\leftrightarrow \gamma_2}\quad \afrac{\vdash\Delta}}{\vdash\Gamma',\Gamma'',\gamma_1\leftrightarrow\gamma_2, \Delta},
\]
then we transform it into the derivation ending as
\[
\f{\f{\vdash\Gamma',\gamma_1\quad \vdash\Delta}{\vdash\Gamma',\gamma_1, \Delta}\quad \afrac{\vdash\Gamma'',\gamma_2}}{\vdash\Gamma',\Gamma'',\gamma_1\leftrightarrow\gamma_2, \Delta},
\]
where we can apply the induction hypothesis to the subderivation ending with $\vdash\Gamma',\gamma_1, \Delta$.
\end{proof}

The following definitions consider the cut whose cut formula is \emph{not} empty. Let the \emph{degree} of a cut be the number of occurrences of $\diskon$ and $\leftrightarrow$ in the cut formula. For the cut rule and $\leftrightarrow$-introduction, every formula of the lower sequent, except the \emph{principal} formula $\varphi\leftrightarrow\psi$ in the case of $\leftrightarrow$-introduction, has a unique \emph{successor}, an occurrence of the same formula, in the upper sequent. In the case of $\diskon$-introduction, every formula of the lower sequent, except the \emph{principal} formula $\varphi\diskon\psi$, has two \emph{successors}, occurrences of the same formula, in the upper sequent. Let the \emph{rank} of a formula in a derivation be the number of formulae that are related to this formula by the reflexive and transitive closure of the successor relation. Let the \emph{rank} of a cut rule in a derivation be the sum of the ranks of the cut formulae in both premisses of this cut.

For the proof of the following lemma, we use a procedure akin to the cut-elimination procedure introduced by Gentzen, \cite{G35}, which corresponds to cut-dis\-in\-te\-gra\-tion of Do\v sen, \cite[Section~1.8.1]{D99}.

\begin{lem}\label{normalisation2}
If the last inference rule in a derivation is
\[
\f{\vdash\Gamma,\varphi \quad \vdash\Delta,\varphi}{\vdash\Gamma,\Delta}
\]
and there are no other applications of cut in this derivation preceded by some $\diskon$-introduction or $\leftrightarrow$-introduction, then there is a normal derivation of the sequent $\vdash\Gamma,\Delta$.
\end{lem}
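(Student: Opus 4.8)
The plan is to mimic Gentzen's cut-elimination argument adapted to this one-sided system, where the two connectives $\diskon$ and $\leftrightarrow$ play the roles of the additive $\&$ and the multiplicative $\otimes$ respectively. I would do a double induction, the outer induction on the degree of the cut (the number of occurrences of $\diskon$ and $\leftrightarrow$ in the cut formula) and the inner induction on the rank of the cut. The goal is to show that the single offending cut at the bottom of the derivation can either be pushed upward (toward the axioms, reducing rank) or decomposed into cuts on smaller formulae (reducing degree), until we reach a point where Lemma~\ref{normalisation1} applies or the cut disappears entirely.

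First I would treat the base cases for the rank. If the cut formula $\varphi$ in one of the premisses is the principal formula of the inference rule immediately above it, then $\varphi$ must be of the form $\psi\diskon\theta$ or $\psi\leftrightarrow\theta$ (since the axiomatic sequents contribute only atomic formulae, and a $\diskon$- or $\leftrightarrow$-introduction is the only way to produce a compound formula), and I would perform the standard \emph{key reduction}: for $\diskon$ against $\diskon$, the cut on $\psi\diskon\theta$ is replaced by a cut on $\psi$ (say), discarding the unused branch; for $\leftrightarrow$ against $\leftrightarrow$, the cut on $\psi\leftrightarrow\theta$ is replaced by two successive cuts, one on $\psi$ and one on $\theta$ — in both cases the new cuts have strictly smaller degree, so the outer induction hypothesis applies to the (normal, by Lemma~\ref{normalisation1}, modulo the new cuts) resulting subderivations. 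Then I would handle the \emph{commuting conversions}: when the cut formula in a premiss is not principal but a successor of a cut formula higher up, push the bottom cut past the inference rule (cut, $\diskon$-introduction, or $\leftrightarrow$-introduction) that sits above it. In each such case the rank of the bottom cut strictly decreases — the cut is moved closer to where its cut formula originates — so the inner induction hypothesis applies, after first observing that the rearranged derivation still has at most the single bottom cut preceded by $\diskon$- or $\leftrightarrow$-introductions, and any newly exposed cut is itself amenable to Lemma~\ref{normalisation1}.

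A subtlety I would be careful about is the $\diskon$-introduction commuting case, where the cut formula appears with two successors in the upper sequent: pushing the cut upward duplicates it, producing two cuts each of strictly smaller rank, and I would need the inner induction hypothesis to be formulated so as to apply to both copies simultaneously (equivalently, phrase the inner induction on multisets of ranks ordered by the multiset order, or simply induct on the sum of ranks over all the relevant cuts). The other case requiring care is when the premiss producing the cut formula ends in the \emph{empty}-cut-formula rule from~(\ref{rule-cut}); there the cut on $\varphi$ and the empty cut can be permuted freely, which again reduces rank. I would also note that $\diskon$-introduction and $\leftrightarrow$-introduction occurring \emph{above} the relevant cut but not feeding the cut formula are simply left in place — they do not obstruct normality, since normality only forbids a $\diskon$- or $\leftrightarrow$-introduction \emph{preceding} (i.e.\ below, in the upward-growing tree) a cut.

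The main obstacle I expect is bookkeeping: ensuring that after each reduction step the induction hypothesis genuinely applies, i.e.\ that the transformed derivation still satisfies the hypothesis ``there are no other applications of cut preceded by some $\diskon$-introduction or $\leftrightarrow$-introduction'' for the cuts to which we wish to apply Lemma~\ref{normalisation1}, while the one or two cuts we are actively manipulating have dropped in the appropriate measure. The cleanest way to organize this is to prove, by the outer induction on degree and the inner induction on (summed) rank, the more uniform statement: \emph{any derivation whose only non-normalities are cuts all lying on a single path to the root can be transformed into a normal derivation of the same sequent}; Lemmas~\ref{normalisation1} and~\ref{normalisation2} are then the instances with zero, respectively one, such cut, and Proposition~\ref{decidability} follows because a normal derivation has the subformula property up to the $G$-action and the bounded branching of the rules, so proof search terminates.
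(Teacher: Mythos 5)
Your proposal follows essentially the same route as the paper: a Gentzen-style cut-disintegration by lexicographic induction on the pair (degree, rank) of the bottom cut, with key reductions when the cut formula is principal on both sides ($\diskon$ against $\diskon$ dropping to a cut on one component, $\leftrightarrow$ against $\leftrightarrow$ splitting into two lower-degree cuts reassembled by the empty cut and Lemma~\ref{normalisation1}), commuting conversions lowering the rank otherwise, and the duplication caused by commuting past a $\diskon$-introduction handled simply by applying the induction hypothesis to each of the two resulting lower-rank cuts separately and then re-applying the introduction below them --- exactly what the paper does, so your proposed strengthening to multisets or sums of ranks is not needed.

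The one step whose justification would fail as written is your commuting conversion past a \emph{non-empty} cut sitting above a premiss. After that permutation the newly exposed bottom rule is again a non-empty cut, and Lemma~\ref{normalisation1} does not apply to it (it only treats the cut whose cut formula is empty); moreover, once you normalise the upper part by the induction hypothesis, the re-attached bottom cut may be preceded by introductions coming from that normal derivation, and its cut formula is a different (atomic) formula whose rank is not controlled by your measure, so when the original cut formula is itself atomic the lexicographic pair need not decrease. The paper avoids this case altogether: by the hypothesis of the lemma, a cut ending a premiss is not preceded by any introduction, so the entire subderivation of that premiss contains no $\diskon$- or $\leftrightarrow$-introductions; hence the cut formula must be atomic, and if the whole derivation is not normal the offending introduction lies in the other premiss, whose last rule is then an introduction in which the (atomic, hence non-principal) cut formula occurs, and the ordinary introduction-commuting conversion applies, lowering the rank. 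With this observation inserted, and keeping your permutation with the \emph{empty} cut followed by Lemma~\ref{normalisation1}, your argument goes through.
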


\begin{proof}
We proceed by induction on the lexicographically ordered pairs $(d,r)$, where $d\geq 0$ is the degree and $r\geq 2$ is the rank of this cut.
The basis of this induction, i.e.\ the case when $(d,r)=(0,2)$, holds: in this case the last inference rule is not preceded by $\diskon$-introduction or $\leftrightarrow$-introduction, since both premisses must be axiomatic sequents and the derivation is already normal.

If $r>2$, then it is possible that the end of our derivation is of the form
\[
\f{\f{\vdash\Gamma',\varphi,\psi\quad \vdash\Gamma',\varphi,\chi}{\vdash \Gamma',\psi\diskon\chi,\varphi}\quad \afrac{\vdash\Delta,\varphi}}{\vdash\Gamma',\psi\diskon\chi,\Delta},
\]
and we transform it into the derivation ending as
\[
\f{\f{\vdash\Gamma',\varphi,\psi \quad \vdash\Delta,\varphi}{\vdash\Gamma',\psi,\Delta} \quad \f{\vdash\Gamma',\varphi,\chi \quad \vdash\Delta,\varphi}{\vdash\Gamma',\chi,\Delta}}{\vdash\Gamma', \psi\diskon\chi,\Delta},
\]
where both applications of cut have the same degree but lower ranks. If the end of our derivation is of the form
\[
\f{\f{\vdash\Gamma',\varphi,\psi\quad \vdash\Gamma'',\chi}{\vdash \Gamma',\Gamma'',\psi\leftrightarrow\chi,\varphi}\quad \afrac{\vdash\Delta,\varphi}}{\vdash\Gamma',\Gamma'',\psi \leftrightarrow\chi,\Delta},
\]
then we transform it into the derivation ending as
\[
\f{\f{\vdash\Gamma',\varphi,\psi\quad \vdash\Delta,\varphi}{\vdash \Gamma',\Delta,\psi}\quad \afrac{\vdash\Gamma'',\chi}}{\vdash\Gamma',\Gamma'',\psi \leftrightarrow\chi,\Delta},
\]
where the new cut has the same degree but a lower rank.

On the other hand, if we assume that the end of our derivation is of the form
\[
\f{\f{\vdash\Gamma_1,\varphi,\psi\quad \vdash \Gamma_2,\psi}{\vdash \Gamma_1,\Gamma_2,\varphi}\quad \afrac{\vdash\Delta,\varphi}}{\vdash\Gamma_1,\Gamma_2,\Delta} \quad \mbox{\rm or} \quad
\f{\f{\vdash\Gamma_1,\varphi\quad \vdash \Gamma_2}{\vdash \Gamma_1,\Gamma_2,\varphi}\quad \afrac{\vdash\Delta,\varphi}}{\vdash\Gamma_1,\Gamma_2,\Delta},
\]
then, since $\diskon$-introduction and $\leftrightarrow$-introduction are not applied in the subderivations ending with $\vdash\Gamma_1,\Gamma_2,\varphi$, the formula $\varphi$ must be atomic. If our derivation is not normal, then the subderivation ending with $\vdash\Delta,\varphi$ must have $\diskon$-introduction or $\leftrightarrow$-introduction as the last rule and we are again in the former situation.

If $r=2$ and $d>0$, then we have two possibilities. If the end of our derivation is of the form
\[
\f{\f{\vdash\Gamma,\varphi_1\quad \vdash\Gamma,\varphi_2}{\vdash\Gamma,\varphi_1\diskon\varphi_2}\quad \f{\vdash\Delta,\varphi_1\quad \vdash\Delta,\varphi_2}{\vdash\Delta,\varphi_1\diskon\varphi_2}}{\vdash \Gamma,\Delta},
\]
then we transform it into the derivation whose end is the cut
\[
\f{\Gamma,\varphi_1\quad \vdash\Delta,\varphi_1}{\vdash\Gamma,\Delta}
\]
with a lower degree. If the end of our derivation is of the form
\[
\f{\f{\vdash\Gamma',\varphi_1\quad \vdash\Gamma'',\varphi_2}{\vdash\Gamma',\Gamma'',\varphi_1 \leftrightarrow\varphi_2}\quad \f{\vdash\Delta',\varphi_1\quad \vdash\Delta'',\varphi_2}{\vdash\Delta',\Delta'',\varphi_1 \leftrightarrow\varphi_2}}{\vdash\Gamma',\Gamma'',\Delta',\Delta''},
\]
then we transform it into the derivation whose end is of the form
\[
\f{\f{\vdash\Gamma',\varphi_1\quad \vdash\Delta',\varphi_1}{\vdash\Gamma',\Delta'}\quad \f{\vdash\Gamma'',\varphi_2\quad \vdash\Delta'',\varphi_2}{\vdash\Gamma'',\Delta''}}{\vdash\Gamma',\Gamma'', \Delta',\Delta''},
\]
where both upper cuts are of lower degree. By the induction hypothesis, there are two normal derivations ending with $\vdash\Gamma',\Delta'$ and $\vdash\Gamma'',\Delta''$, and, by Lemma~\ref{normalisation1}, there is a normal derivation of $\vdash\Gamma',\Gamma'', \Delta',\Delta''$.
\end{proof}

\begin{cor}\label{atomisation}
For every derivation of a sequent, there is a normal derivation of the same sequent.
\end{cor}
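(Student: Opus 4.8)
The plan is to prove the corollary by structural induction on a given derivation $\mathcal{D}$ of a sequent $\vdash\Delta$, using Lemmas~\ref{normalisation1} and~\ref{normalisation2} as the engine for the cut cases. The base case is immediate: if $\mathcal{D}$ is a single axiomatic sequent, it is already normal.

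For the step, first suppose the last rule of $\mathcal{D}$ is $\diskon$-introduction or $\leftrightarrow$-introduction, with immediate subderivations $\mathcal{D}_1,\mathcal{D}_2$ of the premisses. By the induction hypothesis there are normal derivations $\mathcal{D}_1',\mathcal{D}_2'$ of those premisses; grafting the same last rule beneath them gives a derivation of $\vdash\Delta$. This derivation is normal, because inside $\mathcal{D}_1'$ and $\mathcal{D}_2'$ no cut is preceded by a $\diskon$-introduction or $\leftrightarrow$-introduction (they are normal), and the freshly added bottom introduction precedes no rule at all, hence no cut.

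Next suppose the last rule of $\mathcal{D}$ is one of the two cut rules, with subderivations $\mathcal{D}_1,\mathcal{D}_2$. Replace them by normal derivations $\mathcal{D}_1',\mathcal{D}_2'$ furnished by the induction hypothesis, and let $\mathcal{D}'$ be the result of putting the cut back at the bottom. The crucial observation is that $\mathcal{D}'$ satisfies exactly the side condition of Lemma~\ref{normalisation1} (when the cut formula is empty) or of Lemma~\ref{normalisation2} (when it is not): the last rule is a cut, and since $\mathcal{D}_1'$ and $\mathcal{D}_2'$ are normal, the only cut occurring in $\mathcal{D}'$ that could be preceded by a $\diskon$-introduction or $\leftrightarrow$-introduction is this final one. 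Applying the relevant lemma then yields a normal derivation of $\vdash\Delta$, which closes the induction.

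The only step requiring genuine care — the main obstacle — is this last verification that the reassembled $\mathcal{D}'$ meets the ``no other cut preceded by an introduction'' hypothesis of the two lemmas; it depends precisely on the normality of $\mathcal{D}_1'$ and $\mathcal{D}_2'$ together with the fact that the precedes relation between rule occurrences is unchanged by grafting subderivations below a cut. I would also remark that the recursion here is an honest structural induction on derivations (the subderivations are strictly smaller), so no auxiliary well-founded measure is needed beyond the degree/rank pair already used internally in Lemma~\ref{normalisation2}.
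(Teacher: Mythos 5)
Your proof is correct and is essentially the argument the paper intends: a structural induction on the derivation, where introduction rules are simply re-grafted below normal subderivations and the two cut cases are discharged by Lemmas~\ref{normalisation1} and~\ref{normalisation2}, whose hypotheses are met precisely because the normalized subderivations contain no cut preceded by an introduction. No gap.
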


For a multiset $\Gamma$ of formulae, let $\lambda(\Gamma)$ be the set of elements of $W$ occurring in~$\Gamma$ and let $\kappa(\Gamma)$ be the number of elements (possibly with repetition) of $\Gamma$.

\begin{lem}\label{materijal}
If $\vdash\Delta,\varphi$ is derivable, then $\lambda(\{\varphi\})\subseteq \lambda(\Delta)$.
\end{lem}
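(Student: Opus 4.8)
The plan is to prove the lemma by induction on the height of a derivation of $\vdash\Delta,\varphi$, where I regard $\varphi$ as a fixed \emph{occurrence} of a formula in the derivable sequent and $\Delta$ as the multiset of the remaining occurrences. For each possible last step of the derivation --- one of the two kinds of axiom, or one of the inference rules (\ref{rule-cut}) and (\ref{rule-inf}) --- I would determine where the elements of $\mathcal{W}$ occurring in $\varphi$ can be located among the other formulae. Note that normality of the derivation is not needed here, so I would not invoke Corollary~\ref{atomisation}.

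The two axiom schemata (\ref{perm-swit}) are immediate: there $\Delta$ is a single formula, and the two formulae of the sequent are built from the same six elements of $\mathcal{W}$. The genuine base case is an axiomatic sequent $\vdash\{\nu x\mid x\in L_2\}$ coming from an $\mathcal{M}$-complex $L$, with $\varphi=\nu x_0$ for some $x_0\in L_2$. Here every element of $\mathcal{W}$ occurring in $\varphi$ is either one of the three edges $d_0 x_0,d_1 x_0,d_2 x_0$ of $x_0$, or one of its three vertices $d_1d_2 x_0,d_0d_2 x_0,d_0d_0 x_0$. For an edge $y=d_i x_0$, property~(3) in the definition of $\mathcal{M}$-complexes gives a $2$-cell $x'\neq x_0$ having $y$ as an edge, so $y$ occurs in $\nu x'$, and $\nu x'\in\Delta$. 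For a vertex $v$ of $x_0$, I would use Remark~\ref{triang} to see $v$ as a vertex of some edge $y$ of $x_0$, then apply property~(3) to $y$ to obtain $x'\neq x_0$ having $y$ as an edge; since, by the incidence relations recorded in Remark~\ref{triang}, both endpoints of any edge of a $2$-cell are vertices of that $2$-cell, the element $v$ is one of the three vertices of $x'$, hence occurs in $\nu x'\in\Delta$. I expect this base case to be the only place requiring real care, since it is where the combinatorial axioms of an $\mathcal{M}$-complex enter; everything else is bookkeeping.

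For the inductive steps --- cut (with empty or non-empty cut formula), $\diskon$-introduction, and $\leftrightarrow$-introduction --- I would split according to whether $\varphi$ is the principal formula of the last rule or is inherited from a context. When $\varphi$ lies in a context, the induction hypothesis applied to the premise containing that occurrence already yields $\lambda(\{\varphi\})\subseteq\lambda(\Delta)$, \emph{except} that in a non-empty cut, of the shape
\[
\f{\vdash\Gamma,\vartheta\quad\vdash\Sigma,\vartheta}{\vdash\Gamma,\Sigma},
\]
a variable of $\varphi$ might be contributed by the cut formula $\vartheta$; such a variable is recovered by applying the induction hypothesis to the other premise, which gives $\lambda(\{\vartheta\})\subseteq\lambda(\Sigma)$. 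When $\varphi$ is the principal formula $\varphi_1\diskon\varphi_2$ or $\varphi_1\leftrightarrow\varphi_2$, its variable set equals $\lambda(\{\varphi_1\})\cup\lambda(\{\varphi_2\})$, and applying the induction hypothesis to the two premises supplies each of the two pieces; the only point to watch is that in $\diskon$-introduction the two premises share the context $\Gamma=\Delta$, whereas in $\leftrightarrow$-introduction it splits as $\Gamma,\Sigma$, so one must track which context absorbs which variables. This is routine multiset computation, and it completes the induction.
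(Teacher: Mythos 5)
Your proposal is correct and follows essentially the same route as the paper's proof: induction on derivations, with the non-empty cut as the only non-trivial inductive case, handled exactly as in the paper by passing through the cut formula to the other premise. The only difference is that you spell out the axiomatic base case (via property (3) of $\mathcal{M}$-complexes and Remark~\ref{triang}), which the paper simply asserts, and your argument there is sound.
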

\begin{proof}
Note that this property holds for the axiomatic sequents and the only interesting case in an inductive proof of this fact is when a derivation of $\vdash\Delta,\varphi$ ends as
\[
\f{\vdash\Delta',\psi\quad \vdash\Delta'',\varphi,\psi}{\vdash\Delta',\Delta'',\varphi}.
\]
If $A\in\lambda(\{\varphi\})$ and $A\not\in\lambda(\Delta'')$, then $A\in\lambda(\{\psi\})$, and by the induction hypothesis applied to the subderivation ending with $\vdash\Delta',\psi$, we have that $A\in\lambda(\Delta')$.
\end{proof}

\begin{lem}\label{materijal2}
For every sequent $\vdash\Delta$ that occurs in a derivation of $\vdash \Gamma$, we have that $\lambda(\Delta)\subseteq \lambda(\Gamma)$ and $2\leq\kappa(\Delta)\leq\kappa(\Gamma)$.
\end{lem}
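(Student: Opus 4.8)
The plan is to prove Lemma~\ref{materijal2} by induction on the structure of the derivation of $\vdash\Gamma$, showing that the stated bounds on $\lambda$ and $\kappa$ propagate downward from each sequent occurrence to its descendants. More precisely, I would fix a derivation of $\vdash\Gamma$ and argue that for every sequent occurrence $\vdash\Delta$ in it, $\lambda(\Delta)\subseteq\lambda(\Gamma)$ and $2\le\kappa(\Delta)\le\kappa(\Gamma)$. Since $\vdash\Gamma$ is the root, it suffices to prove the stronger local statement: whenever $\vdash\Delta$ is a premise of an inference whose conclusion is $\vdash\Delta'$, then $\lambda(\Delta)\subseteq\lambda(\Delta')$ and $\kappa(\Delta)\le\kappa(\Delta')$; the lower bound $2\le\kappa(\Delta)$ then follows separately, because every axiomatic sequent has at least two formulae (each axiom scheme~(\ref{ax-com}), (\ref{identity}), (\ref{perm-swit}) lists at least two atomic formulae, an $\mathcal{M}$-complex having at least two $2$-cells by conditions~(2) and~(3)), and every inference rule has conclusion with $\kappa\ge 2$ as well, so by downward induction no sequent in the derivation can drop below two formulae.

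First I would establish the containment $\lambda(\Delta)\subseteq\lambda(\Delta')$ for each rule. For the two cut rules and the $\diskon$-introduction rule, the conclusion is formed from the premises only by deleting a cut formula, merging multisets, or replacing $\varphi,\psi$ by $\varphi\diskon\psi$ (which does not change the set of elements of $\mathcal{W}$ occurring), so $\lambda$ can only shrink or stay the same going upward—except that for the first cut rule $\f{\vdash\Gamma,\varphi\ \ \vdash\Delta,\varphi}{\vdash\Gamma,\Delta}$ the cut formula $\varphi$ disappears from the conclusion, and one must check that no element of $\mathcal{W}$ occurring in $\varphi$ is lost. This is exactly where Lemma~\ref{materijal} enters: it gives $\lambda(\{\varphi\})\subseteq\lambda(\Gamma)$ (and likewise $\subseteq\lambda(\Delta)$), so every symbol in the cut formula already appears elsewhere in each premise, hence in the conclusion. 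For $\leftrightarrow$-introduction $\f{\vdash\Gamma,\varphi\ \ \vdash\Delta,\psi}{\vdash\Gamma,\Delta,\varphi\leftrightarrow\psi}$ there is no deletion at all, so the containment is immediate. For the axiomatic sequents there is nothing to prove since they are leaves.

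Next I would handle the cardinality bound $\kappa(\Delta)\le\kappa(\Delta')$. For $\diskon$-introduction the premises are $\vdash\Gamma,\varphi$ and $\vdash\Gamma,\psi$, each with $\kappa(\Gamma)+1$ formulae, while the conclusion $\vdash\Gamma,\varphi\diskon\psi$ has $\kappa(\Gamma)+1$ as well, so equality holds. For $\leftrightarrow$-introduction the premises have $\kappa(\Gamma)+1$ and $\kappa(\Delta)+1$ formulae and the conclusion has $\kappa(\Gamma)+\kappa(\Delta)+1$, so each premise has strictly fewer formulae (using $\kappa\ge 2$ for the other premise, hence $\kappa+1\le\kappa(\Gamma)+\kappa(\Delta)+1$). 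For the cut with nonempty cut formula, the premises have $\kappa(\Gamma)+1$ and $\kappa(\Delta)+1$ formulae, the conclusion $\kappa(\Gamma)+\kappa(\Delta)$; again using that the other side has at least one formula besides the cut formula we get $\kappa(\Gamma)+1\le\kappa(\Gamma)+\kappa(\Delta)$ and symmetrically. For the cut with empty cut formula the premises have $\kappa(\Gamma)$ and $\kappa(\Delta)$ and the conclusion $\kappa(\Gamma)+\kappa(\Delta)$, so again each premise is no larger. Combining: by induction on the height of the subderivation rooted at any sequent occurrence, $\lambda(\Delta)\subseteq\lambda(\Gamma)$ and $\kappa(\Delta)\le\kappa(\Gamma)$, and the lower bound $\kappa(\Delta)\ge 2$ is immediate from inspection of all axioms and rules.

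The main obstacle, and the only genuinely non-bookkeeping point, is the loss of the cut formula in the first cut rule: without Lemma~\ref{materijal} the $\lambda$-containment would fail, since a cut formula could a priori introduce symbols not occurring elsewhere in a premise. Once that lemma is invoked, everything else is a routine case analysis over the six rule shapes (two cut rules, $\diskon$-introduction, $\leftrightarrow$-introduction, and the axiomatic sequents as base cases), and I would present it compactly as such rather than spelling out each arithmetic inequality.
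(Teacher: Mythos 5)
Your proposal is correct and follows essentially the same route as the paper: the paper's (much terser) proof likewise invokes Lemma~\ref{materijal} to see that the cut rule does not lose any letters of $\mathcal{W}$ when the cut formula disappears, and then observes that all rules satisfy $2\leq\kappa(\Delta)\leq\kappa(\Gamma)$ for a premise $\vdash\Delta$ and conclusion $\vdash\Gamma$. Your write-up just makes explicit the rule-by-rule bookkeeping (including the point that the cut-size inequality uses $\kappa\geq 2$ for the other premise) that the paper leaves as "obvious".
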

\begin{proof}
By Lemma~\ref{materijal}, every application of cut preserves the letters from the premisses, and it is obvious that the other rules satisfy this property. Also, all the rules are such that $2\leq\kappa(\Delta)\leq\kappa(\Gamma)$ holds for $\vdash\Delta$ being a premise of the rule and $\vdash\Gamma$ being its conclusion.
\end{proof}

The following is usually presupposed for a formal system.

\begin{lem}\label{decidaxiom}
The set of axiomatic sequents is decidable.
\end{lem}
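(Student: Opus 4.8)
The plan is to show that, given a candidate axiomatic sequent of the form $\vdash\{\nu x\mid x\in L_2\}$, one can effectively check whether the presented data actually describe a valid instance. Recall that the axiomatic sequents come in three families: the instances of the two fixed schemata (\ref{perm-swit}), for which decidability is immediate (one just checks whether the given pair of sextuples matches one of the two patterns, up to the bookkeeping of which elements of $\mathcal{W}$ are plugged in), and the sequents $\vdash\{\nu x\mid x\in L_2\}$ associated with an $\mathcal{M}$-complex $L$ whose $0$- and $1$-cells lie in $\mathcal{W}$. So the real content is to decide, given a finite multiset $\Gamma$ of atomic formulae (elements of $F^6(\mathcal{W})$), whether there is an $\mathcal{M}$-complex $L$ with $L_0\cup L_1\subseteq\mathcal{W}$ such that $\Gamma=\{\nu x\mid x\in L_2\}$ (as a multiset).

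First I would extract from $\Gamma$ a candidate combinatorial datum: set $L_2$ to be an index set of the same cardinality as $\Gamma$, and for each $x\in L_2$ corresponding to the sextuple $(a_1,\dots,a_6)$, read off the prescribed edges $d_0x=a_4$, $d_1x=a_5$, $d_2x=a_6$, and the prescribed vertices $d_1d_2x=a_1$, $d_0d_2x=a_2$, $d_0d_0x=a_3$. Then $L_1$ is forced to be the set of all $a_4,a_5,a_6$ occurring, and $L_0$ the set of all $a_1,a_2,a_3$ occurring. The face maps $d_k\colon L_2\to L_1$ are now determined; the face maps $d_k\colon L_1\to L_0$ must be recovered from the consistency conditions built into $\nu$, namely that for each $x$ the vertex data and the edge data agree via $d_id_j$ in the two ways allowed by the $\Delta$-identity (cf.\ Remark~\ref{triang}): e.g.\ $d_1(d_2x)$ and $d_1(d_1x)$ must both equal $a_1$, etc. This yields a finite system of equations over the finite sets $L_0,L_1,L_2$; it is decidable whether it has a solution assigning a consistent pair of vertices to each $1$-cell, and a solution, if it exists, is unique given the labels. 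If no consistent assignment of faces exists, $\Gamma$ is not of the form (\ref{ax-com}).

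Next, assuming a well-defined $\Delta$-complex structure $L$ has been reconstructed, I would verify, one by one, the defining conditions (0)--(5) of an $\mathcal{M}$-complex together with connectedness, all of which are manifestly decidable for a finite $L$: finiteness is automatic; homogeneity (1) and regularity (2) are finite checks on the face maps; condition (3), that each $1$-cell is an edge of exactly two $2$-cells, is a counting check; condition (4), that each $L_w$ is linked, is a finite graph-connectivity check on the $w$-neighbour relation; connectedness of $L$ is a finite graph-connectivity check; and orientability (5) reduces to computing $\ker\partial_2$ as the solution space of a finite linear system over $\mathbf{Z}$ and checking it is infinite cyclic, or equivalently, once all the other conditions hold, checking that the $2$-cells admit a coherent orientation, which is again a finite propagation along the $w$-neighbour relation. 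Finally one checks that the chosen $\nu$-sextuples are exactly the ones read off from some orientation of $L$ (with the sign convention that the distinguished cell appears negatively as in the definition, handled up to the $G$-action as discussed in Remark~\ref{less-syntactical}); this is a finite comparison. All told, the membership test for the axiomatic set is a finite sequence of finite checks, hence decidable.

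The main obstacle I anticipate is not any single step but the bookkeeping in the reconstruction phase: one must be careful that the "inverse" of the operator $\nu$ is genuinely well-defined, i.e.\ that the overdetermined data (three edges and three vertices per $2$-cell, with the vertices shared across edges) are reconstructed without ambiguity, and that the regularity hypothesis built into $\mathcal{M}$-complexes is exactly what guarantees the three edges and three vertices of each $u$ are distinct (Remark~\ref{triang}) so that the reconstruction of the face maps $L_1\to L_0$ is forced. A subtler point is orientability: verifying $H_2(L;\mathbf{Z})\cong\mathbf{Z}$ directly as a linear-algebra computation is fine, but one should note (using Proposition~\ref{surface}) that for a connected $\Delta$-complex already satisfying (0)--(4) this is equivalent to the existence of a coherent orientation, which keeps the check elementary. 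Once these points are handled, decidability of the axiomatic set follows, and this is what Lemma~\ref{decidaxiom} asserts.
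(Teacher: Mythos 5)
Your proposal is correct and follows essentially the same route as the paper, whose proof simply observes that the defining conditions (0)--(5) of $\mathcal{M}$-complexes (together with connectedness) and membership in the schemata \eqref{identity} and \eqref{perm-swit} are all finite, hence decidable, checks; your only real addition is to make explicit the reconstruction of the candidate complex $L$ from the sextuples by inverting $\nu$, a step the paper leaves implicit in ``straightforward''. One small caution: your final step of matching the sextuples against ``some orientation of $L$ with a sign convention, up to the $G$-action'' is not part of the definition of the axiomatic sequents \eqref{ax-com} --- $\nu$ does not depend on any orientation, and in the non-quotiented syntax no $G$-action is applied to axioms of this form --- but since your reconstruction already forces each sextuple to equal $\nu x$, this extra check is redundant rather than harmful.
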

\begin{proof}
It is straightforward to check that the properties (0)-(5) of
$\Delta$-complexes that define the $\mathcal{M}$-complexes are
decidable. Hence the set of axiomatic sequents coming from
$\mathcal{M}$-complexes is decidable. Also, it is straightforward
to check whether a sequent is an instance of the identity sequent
\eqref{identity} or of the two axiomatic schemata
\eqref{perm-swit} corresponding to permutation of vertices and
switching of triangles. Therefore, the set of all axiomatic
sequents is recursive.
\end{proof}

Let the \emph{atomic} Menelaus system be defined as the original system, save that $\diskon$ and $\leftrightarrow$ are omitted from the language, and $\diskon$-introduction and $\leftrightarrow$-introduction are omitted from the set of rules. The sequents of this system are called \emph{atomic}.

\begin{lem}
\label{decidatom}
The atomic Menelaus system is decidable.
\end{lem}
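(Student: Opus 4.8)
The plan is to show that the atomic Menelaus system is decidable by proving that proof search is finitely branching and bounded, so the set of derivable atomic sequents is recursive. The key observation, already available to us, is Lemma~\ref{materijal2}: every sequent $\vdash\Delta$ occurring in a derivation of $\vdash\Gamma$ satisfies $\lambda(\Delta)\subseteq\lambda(\Gamma)$ and $2\leq\kappa(\Delta)\leq\kappa(\Gamma)$. Since in the atomic system every formula is an element of $F^6(\mathcal{W})$, a formula appearing anywhere in a derivation of $\vdash\Gamma$ is a sextuple of elements of the finite set $\lambda(\Gamma)$; hence there are at most $|\lambda(\Gamma)|^6$ possible atomic formulae and at most finitely many multisets $\Delta$ with $\kappa(\Delta)\leq\kappa(\Gamma)$ built from them. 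Call this finite set of candidate sequents $S(\Gamma)$.

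First I would make precise that, in the atomic system, the only rules are the two cut rules, and that both have the \emph{subformula-like} property guaranteed by Lemma~\ref{materijal2}: the premisses of any rule instance whose conclusion lies in $S(\Gamma)$ again lie in $S(\Gamma)$. For the cut with empty cut formula this is immediate since $\Gamma_1,\Gamma_2$ partition the conclusion; for the cut with cut formula $\varphi$, the premisses are $\vdash\Gamma_1,\varphi$ and $\vdash\Gamma_2,\varphi$ where $\Gamma_1,\Gamma_2$ partition the conclusion and $\varphi$ is a sextuple over $\lambda(\Gamma_1)\cup\lambda(\Gamma_2)\subseteq\lambda(\Gamma)$, and $\kappa$ of each premise is $\kappa(\Gamma_i)+1\le\kappa(\Gamma)$ because $\kappa(\Gamma)=\kappa(\Gamma_1)+\kappa(\Gamma_2)$ and $\kappa(\Gamma_i)\ge 1$ (indeed $\ge 2$ when the conclusion has more than two elements, but the weaker bound suffices). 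Then I would define, for a fixed $\vdash\Gamma$, the set $D$ of derivable members of $S(\Gamma)$ as the least fixed point of the monotone operator on subsets of the finite set $S(\Gamma)$ that adds a sequent whenever it is an axiomatic sequent (decidable by Lemma~\ref{decidaxiom}) or it is the conclusion of a cut rule instance both of whose premisses are already in the set. Since $S(\Gamma)$ is finite and membership of an axiomatic sequent is decidable, this least fixed point is computed in finitely many steps, and a sequent $\vdash\Gamma$ is derivable in the atomic system iff it belongs to $D$.

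The main point needing care is the closure claim: I must verify that \emph{every} derivation of $\vdash\Gamma$ in the atomic system uses only sequents from $S(\Gamma)$, so that restricting proof search to $S(\Gamma)$ loses no derivations; this is exactly Lemma~\ref{materijal2} applied along the derivation, together with the remark that in the absence of $\diskon$ and $\leftrightarrow$ there are no other rules to consider. I would also note the trivial direction, that anything assembled from axiomatic sequents by cut rules within $S(\Gamma)$ is of course derivable, so $D$ is exactly the set of derivable members of $S(\Gamma)$. I expect the only mild obstacle to be bookkeeping around the partition $\Gamma=\Gamma_1,\Gamma_2$ in the cut rules and the bound $\kappa(\Gamma_i)\ge 1$; everything else is an immediate consequence of the finiteness furnished by Lemmas~\ref{materijal}, \ref{materijal2} and the decidability of axioms from Lemma~\ref{decidaxiom}. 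Concretely, the algorithm is: compute $\lambda(\Gamma)$ and $\kappa(\Gamma)$, enumerate $S(\Gamma)$, iterate the operator to a fixed point, and test membership of $\vdash\Gamma$.
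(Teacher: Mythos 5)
Your proposal is correct and follows essentially the same route as the paper: both use Lemma~\ref{materijal2} (with Lemma~\ref{materijal}) to confine any derivation of $\vdash\Gamma$ to the finite set of atomic sequents over $\lambda(\Gamma)$ with at most $\kappa(\Gamma)$ elements, and then saturate that finite set under the cut rules starting from the axiomatic sequents (decidable by Lemma~\ref{decidaxiom}), which is exactly the paper's Gentzen-style iteration $S_0\subseteq S_1\subseteq\cdots$ phrased as a least fixed point. The only cosmetic difference is your explicit re-verification that premisses of cuts stay in the candidate set, which the paper simply reads off from Lemma~\ref{materijal2}.
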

\begin{proof}
Let $\vdash\Gamma$ be an atomic sequent. By Lemma~\ref{materijal2}, if $\vdash\Delta$ occurs in a derivation of $\vdash\Gamma$, then $\lambda(\Delta)\subseteq\lambda(\Gamma)$ and $\kappa(\Delta)\leq \kappa(\Gamma)$. Let $S$ be the set of atomic sequents
\[
\{\vdash\Delta\mid \lambda(\Delta)\subseteq\lambda(\Gamma)\;\mbox{\rm and } \kappa(\Delta)\leq \kappa(\Gamma)\}.
\]
Note that $S$ is finite. Then the decision procedure may be carried out in the following way (cf.\ \cite[Section IV.1.2]{G35}).

Let $S_0\subseteq S$ be the set of axiomatic sequents in $S$. (Recall that the set of all axiomatic sequents is decidable by Lemma~\ref{decidaxiom}.) If $(\vdash\Gamma)\in S_0$, then we are done, and $\vdash\Gamma$ is derivable. If not, then let $S_1$ contain all the elements of $S_0$ and all the sequents from $S$ obtained from two $S_0$ sequents by a single application of cut. If $S_1=S_0$, then $\vdash\Gamma$ is not derivable, otherwise we proceed in this manner until either $\vdash\Gamma$ appears as a member of some $S_i$, in which case it is derivable, or the procedure yields no more derivable sequents. In the later case the sequent $\vdash \Gamma$ is not derivable in the atomic Menelaus system.
\end{proof}

We say that the formula $A$ is a \emph{subformula} of $\Gamma$ if $A$ is a subformula of some formula in $\Gamma$.

\begin{proof}[Proof of Proposition~\ref{decidability}] Let $\vdash\Gamma$ be a sequent. By Corollary~\ref{atomisation}, for every derivation of $\vdash\Gamma$ there exists a normal derivation of the same sequent, i.e.\ a derivation that can be divided into \textit{atomic} and \textit{non-atomic} part. Note that if $\vdash\Delta$ occurs in a non-atomic part of such a derivation, then all the formulae in $\Delta$ are subformulae of $\Gamma$. Moreover, by Lemma~\ref{materijal2}, we know that $\kappa(\Delta)\leq\kappa(\Gamma)$.

Let $S$ be the following set of sequents
\[
\{\vdash\Delta\mid\; \mbox{\rm every formula in } \Delta \; \mbox{\rm is a subformula of }\Gamma\; \mbox{\rm and }\kappa(\Delta)\leq\kappa(\Gamma)\}.
\]
Note that $S$ is again finite and we proceed as in the proof of Lemma~\ref{decidatom}, save that for $S_0$ we take not just the axiomatic sequents but all the sequents from $S$ derivable in the atomic Menelaus system. (By Lemma~\ref{decidatom}, this set is decidable.) If $\vdash\Gamma$ is not in $S_i$, then $S_{i+1}$ contains all the elements of $S_i$ and all the sequents from $S$ obtained from two $S_i$ sequents by a single application of $\diskon$-introduction or $\leftrightarrow$-introduction.
\end{proof}

Thanks to the results of this section, we can now give a formal
proof of the non-derivability of sequent (\ref{unprov1}).

\begin{example}\label{exunprov1}
Let us recall here this sequent:
\[ \vdash (A,B,P,C,X,R),
(A,C,P,B,X,Q), (B,R,C,X,P,A), (A,R,C,X,Q,B).
\]

Our proof of underivability does not follow the algorithm developed in this section completely, but it uses some observations shortening the procedure.

According to Corollary~\ref{atomisation}, derivability of
\ref{unprov1} implies existence of a normal derivation for it,
which in this case means an atomic derivation of this sequent.
Note that every sequent derivable in the atomic Menelaus system is
even, since the axiomatic sequents are such, and the cut rules
preserve this property. Also, every two element sequent must
contain elements from the same $G$-orbit (see the end of
Section~\ref{permutations}). Since no pair of elements of
\ref{unprov1} belongs to the same $G$-orbit, the last rule in such
a derivation of \ref{unprov1} cannot be a cut with empty cut
formula. We conclude that either \ref{unprov1} is an axiomatic
sequent or it is obtained from a four element premise and a two
element premise by a cut whose cut formula is not empty. This
means that \ref{unprov1} is, up to the action of $G$, an axiomatic
sequent, which is not possible for the following reasons.

There are 7 elements of $W$ involved in the sequent \ref{unprov1}. If that sequent, up to the action of $G$, is delivered from an $\mathcal{M}$-complex $L$, then we have that $|L_2|=4$ and $|L_1|+|L_0|=7$. Since the Euler characteristic $|L_2|-|L_1|+|L_0|$ of the geometric realisation of $L$ (cf.\ Proposition~\ref{surface}) is equal to $2-2g$, where $g$ is the genus of the obtained surface, we have that $|L_2|-|L_1|+|L_0|$ is even. Therefore, $|L_0|-|L_1|$ is even, which means that $|L_1|$ and $|L_0|$ cannot be integers since $|L_1|+|L_0|=7$.
\end{example}

\begin{example}
Consider the following incidence result.

\vspace{1ex}

\noindent\textit{For a triangle $ABC$ and points $D,E,F,G,H,K$ on its sides, as it is illustrated in the following picture, let $AB\cap FK=\{N\}$, $AB\cap GH=\{Q\}$, $BC\cap EH=\{L\}$, $BC\cap DK=\{P\}$, $AC\cap DG=\{M\}$ and $AC\cap EF=\{O\}$. Then the points $M$, $L$ and $N$ are collinear iff the points $O$, $P$ and $Q$ are collinear.
}

\begin{center}
\begin{tikzpicture}[scale=1.25][line cap=round,line join=round,>=triangle 45,x=1.0cm,y=1.0cm]
\draw [line width=1.6pt] (2.46,-3.0)-- (3.38,-1.28);
\draw [line width=1.6pt] (6.18,-3.0)-- (3.38,-1.28);
\draw [line width=1.6pt] (2.46,-3.0)-- (6.18,-3.0);
\draw [dash pattern=on 2pt off 2pt] (3.38,-1.28)-- (4.723199999999999,1.2311999999999994);
\draw [dash pattern=on 2pt off 2pt] (4.723199999999999,1.2311999999999994)-- (4.193333333333335,-3.0);
\draw [dash pattern=on 2pt off 2pt] (3.34439184746877,-3.0000000000000004)-- (4.3,0.4399999999999995);
\draw [dash pattern=on 2pt off 2pt] (3.38,-1.28)-- (1.8523718791064396,-0.34159986859395547);
\draw [dash pattern=on 2pt off 2pt] (3.34439184746877,-3.0000000000000004)-- (1.8523718791064396,-0.34159986859395547);
\draw [dash pattern=on 2pt off 2pt] (2.5502395964691047,-0.7702900378310215)-- (4.193333333333335,-3.0);
\draw (-0.6733333333333333,-3.0)-- (4.3,0.4399999999999995);
\draw (-3.0,-3.0)-- (4.723199999999999,1.2311999999999994);
\draw [dash pattern=on 2pt off 2pt] (-3.0,-3.0)-- (2.46,-3.0);
\draw [dash pattern=on 2pt off 2pt] (-3.0,-3.0)-- (3.7575108707909206,-1.5118995349144224);
\draw [dash pattern=on 2pt off 2pt] (-0.6733333333333333,-3.0)-- (4.334807207686513,-1.8665244275788577);
\draw (-3.0,-3.0)-- (5.374928407829859,1.5882531954642758);
\draw (5.15266629919752,1.0297799066835385)-- (-0.6733333333333333,-3.0);
\draw (2.2,-3) node[anchor=north west] {$A$};
\draw (6,-3) node[anchor=north west] {$B$};
\draw (3.1,-0.85) node[anchor=north west] {$C$};
\draw (3.12,-3) node[anchor=north west] {$D$};
\draw (4,-3) node[anchor=north west] {$E$};
\draw (4.3,-1.5) node[anchor=north west] {$F$};
\draw (3.75,-1.2) node[anchor=north west] {$G$};
\draw (2.7,-1.35) node[anchor=north west] {$H$};
\draw (2.4,-1.9) node[anchor=north west] {$K$};
\draw (-0.95,-3) node[anchor=north west] {$N$};
\draw (2.35,-0.35) node[anchor=north west] {$L$};
\draw (3.9,0.8) node[anchor=north west] {$M$};
\draw (-3.2,-3) node[anchor=north west] {$Q$};
\draw (1.6,0.05) node[anchor=north west] {$P$};
\draw (4.5,1.65) node[anchor=north west] {$O$};

\begin{scriptsize}
\draw [fill=black] (2.46,-3.0) circle (1.5pt);
\draw [fill=black] (6.18,-3.0) circle (1.5pt);
\draw [fill=black] (3.38,-1.28) circle (1.5pt);
\draw [fill=black] (-0.6733333333333333,-3.0) circle (1.5pt);
\draw [fill=black] (4.3,0.4399999999999995) circle (1.5pt);
\draw [fill=black] (-3.0,-3.0) circle (1.5pt);
\draw [fill=black] (4.723199999999999,1.2311999999999994) circle (1.5pt);
\draw [fill=black] (1.8523718791064396,-0.34159986859395547) circle (1.5pt);
\draw [fill=black] (2.5502395964691047,-0.7702900378310215) circle (1.5pt);
\draw [fill=black] (4.193333333333335,-3.0) circle (1.5pt);
\draw [fill=black] (3.34439184746877,-3.0000000000000004) circle (1.5pt);
\draw [fill=black] (2.891560444295605,-2.193169604143001) circle (1.5pt);
\draw [fill=black] (3.1889958791616024,-1.637094660697874) circle (1.5pt);
\draw [fill=black] (3.7575108707909206,-1.5118995349144224) circle (1.5pt);
\draw [fill=black] (4.334807207686513,-1.8665244275788577) circle (1.5pt);
\end{scriptsize}
\end{tikzpicture}
\end{center}

By reasoning as for validity of the sequent~\ref{unprov1}, one can prove that the following sequent is valid.
\begin{equation*}
\begin{array}{rl}
\vdash\!\!\!\!\!\! & (A,B,C,L,M,N), (A,B,C,L,H,E), (A,B,C,P,K,D), (A,B,C,G,M,D),
\\
 & (A,B,C,F,O,E), (A,B,C,G,H,Q), (A,B,C,F,K,N), (A,B,C,P,O,Q)
\end{array}
\end{equation*}
This suffices for the proof of the above result. However, by
reasoning as in the proof of underivability of \ref{unprov1}, the
above sequent is not derivable. This provides a geometrically more
interesting example of a valid and underivable sequent. The above
incidence result is equivalent to Pascal's theorem about six
points on a conic (for more details see, for example, \cite{DB}).
It witnesses that such incidence results are not derivable in the
Menelaus system.
\end{example}

\section{The Menelaus cyclic operad} \label{cyclic}
\noindent

In this section, we study the algebraic structure on  ${\mathcal
M}$-complexes induced by the connected sum construction. Connected
sums are widely used in topology. Here we express them
combinatorially. We show that the connected sum of two ${\mathcal
M}$-complexes is again an ${\mathcal M}$-complex, and that this
gives rise to a cyclic operad structure. We establish a tight link
between the connected sum construction and the first cut rule of
the Menelaus system.
% i.e., in the language of the Menelaus system, the sequent  obtained by applying the first cut rule to the  two associated axiomatic sequent could as well be derived directly as an axiomatic sequent.
%The relevant algebraic structure governing connected sums is that of a cyclic operad.

We first recall the notion of cyclic operad, then we show how to
endow ${\mathcal M}$-complexes with a  cyclic operad structure,
for which we provide a presentation by generators and relations.
The generators are the indecomposable ${\mathcal M}$-complexes
i.e., those ${\mathcal M}$-complexes that cannot be  expressed as
a connected sum  of two simpler complexes. We  show that these
generators do not freely generate the structure, and discuss the
proof-theoretical significance of this negative result.  Finally,
we complete the description of the presentation, in two steps:  we
provide a combinatorial description of indecomposable ${\mathcal
M}$-complexes, and we identify those ${\mathcal M}$-complexes that
give rise to inherently diffferent decompositions and the
corresponding relations.
%More generally, this section  answers positively the question of whether cyclic operads appear in the context of proof theory, alongside with ordinary operads

 \subsection{Cyclic operads} \label{cyclic-operad-def}
Cyclic operads are mathematical objects that were introduced by
Getzler and Kapranov in \cite{Getzler:1994pn}   with the goal of
encoding algebraic structures whose operations carry no
qualitative difference between the notion of inputs and that of
output.  Their original purpose  stemmed from   algebraic
topology, but recently, cyclic operads have found applications in
many other areas of modern mathematics, such as combinatorics,
homotopy theory, higher category theory, proof theory and
mathematical physics. In categorical logic, they hide under the
name of cyclic multicategories \cite{cgr}.  We refer to
\cite[Definition 1]{co} for a formal definition, contenting
ourselves here with the  explanations given below, which should be
sufficient for a self-contained account of our results in the
Menelaus context.

\smallskip

Combinatorially, a cyclic operad  consists of operations  that
have an {\em arity}, i.e., a finite set of entries, and that can
be composed in the following way. Two operations $f$ and $g$ with
arities $X$ and $Y$, respectively,  can be composed by choosing an
entry $x$ of $f$ and an entry $y$ of  $g$, yielding an operation
$f {_x\circ_y} \,g$ with arity $(X -\{x\}) \cup (Y-\{y\})$ (where
it is assumed that $X -\{x\}$ and  $Y-\{y\}$ are disjoint), as
illustrated below:
\begin{center}
\begin{tikzpicture}
 \node (f) [circle,fill=none,draw=black,minimum size=2mm,inner sep=0.4mm]  at (-1,0) {\small $f$};
\node (g) [circle,fill=none,draw=black,minimum size=2mm,inner
sep=0.7mm]  at (1,0) {\small $g$}; \node (a)
[circle,fill=none,draw=none,minimum size=2mm,inner sep=0mm]  at
(-1.6,0.85) {}; \node (w) [circle,fill=none,draw=none,minimum
size=2mm,inner sep=0mm]  at (1.6,0.85) {}; \node (b)
[circle,fill=none,draw=none,minimum size=2mm,inner sep=0mm]  at
(-2.2,0) {}; \node (c) [circle,fill=none,draw=none,minimum
size=2mm,inner sep=0mm]  at (-1.6,-0.85) {}; \node (d)
[circle,fill=none,draw=none,minimum size=2mm,inner sep=0mm]  at
(2.2,0) {}; \node (e) [circle,fill=none,draw=none,minimum
size=2mm,inner sep=0mm]  at (1.6,-0.85) {}; \node (i)
[label={[xshift=-0.2cm, yshift=-0.17cm,]{\footnotesize
$x$}},label={[xshift=0.2cm, yshift=-0.22cm,]{\footnotesize
$y$}},circle,fill=none,draw=none,minimum size=0mm,inner sep=0mm]
at (0,0.15) {}; \draw (-0.5,-0.775)--(f)--(-0.5,0.775); \draw
(0.5,-0.775)--(g)--(0.5,0.775); \draw (f) -- (g); \draw (f)--(a);
\draw (f)--(b); \draw (f)--(c); \draw (g)--(d); \draw (g)--(e);
\draw (g)--(w); \draw (0,-0.1)--(0,0.1);
\end{tikzpicture}
\end{center}
The picture features an unrooted tree with two nodes, one edge
(composed of  {\em half-edges} $x$ and $y$), and {\em open}
half-edges that are ready for being composed with other
operations. Indeed, more generally, the axioms defining a cyclic
operad guarantee that any unrooted tree  whose nodes and
half-edges are decorated by operations together with their entries
can be composed  by iteratively picking an edge (made of a pair of
half-edges) and composing the two nodes to which these half-edges
pertain, in such a way that the overall composition  is
well-defined, i.e., does not depend on the order in which edges
have been picked. It turns out that the   associativity and
commutativity axioms
$$(f\, {_{x}\circ_{y}}\,\, g)\,\,{_{u}\circ_z}\, h =  f {_{x}\circ_{z}}\,\, (g\,\,{_{u}\circ_{z}}\, h) \quad \mbox{ and } \quad f{_x\circ_y} \,g=g{_y\circ_x} f,$$
respectively, are necessary and sufficient to guarantee that the
order in which each such tree is composed does not matter. There
are also unit operations satisfying the expected laws. The full
definition  includes  one last ingredient: for each bijection
$\sigma:Z\rightarrow X$ and an operation $f$ of arity $X$, there
is an operation $f^{\sigma}$ of arity $Z$ (the action of $\sigma$
on $f$). These bijections   are required to satisfy the equality
$(f^\sigma)^\tau=f^{\sigma\circ\tau}$ and to  be compatible with
compositions and units.

Given some collection of generating operations (of certain arity),
one can build the free cyclic operad over that collection by
taking as operations all the unrooted trees built as above,
decorating   the nodes  by the generating operations. The question
of whether a given cyclic operad is {\em free} (for some
collection of generators) can  be understood as the question of
whether all its operations have a unique (up to associativity and
commutativity)  decomposition in terms of the generating
operations. The importance of  free cyclic operads lies in the
fact that any cyclic operad can be presented as the quotient of a
free operad under relations that equate precisely all the possible
decompositions, for each of the operations.

\smallskip
 \subsection{The cyclic operad of ${\mathcal M}$-complexes}\label{sneg}
 In this section, we define a cyclic operad, which we call the {\em Menelaus cyclic operad}, whose operations  are the ${\mathcal M}$-complexes, each having as arity its set of $2$-cells. More precisely, the operations of   the Menelaus cyclic operad are the  $<\!\!2$-isomorphism classes $[K]$ of ${\mathcal M}$-complexes, where a $<\!\!2$-isomorphism is a morphism  (in the category of $2$-dimensional homogeneous $\Delta$-complexes) whose $0$-th and $1$-st components are bijections and whose  $2$-nd component is the identity function (informally, the names of 1-cells or 0-cells do not matter).
%
% can be represented as unrooted trees whose vertices are decorated with ${\mathcal M}$-complexes, in such a way that the half-edges adjacent to each vertex correspond to the $2$-cells of the corresponding ${\mathcal M}$-complex.
The composition of  ${\mathcal M}$-complexes is defined in terms
of the connected sum construction  that we define below for
arbitrary  homogeneous $n$-dimensional $\Delta$-complexes.
%, and  which preserves the structure of $\mathcal{M}$-complexes, as we show in Lemma \ref{close}.
%Therefore,  each unrooted tree of the Menelaus cyclic operad is an ${\mathcal M}$-complex,  whereby the internal edges, i.e., pairs of connected half-edges, witness the connected sum of the ${\mathcal M}$-complexes decorating the corresponding vertices.

\smallskip

 Let $X$ be an $n$-dimensional homogeneous $\Delta$-complex and let $x\in X_n$. The $\Delta$-complex $X\ominus x$ is obtained from $X$ by deleting $x$ from $X_n$ and restricting the faces $d^n_i$ to $X_n-\{x\}$.

The \emph{boundary complex} $\partial \Delta^n$ of the $n$-simplex
$\Delta^n$ is the $\Delta$-complex such that for $0\leq k\leq
n-1$, we have
\[
(\partial\Delta^n)_k=\{U\subseteq\{0,\ldots,n\}\mid |U|=k+1\},
\]
and for $U=\{j_0,\ldots,j_k\}$, such that $0\leq
j_0<\ldots<j_k\leq n$,
\[
d^k_i U=\{j_0,\ldots,j_{i-1},j_{i+1},\ldots,j_k\}.
\]

For an $n$-dimensional homogeneous $\Delta$-complex $X$ and $x\in
X_n$, let $f^x\colon \partial\Delta^n \to X\ominus x$ be a family
of functions \[ \{f^x_k\colon (\partial\Delta^n)_k\to (X\ominus
x)_k\mid 0\leq k\leq n-1\}
\]
defined so that for $i_1<\ldots<i_{n-k}$,
\[
f^x_k(\{0,\ldots,n\}-\{i_1,\ldots,i_{n-k}\})=d_{i_1}\ldots
d_{i_{n-k}} x.
\]
It is straightforward to check that $f^x$ is a morphism of
$\Delta$-complexes.

Let $X$ and $Y$ be two disjoint $n$-dimensional homogeneous
$\Delta$-complexes and let $x\in X_n$ and $y\in Y_n$. For
$f^x\colon \partial\Delta^n \to X\ominus x$ and $f^y\colon
\partial\Delta^n \to Y\ominus y$ defined as above, let $X
{_x\circ_y} Y$ be the $n$-dimensional $\Delta$-complex defined as
\[
(X {_x\circ_y} Y)_n=(X_n-\{x\})\cup(Y_n-\{y\}),\quad (X
{_x\circ_y} Y)_k=(X_k\cup Y_k)_{\sim_k}, 0\leq k\leq n-1,
\]
where the equivalence relations $\sim_k$ are generated by
$f^x_k(U)\sim_k f^y_k(U)$ for every $U\in (\partial\Delta^n)_k$.
Moreover, the faces of $X {_x\circ_y} Y$ are defined so that for
$z_{\sim_k}\in (X {_x\circ_y} Y)_k$ we have $d_i z_{\sim_k}=(d_i
z)_{\sim_k}$.

The complex $X {_x\circ_y} Y$, which is the \emph{connected sum}
of $X$ and $Y$ with respect to $x$ and $y$, is the pushout
$(X\ominus x)\bigsqcup_{\partial\Delta^n} (Y\ominus y)$ in the
category of $\Delta$-complexes. It is homogeneous, and, as we show
in Lemma \ref{close}, in the case when $X$ and $Y$ are
$\mathcal{M}$-complexes, $X {_x\circ_y} Y$ is an
$\mathcal{M}$-complex too. Informally, the connected sum is the
complex obtained by gluing the complexes $X\ominus x$ and
$Y\ominus y$ along the boundary of $x$ and the border of $y$, as
indicated in the picture below.
  \begin{center}
 \resizebox{5cm}{!}{\begin{tikzpicture}
 \draw[line width=2.8pt] (0,0) ellipse (4.15cm and 2.86cm);
 \draw[line width=2.8pt] (-2.1,0.4)  arc[radius = 2.2cm, start angle= 200, end angle= 340];
 \draw[line width=2.8pt] (1.85,0.015)  arc[radius =2.28cm, start angle= 32, end angle= 145];
  \draw[line width=2.8pt] (8,0) circle (2cm);
  \draw[draw=qqwuqq,line width=1.5pt] (2.15,1) to[bend left] (3.15,1.5);
  \draw[draw=ffwwzz,line width=1.5pt] (3.15,1.5) to[out=-70, in=60] (3.1,0.1);
  \draw[draw=wwqqcc,line width=1.5pt] (2.15,1) to[out=-30, in=130] (3.1,0.1);
    \draw[draw=ffwwzz,line width=1.5pt] (6.25,-0.2) to[bend left] (7.55,0.5);
  \draw[draw=qqwuqq,line width=1.5pt] (7.55,0.5) to[out=-775, in=60] (7.5,-0.55);
  \draw[draw=wwqqcc,line width=1.5pt] (6.25,-0.2) to[out=-20, in=180] (7.5,-0.55);
  \draw[dashed] (3.15,1.5) to [out=15, in=150] (7.55,0.5);
  \draw[dashed] (2.15,1) to [out=15, in=150] (7.5,-0.55);
   \draw[dashed] (3.1,0.1) to [out=15, in=150] (6.25,-0.2);
   \node (x) [circle,draw=none, inner sep=0,minimum size=1.15mm] at (2.95,0.8) {\Huge $x$};
      \node (y) [circle,draw=none, inner sep=0,minimum size=1.15mm] at (7.22,0) {\Huge $y$};
 \end{tikzpicture}}
\end{center}
We are now ready to define a  cyclic operadic structure on
homogeneous $n$-dimen\-sional $\Delta$-complexes. Composition is
given by the connected sums, and the   unit  operations are
$\Delta$-complexes given by two $n$-cells  having the same
boundaries. The bijections renaming the arities  act as follows.
Let $\sigma:Z_n\rightarrow X_n$ be a bijection and $K$ be an
$n$-dimensional homogeneous $\Delta$-complex such that  $K_n=X_n$.
We define a representative $K'$ of $[K]^\sigma$ as follows:
$K'_n=Z_n$, $K'_i=K_i$ for all $i<n$, and the face maps of $K'$
coincide with those of $K$ except for  the maps $d_i^n$ which for
$K'$ are defined by  $d_i^nu=d_i^n(\sigma(u))$. The  associativity
and commutativity laws  of cyclic operads are immediate
consequences of our ability to   choose the representatives of the
$<\!\!n$-isomorphism classes involved in the composition in such a
way that their collections of cells in all dimensions are
disjoint, and of the observation that union of disjoint sets is
commutative and associative ``on the nose''.

\begin{lem}\label{close}The  $\mathcal{M}$-complexes are closed under the connected
sum construction.\end{lem}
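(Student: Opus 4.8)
The plan is to verify, one by one, that each of the conditions (0)--(5) defining an $\mathcal{M}$-complex is preserved by the connected sum $X\,{_x\circ_y}\,Y$, assuming $X$ and $Y$ are $\mathcal{M}$-complexes (here $n=2$). Write $Z=X\,{_x\circ_y}\,Y$. We already know from the discussion above that $Z$ is homogeneous $2$-dimensional, which settles (1), and it is obviously finite, which settles (0). Connectedness of $Z$ is also immediate: $Z$ is the pushout of $X\ominus x$ and $Y\ominus y$ glued along the image of $\partial\Delta^2$ (a triangle with its three vertices), and since $X$ and $Y$ are connected, $X\ominus x$ and $Y\ominus y$ remain connected (deleting a single $2$-cell from a homogeneous complex cannot disconnect the $1$-skeleton), so their union along a common nonempty subcomplex is connected.

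The substantive part is checking (2)--(5). For regularity (2): the edges and vertices of a $2$-cell $u$ of $Z$ are, by the definition of the faces of $Z$, the $\sim$-classes of the edges and vertices of $u$ viewed inside $X$ (or $Y$). Since $X$ and $Y$ are regular, $u$ has three distinct edges and three distinct vertices before quotienting; one must check that the equivalence relations $\sim_1,\sim_0$ do not identify two edges (or two vertices) of the same $u$. The only identifications introduced are $f^x_k(U)\sim_k f^y_k(U)$, i.e. one identifies an edge/vertex of $x$ in $X$ with the corresponding edge/vertex of $y$ in $Y$; since $u$ itself is distinct from $x$ (resp. $y$) and the complexes are disjoint before gluing, a glued pair consists of one edge from $X$ and one from $Y$, so it cannot consist of two edges of a single $u$. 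This uses the hypothesis, stated at the end of Remark~\ref{mcomplex}, which underlies regularity, together with condition (3) for $X$ applied to the edges of $x$. For condition (3): a $1$-cell of $Z$ is either a $\sim$-class not meeting the image of $\partial\Delta^2$ (then it keeps its two adjacent $2$-cells from $X$ or from $Y$), or it is a glued pair $f^x_1(U)\sim f^y_1(U)$; in $X$ the edge $f^x_1(U)$ bordered exactly two $2$-cells, one of which was $x$ (the deleted one), leaving one $2$-cell in $X\ominus x$, and symmetrically one $2$-cell in $Y\ominus y$, so the glued $1$-cell borders exactly two $2$-cells of $Z$. For the linkedness condition (4), fix $w\in Z_0$. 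If $w$ comes from a vertex of $Z$ not on the glued triangle, then $L_w$ lies entirely in $X\ominus x$ or in $Y\ominus y$ and linkedness is inherited from $X$ or $Y$. If $w$ lies on the glued triangle, then $L_w$ is the union of the star of the corresponding vertex in $X\ominus x$ and the star of the corresponding vertex in $Y\ominus y$; each of these was linked in $X$ resp. $Y$, the removal of $x$ resp. $y$ merely cuts a cyclic chain of $w$-neighbours into a (still connected) linear chain ending at the two $2$-cells adjacent to the two edges of $x$ at $w$, and those two terminal cells on the $X$-side are $w$-neighbours (across the glued edges) of the terminal cells on the $Y$-side; concatenating gives a single linked chain.

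Finally, orientability (5): I would argue that $H_2(Z;\mathbf{Z})\cong\mathbf{Z}$. The cleanest route is to observe that if $\sum\varepsilon_i x_i$ is an orientation ($2$-cycle generator) of $X$ with $x=x_1$, $\varepsilon_1=1$, say, and $\sum\delta_j y_j$ is one of $Y$ with $y=y_1$, $\delta_1=-1$ (after possibly replacing a cycle by its negative and using that the coefficient of a given cell can be normalized), then their sum, with $x_1$ and $y_1$ deleted, is a $2$-cycle of $Z$: the boundary contributions of $x_1$ and $y_1$, which are the three edges of the glued triangle with opposite signs, cancel against each other under the identification $\sim_1$. One then checks this $2$-cycle generates $H_2(Z;\mathbf{Z})$ — equivalently, that $Z$ is a connected surface and one invokes Remark~\ref{orientability} and Proposition~\ref{surface} (conditions (0)--(4) having already been verified, $|Z|$ is a closed connected surface, so $H_2(Z;\mathbf{Z})$ is $\mathbf{Z}$ or $0$, and the explicit $2$-cycle just produced is nonzero). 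I expect this last point — producing the orientation of $Z$ from those of $X$ and $Y$ with matching signs along the glued triangle, and confirming it is a generator rather than a proper multiple — to be the main obstacle, though it is essentially the combinatorial shadow of the familiar fact that the connected sum of two closed orientable surfaces is closed and orientable; one may also simply cite Proposition~\ref{surface} to reduce (5) to showing $|Z|$ is a connected surface, for which (0)--(4) suffice, and then note $H_2\neq 0$ via the cycle above.
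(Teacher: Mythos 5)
Your proof is correct and takes essentially the same route as the paper: the paper dismisses (0)--(4) as clearly preserved and, for (5), forms exactly the $2$-cycle you construct, namely the two fundamental cycles with the cells $x$ and $y$ removed and signs matched so that their boundaries cancel along the glued triangle. Your additional remark that one should invoke the surface argument (conditions (0)--(4) plus Remark~\ref{orientability}, as in Proposition~\ref{surface}) to see that this nonzero cycle actually yields $H_2\cong\mathbf{Z}$ rather than merely $H_2\neq 0$ fills in a point the paper leaves implicit, and your detailed checks of (0)--(4) are sound expansions of what the paper calls clear.
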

\begin{proof}
The connected sum construction clearly preserves the  properties
(0)-(4) from the definition of $\mathcal{M}$-complexes. In order
to prove that the orientability of ${\mathcal M}$-complexes $X$
and $Y$ implies the orientability of the ${\mathcal M}$-complex $X
{_x\circ_y} Y$,   let $c_1$ (resp. $c_2$) be the fundamental cycle
of  $X$ (resp. of $Y$), and let $\alpha$ (resp. $\beta$) be the
coefficient of $x$ in $c_1$ (resp. $y$ in $c_2$). Then, by the
orientability of $X$ and $Y$, the boundary of the linear
combination $(c_1-\alpha x)\pm (c_2-\beta y)$, where $\pm$ is the
Kronecker symbol of $(\alpha,\beta)$, is zero. This proves that
$H_2(X {_x\circ_y} Y;{\bf Z})={\bf Z}$, and, in particular,
determines an orientation of $X {_x\circ_y} Y$.
\end{proof}
The   Menelaus cyclic operad is the suboperad of the cyclic operad
of homogeneous $2$-dimensional $\Delta$-complexes determined by
the $\mathcal{M}$-complexes.
%The composition in the Menelaus cyclic operad will be illustrated in Example \ref{counterexample}.

\medskip
The following proposition shows the link between the connected sum
construction on $\mathcal{M}$-complexes and the first cut rule of
the Menelaus system.

\begin{prop} \label{connected-cut}
Let $K$, $K_1$ and $K_2$ be ${\mathcal M}$-complexes. If
$K=K_1{_x\circ_y}K_2$, then there is a choice of representatives
in $[K]$, $[K_1]$ and $[K_2]$, giving rise to the axiomatic
sequents $\;\vdash \Gamma,  \vdash \Gamma_1$ and  $\vdash
\Gamma_2$, respectively, such that
  $\vdash \Gamma$ is obtained from  $\vdash \Gamma_1$ and  $\vdash \Gamma_2$  by applying the first cut rule of the Menelaus system.
\end{prop}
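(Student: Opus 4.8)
The plan is to unwind the definition of the connected sum $K = K_1 \,{}_x\!\circ_y\, K_2$ at the level of cells and faces, and to check that choosing suitable representatives turns the gluing into exactly an application of the first cut rule. First I would fix a representative of $[K_1]$ in which the distinguished $2$-cell is $x$, and a representative of $[K_2]$ in which the distinguished $2$-cell is $y$, arranging (as permitted by $<\!2$-isomorphism) that $(K_1)_0 \cup (K_1)_1$ and $(K_2)_0 \cup (K_2)_1$ are disjoint except that they share precisely the three edges and three vertices lying on the boundary of $x$, identified via $f^x$ and $f^y$ with those on the boundary of $y$; concretely, if $\nu x = (A,B,C,P,Q,R)$ then I choose the representative of $K_2$ so that $\nu y = (A,B,C,P,Q,R)$ as well (using the regularity property from Remark~\ref{triang} to know that the boundary data of $x$ and of $y$ consist of three distinct edges and three distinct vertices, so this renaming is legitimate). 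With these choices, $\vdash \Gamma_1 = {\vdash}\{\nu u \mid u \in (K_1)_2\}$ has $\nu x = (A,B,C,P,Q,R)$ among its formulae, and similarly $\vdash \Gamma_2$ has the same formula $(A,B,C,P,Q,R)$; writing $\Gamma_1 = \Gamma_1', (A,B,C,P,Q,R)$ and $\Gamma_2 = \Gamma_2', (A,B,C,P,Q,R)$, the cut rule produces $\vdash \Gamma_1', \Gamma_2'$.

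The heart of the argument is then to verify that $\vdash \Gamma_1', \Gamma_2'$ is literally the axiomatic sequent associated to $K = K_1 \,{}_x\!\circ_y\, K_2$, i.e.\ that $\{\nu u \mid u \in K_2\} = \Gamma_1' \cup \Gamma_2'$ as multisets, for an appropriate representative of $[K]$. By the definition of the connected sum, $(K)_2 = ((K_1)_2 - \{x\}) \cup ((K_2)_2 - \{y\})$, and for $k < 2$ the $k$-cells of $K$ are the $\sim_k$-classes of $k$-cells of $K_1$ or $K_2$, with the identifications generated exactly by $f^x_k(U) \sim_k f^y_k(U)$. Since we have already chosen the names so that $f^x$ and $f^y$ agree on the shared boundary, the quotient map is, after the renaming, the identity on $(K_1)_k \cup (K_2)_k$ modulo identifying the three boundary edges and vertices of $x$ with those of $y$; and the faces of $K$ are induced by those of $K_1$ and $K_2$. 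Hence for $u \in (K_1)_2 - \{x\}$ the composite faces $d_1 d_2 u, d_0 d_2 u, \ldots$ computed in $K$ coincide with those computed in $K_1$ (a face $d_i u$ that happens to lie on $\partial x$ gets identified with the corresponding face of $\partial y$, but under our naming convention it keeps the same label), so $\nu^K u = \nu^{K_1} u$; symmetrically for $u \in (K_2)_2 - \{y\}$. This gives $\{\nu^K u \mid u \in (K)_2\} = \Gamma_1' \cup \Gamma_2'$, which is the desired axiomatic sequent $\vdash \Gamma$.

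The main obstacle I anticipate is bookkeeping around the $<\!2$-isomorphism classes and the disjointness conventions: one must be careful that the representative of $[K]$ witnessing the axiomatic sequent $\vdash \Gamma$ is the \emph{same} one obtained by gluing the chosen representatives of $[K_1]$ and $[K_2]$, and that no spurious coincidences among $1$-cells or $0$-cells are forced by the pushout beyond the three boundary edges and three boundary vertices. This is where the properties of $\mathcal{M}$-complexes are used: regularity (property (2), Remark~\ref{triang}) guarantees that $\partial x$ has three genuinely distinct edges and vertices, so the gluing does not accidentally collapse cells, and property (3) plus Lemma~\ref{close} guarantee that $K_1 \,{}_x\!\circ_y\, K_2$ is again an $\mathcal{M}$-complex, so that $\vdash \Gamma$ is indeed a legitimate axiomatic sequent of the form \eqref{ax-com}. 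Once these finiteness/disjointness subtleties are handled, the remaining verification that $\nu^K u = \nu^{K_i} u$ for each $u$ is a routine unwinding of the definitions of $f^x$, $f^y$, the quotient faces, and $\nu$, and the statement follows.
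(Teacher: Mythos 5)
Your proposal is correct and follows essentially the same route as the paper's proof: choose representatives of $[K_1]$ and $[K_2]$ whose only common cells are the boundary cells of $x$ and $y$, identified so that $\nu x=\nu y$, whence the connected sum is just the union of $K_1\ominus x$ and $K_2\ominus y$ and the three axiomatic sequents form an instance of the first cut rule. Your extra verification that $\nu^K u=\nu^{K_i}u$ for each remaining $2$-cell only spells out what the paper leaves as "it follows".
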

\begin{proof}
We refer to the notations of Section \ref{system}. It is easy to
see that we can choose representatives of $[K_1]$ and  $[K_2]$
(which we still call $K_1$ and $K_2$) such that their $0$-cells
and $1$-cells are all in $\mathcal{W}$, and such that the
$0$-cells and $1$-cells  that are glued with one another in the
connected sum are equal  (and are the only common cells of $K_1$
and $K_2$), so that  $\nu x=\nu y$.  Then the connected sum $K=K_1
{_x\circ_y} K_2$ is simply obtained by taking the union of the
cells of  $K_1\ominus x$ and $K_2\ominus y$.  It follows that the
axiomatic sequents associated with  $K_1$, $K_2$ and $K$ have the
form $\vdash \Gamma,\varphi$, $\vdash \Delta,\varphi$ and $\vdash
\Gamma,\Delta$, respectively, i.e. are the hypotheses and
conclusion of a first cut rule.
\end{proof}
Similarly, it is immediate to see that the units of the Menelaus
cyclic operad match the identity axioms (cf. Equation
\ref{identity}).

\medskip
Our goal is now to exhibit a presentation of the Menelaus cyclic
operad by generators and relations.  We say that an ${\mathcal
M}$-complex is {\em proper} if it is not an identity operation,
and that a  proper ${\mathcal M}$-complex is {\em indecomposable}
if  it cannot be obtained as connected sum of two proper
${\mathcal M}$-complexes.
\begin{prop}  \label{generation}
Every proper ${\mathcal M}$-complex can be obtained as the result
of composing an unrooted tree whose nodes are decorated with
indecomposable ${\mathcal M}$-complexes.
\end{prop}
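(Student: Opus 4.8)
The plan is to prove Proposition~\ref{generation} by induction on the number of $2$-cells of the proper $\mathcal{M}$-complex $K$, using Lemma~\ref{close} together with the evident fact that the connected sum $K_1{_x\circ_y}K_2$ has $(|{(K_1)}_2|-1)+(|{(K_2)}_2|-1)$ two-cells, so that every proper $\mathcal{M}$-complex occurring in a decomposition of $K$ has strictly fewer $2$-cells than $K$ itself. First I would observe that an $\mathcal{M}$-complex with exactly two $2$-cells is either an identity operation or, if it is proper, it has no nontrivial connected sum decomposition (any proper summand would need at least two $2$-cells, and then the sum would have at least two $2$-cells each, totalling at least two after the gluing, which forces both summands to equal the whole and at least one of them to be an identity), hence such a $K$ is indecomposable, and the trivial one-node tree decorated by $K$ does the job. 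This settles the base case.

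For the inductive step, let $K$ be a proper $\mathcal{M}$-complex with $n>2$ two-cells. If $K$ is indecomposable, the one-node tree decorated by $K$ again suffices. Otherwise, by definition, $K=K_1{_x\circ_y}K_2$ for proper $\mathcal{M}$-complexes $K_1,K_2$; by Lemma~\ref{close} both $K_1$ and $K_2$ are genuinely $\mathcal{M}$-complexes, and each has strictly fewer than $n$ two-cells (since a proper $\mathcal{M}$-complex has at least two $2$-cells, the counting identity forces $1\le|{(K_i)}_2|-1$, hence $|{(K_i)}_2|<n$ for $i=1,2$). By the induction hypothesis, each $K_i$ is obtained as the composite of an unrooted tree $T_i$ whose nodes are decorated by indecomposable $\mathcal{M}$-complexes. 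I would then form the tree $T$ obtained by taking the disjoint union of $T_1$ and $T_2$ and adding one new edge joining the half-edge of $T_1$ corresponding to the $2$-cell $x$ to the half-edge of $T_2$ corresponding to $y$. Composing $T$ means first composing the subtrees $T_1$ and $T_2$ — which by the induction hypothesis yields $K_1$ and $K_2$ respectively — and then performing the final composition along the new edge, which is precisely $K_1{_x\circ_y}K_2=K$; the order-independence of this composite is guaranteed by the associativity and commutativity axioms of cyclic operads recalled in Section~\ref{cyclic-operad-def}.

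The main thing to be careful about is the bookkeeping of entries: the half-edge of $T_1$ that one glues must be the one decorating the $2$-cell $x$ of $K_1$, i.e. the open half-edge of the composite tree $T_1$ that survives as the entry $x$ of $K_1$, and similarly for $y$ in $T_2$; since the arity of an $\mathcal{M}$-complex is by definition its set of $2$-cells and composition removes exactly the two chosen entries, this identification is unambiguous once one fixes representatives whose cell-sets are disjoint in the relevant dimensions (which is exactly the freedom used in the definition of the operadic structure and in Proposition~\ref{connected-cut}). I do not expect a serious obstacle here: the only mildly delicate point is verifying the $2$-cell count inequality that drives the induction, and checking that a proper $\mathcal{M}$-complex indeed has at least two $2$-cells — the latter follows from property~(3) in the definition of $\mathcal{M}$-complex, since any $1$-cell (which exists by homogeneity, as $L_0\cup L_1$ is nonempty for a connected complex with $2$-cells) is the edge of exactly two $2$-cells, so $|L_2|\ge 2$, with equality realised by the unit operations.
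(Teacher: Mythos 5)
Your overall strategy (induction on the number of $2$-cells, driven by the counting identity $n=n_1+n_2-2$ for a decomposition $K=K_1\,{_x\circ_y}\,K_2$) is exactly the paper's, but the numerical step that powers the induction has a genuine gap. From the bound you actually establish, namely that a proper $\mathcal{M}$-complex has at least \emph{two} $2$-cells, the identity $n=n_1+n_2-2$ only gives $n_i\leq n$, not $n_i<n$: if one of the proper factors had exactly two $2$-cells, the other would have exactly $n$ of them, and the induction hypothesis could not be applied to it. Your own base-case parenthetical in fact concedes the possible existence of a proper $\mathcal{M}$-complex with two $2$-cells (you propose to treat it as indecomposable), and this is precisely the scenario that would make your inductive step stall. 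What is needed, and what the paper uses, is the stronger observation that the identities (two $2$-cells with the same boundaries) are the \emph{only} $\mathcal{M}$-complexes with two $2$-cells; hence every proper $\mathcal{M}$-complex has at least \emph{three} $2$-cells, and then $n_i\geq 3$ for both proper factors yields $n_i=n-(n_{3-i}-2)\leq n-1<n$, which is the strict decrease your induction requires. (That observation follows from regularity and axiom (3): each of the three distinct edges of one $2$-cell must be shared with the unique other $2$-cell, and orientability forces the boundaries to agree.)

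Apart from this, the tree-assembly part of your argument—grafting the two unrooted trees along the half-edges corresponding to $x$ and $y$, with the bookkeeping of arities and disjoint representatives, and invoking associativity and commutativity for order-independence—is fine and matches the intended construction; the paper leaves it implicit and only records the counting remarks. So the fix is local: replace ``at least two $2$-cells'' by the sharper ``identities are the only $\mathcal{M}$-complexes with two $2$-cells, so proper implies at least three,'' and your induction goes through; this also cleans up your base case, since the troublesome ``proper with exactly two $2$-cells'' case is vacuous.
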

\begin{proof} The statement is a consequence of the following remarks. It is easy to see that the identities are the only ${\mathcal M}$-complexes with two $2$-cells. It follows that proper ${\mathcal M}$-complexes have at least three $2$-cells, and hence, for  every (proper) decomposition $K=K_1 {_x\circ_y} K_2$, if $n,n_1$ and $n_2$ are the numbers of $2$-cells of $K$, $K_1$ and $K_2$, respectively, we deduce from
$n=n_1+n_2-2$, $n_1>2$ and $n_2>2$ that $n_1<n$ and $n_2<n$.
\end{proof}
It follows from Proposition \ref{generation} that the Menelaus
cyclic operad is generated by the indecomposable  ${\mathcal
M}$-complexes.  We shall next prove that the Menelaus cyclic
operad is not free over this collection of generators.

\subsection{A counter-example} \label{long-normal-form}
%Proposition \ref{theprop} gives us a way to represent each  ${\mathcal M}$-complex as an element of the free cyclic operad over the irreducible ${\mathcal M}$-complexes. In other words, the Menelaus cyclic operad is generated by the  irreducible ${\mathcal M}$-complexes. However, as we show in the following example, the Menelaus cyclic operad is not free over this collection of generators.

\begin{example}\label{counterexample}
Here is  an example of an  ${\mathcal M}$-complex $M$ which is
$<\!\!2$-isomorphic to two different connected sums  of two
indecomposable ${\mathcal M}$-complexes.  We define four
${\mathcal M}$-complexes $K$, $U$, $L$ and $V$ as follows:
\begin{itemize}
\item the ${\mathcal M}$-complex $K$ is defined by
$$\begin{array}{lll}
K_2=\{\alpha,\beta,\gamma,\delta\} && \left\{  \begin{array}{l} d_0^2\alpha =1\quad d_1^2\alpha=4 \quad d_2^2\alpha=3\\
d_0^2\beta=2\quad d_1^2\beta=4\quad d_2^2\beta=3\\
d_0^2\gamma=2 \quad d_1^2\gamma=6\quad d_2^2\gamma=5\\
d_0^2\delta=1\quad d_1^2\delta=6\quad d_2^2\delta=5\\
\end{array} \right.\\[1cm]
K_1=\{1,2,3,4,5,6\} && \left\{ \begin{array}{lll}
d_0^11=c\quad d_1^11=d && d_0^12=c\quad d_1^12=d\\
d_0^13=b\quad d_1^13=c && d_0^14=b\quad d_1^14=d\\
d_0^15=a\quad d_1^15=c && d_0^16=a\quad d_1^16=d
\end{array}\right.\\
K_0=\{a,b,c,d\}
\end{array}$$

\item the ${\mathcal M}$-complex $L$ is defined by
$$\quad\quad\enspace\begin{array}{lll}
L_2=\{\alpha',\beta',\gamma',\delta'\} && \left\{  \begin{array}{l} d_0^2\alpha' =4'\quad d_1^2\alpha'=2' \quad d_2^2\alpha'=6'\\
d_0^2\beta'=5'\quad d_1^2\beta'=3'\quad d_2^2\beta'=6'\\
d_0^2\gamma'=1' \quad d_1^2\gamma'=5'\quad d_2^2\gamma'=4'\\
d_0^2\delta'=1'\quad d_1^2\delta'=3'\quad d_2^2\delta'=2'\\
\end{array} \right.\\[1cm]
L_1=\{1',2',3',4',5',6'\} && \left\{ \begin{array}{l}
d_0^11'=c'\;\; d_1^11'=d'\;\; d_0^12'=a'\;\; d_1^12'=c'\\
d_0^13'=a'\;\; d_1^13'=d'\;\; d_0^14'=b'\;\; d_1^14'=c'\\
d_0^15'=b'\;\; d_1^15'=d'\;\; d_0^16'=a'\;\; d_1^16'=b'
\end{array}\right.\\
L_0=\{a',b',c',d'\}
\end{array}$$
 \begin{center}
%%%%%%%%%%%%%% K  and L %%%%%%%%%%%%%%%%%%%
\resizebox{3.5cm}{!}{\begin{tikzpicture} \draw[line
width=0.18em,draw=gray] (0,0) circle (3cm); \draw[line
width=0.18em,draw=qqzzff] (0,3).. controls (-1.2,1) and (-1.2,-1)
..(0,-3) node[midway,xshift=-0.2cm] {$1$}; \draw[line
width=0.18em,draw=ffxfqq] (0,3).. controls (1,1) and (1,-1)
..(0,-3) node[midway,xshift=0.2cm] {$2$}; \draw[line
width=0.18em,draw=qqqqcc] (0,3).. controls (-0.24,1) ..(-0.25,0);
\draw[line width=0.18em,draw=ffwwzz] (-0.25,0).. controls
(-0.24,-1) ..(0,-3); \node (5) at (1.7,1.5) {$5$}; \node (6) at
(1.7,-1.5) {$6$}; \node (alpha) at (-0.6,0) {$\alpha$}; \node
(beta) at (0.4,0) {$\beta$}; \node (gamma) at (2.5,0) {$\gamma$};
\node (delta) at (-2,0) {$\delta$}; \node (K) at (0,-4) {\large
$K$}; \node(A) at (0,3) [circle,draw=black,fill=black,minimum
size=4,inner sep=0,label={[xshift=0cm,yshift=0.0cm]{\large $c$}}]
{}; \node(B) at (0,-3) [circle,draw=black,fill=black,minimum
size=4,inner sep=0,label={[xshift=0cm,yshift=-0.7cm]{\large $d$}}]
{}; \node(C) at (-0.25,0) [circle,draw=black,fill=black,minimum
size=4,inner sep=0,label={[xshift=0.25cm,yshift=-0.3cm]{\large
$b$}}]  {}; \node(D) at (1.75,0)
[circle,draw=black,fill=black,minimum size=4,inner
sep=0,label={[xshift=0.25cm,yshift=-0.3cm]{\large $a$}}]  {};
\draw[line width=0.18em,dashed,draw=red] (D) to[out=270,in=25] (B)
node[midway,yshift=-1cm] {$4$}; \draw[line
width=0.18em,dashed,draw=qqccqq] (D) to[out=90,in=-25] (A)
node[midway,yshift=1cm] {$3$};
\end{tikzpicture}} \quad\quad
 \resizebox{3.5cm}{!}{\begin{tikzpicture}
\draw[line width=0.18em,draw=gray] (0,0) circle (3cm); \draw[line
width=0.18em,draw=pink] (0,3).. controls (-1.2,1) and (-1.2,-1)
..(0,-3) node[midway,xshift=-0.2cm] {$1'$}; \draw[line
width=0.18em,dashed,draw=lime] (0,3).. controls (0.5,1).. (0.55,0)
node[midway,yshift=0cm,xshift=-0.4cm] {$2'$};; \draw[line
width=0.18em,dashed,draw=qqwuqq]  (0.55,0) .. controls (0.5,-1)..
(0,-3) node[midway,yshift=0cm,xshift=-0.4cm] {$3'$}; \draw[line
width=0.18em,dashed,draw=violet] (0,3) to[out=-35,in=95] (1.35,0)
node[midway,yshift=1.3cm,xshift=1.45cm] {$4'$}; \draw[line
width=0.18em,dashed,draw=teal] (1.35,0) to[out=-95,in=35] (0,-3)
node[midway,yshift=-1.3cm,xshift=1.45cm] {$5'$}; \node (alpha) at
(0.9,1) {$\alpha'$}; \node (beta) at (0.9,-1) {$\beta'$}; \node
(gamma) at (2.5,0) {$\gamma'$}; \node (delta) at (-2,0)
{$\delta'$}; \node (L) at (0,-4) {\large $L$}; \node(A) at (0,3)
[circle,draw=black,fill=black,minimum size=4,inner
sep=0,label={[xshift=0cm,yshift=0.0cm]{\large $c'$}}]  {};
\node(B) at (0,-3) [circle,draw=black,fill=black,minimum
size=4,inner sep=0,label={[xshift=0cm,yshift=-0.7cm]{\large
$d'$}}]  {}; \node(C) at (0.55,0)
[circle,draw=black,fill=black,minimum size=4,inner
sep=0,label={[xshift=-0.25cm,yshift=-0.3cm]{\large $a'$}}]  {};
\node(D) at (1.35,0) [circle,draw=black,fill=black,minimum
size=4,inner sep=0,label={[xshift=0.275cm,yshift=-0.3cm]{\large
$b'$}}]  {}; \draw[line width=0.18em,dashed,draw=olive] (D)-- (C)
node[midway,above] {$6'$};
\end{tikzpicture}}
\end{center}

\item the ${\mathcal M}$-complex $U$ is defined by $U=K^{\sigma}$,
where $\sigma$ renames the $2$-cells  $\gamma$ and $\delta$  of
$K$ into $\gamma'$ and $\varphi$, respectively.

\item   the ${\mathcal M}$-complex $V$ is defined by $V=L^{\tau}$,
where $\tau$ renames the $2$-cells $\gamma'$ and $\delta'$  of $L$
into   $\psi$ and $\delta$, respectively.

\end{itemize}
One checks easily that $K$ and $L$ (and hence also $U$ and $V$)
are indecomposable. The compositions $K \,
_\gamma\!\circ_{\delta'} \,L$ and $U \, _\varphi\!\circ_{\psi}\,V$
are then the same operations of the Menelaus cyclic operad:
\begin{center}
%%%%%%%%%%% KL%%%%%%%%%%%%%%%
\resizebox{3.5cm}{!}{\begin{tikzpicture} \draw[line
width=0.18em,draw=gray]  (0,0) circle (3cm); \draw[line
width=0.18em,draw=qqzzff]  (0,3).. controls (-1.2,1) and (-1.2,-1)
..(0,-3) node[midway,xshift=-0.2cm] {$1$}; \draw[line
width=0.18em,draw=black]  (0,3).. controls (1,1) and (1,-1)
..(0,-3) node[midway,xshift=0.2cm] {$[2]$}; \draw[line
width=0.18em,draw=qqqqcc] (0,3).. controls (-0.24,1) ..(-0.25,0);
\draw[line width=0.18em,draw=ffwwzz] (-0.25,0).. controls
(-0.24,-1) ..(0,-3); \node (5) at (1.2,1.5) {$[5]$}; \node (6) at
(1.2,-1.5) {$[6]$}; \node (alpha) at (1.9,1) {$\alpha'$}; \node
(beta) at (1.9,-1) {$\beta'$}; \node (alpha) at (-0.6,0)
{$\alpha$}; \node (beta) at (0.4,0) {$\beta$}; \node (gamma) at
(2.75,-0.5) {$\gamma'$}; \node (delta) at (-2,0) {$\delta$}; \node
(K) at (0,-4) {\large $K \,{_{\gamma}\circ_{\delta'}}\,L$};
\node(S) at (2.35,0) [circle,draw=black,fill=black,minimum
size=4,inner sep=0,label={[xshift=0.275cm,yshift=-0.3cm]{\large
$b'$}}]  {}; \node(A) at (0,3)
[circle,draw=black,fill=black,minimum size=4,inner
sep=0,label={[xshift=0cm,yshift=0.0cm]{\large $[c]$}}]  {};
\node(B) at (0,-3) [circle,draw=black,fill=black,minimum
size=4,inner sep=0,label={[xshift=0cm,yshift=-0.7cm]{\large
$[d]$}}]  {}; \node(C) at (-0.25,0)
[circle,draw=black,fill=black,minimum size=4,inner
sep=0,label={[xshift=0.25cm,yshift=-0.3cm]{\large $b$}}]  {};
\node(D) at (1.75,0) [circle,draw=black,fill=black,minimum
size=4,inner sep=0,label={[xshift=-0.275cm,yshift=-0.4cm]{\large
$[a]$}}]  {}; \draw[line width=0.18em,dashed,draw=black] (D)
to[out=270,in=25] (B) node[midway,yshift=-1cm] {$4$}; \draw[line
width=0.18em,dashed,,draw=black] (D) to[out=90,in=-25] (A)
node[midway,yshift=1cm] {$3$}; \draw[line
width=0.18em,dashed,draw=olive] (D)-- (S) node[midway,above]
{$6'$}; \draw[line width=0.18em,dashed,draw=teal] (S)
to[out=270,in=25] (B) node[midway,xshift=2.3cm,yshift=-1.3cm]
{$5'$}; \draw[line width=0.18em,dashed,draw=violet] (S)
to[out=90,in=-25] (A) node[midway,xshift=2.3cm,yshift=1.3cm]
{$4'$};
\end{tikzpicture}}   \quad\quad
%%%%%%%%%%% UV%%%%%%%%%%%%%%%
 \resizebox{3.5cm}{!}{\begin{tikzpicture}
\draw[line width=0.18em,draw=gray] (0,0) circle (3cm); \draw[line
width=0.18em,draw=black] (0,3).. controls (-1.2,1) and (-1.2,-1)
..(0,-3) node[midway,xshift=-0.2cm] {$[1]$}; \draw[line
width=0.18em,draw=ffxfqq] (0,3).. controls (1,1) and (1,-1)
..(0,-3) node[midway,xshift=0.2cm] {$2$}; \draw[line
width=0.18em,draw=qqqqcc] (0,3).. controls (-0.24,1) ..(-0.25,0);
\draw[line width=0.18em,draw=ffwwzz] (-0.25,0).. controls
(-0.24,-1) ..(0,-3); \node (5) at (1.2,1.5) {$2'$}; \node (6) at
(1.2,-1.5) {$3'$}; \node (alpha) at (1.9,1) {$\alpha'$}; \node
(beta) at (1.9,-1) {$\beta'$}; \node (alpha) at (-0.6,0)
{$\alpha$}; \node (beta) at (0.4,0) {$\beta$}; \node (gamma) at
(2.75,-0.5) {$\gamma'$}; \node (delta) at (-2,0) {$\delta$}; \node
(K) at (0,-4) {\large $U \,{_{\varphi}\circ_{\psi}}\,V$}; \node(S)
at (2.5,0) [circle,draw=black,fill=black,minimum size=4,inner
sep=0,label={[xshift=0.275cm,yshift=-0.35cm]{\large $[a]$}}]  {};
\node(A) at (0,3) [circle,draw=black,fill=black,minimum
size=4,inner sep=0,label={[xshift=0cm,yshift=0.0cm]{\large
$[c]$}}]  {}; \node(B) at (0,-3)
[circle,draw=black,fill=black,minimum size=4,inner
sep=0,label={[xshift=0cm,yshift=-0.7cm]{\large $[d]$}}]  {};
\node(C) at (-0.23,0) [circle,draw=black,fill=black,minimum
size=4,inner sep=0,label={[xshift=0.25cm,yshift=-0.3cm]{\large
$b$}}]  {}; \node(D) at (1.75,0)
[circle,draw=black,fill=black,minimum size=4,inner
sep=0,label={[xshift=-0.275cm,yshift=-0.3cm]{\large $a'$}}]  {};
\draw[line width=0.18em,dashed,draw=qqwuqq] (D) to[out=270,in=25]
(B) node[midway,yshift=-1cm] {$4$}; \draw[line
width=0.18em,dashed,draw=lime] (D) to[out=90,in=-25] (A)
node[midway,yshift=1cm] {$3$}; \draw[line
width=0.18em,dashed,draw=olive] (D)-- (S) node[midway,above]
{$6'$}; \draw[line width=0.18em,dashed,draw=black] (S)
to[out=270,in=25] (B) node[midway,xshift=2.3cm,yshift=-1.45cm]
{$[6]$}; \draw[line width=0.18em,dashed,draw=black] (S)
to[out=90,in=-25] (A) node[midway,xshift=2.3cm,yshift=1.45cm]
{$[5]$};
\end{tikzpicture}}
\end{center}
Indeed, taking into account the identifications
$$2\sim 1'\enspace\enspace 5\sim 4' \enspace\enspace 6\sim 5'\quad\quad  a\sim a'\enspace\enspace c\sim c'\enspace\enspace d\sim d',$$
induced by the connected sum $K \; _\gamma\!\circ_{\delta'} \;L$,
and the identifications
$$1\sim 1'\enspace\enspace 6\sim 3'\enspace\enspace 5\sim 2' \quad\quad  a\sim b'\enspace\enspace c\sim c'\enspace\enspace d\sim d',$$ induced by the connected sum $U \,{_{\varphi}\circ_{\psi}}\,V$,  the witnessing $<\!\!2$-isomorphism between $K \, _\gamma\!\circ_{\delta'} \,L$ and $U \, _\varphi\!\circ_{\psi}\,V$  is defined as follows (where it is not the identity)
$$\begin{array}{l}
\! 1\stackrel{1}{\mapsto} [1] \quad [2]\stackrel{2}{\mapsto} 2  \quad [5]\stackrel{2'}{\mapsto} 2'\quad [6]\stackrel{3'}{\mapsto} 3' \quad  4'\stackrel{4'}{\mapsto} [5] \quad 5'\stackrel{5'}{\mapsto} [6]\\
\! [a]\stackrel{a'}{\mapsto} a'\quad b'\stackrel{b'}{\mapsto} [a]
\end{array},$$
where, above the $\mapsto$ signs, we have specified yet another
representative of the  $<\!\!2$-isomorphism class, that we shall
put to use below. It follows that $K \, _\gamma\!\circ_{\delta'}
\,L = U \, _\varphi\!\circ_{\psi}\,L$ is a relation, preventing
the Menelaus cyclic operad to be free.

In the above pictures, we have highlighted in black two distinct
triangles drawn on the sphere, which witness the respective
decompositions $K \, _\gamma\!\circ_{\delta'} \,L$ and  $U \,
_\varphi\!\circ_{\psi}\,L$. We shall study such triangles in the
rest of this section.

\smallskip

In the formalism of unrooted trees, the ${\mathcal M}$-complexes
$K$, $L$, $U$ and $V$  are represented by the corollas
\begin{center}
\begin{tikzpicture}
 \node (f) [circle,fill=none,draw=black,minimum size=2mm,inner sep=0.5mm]  at (-1,0) {\small $K$};
\node (a) [label={[xshift=-0.05cm, yshift=-0.29cm,]{\footnotesize
$\delta$}},circle,fill=none,draw=none,minimum size=2mm,inner
sep=0mm]  at (-1.4,0.85) {}; \node (b)
[label={[xshift=-0.05cm,yshift=-0.33cm,]{\footnotesize
$\alpha$}},circle,fill=none,draw=none,minimum size=2mm,inner
sep=0mm]  at (-2,0) {}; \node (c) [label={[xshift=0.05cm,
yshift=-0.475cm,]{\footnotesize
$\beta$}},circle,fill=none,draw=none,minimum size=2mm,inner
sep=0mm]  at (-0.6,-0.85) {}; \node (d) [label={[xshift=0.05cm,
yshift=-0.33cm,]{\footnotesize
$\gamma$}},circle,fill=none,draw=none,minimum size=2mm,inner
sep=0mm]  at (0,0) {}; \draw (f)--(a); \draw (f)--(b); \draw
(f)--(c); \draw (f)--(d);
\end{tikzpicture} \quad \begin{tikzpicture}
 \node (f) [circle,fill=none,draw=black,minimum size=2mm,inner sep=0.5mm]  at (-1,0) {\small $L$};
\node (a) [label={[xshift=-0.05cm, yshift=-0.29cm,]{\footnotesize
$\gamma'$}},circle,fill=none,draw=none,minimum size=2mm,inner
sep=0mm]  at (-1.4,0.85) {}; \node (b)
[label={[xshift=-0.05cm,yshift=-0.33cm,]{\footnotesize
$\delta'$}},circle,fill=none,draw=none,minimum size=2mm,inner
sep=0mm]  at (-2,0) {}; \node (c) [label={[xshift=0.05cm,
yshift=-0.475cm,]{\footnotesize
$\alpha'$}},circle,fill=none,draw=none,minimum size=2mm,inner
sep=0mm]  at (-0.6,-0.85) {}; \node (d) [label={[xshift=0.05cm,
yshift=-0.33cm,]{\footnotesize
$\beta'$}},circle,fill=none,draw=none,minimum size=2mm,inner
sep=0mm]  at (0,0) {}; \draw (f)--(a); \draw (f)--(b); \draw
(f)--(c); \draw (f)--(d);
\end{tikzpicture} \quad \begin{tikzpicture}
 \node (f) [circle,fill=none,draw=black,minimum size=2mm,inner sep=0.5mm]  at (-1,0) {\small $U$};
\node (a) [label={[xshift=-0.05cm, yshift=-0.29cm,]{\footnotesize
$\gamma'$}},circle,fill=none,draw=none,minimum size=2mm,inner
sep=0mm]  at (-1.4,0.85) {}; \node (b)
[label={[xshift=-0.05cm,yshift=-0.33cm,]{\footnotesize
$\alpha$}},circle,fill=none,draw=none,minimum size=2mm,inner
sep=0mm]  at (-2,0) {}; \node (c) [label={[xshift=0.05cm,
yshift=-0.475cm,]{\footnotesize
$\beta$}},circle,fill=none,draw=none,minimum size=2mm,inner
sep=0mm]  at (-0.6,-0.85) {}; \node (d) [label={[xshift=0.05cm,
yshift=-0.33cm,]{\footnotesize
$\varphi$}},circle,fill=none,draw=none,minimum size=2mm,inner
sep=0mm]  at (0,0) {}; \draw (f)--(a); \draw (f)--(b); \draw
(f)--(c); \draw (f)--(d);
\end{tikzpicture} \quad  \begin{tikzpicture}
 \node (f) [circle,fill=none,draw=black,minimum size=2mm,inner sep=0.5mm]  at (-1,0) {\small $V$};
\node (a) [label={[xshift=-0.05cm, yshift=-0.29cm,]{\footnotesize
$\delta$}},circle,fill=none,draw=none,minimum size=2mm,inner
sep=0mm]  at (-1.4,0.85) {}; \node (b)
[label={[xshift=-0.05cm,yshift=-0.33cm,]{\footnotesize
$\psi$}},circle,fill=none,draw=none,minimum size=2mm,inner
sep=0mm]  at (-2,0) {}; \node (c) [label={[xshift=0.05cm,
yshift=-0.475cm,]{\footnotesize
$\alpha'$}},circle,fill=none,draw=none,minimum size=2mm,inner
sep=0mm]  at (-0.6,-0.85) {}; \node (d) [label={[xshift=0.05cm,
yshift=-0.33cm,]{\footnotesize
$\beta'$}},circle,fill=none,draw=none,minimum size=2mm,inner
sep=0mm]  at (0,0) {}; \draw (f)--(a); \draw (f)--(b); \draw
(f)--(c); \draw (f)--(d);
\end{tikzpicture}
\end{center}
respectively. The tree-wise representation of the compositions $K
\, _\gamma\!\circ_{\delta'} \,L$ and $U \, _\varphi\!\circ_{\psi
}\,V$ is   obtained by grafting the corresponding corollas along
the half-edges indicated in the two insertions, and their
identification   is reflected by imposing the equality
\begin{center}
\begin{tikzpicture}
 \node (f) [circle,fill=none,draw=black,minimum size=2mm,inner sep=0.5mm]  at (-1,0) {\small $K$};
\node (g) [circle,fill=none,draw=black,minimum size=2mm,inner
sep=0.5mm]  at (1,0) {\small $L$}; \node (a)
[label={[xshift=-0.05cm, yshift=-0.29cm,]{\footnotesize
$\delta$}},circle,fill=none,draw=none,minimum size=2mm,inner
sep=0mm]  at (-1.6,0.85) {}; \node (w) [label={[xshift=0.05cm,
yshift=-0.29cm,]{\footnotesize
$\gamma'$}},circle,fill=none,draw=none,minimum size=2mm,inner
sep=0mm]  at (1.6,0.85) {}; \node (b)
[label={[xshift=-0.05cm,yshift=-0.33cm,]{\footnotesize
$\alpha$}},circle,fill=none,draw=none,minimum size=2mm,inner
sep=0mm]  at (-2.2,0) {}; \node (c) [label={[xshift=-0.07cm,
yshift=-0.42cm,]{\footnotesize
$\beta$}},circle,fill=none,draw=none,minimum size=2mm,inner
sep=0mm]  at (-1.6,-0.85) {}; \node (d) [label={[xshift=0.07cm,
yshift=-0.35cm,]{\footnotesize
$\beta'$}},circle,fill=none,draw=none,minimum size=2mm,inner
sep=0mm]  at (2.2,0) {}; \node (e) [label={[xshift=0.07cm,
yshift=-0.33cm,]{\footnotesize
$\alpha'$}},circle,fill=none,draw=none,minimum size=2mm,inner
sep=0mm]  at (1.6,-0.85) {}; \node (i) [label={[xshift=-0.2cm,
yshift=-0.17cm,]{\footnotesize $\gamma$}},label={[xshift=0.2cm,
yshift=-0.17cm,]{\footnotesize
$\delta'$}},circle,fill=none,draw=none,minimum size=0mm,inner
sep=0mm]  at (0,0.15) {}; \draw (f) -- (g); \draw (f)--(a); \draw
(f)--(b); \draw (f)--(c); \draw (g)--(d); \draw (g)--(e); \draw
(g)--(w); \draw (0,-0.1)--(0,0.1);
\end{tikzpicture}\quad \raisebox{3em}{$=$} \quad\begin{tikzpicture}
 \node (f) [circle,fill=none,draw=black,minimum size=2mm,inner sep=0.5mm]  at (-1,0) {\small $U$};
\node (g) [circle,fill=none,draw=black,minimum size=2mm,inner
sep=0.5mm]  at (1,0) {\small $V$}; \node (a)
[label={[xshift=-0.05cm, yshift=-0.29cm,]{\footnotesize
$\gamma'$}},circle,fill=none,draw=none,minimum size=2mm,inner
sep=0mm]  at (-1.6,0.85) {}; \node (w) [label={[xshift=0.05cm,
yshift=-0.29cm,]{\footnotesize
$\delta$}},circle,fill=none,draw=none,minimum size=2mm,inner
sep=0mm]  at (1.6,0.85) {}; \node (b)
[label={[xshift=-0.05cm,yshift=-0.33cm,]{\footnotesize
$\alpha$}},circle,fill=none,draw=none,minimum size=2mm,inner
sep=0mm]  at (-2.2,0) {}; \node (c) [label={[xshift=-0.07cm,
yshift=-0.42cm,]{\footnotesize
$\beta$}},circle,fill=none,draw=none,minimum size=2mm,inner
sep=0mm]  at (-1.6,-0.85) {}; \node (d) [label={[xshift=0.07cm,
yshift=-0.35cm,]{\footnotesize
$\beta'$}},circle,fill=none,draw=none,minimum size=2mm,inner
sep=0mm]  at (2.2,0) {}; \node (e) [label={[xshift=0.07cm,
yshift=-0.33cm,]{\footnotesize
$\alpha'$}},circle,fill=none,draw=none,minimum size=2mm,inner
sep=0mm]  at (1.6,-0.85) {}; \node (i) [label={[xshift=-0.2cm,
yshift=-0.17cm,]{\footnotesize $\varphi$}},label={[xshift=0.2cm,
yshift=-0.17cm,]{\footnotesize
$\psi$}},circle,fill=none,draw=none,minimum size=0mm,inner
sep=0mm]  at (0,0.15) {}; \draw (f) -- (g); \draw (f)--(a); \draw
(f)--(b); \draw (f)--(c); \draw (g)--(d); \draw (g)--(e); \draw
(g)--(w); \draw (0,-0.1)--(0,0.1);
\end{tikzpicture}
\end{center}
of the resulting unrooted trees.
\end{example}
Example \ref{counterexample} has a proof-theoretic consequence, as
we explain now.  Let us first display the third representative of
$K \, _\gamma\!\circ_{\delta'} \,L$ and $U \,
_\varphi\!\circ_{\psi}\,L$ specified above.
\begin{center}
%%%%%%%%%%% KL UV proof-theoretic %%%%%%%%%%%%%%%
\resizebox{3.5cm}{!}{\begin{tikzpicture} \draw[line
width=0.18em,draw=gray]  (0,0) circle (3cm); \draw[line
width=0.18em,draw=qqzzff]  (0,3).. controls (-1.2,1) and (-1.2,-1)
..(0,-3) node[midway,xshift=-0.2cm] {$1$}; \draw[line
width=0.18em,draw=ffxfqq]  (0,3).. controls (1,1) and (1,-1)
..(0,-3) node[midway,xshift=0.2cm] {$2$}; \draw[line
width=0.18em,draw=qqqqcc] (0,3).. controls (-0.24,1) ..(-0.25,0);
\draw[line width=0.18em,draw=ffwwzz] (-0.25,0).. controls
(-0.24,-1) ..(0,-3); \node (5) at (1.2,1.5) {$2'$}; \node (6) at
(1.2,-1.5) {$3'$}; \node (alpha) at (1.9,1) {$\alpha'$}; \node
(beta) at (1.9,-1) {$\beta'$}; \node (alpha) at (-0.6,0)
{$\alpha$}; \node (beta) at (0.4,0) {$\beta$}; \node (gamma) at
(2.75,-0.5) {$\gamma'$}; \node (delta) at (-2,0) {$\delta$};
\node(S) at (2.35,0) [circle,draw=black,fill=black,minimum
size=4,inner sep=0,label={[xshift=0.275cm,yshift=-0.3cm]{\large
$b'$}}]  {}; \node(A) at (0,3)
[circle,draw=black,fill=black,minimum size=4,inner
sep=0,label={[xshift=0cm,yshift=0.0cm]{\large $c$}}]  {}; \node(B)
at (0,-3) [circle,draw=black,fill=black,minimum size=4,inner
sep=0,label={[xshift=0cm,yshift=-0.7cm]{\large $d$}}]  {};
\node(C) at (-0.25,0) [circle,draw=black,fill=black,minimum
size=4,inner sep=0,label={[xshift=0.25cm,yshift=-0.3cm]{\large
$b$}}]  {}; \node(D) at (1.75,0)
[circle,draw=black,fill=black,minimum size=4,inner
sep=0,label={[xshift=-0.275cm,yshift=-0.3cm]{\large $a'$}}]  {};
\draw[line width=0.18em,dashed,draw=qqwuqq] (D) to[out=270,in=25]
(B) node[midway,yshift=-1cm] {$4$}; \draw[line
width=0.18em,dashed,,draw=lime] (D) to[out=90,in=-25] (A)
node[midway,yshift=1cm] {$3$}; \draw[line
width=0.18em,dashed,draw=olive] (D)-- (S) node[midway,above]
{$6'$}; \draw[line width=0.18em,dashed,draw=teal] (S)
to[out=270,in=25] (B) node[midway,xshift=2.3cm,yshift=-1.3cm]
{$5'$}; \draw[line width=0.18em,dashed,draw=violet] (S)
to[out=90,in=-25] (A) node[midway,xshift=2.3cm,yshift=1.3cm]
{$4'$};
\end{tikzpicture}}
\end{center}
Based on this picture (and on the representatives that it induces
for $K,L,U,V$), and by Proposition \ref{connected-cut}, we get the
following derivations in the Menelaus system, where, in order to
save horizontal space and to  help the reader to place the
sextuples spatially, we write $(b,c,d,1,4,3)$ as
$\underbracket{bcd143}_{\alpha}$, etc.:

{\small
$$\f{\f{}{\underbracket{bcd143}_{\alpha}\quad\underbracket{bcd243}_{\beta}\quad\underbrace{a'cd23'2'}_{\gamma}\quad\underbracket{a'cd13'2'}_{\delta}}\quad\quad\f{}{\underbracket{a'b'c4'2'6'}_{\alpha'}\quad\underbracket{a'b'd5'3'6'}_{\beta'}\quad\underbracket{b'cd25'4'}_{\gamma'}\quad\underbrace{a'cd23'2'}_{\delta'}}}{\underbracket{bcd143}_{\alpha}\quad\underbracket{bcd243}_{\beta}\quad\underbracket{a'cd13'2'}_{\delta}\quad\underbracket{a'b'c4'2'6'}_{\alpha'}\quad\underbracket{a'b'd5'3'6'}_{\beta'}\quad\underbracket{b'cd25'4'}_{\gamma'}}$$
$$\f{\f{}{\underbracket{bcd143}_{\alpha}\quad\underbracket{bcd243}_{\beta}\quad\underbracket{b'cd25'4'}_{\gamma'}\quad\underbrace{b'cd15'4'}_{\varphi}}\quad\quad\f{}{\underbracket{a'b'c4'2'6'}_{\alpha'}\quad\underbracket{a'b'd5'3'6'}_{\beta'}\quad\underbrace{b'cd15'4'}_{\psi}\quad\underbracket{a'cd13'2'}_{\delta}}}{\underbracket{bcd143}_{\alpha}\quad\underbracket{bcd243}_{\beta}\quad\underbracket{b'cd25'4'}_{\gamma'}\quad\underbracket{a'b'c4'2'6'}_{\alpha'}\quad\underbracket{a'b'd5'3'6'}_{\beta'}\quad\underbracket{a'cd13'2'}_{\delta}}$$}
These two derivations can be contrasted with a third one, obtained
by directly applying one axiom:
$$\f{}{\underbracket{bcd143}_{\alpha}\quad\underbracket{bcd243}_{\beta}\quad\underbracket{b'cd25'4'}_{\gamma'}\quad\underbracket{a'b'c4'2'6'}_{\alpha'}\quad\underbracket{a'b'd5'3'6'}_{\beta'}\quad\underbracket{a'cd13'2'}_{\delta}}$$
Recall the notion of normal derivation from Section
\ref{secdecidability}. The process lying behind the proofs of the
two lemmata leading to Corollary \ref{atomisation} is that of a
normalisation, i.e. a gradual transformation from a given
derivation to one in normal form.  Let us now call {\em long
normal derivation} a normal derivation in which moreover the only
axioms coming from ${\mathcal M}$-complexes are those coming from
the indecomposable ${\mathcal M}$-complexes.  Thanks to
Proposition \ref{connected-cut} and Proposition \ref{generation},
we can transform any normal derivation into a long normal
derivation. Note that this process introduces cuts, while the
process leading to a normal derivation can be viewed as a
cut-elimination process. With these glasses, the first two
derivations are distinct  long normal forms for the third one.
Thus, a fortiori, we can draw the following proof-theoretic
conclusion from Example \ref{counterexample}: the process of
transforming a given derivation (normal or not) into a long normal
derivation is non-deterministic.

\smallskip
In the rest of the section, we shall circonscribe the situations
of the kind shown in Example \ref{counterexample}, by examining
which of the ${\mathcal M}$-complexes  induce and which do not
induce   relations, so as to give a presentation of the Menelaus
cyclic operad. We first characterize indecomposable  ${\mathcal
M}$-complexes.

\subsection{Irreducibe ${\mathcal M}$-complexes}

Let $K$ be an $\mathcal{M}$-complex and let $T=\{e_0,e_1,e_2\}$
$\subseteq K_1$ be such that $\partial_1(e_0-e_1+e_2)=0$, i.e.\
$e_0-e_1+e_2$ is a 1-cycle (i.e., $e_0,e_1,e_2$ form a triangle).
Consider the binary relation on $K_2$ of sharing an edge from
$K_1-T$. Let $\tau$ be the transitive closure of this relation. We
say that $T$ is a \emph{cut-triangle}, when $\tau$ is an
equivalence relation with exactly two classes. If $K$ contains a
cut-triangle, then we say that it is \emph{reducible}, otherwise
it is called \emph{irreducible}. Note that if $T$ is the set of
edges of a 2-cell of $K$, then $\tau$ is not reflexive.

We invite the reader to check that the triangles drawn in black in
Example \ref{counterexample} are cut triangles. In fact, as we
shall see, cut triangles detect decomposability.

We set up a bit of terminology. Let $K$ be a reducible ${\mathcal
M}$-complex and let $T=\{e_0,e_1,e_2\}$  be a cut-triangle of $K$.
We shall write $K^l_{T}$ and $K^r_T$ for the equivalence classes
of $2$-cells with respect to the relation of sharing an edge from
$K_1-T$. We shall denote this relation with $\sim_{T}$.
 A {\em path} between two 2-cells $u,v$ is a sequence of 2-cells starting from $u$ and ending with $v$ such that any two consecutive 2-cells  share a 1-cell, and we say that the path {\em crosses} these 1-cells.

A key tool in this section is the following lemma, which  makes
explicit the argument used in the proof of Proposition 2.9.

\begin{lem}[Camembert lemma] \label{camembert}
The axiom (4) in the definition of ${\mathcal M}$-complex can be
reinforced as folows. For every $0$-cell $w$, the faces of $L_w$
can be displayed without repetition around $w$ in a circle.
Formally, we can arrange a cyclic order  on
$L_w=\{u_1,\ldots,u_n\}$  in such a way that, for all $i$, $u_i$
and $u_{i+1}$ are $w$-neighbours, modulo $n$. We refer to $L_{w}$
equipped with this cyclic order as the camembert of  $w$.
\end{lem}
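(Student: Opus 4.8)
The plan is to reduce the statement to an elementary graph-theoretic fact: a finite connected multigraph all of whose vertices have degree exactly $2$ is a single cycle. To a fixed $0$-cell $w$ I would associate the multigraph $G_w$ whose vertex set is $L_w$ and which has one edge for each $1$-cell $e$ of $L$ having $w$ as a vertex: by axiom (3) there are exactly two $2$-cells of $L$ having $e$ as an edge, and since $w$ is a vertex of $e$ it is a vertex of each of these two $2$-cells, so both lie in $L_w$; I join them by an edge of $G_w$ labelled $e$. By the definition of $w$-neighbour, two elements of $L_w$ are $w$-neighbours precisely when some edge of $G_w$ joins them. (It is essential to keep $G_w$ a multigraph: two $2$-cells may be $w$-neighbours along two distinct edges through $w$.)

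The crucial step is the claim that every vertex of $G_w$ has degree $2$. Fix $u\in L_w$. By Remark \ref{triang}, $w$ is a vertex of $u$ and is the common vertex of exactly two of the three mutually distinct edges of $u$; call them $e_1,e_2$, and note that $e_1,e_2$ are $1$-cells having $w$ as a vertex. An edge of $G_w$ labelled $e$ is incident to $u$ if and only if $e$ is an edge of $u$ with $w$ a vertex of $e$, that is, if and only if $e\in\{e_1,e_2\}$; since $e_1\neq e_2$ there are exactly two such edges, so $\deg_{G_w}(u)=2$. One should check that this survives the degenerate possibilities — $e_1$ and $e_2$ shared with the same pair of $2$-cells (a double edge in $G_w$), or a $2$-cell incident to $e$ on both sides (a loop) — but in all cases $G_w$ remains a $2$-regular multigraph and the conclusion is unaffected, the resulting camembert merely being short.

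Finally I would invoke axiom (4), which says exactly that any two elements of $L_w$ are linked by a chain of $w$-neighbours, i.e.\ that $G_w$ is connected. Since $G_w$ is finite, connected and $2$-regular, it is a single cycle: we may list $L_w=\{u_1,\ldots,u_n\}$ with $n=|L_w|$, the $u_i$ pairwise distinct, and with $u_i$ and $u_{i+1}$ joined by an edge of $G_w$ for every $i$ modulo $n$. Equipping $L_w$ with this cyclic order gives the camembert of $w$: consecutive $2$-cells (modulo $n$) are $w$-neighbours and no $2$-cell is repeated. This is the promised reinforcement of axiom (4), and it is the precise form of the rotation-around-a-vertex argument already used informally in the proof of Proposition \ref{surface}.

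The only genuinely delicate point is the degree-$2$ computation, which rests entirely on Remark \ref{triang} (each vertex of a $2$-cell lies on exactly two of its edges) together with axiom (3) (each $1$-cell bounds exactly two $2$-cells); everything else is bookkeeping plus the graph lemma. I expect no real obstacle beyond being careful that a vertex of a face of $u$ counts as a vertex of $u$, so that both $2$-cells meeting along an edge through $w$ indeed belong to $L_w$, and beyond correctly phrasing the argument at the multigraph level rather than the simple-graph level.
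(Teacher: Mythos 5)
Your proof is correct, and it reaches the conclusion by a somewhat cleaner route than the paper's. The paper argues directly: it starts at some $u_1\in L_w$, walks to successive $w$-neighbours while never re-crossing the edge just used, shows via axiom (3) that such walks cannot branch (its ``property (P)''), closes the walk into a circle by finiteness, and then invokes axiom (4) to place every element of $L_w$ on that circle. You assemble the very same ingredients---each cell of $L_w$ has exactly two edges through $w$ (Remark \ref{triang}), each such edge lies on exactly two $2$-cells (axiom (3)), and $L_w$ is linked (axiom (4))---into the link multigraph $G_w$, and then quote the standard fact that a finite connected $2$-regular multigraph is a single cycle; the paper's sequence construction and property (P) are in effect an inline proof of that graph fact, so your modularization trades that bookkeeping for one citation (or a three-line proof) of the graph lemma. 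One point you should state rather than hedge: loops cannot occur in $G_w$, since regularity (axiom (2)) makes the three edges of a $2$-cell distinct and axiom (3) requires two distinct $2$-cells per $1$-cell; if a loop were genuinely possible, your blanket claim of $2$-regularity ``in all cases'' would not literally hold (a loop plus the second edge through $w$ would give degree $3$), whereas the double-edge case (e.g.\ the identity complexes, where $L_w$ is two cells joined along two edges through $w$) is real and is exactly why the multigraph, rather than simple-graph, formulation is needed.
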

 \begin{proof} We first exhibit a circle like in the statement, but we do not prove  yet that all  elements of $L_w$ occur in it.  By axiom (1), we can pick $u_1\in L_w$. We define a $u_1$-sequence to be   a path  $v_1,\ldots,v_{m}$ of elements of $L_w$ such that $v_1=u_1$,   $v_i$ and $v_{i+1}$ are $w$-neighbours for all $1\leq i<m$, and the path $v_{i-1},v_i,v_{i+1}$ crosses two different edges of $v_i$ for all $2\leq i\leq m-1$. By axiom (3), we can construct an infinite $u_1$-sequence  $v_1,\ldots,v_{m},\ldots$, in the following way: choose $v_2$ to be a $w$-neighbour  of $u_1$  in $L_w$ ($v_2$ might be the unique such $2$-cell, in which case $L_w=\{v_1,v_2\}$, or there might be a choice of exactly two such $2$-cells), and then at each $v_i$ continue with the $w$-neighbour of $v_i$  along the edge of $v_i$ incident to $w$  that is not crossed  from $v_{i-1}$ to $v_i$. If $L_w$ contains more than two $2$-cells, then, by axiom (0), there exist $i$ and $j$ such that $i<j$, $v_i=v_j$, and all $2$-cells $v_i,v_{i+1},\ldots,v_{j-1}$ are distinct. We next show the following property (P): every two  $v_i$-sequences that share their first two elements are prefixes of one another. Indeed, if the two sequences diverge at some $2$-cell $v$, this would display a $1$-cell shared by at least three $2$-cells of $K$, contradicting axiom (3). We now show that all elements of $L_w$ appear among $v_i,v_{i+1},\ldots,v_{j-1}$. Let $u\in L_w$. By axiom (4), there exists a $v_i$-sequence leading to $u$. By property (P), this sequence is a prefix of a sufficiently long prefix of $v_i,v_{i+1},\ldots,v_{j-1},v_j,v_{i+1},\ldots,v_{j-1},v_j,\ldots$ or of $v_i,v_{j-1},\ldots,v_{i+1},v_j,v_{j-1},\ldots,v_{ji+1},v_i,\ldots$.  Thus $u=v_k$, for some $i\leq k\leq j-1$.
  \end{proof}

\begin{rem}\label{rem1}   The Camembert lemma provides us with the following reasoning concerning the paths between $2$-cells: if the path crosses at least two edges of a single cut-triangle $T$, using the Camembert lemma, we shall be able to transform that path into one that  crosses strictly less edges of $T$. We shall tacitly use an induction on this number of crossings.
\end{rem}

 The following lemma provides a method for showing that three $1$-cells of an ${\mathcal M}$-complex do {\em not} make a cut-triangle.

 \begin{lem}\label{fff}
If  $T=\{e_0,e_1,e_2\}$ is a cut-triangle of $K$, then  for each
$e_i$, $0\leq i\leq 2$, each of the classes $K^l_{T}$ and $K^r_T$
contains  exactly one $2$-cell having $e_i$ as a $1$-cell.
\end{lem}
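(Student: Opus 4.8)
The plan is to argue homologically, exploiting the orientation of $K$ guaranteed by axiom (5). By axiom (3), the $1$-cell $e_i$ is an edge of exactly two $2$-cells, so the claim amounts to showing that these two $2$-cells lie in different $\sim_{T}$-classes; equivalently, writing $S=\{\,i\mid e_i \text{ has exactly one incident } 2\text{-cell in } K^l_{T}\,\}$, I must prove that $S=\{0,1,2\}$. First I would fix a fundamental cycle $\omega=\sum_{u\in K_2}\varepsilon_u u$ of $K$ (an orientation), and set $\omega^l=\sum_{u\in K^l_{T}}\varepsilon_u u$ and $\omega^r=\sum_{u\in K^r_{T}}\varepsilon_u u$, so that $\omega=\omega^l+\omega^r$ and hence $\partial\omega^l=-\partial\omega^r$; note that $K^l_{T}$ and $K^r_{T}$ are both nonempty, since $T$ is a cut-triangle.

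The next step is to compute $\partial\omega^l$ explicitly. Since $\partial\omega=0$, for every $1$-cell the two $2$-cells incident to it contribute opposite signs to the boundary; therefore a $1$-cell of $K_1-T$, whose two incident $2$-cells lie in the same $\sim_{T}$-class by the very definition of $\sim_{T}$, contributes $0$ to $\partial\omega^l$, while $e_i$ contributes $0$ if both or neither of its incident $2$-cells lie in $K^l_{T}$, and $\pm e_i$ if exactly one does. Thus $\partial\omega^l=\sum_{i\in S}\pm e_i$, and it remains to rule out $|S|\le 2$.

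If $S=\emptyset$, then $\omega^l$ is a $2$-cycle, hence an element of $H_2(|K|;\mathbf{Z})\cong\mathbf{Z}$ (Proposition~\ref{surface} together with axiom (5)), so $\omega^l=k\omega$; comparing the coefficients of an individual $2$-cell forces $k=1$ on $K^l_{T}$ and $k=0$ on $K^r_{T}$, contradicting the nonemptiness of both classes. If $|S|=1$, say $\partial\omega^l=\pm e_i$, then $0=\partial\partial\omega^l=\pm\partial e_i$, contradicting regularity (axiom (2)), which gives $d_0e_i\neq d_1e_i$ and hence $\partial e_i\neq 0$. Finally, if $|S|=2$, say $\partial\omega^l=\alpha e_i+\beta e_j$ with $\alpha,\beta\in\{1,-1\}$ and $k$ the remaining index, then $\alpha\partial e_i+\beta\partial e_j=0$, so $\partial e_i=\pm\partial e_j$; feeding this into the $1$-cycle relation $\partial e_0-\partial e_1+\partial e_2=0$ expresses $\partial e_k$ as either $0$ or $\pm 2\,\partial e_j$, both impossible — the former by regularity, the latter because $\partial e_k=d_0e_k-d_1e_k$ has all coefficients in $\{-1,0,1\}$ whereas $\pm 2\,\partial e_j$ has a coefficient $\pm 2$ (again using $d_0e_j\neq d_1e_j$). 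Hence $S=\{0,1,2\}$, which is the assertion.

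The step I expect to be the main obstacle is the treatment of $|S|=2$: it must be carried out uniformly, without assuming that $e_0,e_1,e_2$ span three distinct $0$-cells, so the argument has to be organised so that the $1$-cycle relation together with regularity forces a contradiction in every configuration. The sign bookkeeping in the computation of $\partial\omega^l$ — i.e.\ the consistent use of the orientation $\omega$ — is the other place where some care will be required.
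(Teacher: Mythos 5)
Your proof is correct, but it follows a genuinely different route from the paper's. The paper argues combinatorially: fixing $e_i$ and supposing its two incident $2$-cells $u^l,u^r$ lie in the same class, it shows that then \emph{every} pair of $2$-cells of $K$ is $\sim_T$-equivalent, by taking an arbitrary path between them and rerouting its crossings of $T$ around the relevant $0$-cells using the Camembert lemma (Lemma~\ref{camembert}); this contradicts the existence of two classes, and it uses only axioms (0)--(4), never orientability. You instead split a fundamental cycle $\omega=\omega^l+\omega^r$ along the two classes and analyse $\partial\omega^l$, ruling out $|S|\le 2$ via $\partial\circ\partial=0$, regularity, and the $1$-cycle relation $\partial(e_0-e_1+e_2)=0$; orientability (axiom (5), with the generator of $\ker\partial_2$ having all coefficients $\pm 1$ on all $2$-cells) is essential both for the sign bookkeeping and for the case $S=\emptyset$. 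Your case analysis checks out in all configurations, including degenerate ones where the $e_i$ share vertices, since it is carried out purely in the free abelian groups $C_1,C_0$; and the fact that a cut-triangle yields exactly two classes partitioning $K_2$ guarantees $\omega=\omega^l+\omega^r$ with both summands nonzero, which is what the $S=\emptyset$ case needs. What each approach buys: the paper's proof is more elementary (no homology beyond the definitions) and localizes the contradiction at a single $e_i$ without invoking the global orientation, whereas yours is shorter algebraically and, interestingly, anticipates the computation the paper performs later when verifying axiom (5) for $\hat K^l_T$ in the proof of Proposition~\ref{theprop} (there $\partial c_l=\sum_m\tau_m e_m$ with $\tau_m\in\{\pm 1\}$ is deduced \emph{from} Lemma~\ref{fff}, while you run the bookkeeping directly on $K$ to obtain the lemma, with no circularity).
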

\begin{proof} Fix an $i\in\{0,1,2\}$.
By the axiom (3) for $K$,   there  exist exactly two $2$-cells of
$K$ having $e_i$ as an edge; denote them with $u^l$ and $u^r$. We
show that, if   $u^l$ and $u^r$ are both contained in  $K^l_{T}$,
then all the 2-cells of $K$ must also belong to $K^l_{T}$, thereby
contradicting the fact that $T$ is a cut-triangle of $K$.

 Let $u$ and $v$ be two arbitrary distinct $2$-cells of $K$. As a consequence of the connectedness property  and axiom (4) for $K$, there exists a path of $2$-cells starting from $u$ and ending in $v$. If the 1-cells crossed on this path all belong to $K_1\setminus T$, then $u$ and $v$ are equivalent by definition. If the path  crosses $e_i$, then, by the assumption that $u^l$ and $u^r$ are equivalent, this crossing can be replaced by a path between  $u^l$ and $u^r$ that does not cross $e_i$. If the path crosses $e_j$ for $e_j\in T\setminus\{e_i\}$, then  let $u_1$ and $u_2$ be the faces sharing $e_j$ on the path. By the Camembert lemma, we can display
all of $u^l,u^r,u_1,u_2$ in a circle of $2$-cells associated with
the link of the 0-cell $w$ common to $e_i$ and $e_j$, in such a
way that $u_1,u_2$ (resp. $u^l,u^r$) are neighbours. Then
following the cells in clockwise or anticlockwise way, we see that
there is a path from $u_1$ to  $u^l$ (or $u^r$) and a path from
$u_2$ to $u^r$ (or $u^l$) that do not cross $e_i$ nor $e_j$ (nor
the third 1-edge of $T$, which is not incident to $w$). Therefore,
again, we can replace the length 1 path from $u_1$ to $u_2$ by the
concatenation of three paths from, say, $u_1$ to $u^l$ to $u^r$ to
$u_2$, witnessing that $u_1$ and $u_2$ are equivalent. This
concludes the proof by contradiction.
\end{proof}

  \begin{example}
The unique cut-triangle in the torus with two holes of Example 3
is the triangle with edges $D$, $1$ and $I$. Indeed, the two
equivalence classes of $2$-cells with respect to the relation of
sharing an edge other than $D$, $1$ and $I$ are given by
\begin{enumerate}
\item $B1A$, $B2C$, $D3C$, $J2I$, $J3A$, and \item $D4E$, $F1G$,
$F5E$, $H4G$, $H5I$.
\end{enumerate}
\end{example}

\begin{example}
Consider the triangulation of the torus with two holes into ten
$2$-cells and three zero-cells in total, obtained by identifying
the opposite sides of a decagon:
\begin{center}
\psscalebox{.4} % Change this value to rescale the drawing.
{
\begin{pspicture}(3,-10)(17,5)
\psline[linewidth=0.05,dimen=outer](8,-8.2)(11.88,3.33)
\psline[linewidth=0.05,dimen=outer](11.76,-8.24)(8.12,3.37)
\psline[linewidth=0.05,dimen=outer](14.83,-6.06)(5.06,1.19)
\psline[linewidth=0.05,dimen=outer](16.03,-2.5)(3.86,-2.37)
\psline[linewidth=0.05,dimen=outer](4.98,-5.96)(14.9,1.09)

\psline[linewidth=0.15,dimen=outer,linecolor=red](8,-8.2)(11.76,-8.24)
\psline[linewidth=0.15,dimen=outer,linecolor=red](8.12,3.37)(11.88,3.33)

\psline[linewidth=0.15,dimen=outer,linecolor=orange](11.76,-8.24)(14.83,-6.06)
\psline[linewidth=0.15,dimen=outer,linecolor=orange](5.06,1.19)(8.12,3.37)

\psline[linewidth=0.15,dimen=outer,linecolor=britishracinggreen](14.83,-6.06)(16.03,-2.5)
\psline[linewidth=0.15,dimen=outer,linecolor=britishracinggreen](3.86,-2.37)(5.06,1.19)

\psline[linewidth=0.15,dimen=outer,linecolor=blue](4.98,-5.96)(3.86,-2.37)
\psline[linewidth=0.15,dimen=outer,linecolor=blue](16.03,-2.5)(14.9,1.09)

\psline[linewidth=0.15,dimen=outer,linecolor=green](14.9,1.09)(11.88,3.33)
\psline[linewidth=0.15,dimen=outer,linecolor=green](8,-8.2)(4.98,-5.96)

\rput(8,-9.0){\psscalebox{2}{$X$}}
\rput(11.76,-9.0){\psscalebox{2}{$Z$}}
\rput(15.5,-6.06){\psscalebox{2}{$X$}}
\rput(16.6,-2.5){\psscalebox{2}{$Z$}}
\rput(15.5,1.09){\psscalebox{2}{$X$}}
\rput(11.88,4.0){\psscalebox{2}{$Z$}}
\rput(8.12,4.0){\psscalebox{2}{$X$}}
\rput(4.4,1.19){\psscalebox{2}{$Z$}}
\rput(3.2,-2.37){\psscalebox{2}{$X$}}
\rput(4.3,-5.96){\psscalebox{2}{$Z$}}
\rput(9.94,-4.0){\psscalebox{2}{$Y$}}

\rput(9.94,-9.0){\psscalebox{2}{$1$}}
\rput(9.94,4.0){\psscalebox{2}{$1$}}
\rput(13.5,-7.7){\psscalebox{2}{$4$}}
\rput(3.8,-4.5){\psscalebox{2}{$2$}}
\rput(16.0,-4.5){\psscalebox{2}{$5$}}
\rput(6.3,-7.7){\psscalebox{2}{$3$}}
\rput(16.0,-0.8){\psscalebox{2}{$2$}}
\rput(6.3,2.8){\psscalebox{2}{$4$}}
\rput(13.5,2.8){\psscalebox{2}{$3$}}
\rput(3.8,-0.8){\psscalebox{2}{$5$}}

\rput(8.8,-6.8){\psscalebox{2}{$A$}}
\rput(11.8,-6.8){\psscalebox{2}{$B$}}
\rput(13.6,-4.6){\psscalebox{2}{$C$}}
\rput(14.8,-2.0){\psscalebox{2}{$D$}}
\rput(13.6,0.8){\psscalebox{2}{$E$}}
\rput(11.0,2.1){\psscalebox{2}{$F$}}
\rput(8.0,2.1){\psscalebox{2}{$G$}}
\rput(5.7,0.0){\psscalebox{2}{$H$}}
\rput(4.7,-2.8){\psscalebox{2}{$I$}}
\rput(6.1,-5.6){\psscalebox{2}{$J$}}

\rput(8,-8.2){\pscircle*{.15}} %A
\rput(11.76,-8.24){\pscircle*{.15}} %B
\rput(14.83,-6.06){\pscircle*{.15}} %O
\rput(16.03,-2.5){\pscircle*{.15}} %N
\rput(14.9,1.09){\pscircle*{.15}} %M
\rput(11.88,3.33){\pscircle*{.15}} %L
\rput(8.12,3.37){\pscircle*{.15}} %K
\rput(5.06,1.19){\pscircle*{.15}} %H
\rput(3.86,-2.37){\pscircle*{.15}} %G
\rput(4.98,-5.96){\pscircle*{.15}} %F
\rput(9.94,-2.43){\pscircle*{.15}} %D

\end{pspicture}
}
\end{center}
In order to see that this triangulation is irreducible, notice
first that the symmetric nature of the triangulation allows us to
look for a cut-triangle by requiring that it contains a fixed side
of the decagon, say $1$, without  loss of generality. Let us first
examine the potential cut-triangles that contain edges $1$ and
$A$:
$$1AD,\enspace 1AF,\enspace 1AH,\enspace \mbox{ and } \enspace 1AJ.$$
(Notice that, since $1AB$ is a $2$-cell of the triangulation, it
is not a candidate for a cut-triangle.) By Lemma \ref{fff}, in
order for   $1AD$ to be a cut-triangle, the $2$-cells $1AB$ and
$1GF$ must belong to different equivalence classes induced by
$1AD$. However, the  sequence of $2$-cells  $1AB, 4BC, 4GH, 1GF$,
in which each two successive members share an edge outside of
$1AD$, witnesses that this is not possible. By the same argument,
the sequence $1AB, 4BC, 5CD, 2DE, 3EF, 3AJ$ shows that $1AF$
cannot be a cut-triangle, the sequence $4HG, 1GF, 3FE, 2ED, 5DC,
5IH$ shows that $1AH$ cannot be a cut-triangle, and the sequence
$3AJ,3EF,1FG,4GH,5IH,2IJ$ shows that $1AJ$ cannot be a
cut-triangle. By  symmetry again, we can conclude that a
cut-triangle cannot contain edges $1$ and $B$. The remaining
candidates for a cut-triangle  are   $1ID$,   $1EJ$ and $1EH$. By
the symmetry of the triangulation again, we can reduce the
analysis to  $1ID$ and $1EJ$. By Lemma \ref{fff}, in order for
$1ID$ to be a cut-triangle, the $2$-cells $1AB$ and $1GF$ must
belong to different equivalence classes induced by  $1ID$.
However, the  sequence of $2$-cells  $1AB, 3AJ, 3FE, 1GF$, in
which each two successive members share an edge outside of  $1ID$,
witnesses that this is not possible. Similarly, the sequence of
$2$-cells $1AB, 4BC, 5CD, 5HI, 4HG, 1GF$ witnesses that $1EJ$
cannot be a cut-triangle.
\end{example}

 We need one more property of cut-triangles in order to show, in Proposition \ref{theprop}, that  the irreducible ${\mathcal M}$-complexes are the indecomposable ${\mathcal M}$-complexes.

\begin{lem} \label{disjointness-lemma}
If $T$ is a cut-triangle for $K$, then the two equivalence classes
$K^l_T$ and $K^r_T$ of 2-cells do not share  lower dimensional
faces other than those of $T$.
\end{lem}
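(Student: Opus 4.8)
The plan is to proceed by contradiction. Recall that the lower-dimensional faces of the cut-triangle $T=\{e_0,e_1,e_2\}$ are the three $1$-cells $e_0,e_1,e_2$ together with their $0$-cell faces; so a shared face ``other than those of $T$'' is either a $1$-cell $e\in K_1-T$ or a $0$-cell $w$ that is not a vertex of any of $e_0,e_1,e_2$. I would treat these two cases separately.

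For the $1$-cell case, suppose $e\in K_1-T$ is an edge of some $u\in K^l_T$ and of some $v\in K^r_T$. By axiom (3) there are exactly two $2$-cells of $K$ having $e$ as an edge, and both $u$ and $v$ are among them. If $u=v$ this contradicts $K^l_T\cap K^r_T=\emptyset$; otherwise $u$ and $v$ are exactly those two $2$-cells and they share the edge $e\notin T$, so $u\sim_T v$ by the definition of $\sim_T$, again a contradiction. Hence $K^l_T$ and $K^r_T$ share no $1$-cell outside $T$.

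For the $0$-cell case, suppose a $0$-cell $w$, not a vertex of any $e_i$, is a vertex of some $u\in K^l_T$ and of some $v\in K^r_T$; we may assume $u\neq v$, since $u=v$ would contradict $K^l_T\cap K^r_T=\emptyset$ directly. Then $u,v\in L_w$, and by the Camembert lemma (Lemma~\ref{camembert}) we may list $L_w=\{u_1,\dots,u_n\}$ in a cyclic order in which every two consecutive members (modulo $n$) are $w$-neighbours, say with $u=u_k$ and $v=u_l$. Travelling from $u_k$ to $u_l$ along this cycle, each consecutive pair shares an edge incident to $w$; since $w$ is not a vertex of any $e_i$, none of these edges lies in $T$, so each consecutive pair is related by $\sim_T$ and therefore $u\sim_T v$ — contradicting $u\in K^l_T$, $v\in K^r_T$.

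The $1$-cell case is immediate from axiom (3), so the substance of the argument is the $0$-cell case, and the crucial point there is exactly the one isolated in Remark~\ref{rem1}: a path of $2$-cells that stays inside the camembert of $w$ only ever crosses edges having $w$ as an endpoint, and these automatically avoid $T$ once $w$ is not a vertex of $T$. The only thing to be careful about is to invoke the Camembert lemma in its strong cyclic form rather than merely the linkedness axiom (4), since it is the cyclic arrangement that lets one connect the two given $2$-cells by such an interior path.
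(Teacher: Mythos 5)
Your proof is correct and follows essentially the same route as the paper: the $1$-cell case is immediate from the definition of $\sim_T$, and the $0$-cell case connects $u$ and $v$ by a path of $w$-neighbours in $L_w$, all of whose crossed edges are incident to $w$ and hence lie outside $T$. The only difference is cosmetic: the paper invokes just the linkedness axiom (4), which already supplies the required path, so your insistence that the full cyclic Camembert lemma is needed (and your detour through axiom (3) in the $1$-cell case) is unnecessary, though harmless.
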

\begin{proof}
We just have to check that if $u,v$ are two 2-cells of $K$ sharing
a 1-cell  in $K_1\setminus T$ or a cell in $K_0$ which is not a
face of any element of $T$, then they are equivalent. This is
obvious for 1-cells, by the very definition of the equivalence.
Suppose now that $u,v$ share a  0-cell $w$ outside of $T$. These
cells belong to $L_w$, hence by  condition (4) applied to $K$ and
$w$, there is a path from $u$ to $v$ in $L_w$. Such a path does
not cross any of the $e_i$'s, since none of these 1-cells  is
incident to $w$. Hence $u$ and $v$ are equivalent.
\end{proof}

\begin{prop}\label{theprop}
An ${\mathcal M}$-complex $K$ is reducible if and only if it is
decomposable.
\end{prop}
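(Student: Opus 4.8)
The plan is to prove the two implications separately, using Lemmas~\ref{fff}, \ref{disjointness-lemma} and~\ref{camembert}, the explicit description of the connected sum, and Proposition~\ref{surface}. For the implication ``decomposable $\Rightarrow$ reducible'', I would suppose $K$ is $<\!2$-isomorphic to a connected sum $K_1\,{_x\circ_y}\,K_2$ of two proper $\mathcal{M}$-complexes and let $T=\{e_0,e_1,e_2\}$ be the triangle of $K$ along which the gluing is performed, i.e.\ the common image of $d_0x,d_1x,d_2x$ and of $d_0y,d_1y,d_2y$; then $e_0-e_1+e_2$ is a $1$-cycle of $K$, being the image of $\partial_2x$ under $C_1(K_1)\to C_1(K)$. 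I claim $T$ is a cut-triangle whose two $\sim_T$-classes are the $2$-cells coming from $K_1\ominus x$ and those from $K_2\ominus y$. Indeed, the connected sum identifies only faces of $x$ with faces of $y$, so any $1$-cell of $K$ outside $T$ lies on $2$-cells of a single side; hence no $\sim_T$-step crosses between the two sides, so there are at least two classes (both non-empty, as $K_1,K_2$ are proper). Conversely each side is a single class: by Proposition~\ref{surface}, $|K_i|$ is a closed connected surface, so $|K_i\ominus x|$ is a connected surface with boundary, whose dual graph (adjacency by a shared edge) is connected; since the edges of $T$ are boundary edges of $K_i\ominus x$, this adjacency is exactly ``sharing an edge outside $T$'', so that side is $\tau$-connected. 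Thus $K$ is reducible.

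For the converse, ``reducible $\Rightarrow$ decomposable'', I would start from a cut-triangle $T=\{e_0,e_1,e_2\}$ with classes $K^l_T,K^r_T$. Regularity of $K$ forces $e_0,e_1,e_2$ to form a genuine triangle with three distinct vertices $p,q,r$, and no $2$-cell of $K$ has all of $e_0,e_1,e_2$ as edges (else $\tau$ is not reflexive); with Lemma~\ref{fff} this gives $|K^l_T|,|K^r_T|\geq2$. I would then form $K_1$ from a fresh disjoint copy of the sub-$\Delta$-complex of $K$ supported on $K^l_T$ by adjoining a single new $2$-cell $x$ with $d_0x,d_1x,d_2x$ the copies of $e_0,e_1,e_2$ (the $\Delta$-identities hold because $e_0-e_1+e_2$ is a $1$-cycle), and form $K_2$ symmetrically from $K^r_T$ with a new $2$-cell $y$. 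By Lemma~\ref{disjointness-lemma}, the subcomplexes of $K$ spanned by $K^l_T$ and $K^r_T$ meet in exactly $T$ and $\{p,q,r\}$ and their union is all of $K$; hence the pushout $K_1\,{_x\circ_y}\,K_2$ along $\partial\Delta^2$ reassembles $K$, so $K=K_1\,{_x\circ_y}\,K_2$ up to $<\!2$-isomorphism with both factors proper. It remains to check that $K_1$ (and by symmetry $K_2$) is an $\mathcal{M}$-complex.

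Conditions (0)--(2) are immediate, (2) because $K_1$ inherits regularity from $K$ and $x$ has the distinct edges $e_0,e_1,e_2$. For (3): a $1$-cell outside $T$ lies in $K$ on two $2$-cells, which are $\sim_T$-equivalent and hence both in $K^l_T$; and each $e_i$ lies on $x$ together with the unique cell of $K^l_T$ given by Lemma~\ref{fff}. For (4): if $w\notin\{p,q,r\}$, Lemma~\ref{disjointness-lemma} puts all $2$-cells of $K$ through $w$ in one class, and their linking in $K$ uses only edges through $w$, none in $T$, so it transfers to $K_1$; if $w\in\{p,q,r\}$, I would arrange $L^K_w$ in a circle by the Camembert Lemma~\ref{camembert}, observe that the class flips exactly at the two edges of $T$ incident to $w$, so $L^K_w$ splits into a $K^l_T$-arc and a $K^r_T$-arc, and note that adjoining $x$ (which is $w$-adjacent to both ends of the $K^l_T$-arc along those two edges) closes that arc into a circle, so $L^{K_1}_w$ is linked.

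The delicate step is (5), orientability. Since $K_1$ satisfies (0)--(4), the argument of Proposition~\ref{surface} makes $|K_1|$ a closed connected $2$-manifold, so $H_2(K_1;\mathbf{Z})$ is $0$ or $\mathbf{Z}$ and it suffices to exhibit a non-zero $2$-cycle. Taking the fundamental cycle $c=\sum_u\varepsilon_u u$ of $K$ and its part $c^l$ supported on $K^l_T$ (in the fresh copy), the coefficient in $\partial c=0$ of any $1$-cell outside $T$ comes only from $c^l$, so $\partial_2c^l=\alpha_0e_0+\alpha_1e_1+\alpha_2e_2$ with each $\alpha_i=\pm1$ (only one cell of $K^l_T$ carries $e_i$). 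Applying $\partial_1$ and using that $p,q,r$ are distinct and $e_0-e_1+e_2$ is a cycle forces $\alpha_0=\alpha_2=-\alpha_1=:\lambda\in\{\pm1\}$, so $\partial_2c^l=\lambda\,\partial_2x$ and $c^l-\lambda x$ is a non-zero $2$-cycle of $K_1$; hence $H_2(K_1;\mathbf{Z})=\mathbf{Z}$. This completes the check that $K_1$ is an $\mathcal{M}$-complex, and $K$ is decomposable. I expect the main obstacles to be exactly these two axioms for the reconstructed pieces: condition (4), which leans on a careful application of the Camembert Lemma, and condition (5), which hinges on the short $\partial_1\partial_2$ sign computation above; the rest is bookkeeping of faces and fresh copies.
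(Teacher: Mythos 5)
Your proposal is correct and follows essentially the same route as the paper's proof: cap each $\sim_T$-class with a fresh $2$-cell glued along the cut-triangle, verify axioms (2)--(5) for the resulting complexes (with the Camembert lemma handling linkedness at the vertices of $T$, and the $\partial_1\partial_2$ sign computation on the restricted fundamental cycle giving orientability), and conversely observe that the triangle created by a connected sum is a cut-triangle. The only differences are cosmetic: you fill in the converse direction (which the paper explicitly leaves to the reader) via dual-graph connectivity of the punctured surfaces, and you invoke the classification fact $H_2\in\{0,\mathbf{Z}\}$ for closed connected surfaces where the paper argues the same point slightly more combinatorially.
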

\begin{proof}
Suppose that $K$ is reducible and  let $T=\{e_0,e_1,e_2\}\subseteq
K_1$ be a cut-triangle of $K$. Define, for $o\in \{l,r\}$ and
$0\leq j\leq 2$, the sets $(\hat{K}^o_T)_j$ as follows:
\begin{center}
$(\hat{K}^o_T)_2=K^o_{T}\cup \{t\}$,\quad
$(\hat{K}^o_T)_1=\displaystyle\bigcup_{x\in (\hat{K}^o_T)_2-\{t\}}
\{d^2_0x,d^2_1x,d^2_2x\} $ \quad and\quad
$(\hat{K}^o_T)_0=\displaystyle\bigcup_{y\in (K^o_T)_1}
\{d^1_0y,d^1_1y\}$,
\end{center}
for $t\not\in K_T^o$. Therefore, for $j<2$, $(\hat{K}^o_T)_j$ is
the set of all $j$-cells contained in the equivalence class
$K_T^o$, while, for $j=2$, $(\hat{K}^o_T)_2$ additionally contains
$t$ as  a $2$-cell. Define, moreover, for $1\leq i\leq 2$, $1\leq
j\leq 2$ and $0\leq k\leq j$,  functions $(d_T^o)_k^j:
(\hat{K}_T^o)_j\rightarrow (\hat{K}_T^o)_{j-1}$ as the appropriate
restrictions of the corresponding faces $d_k^j:K_j\rightarrow
K_{j-1}$ of
%the starting ${\mathcal M}$-complex
$K$, except on the triangle $t$, for which the  action of
$(d_T^o)_k^2$ is defined by  $(d_T^o)_k^2(t)=e_k$. We prove that,
by taking the elements of $(\hat{K}^o_T)_j$ as $j$-cells and
functions $(d_T^o)_k^j$ as the appropriate faces, the equivalence
classes $K_T^l$ and $K^r_T$ get  turned into   ${\mathcal
M}$-complexes $\hat{K}_T^l$ and $\hat{K}^r_T$, respectively. By
construction, $\hat{K}_T^l$ and $\hat{K}^r_T$ satisfy  the first
two axioms of an ${\mathcal M}$-complex. We verify below the
remaining four for $\hat{K}_T^l$.

\begin{itemize}
\item[(2)] The axiom (2) follows by the fact that, in the
cut-triangle $T$, all $0$-cells and $1$-cells are mutually
distinct. Indeed,  by the regularity of $K$, the two $0$-cells of
each $e_i$, $i\in\{0,1,2\}$, are mutually different, and,
therefore, the three $0$-cells of the $2$-cell $t$ must be
mutually distinct. Consequently,  the $1$-cells of $t$ must also
be mutually distinct. \item[(3)] Note that, for all the $1$-cells
$e$ of  $(\hat{K}_T^l)_1-\{e_0,e_1,e_2\}$, this axiom holds by the
same axiom for $K$. (Observe that   Lemma \ref{disjointness-lemma}
disallows us to apply the same argument twice on $e$, which would
produce four $2$-cells adjacent with $e$.) As for the 1-cells
$\{e_0,e_1,e_2\}$, each  of them belongs to $t$ and, by Lemma
\ref{fff}, to exactly one  $2$-cell of $\hat{K}_T^l$, which proves
the claim. \item[(4)] Note that, for all the $0$-cells of
$(\hat{K}_T^l)_0-\{d^1_{0}e_0,d^1_{1}e_0,d_1^1e_1\}$, this axiom
holds by the same axiom for $K$ (and again by Lemma
\ref{disjointness-lemma}). Suppose therefore that $w\in
\{d^1_{0}e_0,d^1_{1}e_0,d_1^1e_1\}$, say $w=d^1_{0}e_0$, and let
$u,v\in L_{w}$. Suppose, moreover, that both $u$ and $v$ are
different from $t$. By the Camembert lemma for $K$,  there exists
a path $u,u_1,\dots,u_k,v$ of $2$-cells of $K$ starting at $u$ and
ending at  $v$, such that every two consecutive $2$-cells are
$w$-neighbours. Axiom (4) follows immediately if this path is
entirely contained in $\hat{K}_T^l$. If the path contains
$2$-cells of $\hat{K}_T^r$, and, hence, crosses the two edges of
$T$ adjacent with $w$, then, by the Camembert lemma, we can
transform it into a path that does not cross an edge of $T$, by
the same reasoning as in the proof of Lemma \ref{fff}. Suppose now
that  $u=t$. By Lemma \ref{fff}, for the $1$-cell $e_0$ of $t$, we
can pick the $2$-cell $u'$ of $\hat{K}_T^l$ that has $e_0$ as a
$1$-cell; note that $u' \in L_w$. By the Camembert lemma for $K$,
there exists a sequence $u,u_1,\dots,u_k,u'$ of $2$-cells starting
at $u$ and ending at  $u'$, such that every two consecutive
$2$-cells are $w$-neighbours, and which is, moreover, entirely
contained in $\hat{K}_T^l$. The sequence of $2$-cells starting at
$u$ and ending at  $t$, such that every two consecutive $2$-cells
are $w$-neighbours, is then the sequence $u,u_1,\dots,u_k,u',t$.
\item[(5)] Let $c=\sum^n_{i=1}\varepsilon_i u_i$ be the
fundamental cycle of $K$, and let $c_l=\sum_{i\in
I}\varepsilon_iu_i$ and $c_r=\sum_{j\in J}\varepsilon_ju_j$ be
such that $c=c_l+c_r$, $K^l_T=\{u_i\,|\, i\in I\}$ and
$K^r_T=\{u_j\,|\, j\in J\}$. Consider the boundary $\partial c_l$.
By the axiom (3) for $\hat{K}_T^l$, each $1$-cell $e$  that
appears in $\partial c_l-\{e_0,e_1,e_2\}$  will have exactly one
more occurence in this boundary. By the orientability of $K$, we
can conclude that $\partial c_l=\sum^2_{m=0}\tau_me_m$, for
$\tau_{m}\in\{+1,-1\}$.
 Moreover, since $\partial^2=0$, we know that  $\sum^2_{m=0}\tau_me_m$ is a $1$-cycle. But since $e_0-e_1+e_2$ is also a $1$-cycle, it must be the case that
 either $\partial c_l=e_0-e_1+e_2$ or $\partial c_l=-(e_0-e_1+e_2)$. In the first case, the orientation of $\hat{K}^l_T$ is obtained by taking the coefficient $\varepsilon_l$ of $T$ to be $-1$ and, in the second case, by taking  it to be $+1$.
\end{itemize}
By Lemma \ref{disjointness-lemma}, $\hat{K}_T^l$ and $\hat{K}^r_T$
have no common faces other than the ones of $t$. Also, both
$\hat{K}_T^l$ and $\hat{K}^r_T$ have strictly less $2$-cells than
$K$. Indeed, if, say, $\hat{K}_T^l$ would contain the same number
of $2$-cells as $K$, then $\hat{K}_T^r$ would have to be the
${\mathcal M}$-complex given by two $2$-cells  having the same
boundaries, meaning that $\sim_T$ would not be an equivalence
relation.
 It follows by the definition of the connected sum construction that  $K$ is indeed a connected sum of  $\hat{K}_T^l$ and $\hat{K}^r_T$ along the $2$-cell $t$.

\smallskip

The proof of  the other direction goes by showing that, for $M = K
{_x\circ_y} L$,   the triangle $T=\{e_0,e_1,e_2\}$ resulting from
the identification of the 1-cells of $x$ and $y$, is a
cut-triangle. We leave the easy details to the reader.
 \end{proof}
 From now on, we shall confuse ``(ir)reducible'' with ``(in)decomposable''.

\subsection{A presentation of the Menelaus cyclic operad}\label{last}
%We now come back to our goal of giving a presentation of the Menelaus cyclic operad. By Proposition \ref{theprop} and Proposition \ref{generation)
%% In other words,
%%we can represent each  ${\mathcal M}$-complex as an element of the free cyclic operad over the irreducible ${\mathcal M}$-complexes.
%In  other words, the Menelaus cyclic operad is generated by the  irreducible ${\mathcal M}$-complexes. However, as we show in the following example, the Menelaus cyclic operad is not free over this collection of generators.
Informed by the notion of cut-triangle, we can attribute the
relation induced by Example \ref{counterexample} to a ``bad''
configuration of two cut-triangles in one and the same ${\mathcal
M}$-complex. Let us now examine ``nice'' configurations, i.e.
those of two  cut-triangles that  will {\em not} induce relations
in the sought presentation.

\begin{definition}
Let $K$ be an ${\mathcal M}$-complex and let
$T_1=\{e^1_0,e^1_1,e^1_2\}\subseteq K_1$ and
$T_2=\{e^2_0,e^2_1,e^2_2\}\subseteq K_1$ be two cut-triangles of
$K$. We say that ${T}_1$ is {\em disjoint} from ${T}_2$   if all
the edges of $T_1$  are $1$-cells of one of the two ${\mathcal
M}$-complexes induced by ${T}_2$.
\end{definition}

\begin{lem}\label{subclass}
If $K$ is an ${\mathcal M}$-complex and if
$T_1=\{e^1_0,e^1_1,e^1_2\}\subseteq K_1$ and
$T_2=\{e^2_0,e^2_1,e^2_2\}\subseteq K_1$ are two cut-triangles of
$K$, then ${T}_1$ is  disjoint from ${T}_2$ if and only if one of
the equivalence classes of $2$-cells induced by $T_2$  is entirely
contained in one of the equivalence classes of $2$-cells induced
by $T_1$.
\end{lem}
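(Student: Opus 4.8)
The plan is to prove the two implications separately, after unwinding the definition of ``$T_1$ is disjoint from $T_2$'': it asserts that all three edges of $T_1$ lie among the $1$-cells of $\hat{K}^l_{T_2}$, or all three lie among those of $\hat{K}^r_{T_2}$. Recalling the construction of $\hat{K}^o_{T}$ from the proof of Proposition~\ref{theprop}, the set $(\hat{K}^o_{T_2})_1$ is precisely the set of $1$-cells that occur as an edge of some $2$-cell belonging to the class $K^o_{T_2}$, so the disjointness condition is: $T_1\subseteq(\hat{K}^l_{T_2})_1$ or $T_1\subseteq(\hat{K}^r_{T_2})_1$.

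For the direction ``disjoint $\Rightarrow$ one class contained in another'', I would assume, by symmetry, that every edge of $T_1$ is an edge of some $2$-cell of $K^l_{T_2}$, and show that the whole class $K^r_{T_2}$ sits inside a single $\sim_{T_1}$-class. Take $u,v\in K^r_{T_2}$; since $u\sim_{T_2}v$, there is a path of $2$-cells from $u$ to $v$ each consecutive pair of which shares a $1$-cell outside $T_2$, and, by the definition of $\sim_{T_2}$, this path stays inside $K^r_{T_2}$. The key point is that every edge $e$ crossed by this path cannot be an edge of any $2$-cell of $K^l_{T_2}$: by axiom~(3) the only two $2$-cells of $K$ containing $e$ are the two consecutive path cells, which both lie in $K^r_{T_2}$ (this is exactly Lemma~\ref{disjointness-lemma} restricted to $1$-cells). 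Hence no crossed edge lies in $T_1\subseteq(\hat{K}^l_{T_2})_1$, so the very same path witnesses $u\sim_{T_1}v$. As $T_1$ is a cut-triangle, $\sim_{T_1}$ has exactly the two classes $K^l_{T_1}$ and $K^r_{T_1}$, and the nonempty set $K^r_{T_2}$ is therefore contained in one of them.

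For the converse I would use complementation together with Lemma~\ref{fff}. Assuming, after renaming classes, $K^r_{T_2}\subseteq K^l_{T_1}$, taking complements in $K_2=K^l_{T_1}\sqcup K^r_{T_1}=K^l_{T_2}\sqcup K^r_{T_2}$ gives $K^r_{T_1}\subseteq K^l_{T_2}$. Now fix $i\in\{0,1,2\}$: by Lemma~\ref{fff} applied to the cut-triangle $T_1$ there is (exactly) one $2$-cell $u_i\in K^r_{T_1}$ having $e^1_i$ as an edge; since $u_i\in K^r_{T_1}\subseteq K^l_{T_2}$, the edge $e^1_i$ is an edge of a $2$-cell of $K^l_{T_2}$, i.e.\ $e^1_i\in(\hat{K}^l_{T_2})_1$. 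As this holds for all three $i$, we get $T_1\subseteq(\hat{K}^l_{T_2})_1$, which is exactly ``$T_1$ disjoint from $T_2$''.

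I do not expect a genuine obstacle: the routine verifications (that the $\sim_{T_2}$-path stays inside $K^r_{T_2}$; that crossing an edge outside $T_2$ forces both incident $2$-cells into $K^r_{T_2}$; that the classes partition $K_2$ so that complements behave) are all immediate from axiom~(3), the definition of $\sim_T$, and Lemma~\ref{disjointness-lemma}. The one place needing care is the bookkeeping in the forward direction --- re-using verbatim the path produced from $u\sim_{T_2}v$ as a $\sim_{T_1}$-path --- which hinges entirely on the observation that a path confined to $K^r_{T_2}$ can never cross an edge incident to a $2$-cell of $K^l_{T_2}$, hence never crosses an edge of $T_1$.
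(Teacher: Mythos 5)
Your proof is correct, and it takes a noticeably different --- and leaner --- route than the paper's. In the forward direction the paper starts from an arbitrary path joining two cells of $K^r_{T_2}$ (one that may wander into $K^l_{T_2}$ and cross $T_2$) and then repairs it using the Camembert lemma (Lemma~\ref{camembert}), with an induction on the number of crossings; you instead take the chain that witnesses $u\sim_{T_2}v$, which by construction stays inside $K^r_{T_2}$ and crosses only edges outside $T_2$, and observe via axiom (3) (equivalently, the $1$-cell case of Lemma~\ref{disjointness-lemma}) that such a chain can never cross an edge of $T_1$, so the very same chain witnesses $u\sim_{T_1}v$. For the converse the divergence is greater: the paper argues by contradiction, locating a vertex shared by edges of $T_1$ lying in different $T_2$-sides and invoking the Camembert lemma around that vertex, whereas you argue directly --- since each $\sim_{T_i}$ partitions all of $K_2$ into exactly two classes, $K^r_{T_2}\subseteq K^l_{T_1}$ yields $K^r_{T_1}\subseteq K^l_{T_2}$ by complementation, and Lemma~\ref{fff} then supplies, for every edge of $T_1$, a $2$-cell of $K^r_{T_1}\subseteq K^l_{T_2}$ incident to it, so $T_1\subseteq(\hat{K}^l_{T_2})_1$, which is the definition of disjointness. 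Your version buys a shorter argument that bypasses the Camembert lemma entirely (using only axiom (3), Lemma~\ref{fff} and the two-class partition), and it does not even need the containment to be proper; the paper's version keeps to the uniform path-rerouting technique it uses throughout the section. Both establish exactly the stated equivalence.
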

\begin{proof} Suppose that ${T}_1$ is  disjoint from ${T}_2$, i.e., that, without  loss of generality, all the edges of $T_1$ belong to $\hat{K}^l_{T_2}$. We show that any two $2$-cells $u$ and $v$ of $\hat{K}^r_{T_2}$ belong to the same class with respect to $T_1$. Pick a path $p$ from $u$ to $v$ in $\hat{K}^r_{T_2}$ and suppose that it crosses $T_1$. Observe that this is only possible if $T_1$ and $T_2$ share one edge, say $e_1$. Moreover, the path contains the $2$-cells  $w_1$ (from  $\hat{K}_{T_2}^r$) and $w_2$ (from $\hat{K}_{T_2}^l$) adjacent with $e_1$. Since $e_1$ is an edge of $T_2$, and $u$ and $v$ are both in $\hat{K}_{T_2}^r$, the path must cross again $T_2$ before reaching $v$, say, at the edge $e_2$. Let $w'_1$  and $w'_2$ be the $2$-cells   from  $\hat{K}_{T_2}^l$ and $\hat{K}_{T_2}^r$, respectively, adjacent with $e_2$. Then, thanks to the Camembert lemma, the part of the path $p$ from $w_1$ to $w'_1$ can be replaced by another path that lies completely in $\hat{K}^r_{T_2}$. We conclude  by induction.

For the opposite direction,  suppose, without  loss of generality,
that  $\hat{K}^l_{T_2}\subsetneq \hat{K}^l_{T_1}$.  Suppose, again
without loss of generality, that $e^1_0$ is a $1$-cell of
$\hat{K}^l_{T_2}$ and that $e^1_1$ is a $1$-cell of
$\hat{K}^r_{T_1}$. Observe that this is only possible if $e^1_0$
shares its two  vertices, say $A$ and $B$, with one of the edges
of $T_2$, say $e_0^2$. Suppose that $e_1^2$ is the other edge of
$T_2$ adjacent with $A$. Let $u$ and $v$ be the 2-cells of
$\hat{K}^l_{T_2}$ adjacent with $e_0^2$ and $e_1^2$, respectively.
Since $\hat{K}^l_{T_2}\subsetneq \hat{K}^l_{T_1}$, the path on the
camembert of $A$ between $u$ and $v$ that lies in
$\hat{K}^l_{T_2}$ does not cross $T_1$. This means that $e^1_0$ is
on the opposite path between $u$ and $v$  on the camembert of $A$,
and, hence in $\hat{K}^r_{T_2}$. Contradiction.\end{proof}

  Lemma \ref{subclass} in particular  shows that being disjoint is a symmetric relation on cut-triangles of an ${\mathcal M}$-complex.

 \begin{lem}\label{nesting}
For two disjoint cut-triangles
$T_1=\{e^1_0,e^1_1,e^1_2\}\subseteq K_1$ and
$T_2=\{e^2_0,e^2_1,e^2_2\}\subseteq K_1$  of an ${\mathcal
M}$-complex $K$, the following two claims hold.
\begin{enumerate}
\item If $K^l_{T_1}\subsetneq K^r_{T_2}$,  then $T_1$ is a
cut-triangle of the ${\mathcal M}$-complex $\hat{K}^r_{T_2}$.
\item There exist ${\mathcal M}$-complexes $K_1$, $K_2$ and $K_3$
such that $$K=(K_1\, {_{t_1}\!\circ_{t_1}}\,
K_2)\,{_{t_2}\!\circ_{t_2}}\, K_3=K_1\, {_{t_1}\!\circ_{t_1}}\,(
K_2\,{_{t_2}\!\circ_{t_2}}\, K_3),$$ where $t_1$ and $t_2$ are the
$2$-cells induced by $T_1$ and $T_2$ as in the proof of
Proposition \ref{theprop}.
\end{enumerate}
\begin{proof}
(1)  We prove that the two equivalence classes of triangles with
respect to the relation of sharing an edge from
$(\hat{K}^t_{T_2})_1-T_1$ are given by $K^l_{T_1}$ and
$K^r_{T_1}\cap K^r_{T_2}$. Since $T_1$ is a cut-triangle of $K$,
we know that these two sets of $2$-cells make {\em at least} $2$
equivalence classes, and it remains to show that all the $2$-cells
from $K^r_{T_1}\cap K^r_{T_2}$ are equivalent, i.e. that for
$u,v\in K^r_{T_1}\cap K^r_{T_2}$ we can find a path from $u$ to
$v$ that does not leave $K^r_{T_1}\cap K^r_{T_2}$. Since
$K^r_{T_1}\cap K^r_{T_2}\subseteq K^r_{T_1}$ and $K^r_{T_1}$ is a
well-defined equivalence class with respect to $T_1$, there exists
a path    $u,u_1,\dots,u_k,v$  from $u$ to $v$ in  $K^r_{T_1}$,
which, however, might leave $K^r_{T_2}$. But if the $2$-cells
$u_i$ and $u_{i+1}$ are such that $u_i\in K^r_{T_2}$ and
$u_{i+1}\in K^r_{T_1}-K^r_{T_2}$, by the Camembert lemma, we can
replace $u_{i+1}$ by the $2$-cell $u'_{i+1}\in K^r_{T_2}$ adjacent
to $u_i$, and conclude by the induction hypothesis.

\smallskip

(2) This is a consequence of the claim (1). Indeed, supposing that
$K^l_{T_1}\subsetneq K^r_{T_2}$ and  $ K^l_{T_2}\subsetneq
K^{r}_{T_1}$, by (1) we know that $T_1$ (resp. $T_2$) is a
cut-triangle in $\hat{K}^r_{T_2}$ (resp. $\hat{K}^r_{T_1}$), and
we define $K_1=\hat{K}^l_{T_1}$, $K_3=\hat{K}^l_{T_2}$ and  $K_2$
as the ${\mathcal M}$-complex that, together with
$\hat{K}^l_{T_1}$, makes the decomposition of  the ${\mathcal
M}$-complex $\hat{K}^r_{T_2}$ induced by $T_1$.
\end{proof}
 \end{lem}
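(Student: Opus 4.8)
The plan is to reduce both parts to the dissection construction of Proposition~\ref{theprop} and to the Camembert lemma, reusing the path-rerouting argument already used in Lemmas~\ref{fff} and~\ref{subclass}. For part~(1) we are given $K^l_{T_1}\subsetneq K^r_{T_2}$. First I would note that every $2$-cell of $K^l_{T_1}$ lies in $K^r_{T_2}$, so all of its faces -- in particular the edges $e^1_0,e^1_1,e^1_2$ of $T_1$ -- are cells of $\hat K^r_{T_2}$, and $e^1_0-e^1_1+e^1_2$ is still a $1$-cycle there; thus $T_1$ is a candidate cut-triangle of $\hat K^r_{T_2}$. The substance of the proof is to show that the transitive closure $\tau$ of ``sharing a $1$-cell outside $T_1$'' has exactly two classes on the $2$-cells of $\hat K^r_{T_2}$, and I would exhibit these classes as $K^l_{T_1}$ and $(K^r_{T_1}\cap K^r_{T_2})\cup\{t_2\}$, where $t_2$ is the $2$-cell capping $\hat K^r_{T_2}$. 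There are at least two classes because, $T_1$ being a cut-triangle of $K$, no cell of $K^l_{T_1}$ is joined to a cell of $K^r_{T_1}$ by edge-sharings avoiding $T_1$, already in $K$. The set $K^l_{T_1}$ is a single $\tau$-class since it is already a $\tau$-class of $K$ contained in $\hat K^r_{T_2}$. For the second set, given $u,v\in K^r_{T_1}\cap K^r_{T_2}$ I would start from a path $u,\dots,v$ inside the $K$-class $K^r_{T_1}$, and each time it leaves $K^r_{T_2}$ -- which can only happen by crossing $T_2$ into $K^l_{T_2}\subseteq K^r_{T_1}$ and later crossing $T_2$ back -- I would splice out the detour and replace it by a path running around the camembert of a $0$-cell shared by the two crossed edges of $T_2$ and staying in $K^r_{T_2}$, exactly as in the proof of Lemma~\ref{fff}; one checks the replacement stays in $K^r_{T_1}$, and an induction on the number of crossings yields a path inside $K^r_{T_1}\cap K^r_{T_2}$. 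Finally $t_2$ joins this second class via any edge of $T_2$ that is outside $T_1$ (such an edge exists since $T_1\neq T_2$), because by Lemma~\ref{fff} the other $2$-cell of $K$ on that edge lies in $K^l_{T_2}\subseteq K^r_{T_1}$, so the cell of $\hat K^r_{T_2}$ on that edge besides $t_2$ lies in $K^r_{T_1}\cap K^r_{T_2}$.

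For part~(2), disjointness together with Lemma~\ref{subclass} (and passing to complements) lets us relabel the $l/r$ sides so that $K^l_{T_1}\subsetneq K^r_{T_2}$; then $K^l_{T_2}=K\setminus K^r_{T_2}\subsetneq K\setminus K^l_{T_1}=K^r_{T_1}$ automatically, so part~(1) applies with the two triangles in either order, giving that $T_1$ is a cut-triangle of $\hat K^r_{T_2}$ and $T_2$ is a cut-triangle of $\hat K^r_{T_1}$. I would then set $K_1=\hat K^l_{T_1}$, $K_3=\hat K^l_{T_2}$, and let $K_2$ be the $\mathcal M$-complex obtained from the $2$-cell set $(K^r_{T_1}\cap K^r_{T_2})\cup\{t_1,t_2\}$ by the recipe in the proof of Proposition~\ref{theprop}. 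The class computation of part~(1) then shows $\hat K^r_{T_2}=K_1\,{_{t_1}\!\circ_{t_1}}\,K_2$, and the mirror computation shows $\hat K^r_{T_1}=K_2\,{_{t_2}\!\circ_{t_2}}\,K_3$ (here one must check that the two constructions produce the same complex $K_2$, which is the routine matching of lower-dimensional cells and faces). Since $K=K_1\,{_{t_1}\!\circ_{t_1}}\,\hat K^r_{T_1}$ by Proposition~\ref{theprop}, substituting the second identity gives $K=K_1\,{_{t_1}\!\circ_{t_1}}\,(K_2\,{_{t_2}\!\circ_{t_2}}\,K_3)$; and since $K=\hat K^l_{T_2}\,{_{t_2}\!\circ_{t_2}}\,\hat K^r_{T_2}$, commutativity of the connected sum and the first identity give $K=(K_1\,{_{t_1}\!\circ_{t_1}}\,K_2)\,{_{t_2}\!\circ_{t_2}}\,K_3$, as required.

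The step I expect to be the main obstacle is the rerouting in part~(1): one has to verify carefully that after excising a detour that strays into $K^l_{T_2}$, the camembert replacement path does not itself escape $K^r_{T_1}$, and one has to carry out cleanly the bookkeeping in the case where $T_1$ and $T_2$ share one or two edges -- which is in fact the only way a path can cross both triangles, and the only place where the interaction of the two cut-triangles is genuinely delicate. Once this is in place, part~(2) is no more than the associativity and commutativity of the connected sum in the Menelaus cyclic operad applied to three explicitly identified pieces, and needs no ideas beyond Proposition~\ref{theprop} and Lemma~\ref{disjointness-lemma}.
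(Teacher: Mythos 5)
Your proposal is correct and follows essentially the same route as the paper: for (1) you identify the two equivalence classes in $\hat{K}^r_{T_2}$ as $K^l_{T_1}$ and $K^r_{T_1}\cap K^r_{T_2}$ and reroute stray paths with the Camembert lemma, and for (2) you assemble $K_1=\hat{K}^l_{T_1}$, $K_3=\hat{K}^l_{T_2}$ and the middle piece exactly as the paper does, invoking commutativity and associativity of the connected sum. Your extra bookkeeping (placing the cap $2$-cell $t_2$ in the class $K^r_{T_1}\cap K^r_{T_2}$, and checking the two constructions of $K_2$ agree) only makes explicit details the paper leaves implicit.
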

  \begin{definition}
Two cut-triangles $T_1=\{e^1_0,e^1_1,e^1_2\}\subseteq K_1$ and
$T_2=\{e^2_0,e^2_1,e^2_2\}\subseteq K_1$   of an ${\mathcal
M}$-complex $K$ are {\em imbricated} if they are not disjoint.
 \end{definition}

\begin{example} We give some examples of imbricated and disjoint cut-triangles.
\begin{enumerate}
\item Let $T_2$ be a cut-triangle. If $T_1$ is a cut-triangle
distinct from $T_2$  such that whenever two vertices $A,B$ of
$T_1$ are vertices of $T_2$, then also the edge of $T_1$
connecting $A,B$ is an edge of $T_2$, then $T_1$ is disjoint from
$T_2$. \item The following ${\mathcal M}$-complex contains
disjoint cut-triangles, $T_1=\{2,3,4\}$ and $T_2=\{1,5,6\}$, that
share two vertices (and no edge):
\begin{center}
\resizebox{3.5cm}{!}{\begin{tikzpicture} \draw[line width=0.18em]
(0,0) circle (3cm); \node (5) at (-2.8,-1.5) {$5$}; \node (6) at
(2.8,-1.5) {$6$}; \node (5) at (-2.8,1.5) {$3$}; \node (6) at
(2.8,1.5) {$4$}; \node(E) at (0,1.75)
[circle,draw=black,fill=black,minimum size=4,inner
sep=0,label={[xshift=0.3cm,yshift=-0.1cm]{\large $e$}}]  {};
\node(F) at (0,-1.75) [circle,draw=black,fill=black,minimum
size=4,inner sep=0,label={[xshift=0.3cm,yshift=-0.6cm]{\large
$f$}}]  {}; \node(A) at (0,3)
[circle,draw=black,fill=black,minimum size=4,inner
sep=0,label={[xshift=0cm,yshift=0.0cm]{\large $c$}}]  {}; \node(B)
at (0,-3) [circle,draw=black,fill=black,minimum size=4,inner
sep=0,label={[xshift=0cm,yshift=-0.7cm]{\large $d$}}]  {};
\node(C) at (3,0) [circle,draw=black,fill=black,minimum
size=4,inner sep=0,label={[xshift=0.25cm,yshift=-0.3cm]{\large
$b$}}]  {}; \node(D) at (-3,0)
[circle,draw=black,fill=black,minimum size=4,inner
sep=0,label={[xshift=-0.25cm,yshift=-0.3cm]{\large $a$}}]  {};
\draw[line width=0.18em,draw=teal] (D)to[out=-25,in=205] (C)
node[midway,below,xshift=0cm,yshift=-0.75cm] {$1$};
\draw[dashed,line width=0.18em,draw=qqwuqq] (D)to[out=20,in=160]
(C) node[midway,above,xshift=0cm,yshift=0.65cm] {$2$};
\draw[dashed,line width=0.18em,draw=red] (E)-- (A);
\draw[dashed,line width=0.18em,draw=qqccqq] (E)to[out=190,in=45]
(D); \draw[dashed,line width=0.18em,draw=ffxfqq]
(E)to[out=-10,in=135]  (C); \draw[line width=0.18em,draw=qqzzff]
(F)-- (B); \draw[line width=0.18em,draw=qqqqcc]
(F)to[out=170,in=-45]  (D); \draw[line width=0.18em,draw=ffwwzz]
(F)to[out=10,in=225]  (C);
\end{tikzpicture}}
\end{center}
\item This is to be contrasted with Example \ref{counterexample},
where we also have an ${\mathcal M}$-complex that contains two
cut-triangles  sharing two vertices and no edge, which are,
however, imbricated.
\end{enumerate}
\end{example}
Our main theorem pinpoints the presentation of the Menelaus cyclic
operad  investigated in this section.
%Its rigorous proof uses the basic vocabulary of operad theory, but the idea can be easily understood from the intuitive description of cyclic operads  given in Section \ref{cyclic}.

\begin{thm} The Menelaus cyclic operad is the quotient   of the free cyclic operad generated by the irreducible ${\mathcal M}$-complexes, under the equivalence relation generated by the equalities of the form ${\mathcal T}_1 \,{_u\circ_v}\, {\mathcal T}_2={\mathcal T}'_1 \,{_{u'}\circ_{v'}}\, {\mathcal T}'_2$, for each quadruple $({\mathcal T}_1,{\mathcal T}_2,{\mathcal T}'_1,{\mathcal T}'_2)$ of unrooted trees and   quadruple $(u,v,u'v')$ of leaves, such that both hand sides are well formed and evaluate, up to $<\!\!2$-isomorphism, to the same ${\mathcal M}$-complex $K$, in which the cut-triangles $T$ and $T'$, associated with the pairs $(u,v)$ and $(u',v')$ by Proposition \ref{theprop}, are imbricated.
\end{thm}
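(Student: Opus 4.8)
The plan is to realize the Menelaus cyclic operad as the quotient of the free cyclic operad $\mathcal{F}$ generated by the irreducible $\mathcal{M}$-complexes (with units added formally to match the identity axioms) by the congruence $\ker\pi$ of the evaluation map $\pi\colon\mathcal{F}\to{}$Menelaus cyclic operad, and then to identify $\ker\pi$ with the congruence generated by the stated imbrication relations. Surjectivity of $\pi$ is exactly Proposition~\ref{generation}; the units of $\mathcal{F}$ are sent to the identity operations \eqref{identity}. One inclusion is immediate: in every imbrication relation the two hand sides are required to evaluate, up to $<\!\!2$-isomorphism, to the same $\mathcal{M}$-complex $[K]$, so $\pi$ already identifies them, whence the generated congruence is contained in $\ker\pi$. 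All the substance lies in the reverse inclusion: if two unrooted trees $\mathbf{t}_1,\mathbf{t}_2$ with irreducible nodes satisfy $\pi(\mathbf{t}_1)=\pi(\mathbf{t}_2)=[K]$, then $\mathbf{t}_1$ and $\mathbf{t}_2$ are connected by a chain of imbrication relations (the associativity and commutativity identities being trivial in $\mathcal{F}$).

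First I would build a dictionary between trees over $[K]$ and \emph{full dissections} of $K$, where a full dissection is a maximal set of pairwise disjoint cut-triangles of $K$. Given $\mathbf{t}$ with $\pi(\mathbf{t})=[K]$, each internal edge is a connected-sum gluing, so by Proposition~\ref{connected-cut} and Proposition~\ref{theprop} it determines a cut-triangle of $K$; let $\mathcal{C}(\mathbf{t})$ be the resulting set. Using Lemmas~\ref{subclass} and~\ref{disjointness-lemma} one checks that the members of $\mathcal{C}(\mathbf{t})$ are pairwise disjoint and that cutting $K$ along all of them recovers precisely the irreducible node-labels, so $\mathcal{C}(\mathbf{t})$ is a full dissection. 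Conversely, from a full dissection $\mathcal{C}$ one obtains a tree $\mathbf{t}(\mathcal{C})\in\mathcal{F}$ by cutting along the triangles of $\mathcal{C}$ one at a time, Proposition~\ref{theprop} turning each cut into a connected-sum decomposition and the commutativity axiom fixing the left/right choice; that the result does not depend on the order of cutting is exactly what Lemma~\ref{nesting}, and in particular part~(2) applied iteratively, provides, the residual ambiguity being absorbed by the associativity axiom of $\mathcal{F}$, while Lemma~\ref{nesting}(1) guarantees that cutting along one triangle of $\mathcal{C}$ leaves the remaining ones cut-triangles of the relevant piece. These assignments are mutually inverse, so the fibre $\pi^{-1}([K])/{\sim_{\mathcal{F}}}$ is in bijection with the set of full dissections of $K$, and it suffices to connect any two full dissections of $K$ by imbrication moves.

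Next I would argue by induction on $|\mathcal{C}_1\triangle\mathcal{C}_2|$. If this is nonempty, pick $T\in\mathcal{C}_1\setminus\mathcal{C}_2$. Since $\mathcal{C}_2$ is a maximal pairwise disjoint family, $T$ cannot be disjoint from every member of $\mathcal{C}_2$ — otherwise $\mathcal{C}_2\cup\{T\}$ would still be such a family — so some $T'\in\mathcal{C}_2$ is imbricated with $T$. I then localize: in the tree $\mathbf{t}(\mathcal{C}_1)$, contract the edge carrying $T$ so as to merge its two adjacent pieces into a single $\mathcal{M}$-complex $M$; by the connected-sum direction of Proposition~\ref{theprop}, $M$ has both $T$ and $T'$ as cut-triangles. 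Cutting $M$ along $T$, respectively along $T'$, and completing each side with a full dissection of the resulting sub-pieces, yields two presentations $M\cong{\mathcal T}_1\,{_u\circ_v}\,{\mathcal T}_2\cong{\mathcal T}'_1\,{_{u'}\circ_{v'}}\,{\mathcal T}'_2$ whose distinguished edges carry the imbricated pair $T,T'$; this is precisely an instance of the defining relation, applied inside $\mathbf{t}(\mathcal{C}_1)$ by the congruence property. Choosing the completing sub-dissections so as to keep all previously matched triangles of $\mathcal{C}_2$, the new full dissection agrees with $\mathcal{C}_2$ at least on $T'$ and strictly decreases the symmetric difference, and iteration completes the proof.

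The hard part will be the dictionary step — proving that $\mathbf{t}\mapsto\mathcal{C}(\mathbf{t})$ and $\mathcal{C}\mapsto\mathbf{t}(\mathcal{C})$ are well defined and mutually inverse. Two points require real care. First, that cutting along a pairwise disjoint family of cut-triangles in different orders produces the \emph{same} element of $\mathcal{F}$: this needs Lemma~\ref{nesting} iterated over an arbitrary such family, together with the associativity/commutativity coherence of cyclic operads, and the stability facts that disjointness and cut-triangle-ness are preserved on passing to the pieces. Second, that the symmetric-difference induction in the last step genuinely terminates: one must control how the completing sub-dissections are re-chosen after each swap so that no triangle already agreeing with $\mathcal{C}_2$ is destroyed. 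En route a small preliminary lemma must also be established: an internal edge of a tree over $[K]$ corresponds to a cut-triangle of $K$ itself, not merely of a sub-piece — this is the ``only if'' direction implicit in the proof of Proposition~\ref{theprop}, re-run relative to $K$, and it is the technical heart on which both halves of the dictionary rest.
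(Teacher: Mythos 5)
Your overall strategy (reduce to showing that any two trees of irreducibles evaluating to the same $[K]$ are connected by imbrication relations) is the right one, but the key inductive step has a genuine gap. Having picked $T\in\mathcal{C}_1\setminus\mathcal{C}_2$ and $T'\in\mathcal{C}_2$ imbricated with $T$, you contract only the edge of $\mathbf{t}(\mathcal{C}_1)$ carrying $T$, obtaining the merge $M$ of the two adjacent pieces, and assert that $M$ has both $T$ and $T'$ as cut-triangles. Proposition \ref{theprop} gives this for $T$ (it is the glued triangle), but nothing gives it for $T'$: the only thing you know about $T'$ is that it is a cut-triangle of $K$ imbricated with $T$, and it may very well be imbricated with (or simply not ``contained in one side of'') other members of $\mathcal{C}_1\setminus\{T\}$. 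In that case its three edges do not all survive into $M$ (they are spread over several pieces of the dissection), so $T'$ is not even a triangle of $M$, let alone a cut-triangle, and the relation instance you want to apply ``inside $\mathbf{t}(\mathcal{C}_1)$'' does not exist. Note that Lemma \ref{nesting}(1), which is what would let a cut-triangle descend to a piece, requires disjointness — exactly what you cannot assume here, since $T'$ was chosen via maximality to be \emph{im}bricated with something in $\mathcal{C}_1$. A secondary weakness is the termination bookkeeping: after the swap you must re-complete to a full dissection of the two $T'$-sides, and your claim that this can be done ``keeping all previously matched triangles of $\mathcal{C}_2$'' while strictly decreasing $|\mathcal{C}_1\triangle\mathcal{C}_2|$ is asserted rather than proved (the other members of $\mathcal{C}_2$ are disjoint from $T'$, so Lemma \ref{nesting}(1) does descend them to the pieces, but this has to be said and combined with a check that the added triangles stay pairwise disjoint in $K$).

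The repair, and the route the paper takes, avoids localization altogether. The relation in the statement is between decompositions of an arbitrary $\mathcal{M}$-complex with arbitrary trees on both sides, so when $T$ and $T'$ are imbricated cut-triangles of $K$ itself you may apply the relation at the \emph{top} level: split $\mathbf{t}(\mathcal{C}_1)$ at the $T$-edge, and take on the other side any tree split at a $T'$-edge (such a tree exists by Proposition \ref{theprop} and Proposition \ref{generation}). What then remains is to match the subtrees on the smaller pieces, and this is exactly the paper's induction: it inducts on the number of $2$-cells of $K$, compares only the top-level cut-triangles $T,T'$ of the two given decompositions, invokes the relation when they are imbricated, and when they are disjoint uses Lemma \ref{nesting}(2) (the common three-factor refinement) together with associativity of the free cyclic operad and the induction hypothesis on the pieces. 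In particular the paper needs neither your dictionary with full dissections nor the symmetric-difference measure; if you keep your framework, you must either prove the descent of $T'$ to $M$ under hypotheses you can actually guarantee, or restructure the induction along these lines.
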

\noindent (The reader may want to read this statement with Example
\ref{counterexample} in mind, taking the single-node trees
$K,L,U,V$ and $\gamma,\delta',\varphi,\psi$ as quadruples.)
\begin{proof} Given a reducible ${\mathcal M}$-complex  $K$, the goal is to prove that any two decompositions of $K$ are equal modulo the equalities of the presentation given in the statement.  We proceed by induction on the number of $2$-cells of $K$. Using the notation of the statement, the two decompositions may be written as ${\mathcal T}_1 \,{_u\circ_v}\, {\mathcal T}_2$ and ${\mathcal T}'_1 \,{_{u'}\circ_{v'}}\, {\mathcal T}'_2$, with underlying cut-triangles $T$ and $T'$.
% the decomposition of $K$ given in Proposition \ref{theprop} (which defines the isomorphism from the Menelaus cyclic operad to the quotient   of the free cyclic operad from the claim of the theorem) does not depend on the choice of a cut-triangle. If $T_1$ and $T_2$ are two different cut-triangles of $K$, and
If $T$ and $T'$ are imbricated, the claim is ensured by the
corresponding imposed relation. If $T$ and $T'$ are disjoint, the
claim follows by Lemma \ref{nesting} and the associativity of
composition in the free cyclic operad generated  by the
irreducible ${\mathcal M}$-complexes.
\end{proof}

\begin{center}\textmd{\textbf{Acknowledgements} }
\end{center}
\medskip
Jovana Obradovi\' c gratefully acknowledges the financial support
of the Praemium Academiae of M.\ Markl and RVO:67985840. This work was supported by the Serbian Ministry of Education, Science and Technological Development through Mathematical Institute of the Serbian
Academy of Sciences and Arts. The authors are greatly indebted to the anonymous referees and Editor for careful reading and valuable comments and remarks, and for their suggestions of possible new developments towards enhancing our proof system and tackling further completeness and complexity issues. The authors are thankful  to \v Sejla Dautovi\' c and Filip Jevti\' c for very useful suggestions on how to improve the presentation of our work.

\end{document}